\definecolor{shadecolor}{gray}{0.875}
\numberwithin{equation}{section}
\theoremstyle{plain}
\newtheorem{prop}{Proposition}[section]
\newtheorem{theo}[prop]{Theorem}
\newtheorem{coro}[prop]{Corollary}
\newtheorem{lemm}[prop]{Lemma}
\theoremstyle{definition}
\newtheorem{defi}[prop]{Definition}
\newtheorem{ques}[prop]{Question}
\newtheorem{conj}[prop]{Conjecture}
\newtheorem*{mconj}{Manin's Conjecture}
\newtheorem*{babconj}{Conjecture WBAB$_{n}$}
\newtheorem{rema}[prop]{Remark}
\newtheorem{exam}[prop]{Example}
\def\ra{\rightarrow}
\def\cY{{\mathcal Y}}
\def\rk{{\mathrm{rk}}}
\def\bN{{\mathbb N}}
\def\bP{{\mathbb P}}
\def\bQ{{\mathbb Q}}
\def\bR{{\mathbb R}}
\def\bZ{{\mathbb Z}}
\def\eff{\mathrm{eff}}
\def\Pic{\mathrm{Pic}}
\def\NS{\mathrm{NS}}
\def\vol{\mathrm{vol}}
\def\Chow{\mathrm{Chow}}
\def\Mor{\mathrm{Mor}}
\def\Nef{\mathrm{Nef}}
\author{Brian Lehmann}
\address{Department of Mathematics \\
Boston College  \\
Chestnut Hill, MA \, \, 02467}
\email{lehmannb@bc.edu}
\author{Sho Tanimoto }
\address{Department of Mathematics\\
Rice University, MS 136 \\
Houston, Texas  77251-1892 \\
USA}
\email{sho.tanimoto@rice.edu}
\author{Yuri Tschinkel}
\address{Courant Institute\\
                New York University \\
                New York, NY 10012 \\
                USA }
\address{Simons Foundation\\
                 160 Fifth Av.\\ 
                 New York, NY 10010}                
\email{tschinkel@cims.nyu.edu}
\title[Balanced line bundles]{Balanced line bundles on Fano varieties}
\begin{document}
\date{\today}

\begin{abstract}
A conjecture of Batyrev and Manin relates arithmetic properties of varieties with 
ample anticanonical class to geometric invariants; in particular, 
counting functions defined by metrized ample line bundles and the corresponding 
asymptotics of rational points of bounded height are interpreted 
in terms of cones of effective divisors and 
certain thresholds with respect to these cones. This framework leads to 
the notion of balanced line bundles, whose counting functions, conjecturally, 
capture generic distributions of rational points. 
We investigate balanced line bundles in the context of
the Minimal Model Program, with special regard to the 
classification of Fano threefolds.  
\end{abstract}

\maketitle

\section{Introduction} 
\label{secct:intro}

Let $X$ be a smooth projective variety defined over a number field $F$ 
and $\mathcal L=(L, \|\cdot\|)$ an ample, adelically metrized, line bundle on $X$. 
Such line bundles give rise to height functions
$$
\begin{array}{rcl}
X(F) & \ra &  \bR_{>0} \\
x & \mapsto & H_{\mathcal L }(x)
\end{array}
$$ 
on the set of $F$-rational points (see, e.g.,  \cite[Section 1.3]{CL-T-torsors} for the definitions). 
A basic result is that the associated 
counting function
$$
N(X(F),\mathcal L, B):=\#\{ x\in X(F) \, |\, H_{\mathcal L}(x)\le B\} 
$$
is finite, for each $B\in \bR$.   
Conjectures of Manin and Batyrev-Manin concern the asymptotic behavior of $N(X,\mathcal L, B)$, as $B\ra \infty$, 
for Fano varieties, i.e., varieties $X$ with ample anticanical class $-K_X$. The conjectures predict that
this asymptotic is controlled by the geometry of $X$ and $L$ \cite{BM}. 
More precisely, define the geometric constants
\begin{equation*}
a(X,L) = \min \{ t\in \bR \mid t[L] + [K_X] \in \Lambda_{\mathrm{eff}}(X) \}.
\end{equation*}
and
\begin{align*}
b(X,L) = & \textrm{ the codimension of the minimal} \\
& \textrm{supported face of }  \Lambda_{\mathrm{eff}}(X) \textrm{ containing} \\
& \textrm{the numerical class } a(X, L)[L] + [K_X].
\end{align*}
The following extension of Manin's original conjecture builds on \cite{BM}, \cite{Peyre}, \cite{BT}:

\begin{mconj}
Let $X$ be a smooth projective variety over a number field, with ample anticanonical class $-K_X$.  
Let $\mathcal{L} = (L,\Vert \cdot \Vert)$ be an ample adelically metrized line bundle on $X$. 
There exists a Zariski open set $X^{\circ}\subseteq X$ such that 
for every sufficiently large finite extension $F'$ of the ground field $F$, one has
\begin{equation}
\label{eqn:manin}
N(X^\circ(F'),\mathcal L, B)
\sim c(X^\circ,\mathcal{L})B^{a(X,L)} \log(B)^{b(X,L)-1}, \quad B\ra \infty,
\end{equation}
for some constant $c(X^\circ,\mathcal{L})>0$.
\end{mconj}

Implicitly this conjecture anticipates a compatibility between the constants $a(X,L), b(X,L)$ and the constants $a(Y,L), b(Y,L)$ for subvarieties $Y$ of $X$: any subvariety $Y$ with $Y \cap X^\circ \neq \emptyset$ must satisfy
\begin{equation}
\label{eqn:fail}
(a(Y,L),b(Y,L)) \leq (a(X,L),b(X,L))
\end{equation}
in the lexicographic order.  
In particular, to construct a suitable open set $X^{\circ}$ we have to  
remove $L$-accumulating subvarieties of $X$, i.e.,  
subvarieties $Y\subset X$ which fail inequality \eqref{eqn:fail}.

There is a large body of work proving Manin's conjecture for various classes of varieties, 
most of which are either hypersurfaces of low degree,  
Del Pezzo surfaces, or equivariant compactifications of homogeneous spaces of 
linear algebraic groups (see \cite{T} for a survey and further references). While there are still many 
classes of equivariant compactifications for which Manin's conjecture is open, e.g., 
compactifications of solvable groups, or of extensions of semisimple groups by unipotent groups, we have 
the following geometric result (see Section~\ref{sect:examples}):

\begin{theo}
\label{thm:first} 
Let $X$ be a generalized flag variety
and $L$ an ample line bundle on $X$.  
Then $a(Y,L) \leq a(X,L)$ for every subvariety $Y$.
\end{theo}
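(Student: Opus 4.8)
The plan is to combine two classical positivity facts about $X=G/P$ with one short general lemma. The first fact is that the effective cone $\Lambda_{\mathrm{eff}}(X)$ is the simplicial cone spanned by the classes of the codimension-one Schubert varieties $D_1,\dots,D_r$, each of which is globally generated (its class is a dominant fundamental weight, so this is Borel--Weil); consequently $\Lambda_{\mathrm{eff}}(X)\subseteq\Nef(X)$, i.e.\ every pseudoeffective divisor class on $X$ is nef. The second fact is that the tangent bundle $T_X$ is globally generated, since homogeneity supplies a surjection $\mg\otimes\cO_X\twoheadrightarrow T_X$. Granting the first fact and writing $\tau:=a(X,L)$, the class $\tau[L]+[K_X]$ lies in $\Lambda_{\mathrm{eff}}(X)$ by the definition of $a(X,L)$, and is therefore nef. (Explicitly, if $L=\sum\ell_iD_i$ and $-K_X=\sum c_iD_i$ with all $\ell_i,c_i>0$, then $\tau=\max_i c_i/\ell_i$, but the value plays no role.)

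The technical core is the following lemma, which I would establish next: for any morphism $\mu\colon W\to X$ from a smooth projective variety $W$ that is generically finite onto its image, $K_W-\mu^*K_X$ is pseudoeffective. In characteristic zero $\mu$ is a generic immersion onto its image, so the differential $d\mu\colon T_W\to\mu^*T_X$ is injective as a map of coherent sheaves (it is injective at the generic point and $T_W$ is torsion free). Writing $\mathcal{Q}$ for its cokernel and taking first Chern classes in $0\to T_W\to\mu^*T_X\to\mathcal{Q}\to0$ gives $K_W-\mu^*K_X=c_1(\mathcal{Q})$. Now $\mathcal{Q}$ is a quotient of $\mu^*T_X$, which is globally generated because $T_X$ is globally generated; hence $\mathcal{Q}$ is globally generated, and the first Chern class of a globally generated coherent sheaf is pseudoeffective --- indeed effective, as one sees by wedging $\rk \mathcal{Q}$ general global sections. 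When $Y$ is smooth and $W=Y$ this is just the statement that $K_Y-K_X|_Y=\det N_{Y/X}$ is effective, $N_{Y/X}$ being a globally generated quotient of $T_X|_Y$.

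To conclude, let $Y\subseteq X$ be any subvariety, choose a resolution $\nu\colon W\to Y$, and let $\mu\colon W\to Y\hookrightarrow X$ be the composite, which is generically finite onto its image. By definition $a(Y,L)=a(W,\mu^*L)$, so it suffices to prove that $\tau\mu^*L+K_W$ is pseudoeffective; and indeed
\[
\tau\mu^*L+K_W \;=\; \mu^*\bigl(\tau[L]+[K_X]\bigr)\;+\;\bigl(K_W-\mu^*K_X\bigr),
\]
where the first term is the pullback of the nef class $\tau[L]+[K_X]$, hence nef, and the second is pseudoeffective by the lemma; a sum of pseudoeffective classes is pseudoeffective. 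Therefore $a(W,\mu^*L)\le\tau=a(X,L)$, which is the claim.

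I do not anticipate a real obstacle: the argument is a formal consequence of the two inputs above. The point requiring genuine care is that one must use that $\tau[L]+[K_X]$ is \emph{nef} on $X$, not merely effective --- the restriction of an effective class to a subvariety contained in its support need not be pseudoeffective, whereas nefness survives arbitrary pullback. The same proof in fact yields $a(Y,L)\le a(X,L)$ for every subvariety $Y$ of any smooth projective variety $X$ with globally generated tangent bundle whose pseudoeffective cone is contained in its nef cone; only the description of $\Lambda_{\mathrm{eff}}(G/P)$ and the global generation of $T_{G/P}$ are specific to flag varieties, and both are classical.
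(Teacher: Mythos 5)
Your proof is correct, but it is a genuinely different argument from the one in the paper. The paper proves this (as Theorem \ref{thm:first-again}) by translating $Y$ under the transitive $G$-action to produce a dominant family of subvarieties with constant $a$-value, and then invoking Proposition \ref{prop:verygeneralbalanced}, whose proof runs through the Chow/family machinery and a theorem of Peternell on nef curve classes of fibers with trivial normal bundle. You instead isolate two concrete positivity properties of $G/P$ --- that $\Lambda_{\mathrm{eff}}(X)=\Nef(X)$ (both being the cone on the Schubert divisor classes) and that $T_X$ is globally generated via $\mg\otimes\cO_X\twoheadrightarrow T_X$ --- and deduce the result from the decomposition $K_W+\tau\mu^*L=\mu^*(K_X+\tau L)+(K_W-\mu^*K_X)$, the first summand being nef by pullback and the second pseudoeffective because the cokernel of $T_W\hookrightarrow\mu^*T_X$ is a globally generated sheaf with effective $c_1$. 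Both arguments are ultimately powered by homogeneity, but they consume it differently: the paper's route needs only transitivity of the action and is an instance of a general statement about dominant families that it reuses elsewhere; yours is self-contained, avoids the family machinery entirely, and makes the mechanism transparent --- in particular it shows the conclusion holds for any smooth projective $X$ with globally generated tangent bundle whose pseudoeffective cone lies in its nef cone. Your closing caveat is exactly the right one: mere effectivity of $a(X,L)L+K_X$ would not survive restriction to a subvariety in its support, and the nefness you use is the same positivity that, in the paper's version, enters through the pushforward of nef curve classes.
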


However, Manin's Conjecture is known to fail for certain Fano hypersurfaces 
in products of projective spaces \cite{BT-cubic}. 
In fact, all known failures of Manin's Conjecture are explained by the geometric incompatibility discussed 
above, i.e., the possibility that $L$-accumulating subvarieties are Zariski dense 
(see Examples \ref{exam:maninconjfail} and \ref{exam:tamagawaconstantfail}).

In this paper, we give a systematic analysis of the geometric invariants $a$ and $b$, 
building on \cite{BT} and \cite{balanced}, and apply it to Fano threefolds.  
We will mostly work over an algebraically closed field of characteristic zero, to focus on the underlying geometry.  
Our first general result is:

\begin{theo} 
\label{theo:weakmanincompatibility}
Let $X$ be a smooth projective variety and $L$ an ample line bundle on $X$.  
There is a countable union $V$ of proper closed subsets of $X$ such that every subvariety $Y\subset X$ satisfying
\begin{equation*}
a(Y,L) > a(X,L)
\end{equation*}
is contained in $V$.
\end{theo}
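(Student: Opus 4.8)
\emph{Strategy.} The plan is to organize all subvarieties of $X$ by the irreducible components of the Chow variety, to isolate the (weak) monotonicity of $a(\,\cdot\,,L)$ along covering families as the single substantive input, and then to conclude by induction on the dimension of the parameter space. First recall that for an arbitrary closed subvariety $Y\subseteq X$ one sets $a(Y,L)=a(\widetilde Y,\nu^*L)$ for any resolution $\nu\colon\widetilde Y\to Y$, independently of $\nu$. The Chow variety of $X$ has at most countably many irreducible components $W_1,W_2,\dots$; for each $i$ let $q_i\colon\mathcal U_i\to W_i$ and $p_i\colon\mathcal U_i\to X$ be the projections of the universal family — arranged, after a standard spreading-out argument, so that the general fibre of $q_i$ is a resolution of the corresponding subvariety of $X$ — so that every proper subvariety $Y\subsetneq X$ arises as a fibre $U_{i,w}:=q_i^{-1}(w)$ for suitable $i$ and $w\in W_i$. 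Since $a(X,L)>a(X,L)$ never holds, no $Y$ with $a(Y,L)>a(X,L)$ equals $X$; thus it suffices to produce, for each $i$, a countable union $V_i$ of proper closed subsets of $X$ containing every $U_{i,w}$ with $a(U_{i,w},L)>a(X,L)$, and to take $V:=\bigcup_i V_i$.

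The essential input is the \emph{covering lemma}: if $\rho\colon\mathcal Z\to B$ is an irreducible family of subvarieties of $X$ whose evaluation map $\mathcal Z\to X$ is dominant, then a general member $Z_b$ satisfies $a(Z_b,L)\le a(X,L)$ (see \cite{BT}). The mechanism: for a general $b$, writing $\iota_b\colon\widetilde Z_b\to X$ for a resolution, adjunction gives $[K_{\widetilde Z_b}]=\iota_b^*[K_X]+[D_b]$ with $[D_b]$ pseudo-effective, since a member of a covering family deforms so as to sweep out $X$, which forces $\det N_{Z_b/X}$ to be pseudo-effective; restricting the pseudo-effective class $a(X,L)[L]+[K_X]$ to a general $Z_b$ — the restriction of a pseudo-effective class to a general member of a covering family is again pseudo-effective — and adding $[D_b]$ exhibits $a(X,L)\,\iota_b^*[L]+[K_{\widetilde Z_b}]$ in $\Lambda_{\eff}(\widetilde Z_b)$, i.e. $a(Z_b,L)\le a(X,L)$. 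I will also use that $b\mapsto a(Z_b,L)$ is constant outside a countable union of proper closed subsets of $B$, which follows from a spreading-out argument applied to the effective cone together with the cone-theoretic description of $a$.

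Next I would prove, by induction on $\dim B$, the following statement: for every irreducible family $\rho\colon\mathcal Z\to B$ of subvarieties of $X$, the union $\bigcup\{Z_b:a(Z_b,L)>a(X,L)\}$ is contained in a countable union of proper closed subsets of $X$. If $\mathcal Z\to X$ is not dominant, its image is a single proper closed subset of $X$ containing every $Z_b$, so we are done. If $\mathcal Z\to X$ is dominant, then by the covering lemma the value $c$ that $a(Z_b,L)$ takes outside a countable union of proper closed subsets of $B$ satisfies $c\le a(X,L)$; hence $\{b\in B:a(Z_b,L)>a(X,L)\}$ is itself contained in a countable union of proper closed subsets of $B$, each of dimension $<\dim B$. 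Applying the inductive hypothesis to the restricted family over (a normalization of each component of) each of these, and taking the union in $X$ of the resulting sets, completes the induction. Applying this statement to each $W_i$ gives the desired $V_i$, and hence $V$.

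\textbf{The main obstacle} is the covering lemma itself — establishing $a(Z_b,L)\le a(X,L)$ for covering families, in particular the pseudo-effectivity of $\det N_{Z_b/X}$ and the behaviour of pseudo-effective classes under restriction to a moving member — together with the constructibility of $a(\,\cdot\,,L)$ in families, which is what makes ``general member'' meaningful and keeps the bad locus in the parameter space of strictly smaller dimension. Granting these inputs (from \cite{BT} and the description of $a$ via $\Lambda_{\eff}$), the remainder of the argument is formal. A secondary, purely technical point is the uniform treatment of singular members of the universal families, handled by spreading out a simultaneous resolution.
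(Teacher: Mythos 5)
Your proposal is correct and shares the paper's overall architecture (countably many components of $\Chow(X)$, a ``covering lemma'' for dominant families, then induction/Noetherian descent in the base), but the proof of the covering lemma itself is genuinely different. The paper (Proposition \ref{prop:verygeneralbalanced}) argues dually: it resolves the universal family, cuts the base down so the evaluation map is generically finite, writes $K_{\mathcal{U}'_{T}} \equiv s'^{*}K_{X}+E$ with $E$ effective, and then invokes Peternell's theorem that nef curve classes on a fiber with trivial normal bundle push forward to nef classes, so that pseudo-effectivity of $K_{\widetilde Z_b}+a(X,L)\iota_b^*L$ is checked by pairing against nef curve classes. You instead argue directly on divisors: $\iota_b^*\bigl(K_X+a(X,L)L\bigr)$ is pseudo-effective because a (very) general member of a covering family is not contained in $\mathbf{B}_{-}\bigl(K_X+a(X,L)L\bigr)$, and the discrepancy $K_{\widetilde Z_b}-\iota_b^*K_X$ is pseudo-effective via generic global generation of the normal bundle. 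Two caveats, neither fatal for this theorem. First, your restriction step only works for \emph{very general} $b$ (you must avoid the countably many loci $\mathbf{B}(D+\epsilon_iA)$, i.e.\ $\mathbf{B}_-$), whereas the paper's nef-curve argument gives the conclusion for a \emph{general} member; this costs you nothing here since the target is a countable union, but your method as written would not recover the single-closed-subset-per-family statement of Theorem \ref{theo:familybalanced}. Second, ``$\det N_{Z_b/X}$ pseudo-effective'' only literally makes sense for smooth members; for singular general members you must run the argument on the resolved family, where the correct statement is effectivity of the ramification divisor of the generically finite map from the cut-down family --- i.e.\ exactly the paper's $K_{\mathcal{U}'_{T}} \equiv s'^{*}K_{X}+E$ step, restricted to a fiber with trivial normal bundle. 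You flag this under ``simultaneous resolution,'' and with that repair the argument is complete. What your route buys is independence from Peternell's theorem and from the nef-curve duality; what the paper's route buys is the stronger ``general member'' conclusion needed for the closed exceptional set in each fixed family.
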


Although the countability of $V$ is necessary in general, Manin's conjecture predicts a stronger statement: for uniruled varieties $X$, the subset $V$ in Theorem \ref{theo:weakmanincompatibility} should be Zariski closed.  
We prove this, assuming 
boundedness of terminal $\mathbb{Q}$-Fano varieties of Picard number $1$  as predicted by the Borisov-Alexeev-Borisov Conjecture (WBAB$_{n-1}$, see Section \ref{sect:rigidity} for a precise formulation): 

\begin{theo} \label{theo:manincompatibility}
Assume Conjecture WBAB$_{n-1}$.  Let $X$ be a smooth uniruled projective variety of dimension $n$ and 
$L$ an ample divisor on $X$.  Then there exists a proper closed subset $V \subset X$ 
such that every subvariety 
$Y\subset X$ satisfying $a(Y,L) > a(X,L)$ is contained in $V$.

In particular, the statement holds for $X$ of dimension at most $4$.
\end{theo}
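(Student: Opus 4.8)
The plan is to bootstrap Theorem~\ref{theo:weakmanincompatibility}, which already produces a countable union $V=\bigcup_{i\ge 1}V_i$ of proper closed subsets of $X$ containing every subvariety $Y$ with $a(Y,L)>a(X,L)$; what must be added is that only finitely many $V_i$ are needed and that the resulting set is not all of $X$. Concretely, I would prove two statements. \emph{Boundedness:} the subvarieties $Y\subset X$ with $a(Y,L)>a(X,L)$ lie in finitely many components of the Chow variety of $X$; equivalently there is a constant $C=C(X,L)$ with $L^{\dim Y}\cdot Y\le C$ for all of them. \emph{Non-dominance:} no positive-dimensional algebraic family of such subvarieties dominates $X$. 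Granting both, let $\cM$ be the finite-type parameter space furnished by boundedness and $\cZ\subset\cM\times X$ the universal subvariety; then $V:=\mathrm{image}(\cZ\to X)$ is closed, contains every $Y$ with $a(Y,L)>a(X,L)$, and is a proper subset of $X$ by non-dominance. The last sentence of the theorem is then immediate: for $n\le 4$ one has $\dim Y\le n-1\le 3$, and boundedness of terminal $\mathbb{Q}$-Fano varieties of Picard number one is known unconditionally in dimensions $\le 3$, so Conjecture WBAB$_{n-1}$ is not needed there.

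Non-dominance is essentially formal and I would do it first. Suppose a family $\{Y_t\}_{t\in W}$ with $\dim Y_t=k\ge 1$ and $a(Y_t,L)>a(X,L)$ covered $X$. Form the incidence variety in $W\times X$ and resolve it, obtaining a smooth $\cY$ with a morphism $p\colon\cY\to W$ whose general fibre $F_t$ is a resolution of $Y_t$, and a dominant morphism $q\colon\cY\to X$; after replacing $W$ by a general complete intersection we may assume $q$ is also generically finite, so $K_\cY=q^*K_X+E$ with $E$ effective. Put $N=q^*L$, which is nef and big. Then $a(X,L)\,N+K_\cY=q^*\!\bigl(a(X,L)L+K_X\bigr)+E$ is pseudoeffective, hence $a(\cY,N)\le a(X,L)$. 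On the other hand, for very general $t$ the fibre $F_t$ avoids the supports of a fixed countable family of effective divisors approximating $a(\cY,N)N+K_\cY$ and has trivial normal bundle in $\cY$; restricting this pseudoeffective class to $F_t$ and using adjunction gives $a(F_t,N|_{F_t})\le a(\cY,N)$. Since $N|_{F_t}$ is pulled back from $L$ and $F_t$ resolves $Y_t$, the left-hand side equals $a(Y_t,L)$, so $a(Y_t,L)\le a(X,L)$, a contradiction.

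For boundedness, fix $1\le k\le n-1$ and a subvariety $Y\subset X$ with $\dim Y=k$ and $a(Y,L)>a(X,L)=:a$. Take a resolution $\phi\colon\widetilde Y\to Y$ and set $M=\phi^*L$, which is nef and big with $a(\widetilde Y,M)=a(Y,L)>a$, so $K_{\widetilde Y}+aM$ is not pseudoeffective. Running a $(K_{\widetilde Y}+aM)$-minimal model program with scaling (available in the dimensions at issue) terminates with a birational contraction to a terminal $\mathbb{Q}$-factorial variety $Y'$ carrying a Mori fibre space $g\colon Y'\to B$; along this process the $a$-invariant and the relevant intersection numbers are controlled by the framework of Section~\ref{sect:rigidity}. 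A general fibre $G$ has $\rho(G)=1$ and $-(K_G+aM'|_G)$ ample, hence $-K_G$ ample, so $G$ is a terminal $\mathbb{Q}$-Fano variety of Picard number one of dimension $\le k$; by Conjecture WBAB in dimension $\le k\le n-1$ the possible $G$ form a bounded family, and writing $M'|_G\equiv\nu(-K_G)$ the negativity $(K_G+aM'|_G)\cdot C<0$ on the extremal curve forces $\nu<1/a$, so $(M'|_G)^{\dim G}$ is bounded in terms of $(-K_G)^{\dim G}$ and hence by $a$ and the WBAB family. The remaining task — and the main obstacle — is to upgrade this bound on the fibres to the desired bound $L^{k}\cdot Y$: one must relate the $L$-degree of $Y$ to the Mori fibration, controlling the base $B$ as well and tracking degrees through the birational modifications, which is organized by induction on dimension together with the rigidity analysis of Section~\ref{sect:rigidity}. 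It is at this step that the full force of Conjecture WBAB$_{n-1}$, rather than merely boundedness of the individual Fano fibres, is used.
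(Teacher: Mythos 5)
Your proposal has a genuine gap, and it is located exactly where you flag ``the main obstacle'': the Boundedness claim you want to prove is false. It is \emph{not} true that the subvarieties $Y\subset X$ with $a(Y,L)>a(X,L)$ have uniformly bounded $L$-degree, so they need not lie in finitely many components of $\Chow(X)$. The paper gives an explicit counterexample right after its proof: let $\pi:X\to W$ be the blow-up of a smooth fourfold along a smooth surface $T$, with exceptional divisor $E$ and $L=\pi^*A-\epsilon E$ for small $\epsilon$. Every surface $Y\subset E$ ruled by the fibers of $\pi|_E$ satisfies $a(Y,L)\ge a(C,L)=2/\epsilon>a(X,L)$, and such surfaces have unbounded $L$-degree (take preimages of curves of arbitrarily large degree in $T$). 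Consequently your plan of producing a single finite-type parameter space $\cM$ for all bad subvarieties and taking the image of the universal family cannot work as stated, and the step you describe as ``upgrading the bound on the fibres to a bound on $L^k\cdot Y$'' cannot be completed: the base $B$ of the Mori fibre space of $Y$ is genuinely uncontrolled.

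The paper circumvents this by an increasing induction on $\dim Y$ built on a dichotomy (Proposition \ref{prop:adichotomy} and Lemma \ref{lemm:babimpliesboundedness}): for a bad $Y$ of dimension $i+1$, \emph{either} $Y$ is covered by proper subvarieties $Z$ with $a(Z,L)=a(Y,L)>a(X,L)$ --- in which case $Y$ is contained in the closed set $V_i$ already constructed for dimension $\le i$, and no degree bound on $Y$ is needed --- \emph{or} $Y$ is itself birational to a terminal $\mathbb{Q}$-factorial Fano of Picard number one (the Mori fibration has zero-dimensional base), in which case WBAB applied directly to that Fano gives $L^{i+1}\cdot Y\le \delta(i+1)/a(X,L)^{i+1}$, and only these $Y$ are swept into a bounded Chow family and handled by the dominant-family argument. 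The unbounded-degree examples above are precisely those falling into the first branch. Your non-dominance argument (a rendition of Proposition \ref{prop:verygeneralbalanced} plus Noetherian induction as in Theorem \ref{theo:familybalanced}) is sound and is also what the paper uses, but to repair the proof you must replace the global boundedness claim by this dimension-by-dimension dichotomy.
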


These theorems demonstrate that the geometric behavior of the constant $a$ is 
compatible with Manin's Conjecture.  In fact, there are no known counterexamples to the weaker conjecture
\begin{equation*}
N(X^{\circ}(F), \mathcal L, B) \ll B^{a(X,L)+\epsilon}, \quad \epsilon>0, \quad B\ra \infty. 
\end{equation*}

In contrast, the properties of the constant $b$ are more subtle: 
it often increases on subvarieties. This 
geometric behavior can have number-theoretic consequences; 
see \cite{BT-cubic} as well as Section~\ref{sect:applications} 
for examples. These considerations motivated the introduction of 
{\em balanced line bundles} \cite{balanced}:

\begin{defi}
Let $X$ be a smooth uniruled projective variety and $L$ an ample line bundle on $X$.  
We say that the pair $(X,L)$ (or just $L$ if $X$ is understood) is weakly balanced 
if there is a proper closed subset $V \subset X$ such that for every 
$Y \not \subset V$ we have 
$$
a(Y,L) \leq a(X,L)\text{ and}  \text{ if } a(Y, L) = a(X,L), \text{ then } b(Y,L) \leq b(X,L).
$$
We say that $(X,L)$ is balanced if there is a proper closed subset 
$V$ such that for $Y \not \subset V$ we have the stronger condition
\begin{itemize}
\item $a(Y,L) < a(X,L)$, or
\item $a(Y,L) = a(X,L)$ and $b(Y,L) < b(X,L)$.
\end{itemize}
In either case, we call the subset $V$ exceptional, or accumulating.  
\end{defi}

In Section~\ref{sect:balanced}, 
we will also introduce notions of weakly $a$-balanced 
and $a$-balanced line bundles (see Definition~\ref{defi: a-balanced})
and discuss their properties.

An effective $\bQ$-divisor $D$ on $X$ is called rigid
if $H^0(X,mD)=1$ for all sufficiently divisible $m\in \bN$. 
The following result is closely related to conjectures of \cite{BT}: 

\begin{theo}
\label{thm:rigid}
Let $X$ be a smooth uniruled projective variety and $L$ an ample divisor on $X$.  
If $(X,L)$ is balanced, then $K_{X} + a(X,L)L$ is numerically equivalent to a 
rigid effective divisor. 
In particular, $X$ is birational to a log-Fano variety and $K_{X} + a(X,L)L$ 
is numerically equivalent to an exceptional divisor for a birational map.
\end{theo}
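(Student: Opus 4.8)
The plan is to combine the Minimal Model Program for klt pairs with big boundary with an analysis of the invariants $a$ and $b$ along the Iitaka fibration of $K_X+a(X,L)L$. Write $a=a(X,L)$ and $D=K_X+aL$. Since $X$ is uniruled, $K_X\notin\Lambda_{\mathrm{eff}}(X)$, so $a>0$ and, by minimality of $a$, the class $D$ lies on the boundary of $\Lambda_{\mathrm{eff}}(X)$; in particular $D$ is not big. First I would record the standard consequences of the MMP (Birkar--Cascini--Hacon--McKernan): choosing a klt pair $(X,\Delta)$ with $\Delta\sim_{\bQ}aL$ (so $\Delta$ is big), the pseudoeffective class $D=K_X+\Delta$ admits a good minimal model, whence $a\in\bQ$, $D$ is $\bQ$-linearly equivalent to an effective $\bQ$-divisor, and $\kappa(D)=\nu(D)=:k$ with $0\le k\le\dim X-1$. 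If $k=0$, then by Nakayama's theory $D$ is numerically equivalent to its negative part $N_\sigma(D)$, an effective $\bQ$-divisor of Iitaka dimension $0$, i.e.\ rigid; this is the first assertion. So everything reduces to showing that $k\ge1$ is incompatible with $(X,L)$ being balanced.

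Suppose $k=\kappa(D)\ge1$. Let $\beta\colon X'\to X$ be a resolution on which the Iitaka fibration of $D':=K_{X'}+a\beta^*L$ becomes a morphism $\phi\colon X'\to Z$ onto a normal projective variety with $\dim Z=k$, and put $M:=\beta^*L$ (big and nef). Since $D'=\beta^*D+E$ with $E\ge0$ $\beta$-exceptional and the components of $E$ rigid, spanning extremal rays of $\Lambda_{\mathrm{eff}}(X')$, one has $a(X',M)=a$, $b(X',M)=b(X,L)$, $D'$ on the boundary of $\Lambda_{\mathrm{eff}}(X')$, and $\kappa(D')=k$. Let $F$ be a general fibre of $\phi$: it is smooth of dimension $\dim X-k\ge1$, and the subvarieties $\beta(F)\subset X$ sweep out $X$, so no proper closed $V\subset X$ contains all of them. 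Adjunction gives $K_F=K_{X'}|_F$, hence $K_F+aM|_F=D'|_F$ with $\kappa(D'|_F)=0$ (general Iitaka fibre). Since $M|_F$ is big on $F$ --- indeed $(M|_F)^{\dim F}=M^{\dim F}\cdot[F]=L^{\dim F}\cdot[\beta(F)]>0$ --- while an effective divisor of Iitaka dimension $0$ is not big, we get $D'|_F\in\Lambda_{\mathrm{eff}}(F)$, so $a(F,M|_F)\le a$; and $D'|_F-(a-a')M|_F\notin\Lambda_{\mathrm{eff}}(F)$ for $a'<a$, so $a(F,M|_F)\ge a$. Thus $a(F,M|_F)=a$, and by birational invariance $a(\beta(F),L)=a(F,M|_F)=a(X,L)$ and $b(\beta(F),L)=b(F,M|_F)$.

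It then remains to prove $b(F,M|_F)\ge b(X',M)=b(X,L)$: granting this, the general $\beta(F)$ is a subvariety of $X$ with $\beta(F)\not\subset V$, $a(\beta(F),L)=a(X,L)$ and $b(\beta(F),L)\ge b(X,L)$, contradicting the definition of a balanced pair --- which completes the proof. I expect this inequality to be the main obstacle. Writing $b(X',M)=\rho(X')-\dim\langle\mathcal F\rangle$ and $b(F,M|_F)=\rho(F)-\dim\langle\mathcal F_F\rangle$, where $\mathcal F$ and $\mathcal F_F$ are the minimal supported faces of $\Lambda_{\mathrm{eff}}(X')$ and $\Lambda_{\mathrm{eff}}(F)$ containing $D'$ and $D'|_F$, the claim becomes $\rho(X')-\dim\langle\mathcal F\rangle\le\rho(F)-\dim\langle\mathcal F_F\rangle$. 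From a presentation $mD'\sim\phi^*A_m+\Phi_m$ ($m$ sufficiently divisible, $A_m$ ample on $Z$, $\Phi_m\ge0$ the fixed part), extremality of faces shows that $\mathcal F$ contains $\phi^*\Lambda_{\mathrm{eff}}(Z)$ and every component of $\Phi_m$; but the bound $\dim\langle\mathcal F\rangle\ge\rho(Z)$ coming from $\phi^*N^1(Z)\subseteq\langle\mathcal F\rangle$ alone is far too weak, since $\rho(X')$ need not equal $\rho(Z)+\rho(F)$. The crux is to show that the components of $\Phi_m$ (which lie in the kernel of the restriction $N^1(X')\to N^1(F)$ only partially, unlike $\phi^*N^1(Z)$), together with $\phi^*\Lambda_{\mathrm{eff}}(Z)$, force $\langle\mathcal F\rangle$ to be large enough to absorb the discrepancy $\rho(X')-\rho(F)$ while still surjecting, after restriction, onto a subspace spanning $\langle\mathcal F_F\rangle$; making this precise requires an analysis of $\Lambda_{\mathrm{eff}}(X')$ relative to $Z$ and a careful treatment of multiple fibres of $\phi$.

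Finally, the ``in particular'' clause follows from the first assertion. Since $D$ is numerically equivalent to a rigid effective divisor, $\nu(D)=0$. Running the $(K_X+\Delta)$-MMP for a klt $\Delta\sim_{\bQ}aL$ (a klt pair with big boundary) yields a birational map $\pi\colon X\dashrightarrow Y$ with $K_Y+\Delta_Y$ nef; as $\nu(K_Y+\Delta_Y)=\nu(D)=0$, this class is numerically trivial, so $-K_Y\equiv\Delta_Y$, which is big. Thus $(Y,\Delta_Y)$ is a klt log Calabi--Yau pair with big boundary, hence (perturbing $\Delta_Y$ using its bigness) $Y$ is a log-Fano variety, and $X$ is birational to it. Moreover the divisors contracted by $\pi$ are exactly the components of $N_\sigma(K_X+\Delta)=N_\sigma(D)$, which is numerically equivalent to $D$ because $\nu(D)=0$; hence $D=K_X+a(X,L)L$ is numerically equivalent to an exceptional divisor for the birational map $\pi$.
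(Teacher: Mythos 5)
Your overall strategy is the same as the paper's: assume $K_X+a(X,L)L$ has Iitaka dimension $\ge 1$, pass to the Iitaka/canonical fibration, and derive a contradiction with the balanced property by showing that a general fiber $F$ satisfies $a(F,L)=a(X,L)$ and $b(F,L)\ge b(X,L)$. Your treatment of the $a$-constant on the fibers, of the case $\kappa=0$ (rigidity), and of the ``in particular'' clause is sound. However, the proof as written has a genuine gap exactly where you say you expect one: the inequality $b(F,L|_F)\ge b(X,L)$ is never established. You correctly observe that the naive bound $\dim\langle\mathcal F\rangle\ge\rho(Z)$ is too weak because $\rho(X')$ need not equal $\rho(Z)+\rho(F)$, but you stop there, so the argument does not close.

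The missing input is the rational connectedness of the fibers of the canonical fibration, and what it buys you. The paper first replaces $X$ by a minimal model $X'$ of $(X,a(X,L)L)$, where $K_{X'}+a(X,L)\pi_*L$ is semiample and defines $\pi:X'\to Z$; there one has the clean formulas $b(X,L)=\rk\NS(X')-\rk\NS_{\pi}(X')$ (Proposition \ref{prop:binterpretationforsemiample} / Corollary \ref{coro:binterpretation}) and, for a general fiber, $b(Y,L)\ge\rk\NS(Y')$ (Lemma \ref{lemm:binterpretationiitaka0}, since $K_{Y}+a(X,L)L|_Y$ becomes a $\pi|_Y$-exceptional divisor). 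The whole point is then that the restriction map $\NS(X')\to\NS(Y')$ has kernel exactly $\NS_{\pi}(X')$, the span of the vertical divisors --- this is precisely what absorbs the discrepancy $\rho(X')-\rho(F)$ that worries you. That identification of the kernel is where the real work lies: a general fiber $Y'$ satisfies $K_{Y'}+a(X,L)\pi_*L|_{Y'}\equiv 0$ with $\pi_*L|_{Y'}$ big, so after perturbation it is log-Fano, hence rationally connected by \cite{HM07}; a divisor class numerically trivial on one general fiber is then numerically trivial on all of them by the injectivity of the N\'eron--Severi specialization map \cite[Proposition 3.6]{poonen}, is actually $\mathbb{Q}$-linearly trivial on the fibers by rational connectedness, and descends to $Z$ modulo vertical divisors by Grauert's theorem. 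Without this chain of ideas (or an equivalent), your face-dimension count cannot be completed, so the proposal as it stands does not prove the theorem.
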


In the second half of the paper we turn to examples.  
We perform an in-depth analysis of the balanced property of $-K_X$ for (most) 
primitive Fano threefolds, following the classification theory of
\cite{FanoI}, \cite{FanoII}, \cite{iskov79}, and \cite{mori-mukai}.
Our main results are Theorems~\ref{thm:fano-rank1} and \ref{thm:fano-rank2}, determining the balanced properties of $-K_X$ 
for primitive Fano threefolds of Picard ranks 1 and 2.

\

Here is the roadmap of the paper: In Section~\ref{sect:preliminaries} we recall basic notions of the Minimal Model Program which will be relevant for the analysis of balanced line bundles, introduced in Section~\ref{sect:balanced}. 
In Section~\ref{sect:rigidity}, we study properties of exceptional sets. In Section~\ref{sect:examples}, we present a first series of examples, illustrating the general features of the theory of balanced line bundles discussed in previous sections. In Sections~\ref{sect:fano-threefolds} and \ref{subsec: primitive rank two}, we turn to Fano threefolds and determine in which cases the 
anticanonical line bundles are balanced. In Section~\ref{sect:applications}, 
we work over number fields and discuss several arithmetic applications of 
the theory of balanced line bundles in this context.

\

\noindent
{\bf Acknowledgments.} 
The authors would like to thank Brendan Hassett, Damiano Testa, and Anthony V\'arilly-Alvarado for useful suggestions and Mihai Fulger for providing an argument for Lemma \ref{lemm:chowboundedness}.
The third author was partially supported by NSF grant 1160859.

\section{Preliminaries}
\label{sect:preliminaries}

In Sections~\ref{sect:preliminaries}-\ref{subsec: primitive rank two} 
we work over an algebraically closed field of characteristic zero.  
A variety is an irreducible reduced scheme of finite type over this field.

\subsection{Basic definitions}
\label{sect:basic}

Let $X$ be a smooth projective variety and $\NS(X)$ its N\'eron-Severi group. 
We denote the corresponding N\'eron-Severi space by 
$\mathrm{NS}(X, \bR)=\NS(X)\otimes \bR$ and the cone of pseudo-effective divisors, i.e., 
the closure of the cone of effective $\bR$-divisors in $\mathrm{NS}(X, \bR)$, by  
$\Lambda_{\mathrm{eff}}(X)$.   The interior of $\Lambda_{\mathrm{eff}}(X)$ is known as the big cone and is denoted by $\mathrm{Big}^{1}(X)$.
We identify divisors and line bundles with their classes in $\NS(X,\bR)$, when convenient. 
We write $K_X$ for the canonical class of $X$ 
and $\mathrm{Bs}(L)$ for the base locus of a line bundle $L$.

\begin{defi}
Let $X$ be a normal projective variety of dimension $n$ and $L$ a Cartier divisor on $X$.  
The volume of $L$ is 
\begin{equation*}
\vol(L) = \limsup_{m \to \infty} \frac{h^{0}(X,mL)}{m^{n}/n!}.
\end{equation*}
\end{defi}
We record the following facts about the volume (see \cite{LazI}):
\begin{itemize}
\item $\vol$ is homogeneous: $\vol(aL) = a^{n}\vol(L)$.  Thus $\vol$ can naturally be defined for $\mathbb{Q}$-divisors as well.
\item $\vol$ extends to a continuous function $\NS(X,\bR) \to \mathbb{R}$.
\item $\vol(L) > 0$ precisely when $L$ lies in the interior of $\Lambda_{\rm eff}(X)$.
\end{itemize}

\begin{defi}
Let $X$ be a normal projective variety and $L$ a pseudo-effective $\mathbb{Q}$-Cartier divisor on $X$.  The stable base locus of $L$ is
\begin{equation*}
\mathbf{B}(L) := \bigcap_{m \in \mathbb{Z}_{>0}, mL\textrm{ Cartier}} \mathrm{Bs}(mL).
\end{equation*}
We will often work with the following perturbed versions.  The augmented base locus is
\begin{equation*}
\mathbf{B}_{+}(L) := \bigcap_{A \textrm{ ample }\mathbb{Q}\textrm{-Cartier}} \mathbf{B}(L-A).
\end{equation*}
\cite[Proposition 1.5]{elmnp06} verifies that there is some ample $\mathbb{Q}$-Cartier divisor $A$ such that $\mathbf{B}_{+}(L) = \mathbf{B}(L-A)$.  In particular $\mathbf{B}_{+}(L)$ is a closed subset.  When $L$ is big, then $L|_{Y}$ is again big for any subvariety $Y \not \subset \mathbf{B}_{+}(L)$.

The diminished base locus is
\begin{equation*}
\mathbf{B}_{-}(L) := \bigcup_{A \textrm{ ample }\mathbb{Q}\textrm{-Cartier}} \mathbf{B}(L+A).
\end{equation*}
The results of \cite{BCHM} show that this set is closed for $L = K_{X} + \Delta + A$ when $(X,\Delta)$ is a normal $\mathbb{Q}$-factorial klt pair and $A$ is an ample divisor.
\end{defi}

\subsection{Families of subvarieties}

We describe families of subvarieties on a smooth projective variety $X$ 
using the Chow variety of \cite{kollarbook}.  
Fix an ample divisor $L$ on $X$.  \cite[I.3.10 Definition]{kollarbook} 
defines a family of $r$-dimensional algebraic cycles on $X$ of $L$-degree $d$ over a base $W$.  
One can think of a family as a cycle $U$ on $X \times W$ which is ``essentially flat'', 
although the precise definition is more involved.  
We use the notation $\pi: U \to W$ to denote a family of this kind.

\cite{kollarbook} defines a functor $Chow_{r,d}(W)$ parametrizing 
effective families over $W$.  By \cite[I.3.21 Theorem]{kollarbook}, for seminormal schemes
this functor is representable by a seminormal projective variety $\Chow_{r,d}(X)$.  
We let $\Chow(X)$ denote the disjoint union over all $r$ and $d$ of $\Chow_{r,d}(X)$.

By a family of subvarieties of $X$, we 
mean a family of effective cycles on $X$ 
whose general member is irreducible and reduced.

\subsection{Minimal model program}
\label{sect:mmpintro}

We frequently use the following result of \cite{BCHM}, 
sometimes without making an explicit reference.

\begin{theo}[\cite{BCHM}] 
\label{theo:runningmmp}
Let $X$ be a smooth projective variety and $L$ a big and nef $\mathbb{Q}$-divisor on $X$.  
Suppose that $K_{X} + L$ is pseudo-effective.  Then there is a birational contraction 
$\phi: X \dashrightarrow X'$ to a $\mathbb{Q}$-factorial terminal variety $X'$ such that 
$K_{X'} + \phi_{*}L$ is semiample.
\end{theo}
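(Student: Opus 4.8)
The plan is to deduce this from the Minimal Model Program with scaling of \cite{BCHM}. The first task is to replace $L$ by a $\mathbb{Q}$-linearly equivalent boundary divisor that turns $X$ into a terminal pair. Since $L$ is big, Kodaira's lemma provides a $\mathbb{Q}$-linear equivalence $L \sim_{\mathbb{Q}} A + E$ with $A$ an ample $\mathbb{Q}$-divisor and $E$ an effective $\mathbb{Q}$-divisor. As $X$ is smooth, I would fix a rational $\epsilon \in (0,1)$ small enough that $(X,\epsilon E)$ is terminal. Then $L - \epsilon E \sim_{\mathbb{Q}} (1-\epsilon)L + \epsilon A$ is a positive combination of a nef and an ample divisor, hence ample, so I can choose an effective $\mathbb{Q}$-divisor $H \sim_{\mathbb{Q}} L - \epsilon E$ all of whose coefficients are as small as we wish (for instance an average $(H_1 + \cdots + H_N)/N$ of general members of a sufficiently divisible very ample linear system). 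Setting $\Delta := \epsilon E + H$ then gives a pair $(X,\Delta)$ that is still terminal, with $\Delta \sim_{\mathbb{Q}} L$ big and $K_X + \Delta \sim_{\mathbb{Q}} K_X + L$ pseudo-effective.

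Next I would run the $(K_X + \Delta)$-MMP with scaling of an ample divisor. Since $(X,\Delta)$ is klt with big boundary and $K_X+\Delta$ is pseudo-effective, \cite{BCHM} guarantees that this program terminates, and — because $K_X + \Delta$ is pseudo-effective, so that the output is a minimal model rather than a Mori fiber space — it terminates with a birational contraction $\phi: X \dashrightarrow X'$ onto a $\mathbb{Q}$-factorial variety $X'$ with $K_{X'} + \phi_*\Delta$ nef. The MMP only raises discrepancies, so $(X',\phi_*\Delta)$ is again terminal; since $\phi_*\Delta \geq 0$, this forces $X'$ itself to be terminal. Moreover $\phi_*\Delta$ is still big, so $(X',\phi_*\Delta)$ is a klt pair with big boundary whose log canonical divisor is nef; by the basepoint-free theorem (again a consequence of \cite{BCHM}), $K_{X'} + \phi_*\Delta$ is semiample. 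Finally, $\Delta \sim_{\mathbb{Q}} L$ on $X$ gives $\phi_*\Delta \sim_{\mathbb{Q}} \phi_*L$ on $X'$, whence $K_{X'} + \phi_*L$ is semiample, as required.

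The point needing the most care is ensuring that the output $X'$ is terminal and not merely klt, while keeping $\phi$ an honest birational contraction out of $X$ (passing to a terminalization of a klt model would extract divisors and destroy the contraction property). This is exactly why $\Delta$ must be built with all coefficients small: had we instead taken $\Delta = \epsilon E + H$ with $H$ a single general hyperplane section, the coefficient $1$ along $H$ would only yield a klt pair on $X$ — plt, but not terminal in general — and terminality of $X'$ could be lost. The remaining ingredients — that a birational contraction carries the big $\mathbb{Q}$-divisor $\Delta$ to a big $\mathbb{Q}$-divisor, that it respects $\mathbb{Q}$-linear equivalence of pushforwards, and that the scaling MMP ends with a minimal model precisely because $K_X + L$ is pseudo-effective — are routine features of \cite{BCHM} and of the formalism of birational contractions.
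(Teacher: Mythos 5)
Your overall strategy---replace $L$ by a $\mathbb{Q}$-linearly equivalent effective boundary $\Delta$ with $(X,\Delta)$ terminal, run the $(K_X+\Delta)$-MMP via \cite{BCHM}, and get semiampleness from the basepoint-free theorem for klt pairs with big boundary---is the same as the paper's, and most steps are fine. The gap is the sentence ``The MMP only raises discrepancies, so $(X',\phi_*\Delta)$ is again terminal.'' The discrepancy inequality of \cite[Corollaries 3.42--3.43]{KM98} preserves terminality across flips, but not across divisorial contractions: if a step contracts a divisor $D$ that appears in $\Delta$ with coefficient $c>0$, then $D$ becomes exceptional over the target, and the inequality only gives $a(D,X_{i+1},\Delta_{i+1}) > a(D,X_i,\Delta_i)=-c$, which is perfectly compatible with the new discrepancy being negative or zero; the strict inequality carries no lower bound on the size of the jump. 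The paper flags exactly this point in the proof of Lemma~\ref{lemm:terminalsings} (``it is not true in general that a divisorial contraction of a terminal pair preserves the terminal property''). Your closing paragraph about keeping coefficients small only addresses terminality of $(X,\Delta)$ on $X$ itself; it does not address what happens when a component of $\mathrm{Supp}(E)$---which necessarily carries a fixed positive coefficient $\epsilon e_i$ in $\Delta$, since $E$ is what absorbs the non-ample part of $L$---gets contracted. And such components are the typical candidates for contraction: for $X$ the blow-up of $\mathbb{P}^3$ at a point with $L=\pi^*\mathcal{O}(4)$, any $E$ with $L-tE$ ample must contain the exceptional divisor $F$ with positive coefficient, and the $(K_X+L)$-MMP contracts precisely $F$.

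The paper closes this gap by a separate comparison (Lemma~\ref{lemm:terminalsings}): the same sequence of steps is also a run of the $(K_X+D)$-MMP for an ample $D\equiv L+\epsilon A$ chosen so that $(X,D)$ is terminal and $D$ contains no divisorial exceptional component of $\phi$. The contracted divisors then have coefficient zero in $D$, so the discrepancy inequality gives $a(D_i,X',\phi_*D)>0$, hence $(X',\phi_*D)$ and therefore $X'$ itself is terminal. You need an argument of this kind---some way of producing a terminal boundary in the numerical class of $L$ whose support misses the contracted divisors---before your implication ``$(X',\phi_*\Delta)$ terminal $\Rightarrow X'$ terminal'' has anything to apply to. Note that only terminality of the variety $X'$ (with zero boundary) is needed for the statement, so this detour is the natural fix; your choice of $H$ as an average of general very ample members can be made to avoid the contracted divisors, but the $\epsilon E$ part cannot, and that is where your argument breaks.
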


We will (somewhat abusively) say that $X'$ is a minimal model for 
$(X,L)$ and that the image $Z$ of the semiample fibration on $X'$ is a canonical model for $(X,L)$.

\begin{proof}
By Wilson's theorem (see \cite[Theorem 2.3.9]{LazI}), 
there exists an effective divisor $E$ such that for sufficiently small $t$, $L - tE$ is ample. Choose a $t\in \bQ$ small enough so that the pair $(X, tE)$ is terminal. It follows from \cite[Theorem 4.8]{Ko97} that there exists a $\bQ$-divisor $L'$ which is $\bQ$-linearly equivalent to $L-tE$ such that $(X, L'+tE)$ is terminal. By \cite{BCHM}, after a sequence of $K_{X}+L'+tE$-flips and divisorial contractions $\phi: X \dashrightarrow X'$ we obtain a klt pair $(X',\phi_*(L'+tE))$ such that $K_{X'} + \pi_{*}L$ is semiample.  The singularities of $X'$ are terminal by the following lemma.
\end{proof}

\begin{lemm} \label{lemm:terminalsings}
Let $X$ be a smooth variety and $L$ a big and nef $\mathbb{Q}$-divisor on $X$.  Suppose that $\phi: X \dashrightarrow X'$ is a finite sequence of $K_{X}+L$-flips and divisorial contractions.  Then there is some effective $\mathbb{Q}$-divisor $L' \equiv L$ such that $(X',\phi_{*}L')$ has terminal singularities.
\end{lemm}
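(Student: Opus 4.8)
The plan is to replace $L$ by a numerically equivalent $\mathbb{Q}$-divisor $L'$ for which $(X,L')$ is \emph{already} a terminal pair, then to observe that the given $\phi$ is simultaneously a (partial) run of the $K_X+L'$-minimal model program, and finally to invoke the standard fact that such a program preserves terminal singularities. To construct $L'$ I would argue exactly as in the first lines of the proof of Theorem~\ref{theo:runningmmp}: Wilson's theorem (\cite[Theorem 2.3.9]{LazI}) produces an effective divisor $E$ with $L-tE$ ample for all sufficiently small rational $t>0$; since $X$ is smooth, after shrinking $t$ the pair $(X,tE)$ is terminal; and then \cite[Theorem 4.8]{Ko97} yields an effective $\mathbb{Q}$-divisor $A'\sim_{\mathbb{Q}}L-tE$ with $(X,tE+A')$ terminal. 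Setting $L':=tE+A'$ gives an effective $\mathbb{Q}$-divisor with $L'\sim_{\mathbb{Q}}L$ (in particular $L'\equiv L$) and $(X,L')$ terminal.

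Next I would check that $\phi$ is equally a sequence of $K_X+L'$-flips and divisorial contractions. Since $L'\equiv L$, at each stage of $\phi$ an extremal ray is $(K+L')$-negative precisely when it is $(K+L)$-negative; numerical equivalence classes of divisors are preserved under each divisorial contraction and flip of $\phi$; and the flip of a prescribed flipping contraction is unique and determined by the numerical class $K_X+L\equiv K_X+L'$. Hence the same morphisms and flips realize a partial run of the $K_X+L'$-MMP starting from the terminal pair $(X,L')$, and since every step is $(K+L')$-negative, every intermediate pair is again terminal, by the usual monotonicity of discrepancies along the MMP (this is standard; see \cite{BCHM}). In particular $(X',\phi_{*}L')$ is terminal; and since $\phi_{*}L'$ is effective, $X'$ itself has terminal singularities, which is the form in which this lemma is applied in Theorem~\ref{theo:runningmmp}.

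The step that requires care — the one I would isolate rather than wave through — is the reinterpretation of $\phi$ as a $K_X+L'$-MMP: one must know that extremal contractions and flips are insensitive to replacing $L$ by a numerically equivalent divisor, since it is only after this reduction that terminality can be propagated; indeed $L$ itself need not form even a klt pair with $X$, so the principle that the MMP preserves terminality does not apply to $K_X+L$ directly. Everything else is a citation — Wilson's theorem, \cite{Ko97}, \cite{BCHM} — together with the elementary remark that a terminal pair with effective boundary has terminal underlying variety.
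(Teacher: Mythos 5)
There is a genuine gap, and it sits exactly at the step you decided to wave through rather than the one you flagged. The claim that a partial run of the $K_{X}+L'$-MMP starting from a terminal pair $(X,L')$ stays terminal at every stage is false for divisorial contractions. For flips there is no problem, since no divisor is contracted; but if $f$ contracts a divisor $D$, then $D$ is not exceptional over $X$, so terminality of $(X,L')$ imposes no condition on $a(D,X,L') = -\mathrm{coeff}_{D}(L')$, whereas $D$ \emph{is} exceptional over the target, so terminality of $(X',f_{*}L')$ requires $a(D,X',f_{*}L')>0$. Monotonicity of discrepancies only yields $a(D,X',f_{*}L') > -\mathrm{coeff}_{D}(L')$, which proves nothing if $L'$ contains $D$ with a non-small positive coefficient --- and the $L'$ you produce from Wilson's theorem together with \cite[Theorem 4.8]{Ko97} comes with no control whatsoever on its coefficients along the divisors contracted by $\phi$. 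The paper explicitly warns about this: ``it is not true in general that a divisorial contraction of a terminal pair preserves the terminal property.''

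The paper's proof supplies exactly the two ingredients your argument is missing. First, it shows that the underlying variety $X'$ is terminal: it perturbs to $K_{X}+L+\epsilon A$ (whose MMP steps agree with those of $\phi$ for small $\epsilon$) and chooses an ample $D \sim_{\mathbb{Q}} L+\epsilon A$ with $(X,D)$ terminal whose support avoids every divisorial exceptional component of $\phi$; since the contracted divisors then have coefficient zero in $D$, the monotonicity argument of \cite[Corollaries 3.42 and 3.43]{KM98} does give strictly positive discrepancies, so $(X',\phi_{*}D)$ and hence $X'$ is terminal. Second, it invokes the criterion that a divisorial contraction of a terminal pair yields a terminal pair provided the target is terminal and the contracted divisor appears with sufficiently small coefficient, and uses Wilson's theorem to choose $L'\equiv L$ with arbitrarily small coefficients along all contracted divisors. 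Your construction of $L'$ could likely be arranged to have this smallness (take $t$ small and the ample part general, avoiding the contracted divisors), but even then you still need the terminality of $X'$ itself, which requires the separate argument with the auxiliary ample divisor; the blanket appeal to ``the MMP preserves terminal pairs'' does not close the proof.
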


\begin{proof}
Fix an ample divisor $A$ on $X$.  For sufficiently small rational $\epsilon > 0$, each step of $\phi$ is also a step of the $K_{X} + L + \epsilon A$-MMP.  Since $L+\epsilon A$ is ample, it is $\mathbb{Q}$-linearly equivalent to an ample divisor $D$ such that $(X,D)$ is terminal and $D$ does not contain any divisorial exceptional component of $\phi$.  Since the steps of the MMP are numerical, $X'$ is again a sequence of $K_{X} + D$-flips and divisorial contractions.  By \cite[Corollaries 3.42 and 3.43]{KM98} the pair $(X',\phi_{*}D)$, and hence also $X'$, has terminal singularities.

Now, although it is not true in general that a divisorial contraction of a terminal pair $(\widetilde{X},\widetilde{L})$ preserves the terminal property, it is true if the target has terminal singularities and the coefficient of the contracted divisor is sufficiently small in $\widetilde{L}$.  By applying Wilson's Theorem (see \cite[Theorem 2.3.9]{LazI}) to $L$, we see that we can choose an $L' \equiv L$ so that every contracted divisor has arbitrarily small coefficient in $L'$, and our claim follows.
\end{proof}

\begin{prop} \label{prop:ratconnectedprop}
Let $X$ be a smooth projective variety and $L$ a big and nef $\mathbb{Q}$-divisor.  
Suppose that $K_{X} + a(X,L)L$ has Iitaka dimension $0$.  
Then $X$ is birational to a log-Fano variety and is rationally connected.
\end{prop}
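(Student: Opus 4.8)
The plan is to run the minimal model program on $K_X + aL$, where $a := a(X,L)$, and to read off a log-Fano model from the vanishing of its Iitaka dimension. (We use here that $a > 0$, which the statement tacitly assumes: if $a \le 0$ then $K_X$ is pseudo-effective, so $X$ is not uniruled, hence not rationally connected.) Since $aL$ is big and nef and $K_X + aL$ is pseudo-effective --- indeed $\bQ$-effective, as it has Iitaka dimension $0$ --- Theorem~\ref{theo:runningmmp} applies with $aL$ in place of $L$ and yields a birational contraction $\phi: X \dashrightarrow X'$ onto a $\bQ$-factorial terminal variety $X'$ with $K_{X'} + a\phi_*L$ semiample. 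Moreover, inspecting the proof of that theorem, $\phi$ is realized as a run of the $(K_X + \Delta)$-MMP for a suitable effective $\bQ$-divisor $\Delta \sim_{\bQ} aL$ with $(X,\Delta)$ klt, and $(X', \phi_*\Delta)$ is again a klt pair.

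The next step is to control Iitaka dimensions along $\phi$. Since flips and divisorial contractions of $(K_X+\Delta)$-negative divisors leave the section ring $\bigoplus_{m} H^0(X, m(K_X+\Delta))$ unchanged, we get $\kappa(X', K_{X'}+\phi_*\Delta) = \kappa(X, K_X+\Delta) = \kappa(X, K_X + aL) = 0$. A semiample $\bQ$-divisor of Iitaka dimension $0$ is $\bQ$-linearly trivial, so $K_{X'} + \phi_*\Delta \sim_{\bQ} 0$, i.e. $K_{X'} \sim_{\bQ} -\phi_*\Delta$. (This is already the assertion, in the spirit of Theorem~\ref{thm:rigid}, that $\phi$ contracts a divisor numerically equivalent to $K_X + aL$.)

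It remains to promote the ``log Calabi--Yau'' pair $(X', \phi_*\Delta)$ to a log-Fano one, and this is where the hypothesis ``$L$ big'' enters. As $\Delta \sim_{\bQ} aL$ is big and $\phi$ is a birational contraction, the pushforward $\phi_*\Delta$ is again big (one has $H^0(X, m\Delta) \subseteq H^0(X', m\phi_*\Delta)$ because $\phi^{-1}$ contracts no divisors). Write $\phi_*\Delta \sim_{\bQ} A_0 + G_0$ with $A_0$ ample and $G_0 \ge 0$ by Kodaira's lemma; then for small rational $\epsilon>0$ the effective divisor $G := (1-\epsilon)\phi_*\Delta + \epsilon G_0$ satisfies $\phi_*\Delta \sim_{\bQ} G + \epsilon A_0$, and $(X', G)$ is still klt since kltness is preserved under small perturbations of the boundary of the klt pair $(X', \phi_*\Delta)$. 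Hence $K_{X'} + G \sim_{\bQ} -\epsilon A_0$, so $-(K_{X'}+G)$ is ample and $(X', G)$ is a klt log-Fano variety birational to $X$. Since klt log-Fano varieties are rationally connected (Zhang; Hacon--McKernan) and rational connectedness is a birational invariant among smooth projective varieties, $X$ is rationally connected.

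The main obstacle is this last part: one must be sure $\phi_*\Delta$ is genuinely big --- this is exactly the role of ``$L$ big'' together with $a>0$, and without it one only reaches a Calabi--Yau-type model rather than a log-Fano one --- and that the perturbation producing $G$ keeps the pair klt. The remaining points (that Theorem~\ref{theo:runningmmp} may be applied to $aL$, which needs $a \in \bQ$ since $L$ is big and nef, and that the Iitaka dimension is truly unchanged along the MMP) I expect to be routine.
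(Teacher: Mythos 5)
Your proof is correct and follows essentially the same route as the paper: run the $(K_X + a(X,L)L)$-MMP to reach a klt pair $(X',\phi_*\Delta)$ with $K_{X'}+\phi_*\Delta$ trivial, then use bigness of $\phi_*\Delta$ to perturb the boundary into a klt log-Fano pair whose anti-log-canonical class is ample, and invoke Hacon--McKernan for rational connectedness. The additional checks you supply (preservation of the Iitaka dimension along the MMP and the tacit hypothesis $a(X,L)>0$) are sensible but do not change the argument.
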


\begin{proof}
Applying MMP as above, one finds a divisor $L'$ that is $\mathbb{Q}$-linearly equivalent to $a(X,L)L$ and a klt pair $(X',\pi_{*}L')$ with $K_{X'} + \pi_{*}L' \equiv 0$.  Note that $\pi_{*}L'$ is big, so we can write $\pi_{*}L' \equiv E + A$ for an effective $\mathbb{Q}$-divisor $E$ and an ample $\mathbb{Q}$-divisor $A$.  
Then for sufficiently small $\epsilon$, $(X',(1-\epsilon)\pi_{*}L' + \epsilon E)$ is still a klt pair.  But since
\begin{equation*}
K_{X'} + (1-\epsilon) \pi_{*}L' + \epsilon E \equiv -\epsilon A
\end{equation*}
 is anti-ample, we conclude that $X$ is birational to a log-Fano variety.  Then $X$ is rationally connected by \cite{HM07}.
\end{proof}

\subsection{Rational curves}
Rather than parametrizing rational curves as subvarieties of $X$, it will 
be more convenient to use the space $\Mor(\mathbb{P}^{1},X)$ as in \cite[I.1.9 Definition]{kollarbook}.
We use the following fundamental results concerning the deformation of rational curves.

\begin{lemm}[\cite{kollarbook}, II.5.14 Theorem]
\label{lemm:bendandbreak}
Let $X$ be a smooth projective variety of dimension $n$. 
Suppose that  $K_{X} + L$ is not pseudo-effective.  Then there is a rational curve $C$ on $X$ such that
\begin{itemize}
\item $C$ deforms to cover $X$,
\item $-K_{X} \cdot C \leq n+1$.
\end{itemize}
\end{lemm}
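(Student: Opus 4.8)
We refer to \cite[II.5.14 Theorem]{kollarbook} for a complete proof; let me indicate the argument one would run, which is the standard one in Mori theory. The plan is first to reformulate the hypothesis as the existence of a covering family of curves on which $-K_{X}$ is positive, and then to apply Mori's bend-and-break in positive characteristic in order to trade such a curve for a rational one of bounded anticanonical degree.

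First I would reformulate the hypothesis. Since $K_{X}+L$ is not pseudo-effective, the duality between $\Lambda_{\mathrm{eff}}(X)$ and the closure of the cone of movable curve classes produces a movable class $\alpha$ with $(K_{X}+L)\cdot\alpha<0$. As $L$ is ample and $\alpha$ is a nonzero limit of classes of curves sweeping out $X$, we have $L\cdot\alpha\ge 0$, hence $-K_{X}\cdot\alpha>0$; after approximating $\alpha$ we may assume it is the class of an irreducible curve $C$ passing through a general point $x\in X$ with $-K_{X}\cdot C>0$.

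Next I would pass to positive characteristic: spread $X$, $C$ and $L$ out over a finitely generated $\bZ$-algebra and specialize to a closed fibre over $\overline{\bF_{p}}$ for $p\gg 0$, which preserves dimensions, the relevant intersection numbers, and the ampleness of $L$. Composing the normalization $B\to X$ of $C$ with a high power of the Frobenius multiplies its anticanonical degree by a large power of $p$ without changing the genus $g$ of $B$, so by the estimate $\dim_{[f]}\Mor(B,X)\ge -K_{X}\cdot f_{*}B+n(1-g)$ the space of such maps through $x$ becomes positive-dimensional once the degree is large; Mori's bend-and-break lemmas then degenerate the family, producing a rational component of the image cycle through $x$. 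Iterating, any rational component through $x$ of anticanonical degree $\ge n+2$ admits, after fixing its values at two points, a family of maps that is still positive-dimensional modulo reparametrization, and hence breaks again into rational pieces one of which runs through $x$ with strictly smaller $(-K_{X})$-degree; after finitely many steps one reaches a rational curve $\ell$ through $x$ with $-K_{X}\cdot\ell\le n+1$ over $\overline{\bF_{p}}$. Since for each $p$ the rational curves of $(-K_{X})$-degree at most $n+1$ are parametrized by a scheme of finite type over the base, their existence for infinitely many $p$ forces such a curve on the generic fibre, i.e.\ in characteristic zero, and since $x$ was general these curves fit into a family dominating $X$.

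The step I expect to be the main obstacle is the bend-and-break degeneration together with its bookkeeping: one must check that at each degeneration a genuinely rational component survives through the prescribed general point and that its anticanonical degree strictly decreases, so that the induction terminates precisely at $n+1$ rather than stalling, and that the limiting curve in characteristic zero continues to move in a family covering $X$ rather than collapsing onto a proper closed subvariety. These points are exactly what is carried out in \cite[\S II.5]{kollarbook}.
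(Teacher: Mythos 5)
The paper offers no proof of this lemma beyond the citation to \cite[II.5.14 Theorem]{kollarbook}, and your sketch is precisely the standard argument carried out there: BDPP-type duality to produce a covering curve on which $-K_{X}$ is positive, reduction mod $p$ and Frobenius twisting to make the morphism space positive-dimensional, bend-and-break to split off rational curves of $(-K_{X})$-degree at most $n+1$ through a general point, and finiteness of bounded-degree families to lift back to characteristic zero and to extract a dominating family. The sketch is essentially correct and consistent with the cited source.
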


\begin{lemm}[\cite{kollarbook}, II.3.10.1 Corollary] \label{lemm:deformationcount}
Let $X$ be an smooth projective variety of dimension $n$.  Let $\pi: \mathcal{C} \to S$ be a family of rational curves admitting a dominant morphism $s: \mathcal{C} \to X$.  Then, for a general member $C$ of the family $\pi$ we have:
\begin{enumerate}
\item $T_{X}|_{C}$ is nef.
\item $\dim_{f|_{C}} \mathrm{Mor}(\mathbb{P}^{1},X) = -K_{X} \cdot C + n$.
\end{enumerate}
In particular, $-K_{X} \cdot C \geq 2$.
\end{lemm}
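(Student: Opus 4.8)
The plan is to deduce everything from the standard deformation theory of morphisms $\mathbb{P}^1\to X$ (see \cite{kollarbook}, Chapter~II), applied to a general member of the family. Replacing $C$ by a general fibre of $\pi$, I would consider the induced morphism $f:=s|_C\colon\mathbb{P}^1\cong C\to X$, the corresponding point $[f]\in\mathrm{Mor}(\mathbb{P}^1,X)$, and the irreducible component $M$ of $\mathrm{Mor}(\mathbb{P}^1,X)$ through $[f]$ (which, after restricting the family to a component of $S$, we may take to dominate $X$ under evaluation, since $s$ is dominant). Because $\mathrm{Aut}(\mathbb{P}^1)\cong\mathrm{PGL}_2$ is connected and acts on $\mathrm{Mor}(\mathbb{P}^1,X)$ by reparametrization, it preserves $M$; combined with the dominance just noted, this shows that for every $p\in\mathbb{P}^1$ the evaluation morphism $\mathrm{ev}_p\colon M\to X$, $g\mapsto g(p)$, is dominant. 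Since we are in characteristic zero, $\mathrm{ev}_p$ is smooth at a general point of $M$, and I would fix $[f]$ and $p$ general so that $\mathrm{ev}_p$ is smooth at $[f]$; in particular $M$ is smooth at $[f]$, and its tangent space is a subspace of $H^0(\mathbb{P}^1,f^*T_X)$ on which $d(\mathrm{ev}_p)$ acts as evaluation at $p$.

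For part (1), I would write $f^*T_X\cong\bigoplus_{i=1}^n\mathcal{O}_{\mathbb{P}^1}(a_i)$ and argue by contradiction: if some $a_i<0$, then $H^0(\mathbb{P}^1,f^*T_X)$ lies in the proper subbundle spanned by the nonnegative summands, so evaluation at $p$ cannot surject onto $f^*T_X|_p=T_{f(p)}X$, contradicting the smoothness of $\mathrm{ev}_p$ at $[f]$. Hence every $a_i\ge 0$, i.e.\ $f^*T_X$ is globally generated, equivalently nef; this is (1) (with $T_X|_C$ read via $f$).

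For part (2), nefness of $f^*T_X$ on $\mathbb{P}^1$ gives $H^1(\mathbb{P}^1,f^*T_X)=0$; since this group is the obstruction space for $\mathrm{Mor}(\mathbb{P}^1,X)$ at $[f]$, the moduli space is smooth there of dimension $h^0(\mathbb{P}^1,f^*T_X)=\chi(\mathbb{P}^1,f^*T_X)$, which by Riemann--Roch on $\mathbb{P}^1$ equals $\deg f^*T_X+n=-K_X\cdot C+n$, using $\deg f^*T_X=f^*c_1(T_X)\cdot[\mathbb{P}^1]=-K_X\cdot C$. For the last inequality I would bound $\dim M$ from below: a general $f\in M$ is non-constant (as $s$ is dominant), hence has finite stabilizer in $\mathrm{PGL}_2$, so its reparametrization orbit is $3$-dimensional; moreover the image cycles of members of $M$ sweep out $X$ and are supported on curves, so they vary in at least an $(n-1)$-dimensional family. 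Therefore $\dim M\ge 3+(n-1)=n+2$, and combining with (2) yields $-K_X\cdot C=\dim M-n\ge 2$.

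The one genuinely delicate point is part (1), the freeness of a general member: this is exactly where characteristic zero enters, through generic smoothness of the dominant evaluation map, and in positive characteristic the statement can fail. The remaining ingredients — the identification of the tangent and obstruction spaces of $\mathrm{Mor}(\mathbb{P}^1,X)$ at $[f]$ with $H^0$ and $H^1$ of $f^*T_X$, Riemann--Roch on $\mathbb{P}^1$, and the crude dimension count in the last step — are routine once (1) is available.
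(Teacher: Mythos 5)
Your proof is correct; note that the paper does not prove this lemma at all but simply quotes it from Koll\'ar's book (II.3.10 and II.3.10.1), and your argument is essentially the standard one given there: generic smoothness of the evaluation map in characteristic zero forces $f^{*}T_{X}$ to be globally generated, the resulting vanishing of $H^{1}(\mathbb{P}^{1},f^{*}T_{X})$ plus Riemann--Roch gives the dimension formula, and a fibre-dimension count over the $3$-dimensional $\mathrm{PGL}_{2}$-orbits yields $-K_{X}\cdot C\geq 2$. (For the last step one can also argue more directly that the nonzero map $T_{\mathbb{P}^{1}}=\mathcal{O}(2)\to f^{*}T_{X}$ forces some summand $\mathcal{O}(a_{i})$ with $a_{i}\geq 2$, but your count is equally valid.)
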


\subsection{Fujita-type statements}

Other tools in the study of adjoint divisors are vanishing theorems.  
These tend to lead to better results than the corresponding versions for rational curves, 
but require stronger hypotheses.  
We use the following Fujita-type statements:

\begin{theo}[\cite{Reider}]
\label{theo: Reider}
Let $X$ be a smooth projective surface and $L$ a nef divisor on $X$ with $L^{2} \geq 5$.  
If $|K_{X}+L|$ has a basepoint at $x \in X$, then there is an effective divisor $D$ containing $x$ satisfying
\begin{align*}
L \cdot D = 0 \textrm{  and  } & D^{2} = -1  \textrm{  or}\\ 
L \cdot D = 1 \textrm{  and  } &  D^{2} = 0.
\end{align*}
\end{theo}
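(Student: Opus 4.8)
The plan is to run Reider's original argument: from the offending base point produce a rank-two vector bundle via the Serre construction, destabilize it using Bogomolov's inequality, extract the divisor $D$ from the destabilizing sub-line bundle, and finish with a short numerical computation based on the Hodge index theorem.

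First I would build the bundle. Since $x\in\mathrm{Bs}|K_X+L|$, every global section of $K_X+L$ vanishes at $x$; substituting this into the sequence $0\to\mathcal I_x(K_X+L)\to\mathcal O_X(K_X+L)\to k(x)\to 0$ shows $H^1(X,\mathcal I_x(K_X+L))\neq 0$, and Serre duality turns this into $\mathrm{Ext}^1_X(\mathcal I_x(L),\mathcal O_X)\neq 0$. A nonzero class gives a non-split extension $0\to\mathcal O_X\to E\to\mathcal I_x(L)\to 0$. The length-one scheme $\{x\}$ satisfies the Cayley--Bacharach condition relative to $|K_X+L|$ for the trivial reason that $x$ is a base point, so by the Serre construction $E$ may be taken locally free of rank two with $c_1(E)=L$ and $c_2(E)=1$.

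Next, the discriminant $c_1(E)^2-4c_2(E)=L^2-4\geq 1$ is positive, so $E$ is Bogomolov-unstable: there is a saturated sub-line bundle $\mathcal O_X(A)\subseteq E$ with torsion-free quotient $\mathcal I_W(B)$, where $A+B\equiv L$, $A\cdot B+\ell(W)=c_2(E)=1$, and $2A-L$ has positive self-intersection and meets an ample class positively (hence is big). I would then compose $\mathcal O_X\hookrightarrow E\to\mathcal I_W(B)$. If this composite vanished, $\mathcal O_X$ would factor through $\mathcal O_X(A)$, forcing the torsion sheaf $\mathcal O_X(A)/\mathcal O_X$ into the torsion-free sheaf $E/\mathcal O_X\cong\mathcal I_x(L)$ --- impossible unless $A\equiv 0$, which contradicts $(2A-L)\cdot H>0$. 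Hence the composite is a nonzero section of $\mathcal O_X(B)$ vanishing along $W$; let $D\in|B|$ be its zero divisor. Chasing the extension locally at $x$ shows that the zero scheme $\{x\}$ of the section $\mathcal O_X\to E$ lies on $D$. By construction $D\cdot(L-D)=A\cdot B\leq 1$ and $L-2D\equiv 2A-L$ is big.

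Finally I would pin down the intersection numbers, and this is where $L^2\geq 5$ is used. Since $L$ is nef and $L-2D$ is big, hence pseudo-effective, $L\cdot(L-2D)\geq 0$, so $L\cdot D\leq L^2/2$; feeding this, the fact that $L-2D\equiv 2A-L$ has positive square, and $D\cdot(L-D)\leq 1$ into the Hodge index inequality for $L$ and $L-2D$ gives, after a short case analysis, $D\cdot(L-D)=1$ and $L\cdot D\leq 1$. Thus either $L\cdot D=0$ and $D^2=-1$, or $L\cdot D=1$ and $D^2=0$, as claimed. I expect the main obstacle to be the middle step: guaranteeing that $E$ is genuinely locally free and --- above all --- that the divisor coming from the destabilizing sub-bundle really passes through $x$ rather than merely realizing the right numerical class, together with making the final inequalities sharp enough to rule out $L\cdot D=2,3$ and all larger values simultaneously, which is exactly the point at which the bound $L^2\geq 5$ becomes essential.
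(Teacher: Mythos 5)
The paper does not prove this statement --- it is quoted directly from Reider's paper \cite{Reider} --- so there is no in-text argument to compare against; your outline is a faithful reconstruction of Reider's original proof (Serre construction from the base point, Bogomolov instability since $c_1(E)^2-4c_2(E)=L^2-4>0$, nonvanishing of the composite $\mathcal{O}_X\to\mathcal{I}_W(B)$, and the Hodge-index endgame). The numerical details check out: with $a=L\cdot D$ and $d=D^2$ one gets $a-d\le 1$ from $c_2(E)=1$, $a^2\ge dL^2$ from Hodge index applied to $L$ and $L-2D$, and $2a\le L^2$ from nefness of $L$ against the big class $L-2D$, which together with $L^2\ge 5$ force $(a,d)=(0,-1)$ or $(1,0)$. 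The only points worth writing out carefully are that the extension class must be chosen to generate the stalk of $\mathcal{E}xt^1(\mathcal{I}_x(L),\mathcal{O}_X)$ at $x$ (an arbitrary nonzero class need not give a locally free $E$; the Cayley--Bacharach condition guarantees a suitable one exists), and that in the subcase $a=d=0$ one must invoke the Hodge index theorem once more to conclude $D\equiv 0$, contradicting that $D$ is a nonzero effective divisor through $x$.
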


\begin{theo}[\cite{Reider}]
\label{theo: Reider II}
Let $X$ be a smooth projective surface and $L$ a nef divisor on $X$ with $L^2 \geq 10$. 
If $|K_X+L|$ fails to separate two points $x, y$ (possibly infinitely near), 
then there is an effective divisor $D$ through $x$ and $y$ such that
\begin{align*}
L \cdot D = 0 \textrm{ and } & D^2 = -1 \textrm{ or } -2; \textrm{ or }\\
L \cdot D = 1 \textrm{ and } & D^2 = 0 \textrm{ or } -1; \text{ or }\\
L \cdot D = 2 \textrm{ and } & D^2 = 0.
\end{align*}
\end{theo}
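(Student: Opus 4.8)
The plan is to follow Reider's method: build a rank-two vector bundle via the Serre construction and then apply the Bogomolov instability inequality. First I would translate the hypothesis. Let $Z\subset X$ be the length-two subscheme supported at $x$ and $y$ (or, in the infinitely near case, the curvilinear length-two scheme at the point recording the prescribed tangent direction). That $|K_X+L|$ fails to separate $x$ and $y$ says exactly that the restriction map $H^0(X,K_X+L)\to H^0(Z,\cO_Z\otimes\cO_X(K_X+L))$ is not surjective. Since $L$ is nef with $L^2\geq 10>0$, it is big and nef, so Kawamata--Viehweg (Ramanujam) vanishing gives $H^1(X,K_X+L)=0$; feeding this into the cohomology sequence of
\begin{equation*}
0\lra \cI_Z(K_X+L)\lra \cO_X(K_X+L)\lra \cO_Z\otimes\cO_X(K_X+L)\lra 0
\end{equation*}
shows $H^1(X,\cI_Z(K_X+L))\neq 0$, which by Serre duality is dual to $\Ext^1(\cI_Z(L),\cO_X)$. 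Provided $Z$ satisfies the Cayley--Bacharach condition relative to $\cO_X(K_X+L)$, a nonzero extension class produces a locally free rank-two sheaf $E$ fitting in
\begin{equation*}
0\lra \cO_X\lra E\lra \cI_Z(L)\lra 0,
\end{equation*}
with $c_1(E)=L$, $c_2(E)=\mathrm{length}(Z)=2$, and the tautological section $s\colon\cO_X\to E$ has zero scheme exactly $Z$.

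Next I would use positivity of the discriminant. Here $c_1(E)^2-4c_2(E)=L^2-8\geq 2>0$, so by Bogomolov's theorem $E$ is unstable: there is a saturated sub-line bundle $\cO_X(A)\hookrightarrow E$ with quotient $\cI_W(L-A)$ for some zero-dimensional subscheme $W$, such that $2A-L$ lies in the positive cone, i.e.\ $(2A-L)^2>0$ and $(2A-L)\cdot H>0$ for every ample $H$. I would then compare $s$ with this sub-bundle. If the composite $\cO_X\to E\to\cI_W(L-A)$ were zero, $s$ would factor through $\cO_X(A)$, so $A$ would be effective; but then the zero scheme of $s$ would contain an effective divisor, contradicting $\dim Z=0$, unless $A\sim 0$ --- which is impossible, as $2A-L=-L$ is not in the positive cone. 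Hence the composite is nonzero, and, being a nonzero section of $\cO_X(L-A)$ that vanishes on $Z$, it cuts out an effective divisor $D$ of class $L-A$ containing $Z$. In particular $x$ and $y$ lie on the effective divisor $D$.

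It remains to carry out the numerical bookkeeping. Write $t=L\cdot D\geq 0$, nonnegative since $L$ is nef and $D$ effective. From $c_2(E)=A\cdot(L-A)+\mathrm{length}(W)$ with $\mathrm{length}(W)\geq 0$ we get $t-D^2\leq 2$, that is $D^2\geq t-2$. Since $2A-L=L-2D$ lies in the positive cone and $L$ is big and nef, $(L-2D)\cdot L>0$, that is $L^2>2t$. The Hodge index theorem, applied using $L^2>0$, gives $D^2\leq t^2/L^2$. If $t\geq 1$ then $t^2/L^2<t/2$, so
\begin{equation*}
t-2\ \leq\ D^2\ \leq\ \frac{t^2}{L^2}\ <\ \frac{t}{2},
\end{equation*}
which forces $t<4$; and $t=3$ is excluded, since it would give $D^2\geq 1$ while the Hodge index bound and $L^2\geq 10$ force $D^2\leq 9/10<1$. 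Hence $t\in\{0,1,2\}$, and for each such value the bounds $t-2\leq D^2\leq t^2/10$, together with $D^2<0$ whenever $t=0$ and $D\neq 0$ (Hodge index again), leave precisely the possibilities in the statement.

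The step I expect to be the real obstacle is guaranteeing that $E$ can be chosen locally free, i.e.\ verifying the Cayley--Bacharach condition for $Z$ with respect to $\cO_X(K_X+L)$; this holds unless exactly one of the two points is a base point of $|K_X+L|$. In that remaining case the construction above does not apply directly, and instead one invokes Theorem~\ref{theo: Reider} at the base point (legitimate since $L^2\geq 10\geq 5$) and checks by a short separate argument that the resulting divisor passes through both points. The infinitely near case needs only the cosmetic change of taking $Z$ curvilinear, and introduces no further difficulty once the bundle is in hand.
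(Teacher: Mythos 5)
The paper itself gives no argument for this statement: it is quoted directly from Reider's paper, so there is no internal proof to compare against. Your reconstruction follows Reider's actual method (Serre construction plus Bogomolov instability), and the main branch of the argument is correct: the Kawamata--Viehweg vanishing of $H^1(K_X+L)$, the identification of the obstruction with $\Ext^1(\cI_Z(L),\cO_X)$, the construction of $E$ with $c_1=L$ and $c_2=2$, the exclusion of a factorization of $s$ through $\cO_X(A)$, and all of the numerical bookkeeping (including ruling out $t=3$ and getting strict negativity of $D^2$ when $t=0$ from Hodge index) check out.

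The genuine gap sits exactly where you predicted it, but your proposed repair does not work. When Cayley--Bacharach fails --- equivalently, when exactly one of the two points, say $x$, is a base point of $|K_X+L|$ --- you propose to invoke Theorem~\ref{theo: Reider} at $x$ and then ``check by a short separate argument that the resulting divisor passes through both points.'' No such argument is available from the conclusion of Theorem~\ref{theo: Reider} alone: that theorem produces \emph{some} effective divisor through $x$ with $(L\cdot D,D^2)\in\{(0,-1),(1,0)\}$, and nothing forces it to contain the second point $y$, which in this situation is an arbitrary point of $X$. The standard way to close this case is to run your own argument on the extension sheaf $E$ without assuming local freeness: $E$ is automatically torsion-free, $c_2(E)=\mathrm{length}(Z)=2$ still holds, Bogomolov instability applies after passing to the double dual $E^{**}$ (which can only decrease $c_2$), and intersecting the destabilizing sub-line bundle $\cO_X(A)\subset E^{**}$ back with $E$ yields a subsheaf $\cI_T(A)\subset E$ whose quotient embeds in $\cI_W(L-A)$; the composite $\cO_X\to E\to\cI_W(L-A)$ is nonzero by your same effectivity argument, its divisor still contains $Z$, and the identity $c_2(E)=\mathrm{length}(T)+A\cdot(L-A)+\mathrm{length}(W)$ gives the same inequality $L\cdot D-D^2\le 2$. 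With that substitution the degenerate case merges into the main one and the proof is complete.
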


In general, we have:

\begin{prop} \label{prop:siubound}
Let $X$ be a smooth projective variety of dimension $n$ and 
$L$ a big and nef divisor on $X$.  Then $K_{X} + (n+1)L$ is pseudo-effective.
\end{prop}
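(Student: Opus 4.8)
The plan is to argue by contradiction: suppose $K_X + (n+1)L$ is not pseudo-effective. The natural tool is Lemma~\ref{lemm:bendandbreak} (bend-and-break): since $K_X + (n+1)L$ fails to be pseudo-effective, there is a rational curve $C$ deforming to cover $X$ with $(-(K_X + (n+1)L)) \cdot C > 0$ is not quite what the lemma gives — rather, applying the lemma to the divisor $(n+1)L$ in place of a general $L$, we get a rational curve $C$ covering $X$ with $-K_X \cdot C \le n+1$ (using that the lemma's "$L$" can be taken to be $(n+1)L$, and the bound is in terms of $-K_X \cdot C \le n+1$ regardless). Wait — I need to be careful: Lemma~\ref{lemm:bendandbreak} as stated concludes $-K_X \cdot C \le n+1$ when $K_X + L$ is not pseudo-effective, for that particular $L$. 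So applying it with the divisor $(n+1)L$: if $K_X + (n+1)L$ is not pseudo-effective, there is a covering rational curve $C$ with $-K_X \cdot C \le n+1$. But also, since $C$ covers $X$, Lemma~\ref{lemm:deformationcount} applies and gives $-K_X \cdot C \ge 2$, and moreover $T_X|_C$ is nef.

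First I would extract, from the non-pseudo-effectivity of $K_X+(n+1)L$ together with Lemma~\ref{lemm:bendandbreak}, a covering family of rational curves $C$ with $-K_X \cdot C \le n+1$. Next, because $K_X+(n+1)L$ is not pseudo-effective and $(n+1)L$ is big and nef, the intersection $(K_X + (n+1)L)\cdot C$ should be forced to be negative for the general member of \emph{some} covering family — here one uses that a pseudo-effective class has nonnegative intersection with a covering (hence "movable") curve class, so if $(K_X+(n+1)L)\cdot C \ge 0$ for every covering curve then $K_X+(n+1)L$ would be pseudo-effective. Hence there is a covering curve $C$ with $(K_X+(n+1)L)\cdot C < 0$, i.e. $-K_X \cdot C > (n+1)(L \cdot C)$. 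Since $L$ is big and nef and $C$ is a covering curve, $L \cdot C \ge 1$ (a big and nef divisor has strictly positive degree on a covering curve; if $L\cdot C = 0$ then $L$ would restrict to zero on a dominant family, contradicting bigness). Therefore $-K_X \cdot C \ge (n+1)(L\cdot C) + 1 \ge n+2$, contradicting the bound $-K_X \cdot C \le n+1$ from bend-and-break.

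The main obstacle is getting the inequality $L \cdot C \ge 1$ cleanly and making sure the "covering curve with negative adjoint degree" step is rigorous: one must pass to a covering family whose general member has $T_X|_C$ nef (Lemma~\ref{lemm:deformationcount}) so that the numerical class of $C$ lies in the interior of the movable cone, and then invoke the duality between pseudo-effective divisors and movable curves (nonnegativity of pseudo-effective classes on movable classes) to deduce that $K_X+(n+1)L$ not pseudo-effective forces negativity against such a $C$. Once the family is chosen so that $C$ is a free curve, $L \cdot C \ge 1$ follows because $L$ is nef (so $L\cdot C \ge 0$) and bigness of $L$ prevents $L \cdot C = 0$ for a dominating family: if $L\cdot C=0$ for a covering family then the restriction of $L$ to the general member is trivial, and pushing forward one sees $\mathrm{vol}(L)=0$, a contradiction. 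Assembling these pieces gives $n+2 \le -K_X\cdot C \le n+1$, the desired contradiction, so $K_X+(n+1)L$ is pseudo-effective.
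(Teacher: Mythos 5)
Your proposed route has a genuine gap, and it is worth noting that the paper proves this proposition by a completely different (and gap-free) argument of Siu: set $P(m)=\chi(\mathcal{O}_X(K_X+mL))$, a polynomial of degree at most $n$; Kawamata--Viehweg vanishing (valid since $L$ is big and nef) gives $P(m)=h^0(X,K_X+mL)$ for $m>0$, and since $P$ is not identically zero it cannot vanish at all of $m=1,\dots,n+1$, so some $K_X+mL$ with $m\leq n+1$ is effective and adding copies of the nef divisor $L$ finishes the proof. No rational curves are needed.

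The gap in your argument is that the two curves you use are not the same curve. Lemma~\ref{lemm:bendandbreak} produces a covering rational curve $C$ with $-K_X\cdot C\leq n+1$, but it does \emph{not} assert $(K_X+(n+1)L)\cdot C<0$ for that $C$. Separately, BDPP duality tells you that non-pseudo-effectivity forces $(K_X+(n+1)L)\cdot\alpha<0$ for \emph{some} movable class $\alpha$ (a priori not rational, not of bounded anticanonical degree, and not the class of $C$). Your attempted bridge --- that since $[C]$ lies in the (interior of the) movable cone, a non-pseudo-effective divisor must be negative on it --- is false: a non-pseudo-effective class need only be negative on some movable class, and can be strictly positive on the class of any particular free curve. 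Producing a single covering \emph{rational} curve on which the non-pseudo-effective adjoint divisor is negative is exactly Conjecture~\ref{conj:rc} of this paper, which the authors state as open in general and verify only for surfaces (Lemma~\ref{lemm:rationalcurvecheck}); if you had that statement, your computation $-K_X\cdot C>(n+1)L\cdot C\geq n+1$ (the step $L\cdot C\geq 1$ is fine, via $L\equiv A+E$ and generality of $C$) would indeed contradict the length bound. As written, however, the contradiction $n+2\leq -K_X\cdot C\leq n+1$ compares intersection numbers of two different curves and does not go through.
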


\begin{proof}
The argument is due to Siu.  
Put $P(m) := \chi(\mathcal{O}_{X}(K_{X} + mL))$.  By Hirzebruch-Riemann-Roch, 
$P(m)$ is a polynomial in $m$ of degree at most $n$.  By Kawamata-Viehweg vanishing, for $m>0$ we have
\begin{equation*}
P(m) = H^{0}(X,K_{X} + mL).
\end{equation*}
Note that $P(m)$ is not identically zero since for $m$ sufficiently large  we have 
$H^{0}(X,K_{X}+mL) > 0$.  Thus $P(m)$ cannot have $n+1$ roots, so that $H^{0}(X,K_{X}+mL) > 0$ for some $1 \leq m \leq n+1$.  
Increasing the coefficient of $L$, we see that $K_{X} + (n+1)L$ must be pseudo-effective.
\end{proof}

However, as in Reider's result it is useful to have an intersection-theoretic criterion instead.  The main theorem in this direction is due to \cite{AS95}.

\begin{theo}[\cite{Ko97}, Theorem 5.8]
Let $X$ be a smooth projective variety of dimension $n$ and $L$ a big and nef Cartier divisor on $X$.  Fix a point $x \in X$ and assume that
\begin{equation*}
L^{\dim Z} \cdot Z > \left( \begin{array}{c} n+1 \\ 2 \end{array} \right)^{\dim Z},
\end{equation*}
for every irreducible subvariety $Z$ passing through $x$.  Then $K_{X} + L$ has a section that does not vanish at $x$.
\end{theo}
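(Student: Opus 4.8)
The plan is to reduce the statement to Nadel vanishing once a sufficiently singular divisor has been produced. The target is an effective $\mathbb{Q}$-divisor $D$ with $D \equiv_{\mathbb{Q}} cL$ for some rational $c<1$ such that, in a neighbourhood of $x$, the non-klt locus of the pair $(X,D)$ consists of the single point $x$, i.e.\ $x$ is an isolated point of the co-support of the multiplier ideal $\mathcal{J}(X,D)$. Granting such a $D$, write $K_X + L = (K_X + D) + (L - D)$; since $L - D \equiv_{\mathbb{Q}} (1-c)L$ is big and nef, Nadel vanishing gives
\[
H^1\!\big(X,\, \mathcal{O}_X(K_X+L)\otimes \mathcal{J}(X,D)\big) = 0 ,
\]
so that the restriction $H^0(X, K_X+L) \to H^0\big(X, \mathcal{O}_X(K_X+L)\otimes \mathcal{O}_X/\mathcal{J}(X,D)\big)$ is surjective. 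Because $x$ is an isolated point of the support of $\mathcal{O}_X/\mathcal{J}(X,D)$, this quotient sheaf splits near $x$ off a finite-length module at $x$, whose value there surjects onto the residue field $\mathbb{C}(x)$; lifting a generator produces a section of $K_X + L$ not vanishing at $x$.

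The substance of the argument is the construction of $D$, which I would carry out by descending induction on the dimension of the minimal non-klt center through $x$. For the initial step, the hypothesis applied to $Z = X$ yields $L^n > \binom{n+1}{2}^n \geq n^n$; since $h^0(X,mL)$ grows like $\frac{L^n}{n!}m^n$ while a point of multiplicity $\geq k$ imposes at most $\binom{k+n-1}{n}$ conditions on sections, one can find $D_0 \equiv_{\mathbb{Q}} c_0 L$ with $c_0 < 1$ and $\mathrm{mult}_x D_0$ large enough that $(X,D_0)$ is not klt at $x$. Let $Z$ be a minimal non-klt center of a slight perturbation of $(X,D_0)$ through $x$; by Kawamata's subadjunction theory $Z$ is normal with controlled singularities and $L|_Z$ remains big and nef. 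One then repeats the construction on $Z$: the hypothesis $L^{\dim Z}\cdot Z > \binom{n+1}{2}^{\dim Z}$ produces a divisor on $Z$ forcing a strictly smaller non-klt center, which is lifted to $X$ and added, with a small ample correction, to the current $D$, shrinking the non-klt locus near $x$. Iterating until the center becomes $\{x\}$ and rescaling the final coefficient gives the desired $D$; the value $\binom{n+1}{2} = 1 + 2 + \cdots + n$ is exactly what is needed so that the successive steps, each dropping the center dimension while spending an amount of ``degree'' proportional to the current dimension, together leave $c<1$.

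The delicate point, and the one I expect to be the main obstacle, is the tie-breaking at each stage. After producing a divisor whose non-klt locus contains the expected smaller center as one component among several, one must perturb $D$ by an auxiliary effective divisor to eliminate the unwanted components and to sharpen the singularity to the next center, while keeping $c<1$ and $(X,D)$ log canonical throughout; this is precisely where the strictness of the numerical hypothesis is consumed, the slack in $L^{\dim Z}\cdot Z > \binom{n+1}{2}^{\dim Z}$ furnishing the room for the auxiliary divisor. The remaining ingredients — Nadel vanishing, the normality and adjunction properties of minimal lc centers, and the elementary count of conditions imposed by a high-multiplicity point — are standard, and the overall scheme is that of Angehrn--Siu \cite{AS95} as presented in \cite{Ko97}.
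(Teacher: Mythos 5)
The paper does not actually prove this statement: it is quoted, with proof deferred, from \cite{Ko97} (Theorem 5.8), i.e.\ the Angehrn--Siu theorem, so there is no internal argument to compare against. Your outline is a faithful summary of exactly that proof --- Nadel vanishing applied to an effective $D\equiv_{\mathbb{Q}} cL$ with $c<1$ whose non-klt locus is isolated at $x$, the divisor being built by inductively cutting down minimal lc centers with tie-breaking, and with $\binom{n+1}{2}=1+2+\cdots+n$ accounting for the summed cost of the dimension drops --- though as written it defers the genuinely hard step to the cited source: passing from the numerical hypothesis on a $d$-dimensional center $Z$ to a divisor \emph{on $X$} that strictly shrinks the center is not a literal lift of a divisor from $Z$ (the needed surjectivity of restriction maps is unavailable) but a jet-counting argument for sections of $mL$ on $X$ along $Z$ combined with an inversion-of-adjunction statement, which is where the real work of \cite{AS95} and \cite{Ko97} lies.
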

Conjecturally, one can replace the right side by $n^{\dim Z}$.

\section{Balanced line bundles}
\label{sect:balanced}

Here we study the invariants appearing in Manin's conjecture \eqref{eqn:manin}.

\subsection{$a$-constants}

\begin{defi}\cite[Definition 2.2]{balanced}
\label{defi: Fujita invariant}
Let $X$ be a smooth projective variety 
and $L$ a big Cartier divisor on $X$. 
The {\it Fujita invariant} is
$$
a(X, L) := \min \{ t\in \bR \mid t[L] + [K_X] \in \Lambda_{\mathrm{eff}}(X) \}.
$$
\end{defi}

By \cite[Proposition 7]{balanced}, $a(X, L)$ is a birational invariant. 
Hence, we define the Fujita invariant for a singular projective variety $X$ 
by taking a smooth resolution $\beta : \tilde{X} \ra X$:
$$
a(X, L):= a(\tilde{X}, \beta^*L).
$$
This definition does not depend on the choice of $\beta$.  
By \cite{BDPP}, $a(X,L)$ is positive if and only if $X$ is uniruled.  


\begin{lemm}
Let $X$ be a smooth projective variety.  The function 
$$
a(X,-): \mathrm{Big}^{1}(X) \to \mathbb{R}
$$ 
is continuous.
\end{lemm}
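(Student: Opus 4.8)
The plan is to exhibit $a(X,-)$ locally as a minimum of finitely many affine-linear functions, which are obviously continuous, so continuity follows. Fix $L_0 \in \mathrm{Big}^1(X)$. By definition $a(X,L) = \min\{t : tL + K_X \in \Lambda_{\mathrm{eff}}(X)\}$, so $a(X,L)L + K_X$ lies on the boundary $\partial\Lambda_{\mathrm{eff}}(X)$. The key geometric input is that the boundary of the pseudo-effective cone, near a point that is big (equivalently, near the class $a(X,L)L+K_X$ when we perturb $L$ so as to stay in the big cone), is locally cut out by finitely many supporting hyperplanes: indeed $\Lambda_{\mathrm{eff}}(X)$ is a closed convex cone, and at a boundary point $p$ the tangent cone is an intersection of half-spaces; after restricting to a small neighborhood of $L_0$ in $\mathrm{Big}^1(X)$ the relevant portion of $\partial\Lambda_{\mathrm{eff}}$ involves only a fixed finite set of these, because a convex function on a finite-dimensional space is continuous, hence locally Lipschitz, on the interior of its domain.

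More concretely, I would argue as follows. Consider the function $g: \NS(X,\bR) \times \bR \to \NS(X,\bR)$, $g(L,t) = tL + K_X$, and define $S = \{(L,t) : g(L,t)\in \Lambda_{\mathrm{eff}}(X)\}$. For each fixed $L \in \mathrm{Big}^1(X)$, the slice $\{t : (L,t)\in S\}$ is a closed half-line $[a(X,L),\infty)$: it is closed because $\Lambda_{\mathrm{eff}}(X)$ is closed, it is an up-set because adding a positive multiple of the big class $L$ keeps us in (indeed moves us into the interior of) $\Lambda_{\mathrm{eff}}(X)$, and it is nonempty by Proposition~\ref{prop:siubound} applied after rescaling (or simply because $L$ is big, so some multiple of $L$ dominates $-K_X$). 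Thus $a(X,L) = \inf\{t : (L,t)\in S\}$. Now $S$ is a closed convex subset of $\NS(X,\bR)\times\bR$ (the preimage of the convex cone $\Lambda_{\mathrm{eff}}(X)$ under a bilinear-in-one-variable map is not convex in general — here is the subtlety), so I cannot conclude convexity of $a$ directly; instead I pass to the homogeneous rescaling. Writing $a(X,L) = \min\{t : K_X \in \Lambda_{\mathrm{eff}}(X) - tL\}$ and using that for $t>0$ membership $tL+K_X\in\Lambda_{\mathrm{eff}}(X)$ is equivalent to $L + \tfrac1t K_X \in \Lambda_{\mathrm{eff}}(X)$ (since $\Lambda_{\mathrm{eff}}$ is a cone), one reduces to studying, for $L$ near $L_0$, the largest $s>0$ with $L + sK_X \in \Lambda_{\mathrm{eff}}(X)$, i.e. how far one can travel from $L$ in the fixed direction $K_X$ before leaving the cone; then $a(X,L) = 1/s(L)$. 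The function $s(L)$ — the "distance to the boundary in a fixed direction" — is a concave, positive, upper-semicontinuous function on the open set $\mathrm{Big}^1(X)$, hence continuous there by standard convexity; and $L_0$ big guarantees $s(L_0)$ is finite and positive, so $a(X,L) = 1/s(L)$ is continuous near $L_0$.

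The main obstacle, and the step I would be most careful about, is the non-convexity of $L \mapsto a(X,L)$ itself: because $a$ is not the restriction of a convex function, one must genuinely use the reformulation via the gauge/support function of $\Lambda_{\mathrm{eff}}(X)$ in a fixed direction, and verify that $s(L)$ is finite and positive on all of $\mathrm{Big}^1(X)$ (finiteness uses that $-K_X + \epsilon L_0$ need not be pseudo-effective — but actually $s(L)$ finite is equivalent to $K_X\notin\Lambda_{\mathrm{eff}}(X)$ scaled appropriately, which holds as $X$ is uniruled by \cite{BDPP}; if $X$ is not uniruled then $a(X,L)\le 0$ and a separate trivial argument handles it, or one notes $\mathrm{Big}^1(X)$ could still meet the region where $a\le 0$ and continuity there is immediate since $a(X,L)=0$ on a relatively open set). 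Concavity and positivity of $s$ then give continuity of $s$, and since $s>0$, inversion is continuous, finishing the proof.
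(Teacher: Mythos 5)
Your final argument is correct in the main case and takes a genuinely different route from the paper. The paper's proof is a compactness argument: over a compact neighborhood $\mathcal{T}$ of $L_0$ the values of $a$ are bounded, and the graph of $a$ is exhibited as the intersection of two closed sets --- the preimage of $0$ under the (continuous) volume function and the preimage of $\Lambda_{\mathrm{eff}}(X)$ --- so that $a$ has closed graph with values in a compact interval and is therefore continuous. Your argument instead identifies $1/a(X,L)$ with the gauge-type function $s(L)=\max\{s\ge 0: L+sK_X\in\Lambda_{\mathrm{eff}}(X)\}$, which is concave (hence locally Lipschitz) and finite and positive on the open set $\mathrm{Big}^{1}(X)$ once $K_X\notin\Lambda_{\mathrm{eff}}(X)$. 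This is more elementary: it uses only that $\Lambda_{\mathrm{eff}}(X)$ is a closed convex cone with nonempty interior, and in particular avoids invoking continuity of the volume function, which the paper needs in order to recognize $a(X,D)D+K_X$ as the unique point of the ray lying on the boundary of the cone. Your observation that $(L,t)\mapsto tL$ destroys convexity, so that one must dehomogenize before applying convex analysis, is exactly the right point.

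Two caveats. First, the opening paragraph's claim that $\partial\Lambda_{\mathrm{eff}}(X)$ is locally cut out by finitely many supporting hyperplanes is false in general (the pseudo-effective cone need not be locally polyhedral), and the justification offered does not establish it; fortunately your final argument never uses this. Second, the lemma is stated for an arbitrary smooth projective $X$, and your treatment of the non-uniruled case is incomplete: if $K_X$ is big then $a(X,L)<0$ on all of $\mathrm{Big}^{1}(X)$ and is not locally constant, so the fallback ``$a(X,L)=0$ on a relatively open set'' fails there. The promised separate argument does exist and is the mirror image of your main one: $-1/a(X,L)=\inf\{v>0: vK_X-L\in\Lambda_{\mathrm{eff}}(X)\}$ is convex, finite, and positive on $\mathrm{Big}^{1}(X)$, hence continuous. (When $K_X$ lies on the boundary of $\Lambda_{\mathrm{eff}}(X)$ one does get $a\equiv 0$, since $K_X-\epsilon L$ leaves the cone for every $\epsilon>0$ and every big $L$.) With that sub-case written out, the proof is complete.
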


\begin{proof}
Fix a big divisor $L$ and consider the closure $\mathcal{T}$ of a small neighborhood of the numerical class 
$L$ in the big cone.  Note that the $a$ values for $D\in \mathcal{T}$ are bounded in absolute value by some constant $M$, since $\mathcal{T} \pm \frac{1}{M}K_{X}$ lies in the big cone for large enough $M$.  Consider the set $\mathcal{T} \times [-M,M]$.  The volume map 
$$
\vol: (D,a) \mapsto \vol(K_{X} + aD)
$$ 
is continuous, so the preimage of $0$ is closed.  Similarly, the addition map 
$$
p: (D,a) \mapsto K_{X} + aD
$$ 
is continuous, so the preimage of the the pseudo-effective cone is closed.  
The intersection of these two sets is exactly the set of pairs $(D,a(X,D))$.  
Using the limit-definition of compactness, we see that $a(X,D)$  must vary continuously.
\end{proof}

\subsection{$b$-constants}

\begin{defi}
Let $V$ be a finite dimensional vector space over $\bR$ and $\Lambda \subset V$ a closed convex cone. A supporting function $\sigma : V \ra \bR$ is a linear functional such that $\sigma$ is non-negative on $\Lambda$. A supported face is a face of the form
$$
F = \Lambda \cap \{ \sigma = 0\}.
$$
It is an extremal face of $\Lambda$.
\end{defi}

\begin{defi}\cite[Definition 2.8]{balanced}
\label{defi: invariant b}
Let $X$ be a smooth projective variety with non-pseudo effective canonical class. Let $L$ be a big Cartier divisor on $X$. We define $b(X, L)$ to  be 
\begin{center}
the codimension of the minimal supported face of $\Lambda_{\mathrm{eff}}(X)$ containing the numerical class $a(X, L)[L] + [K_X]$. 
\end{center}
\end{defi}

Again, this is a birational invariant (\cite[Proposition 9]{balanced}), 
and we define $b(X, L)$ for a singular variety $X$ by taking a smooth resolution $\beta : \tilde{X} \ra X$
$$
b(X, L) := b(\tilde{X}, \beta^*L).
$$
This definition does not depend on the choice of $\beta$.

\begin{lemm} 
\label{lemm:bcomputedonmm}
Let $X$ be a smooth projective variety and $L$ a big and nef divisor on $X$.  
Let $\pi: X \dashrightarrow X'$ be a minimal model for this pair.  Then $b(X,L) = b(X',\pi_{*}L)$.
\end{lemm}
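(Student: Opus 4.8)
The plan is to compare the $b$-invariants of $X$ and its minimal model $X'$ by tracking what the birational contraction $\pi$ does to the relevant faces of the pseudo-effective cone. Since $a(X,L)$ is a birational invariant (by \cite[Proposition 7]{balanced}), we have $a(X,L) = a(X',\pi_*L)$; write $a := a(X,L)$ for brevity. The class $D := K_X + aL$ is pseudo-effective by definition of $a$, and $K_{X'} + a\pi_*L = \pi_*D$ is the pushforward class. Because $\pi$ is a birational contraction obtained by running the MMP for $(X,L)$ (Theorem~\ref{theo:runningmmp}), $\pi_*D$ is semiample, hence in particular its class lies on the boundary of $\Lambda_{\mathrm{eff}}(X')$ unless it is zero. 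The quantity $b(X',\pi_*L)$ is the codimension of the minimal supported face of $\Lambda_{\mathrm{eff}}(X')$ containing $\pi_*D$.

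The key step is to relate the face structure of $\Lambda_{\mathrm{eff}}(X)$ near $D$ to that of $\Lambda_{\mathrm{eff}}(X')$ near $\pi_*D$. First I would reduce to the case where $\pi$ is a single elementary step — either a flip or a divisorial contraction — and compose. For a flip, $\pi_*$ induces an isomorphism $\NS(X,\bR) \to \NS(X',\bR)$ under which $\Lambda_{\mathrm{eff}}(X)$ and $\Lambda_{\mathrm{eff}}(X')$ agree (the small modification does not change the Néron–Severi space or the pseudo-effective cone as a subset of it, since pseudo-effectiveness is determined by intersections with movable curves, which are unaffected). Hence the minimal supported faces containing $D$ and $\pi_*D$ have the same codimension, and $b$ is unchanged. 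For a divisorial contraction $\pi$ contracting a divisor $E$, the map $\pi_*: \NS(X,\bR) \to \NS(X',\bR)$ has one-dimensional kernel spanned by $[E]$, and dually $\pi^*: \NS(X',\bR) \hookrightarrow \NS(X,\bR)$ identifies $\NS(X',\bR)$ with the hyperplane of classes having zero intersection with curves contracted by $\pi$. The point is that $E$ is a component of the stable base locus of $D$ (indeed of $\mathbf{B}_-$ of the relevant adjoint class, which is why the MMP contracts it), so the minimal supported face of $\Lambda_{\mathrm{eff}}(X)$ containing $D$ must contain the extremal ray $[E]$; projecting this face by $\pi_*$ drops the dimension of the ambient space by one and the dimension of the face by one, leaving the codimension — hence $b$ — unchanged.

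To make the divisorial-contraction step precise I would argue as follows: supported faces of $\Lambda_{\mathrm{eff}}(X')$ pull back under $\pi^*$ to supported faces of $\Lambda_{\mathrm{eff}}(X)$ — given a supporting functional $\sigma'$ on $\NS(X',\bR)$, the functional $\pi^*\sigma' := \sigma' \circ \pi_*$ is supporting on $\Lambda_{\mathrm{eff}}(X)$ since $\pi_*$ sends effective divisors to effective divisors — and conversely a supported face $F$ of $\Lambda_{\mathrm{eff}}(X)$ that contains $[E]$ descends: one checks that $\pi_* F$ is a supported face of $\Lambda_{\mathrm{eff}}(X')$ with $\dim \pi_* F = \dim F - 1$. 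Applying this with $F$ the minimal supported face containing $D$ (which contains $[E]$ by the base-locus observation, together with the fact that a minimal supported face is the intersection of all supported faces containing the class), we get that $\pi_* F$ is the minimal supported face of $\Lambda_{\mathrm{eff}}(X')$ containing $\pi_* D$, and $\mathrm{codim}\, \pi_* F = (\dim X' \text{'s Picard rank}) - \dim \pi_* F = (\rho(X) - 1) - (\dim F - 1) = \rho(X) - \dim F = \mathrm{codim}\, F$. Composing over all elementary steps yields $b(X,L) = b(X',\pi_*L)$.

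The main obstacle I anticipate is the claim that the contracted divisors $E$ genuinely lie on the minimal supported face containing $D = K_X + aL$, and more generally the careful bookkeeping that the \emph{minimal} supported face (not just some supported face) behaves well under pushforward. The subtlety is that "minimal supported face containing a class $v$" is the smallest element in a poset of faces, and one must verify both that pushing forward a supported face containing $[E]$ gives a supported face, and that no \emph{smaller} supported face of $\Lambda_{\mathrm{eff}}(X')$ appears after contraction — equivalently, that pulling back the minimal face on $X'$ recovers a face that, together with $[E]$, spans the minimal face on $X$. This requires knowing that $D$ restricted appropriately stays in the relative interior of the corresponding face at each step, which should follow from the negativity lemma and the fact that each MMP step is $D$-nonpositive (indeed, the relevant extremal contractions are with respect to $K_X + aL + \epsilon A$ for small ample $A$, and $D$ is numerically trivial on the contracted curves up to the $\epsilon A$ perturbation). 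Handling this carefully, rather than the cone combinatorics, is where the real work lies.
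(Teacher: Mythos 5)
Your argument is essentially correct, but it is organized quite differently from the paper's. You descend step by step through the MMP, treating flips and divisorial contractions separately and doing explicit face combinatorics under the quotient map $\pi_{*}\colon \NS(X,\bR)\to \NS(X,\bR)/\bR[E]$; the paper instead passes once to a common resolution $W$ of $X$ and $X'$, writes $\phi'^{*}\pi_{*}L = \phi^{*}L + E$ with $E$ effective and $\phi'$-exceptional, and shows directly that the minimal supported faces of $K_{W}+\phi^{*}L$ and $K_{W}+\phi'^{*}\pi_{*}L$ coincide. The paper's route avoids both of the delicate points you flag: there is no need to compute $b$ on the singular intermediate models $X_{i}$ (which in your approach requires knowing each $X_{i}$ is $\mathbb{Q}$-factorial terminal, via Lemma~\ref{lemm:terminalsings}, so that \cite[Proposition 9]{balanced} lets you work on $X_{i}$ without resolving), and the cone bookkeeping is replaced by a single dual computation. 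The crux you correctly identify --- that the contracted divisor $[E]$ lies on the minimal supported face containing $D=K_{X}+aL$ --- is resolved in the paper exactly as you anticipate: supporting functionals of $\Lambda_{\eff}$ are (limits of) nef curve classes by BDPP duality; writing $D = P + N$ for the $\sigma$-decomposition, any nef curve class $\gamma$ with $\gamma\cdot D = 0$ satisfies $\gamma\cdot P = \gamma\cdot N = 0$ (both summands being pseudo-effective), hence $\gamma\cdot N_{i}=0$ for every component $N_{i}$ of $N$; and the MMP-contracted divisors are precisely the components of $N$. Note that for this you genuinely need $E\subset \mathbf{B}_{-}(D)$ (equivalently $\sigma_{E}(D)>0$), not merely $E\subset\mathbf{B}(D)$ --- you say the right thing parenthetically, but your closing paragraph about relative interiors and $\epsilon A$-perturbations is more complicated than necessary; the $\sigma$-decomposition statement above is the clean substitute. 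With those two points supplied, your stepwise argument closes and gives the same conclusion, at the cost of more combinatorial overhead; its one advantage is that it makes visible exactly how each extremal contraction trades one dimension of $\NS$ against one dimension of the face.
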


\begin{proof}
For notational convenience, we rescale $L$ so that $a(X,L)=1$.  
As in Section \ref{sect:mmpintro}, $X'$ has $\mathbb{Q}$-factorial terminal singularities 
so that we may calculate $b$-constants directly on $X'$ (with no resolution needed) by \cite[Proposition 9]{balanced}.  

Let $W$ be a smooth birational model admitting birational morphisms 
$\phi: W \to X$ and $\phi': W \to X'$.  
Then 
$$
b(X,L) = b(W,\phi^{*}L)\quad \text{ and } \quad b(X',\pi_{*}L) = b(W,\phi'^{*}\pi_{*}L).
$$  
We have
\begin{equation*}
\phi^{*}L + E = \phi'^{*}\pi_{*}L
\end{equation*}
for some effective $\phi'$-exceptional divisor $E$.  Note that the minimal supported face for $K_{W} + \phi'^{*}\pi_{*}L$ contains the minimal supported face for $K_{W} + \phi^{*}L$.   Write
\begin{equation*}
K_{W} + \phi^{*}L = P + N
\end{equation*}
for the $\sigma$-decomposition of $K_{W} + \phi^{*}L$.  Then $P$ is semiample and the support of $N$ contains the union of the $\phi'$-exceptional divisors (since by Lemma \ref{lemm:terminalsings} $(X',L')$ has terminal singularities for some $L' \equiv \pi_{*}L$).  Therefore,  
any nef curve class that has vanishing intersection with $K_{W} + \phi^{*}L$ 
also has vanishing intersection with $N$, and thus also with $E$.  
We conclude that the minimal supported faces for $K_{W} + \phi^{*}L$ and $K_{W} + \phi'^{*}\pi_{*}L$ coincide.
\end{proof}


\begin{defi}
\label{defi:poly}
Let $X$ be a $\bQ$-factorial terminal projective variety
and $D$ an pseudo-effective $\bR$-divisor which is in the boundary of $\Lambda_{\eff}(X)$.  
We say $D$ is {\it locally rational polyhedral}
if either $D$ is numerically zero or there exist finitely many linear functionals
$$
\lambda_i:\NS(X,\bQ) \ra \bQ
$$
such that $\lambda_i(D)> 0$ and
$$
\Lambda_{\eff}(X) \cap \{ v : \lambda_i(v) \geq 0 \text{ for any $i$}\},
$$
is finite rational polyhedral and generated by effective $\bQ$-divisors.
\end{defi}

Lemma~\ref{lemm:bcomputedonmm} 
is naturally compatible with the following proposition. 

\begin{prop}[\cite{balanced}, Proposition 2.18] \label{prop:binterpretationforsemiample}
Let $X$ be a uniruled $\mathbb{Q}$-factorial terminal projective variety and 
$L$ a big divisor on $X$.  Suppose that $K_{X} + a(X,L)L$ is locally rationally polyhedral and semiample and that $\kappa(K_{X}+a(X,L)L) \geq 1$.  
If $\pi: X \to Z$ denotes the morphism defined by $K_{X} + a(X,L)L$, then
\begin{equation*}
b(X,L) = \rk \, \NS(X) - \rk \, \NS_{\pi}(X),
\end{equation*}
where $\NS_{\pi}(X)$ is the lattice generated by $\pi$-vertical divisors.
\end{prop}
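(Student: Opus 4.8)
The whole computation takes place on $X$ itself. Since $X$ is $\mathbb{Q}$-factorial and terminal, $b(X,L)$ is the codimension of the minimal supported face $F_{0}$ of $\Lambda_{\eff}(X)$ containing the class $[K_{X}+a(X,L)L]$, with no resolution needed (\cite[Proposition 9]{balanced}). Rescaling, I assume $a(X,L)=1$ and set $D:=K_{X}+L$; by hypothesis $D$ is semiample with $\kappa(D)=\dim Z\geq 1$, so $D\sim_{\mathbb{Q}}\pi^{*}A$ for an ample $\mathbb{Q}$-divisor $A$ on $Z$. Because $1$ is the least $t$ for which $tL+K_{X}$ is pseudoeffective, $[D]$ is a boundary point of $\Lambda_{\eff}(X)$ lying in the relative interior of $F_{0}$, and by definition $b(X,L)=\rk\NS(X)-\dim\mathrm{span}\,F_{0}$; so it suffices to prove $\mathrm{span}\,F_{0}=\mathrm{span}\,\NS_{\pi}(X)$. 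The essential — and hardest — geometric input is that a general fibre $i\colon F\hookrightarrow X$ of $\pi$ is rationally connected: here $(K_{X}+L)|_{F}\equiv 0$ forces $K_{F}\equiv -L|_{F}$ with $L|_{F}$ big (as $L$ is big), and one then argues as in the proof of Proposition~\ref{prop:ratconnectedprop}, using \cite{HM07}, that $F$ is rationally connected. The consequence I will use is a seesaw identity,
\[
\ker\bigl(i^{*}\colon\NS(X,\bR)\to\NS(F,\bR)\bigr)=\mathrm{span}\,\NS_{\pi}(X).
\]
The inclusion $\supseteq$ holds because $\pi$-vertical divisors, including pullbacks of divisors on $Z$, restrict trivially to $F$; for $\subseteq$ one uses that on the rationally connected $F$ a numerically trivial divisor class is $\mathbb{Q}$-linearly trivial, so a class restricting to $0$ on the general fibre is, by seesaw over a suitable open of $Z$, the sum of a $\pi$-vertical class and a pullback from $Z$.

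To prove $\mathrm{span}\,\NS_{\pi}(X)\subseteq\mathrm{span}\,F_{0}$ it is enough to place every $\pi$-vertical prime divisor $E$ on $F_{0}$, since such divisors span $\NS_{\pi}(X)$. Pick a prime divisor $B\subset Z$ with $E$ a component of $\pi^{*}B$, and write $\pi^{*}B=E+E'$ with $E'$ effective. As $A$ is ample, $A-\varepsilon B$ is ample for small $\varepsilon>0$, so $\pi^{*}A-\varepsilon\pi^{*}B$ is nef and hence $\pi^{*}A-\varepsilon E=(\pi^{*}A-\varepsilon\pi^{*}B)+\varepsilon E'$ is pseudoeffective. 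Thus both of $[D]\pm\varepsilon[E]$ lie in $\Lambda_{\eff}(X)$; since $[D]$ is their midpoint and lies on the face $F_{0}$, both endpoints lie on $F_{0}$, so $[E]\in\mathrm{span}\,F_{0}$. For the opposite inclusion, invoke the local rational polyhedrality of $\Lambda_{\eff}(X)$ at $[D]$: after intersecting $\Lambda_{\eff}(X)$ with the relevant local polyhedral cone (which alters neither $\mathrm{span}\,F_{0}$ nor the rest of the argument), $F_{0}$ is generated by finitely many effective $\mathbb{Q}$-divisor classes $e_{1},\dots,e_{m}$. Fix $i$. Since $[D]$ lies in the relative interior of $F_{0}$, the class $[D]-\varepsilon e_{i}=[\pi^{*}A]-\varepsilon e_{i}$ lies in $F_{0}\subseteq\Lambda_{\eff}(X)$ for small $\varepsilon>0$; restricting to a general fibre $F$ — the restriction of a pseudoeffective class to a general fibre is pseudoeffective — and using $(\pi^{*}A)|_{F}\equiv 0$, we find $-\varepsilon\,e_{i}|_{F}$ pseudoeffective on $F$. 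But $e_{i}|_{F}$ is effective and $\Lambda_{\eff}(F)$ is salient, hence $e_{i}|_{F}\equiv 0$, that is, $e_{i}\in\ker(i^{*})=\mathrm{span}\,\NS_{\pi}(X)$ by the seesaw identity. Therefore $\mathrm{span}\,F_{0}=\mathrm{span}\{e_{1},\dots,e_{m}\}\subseteq\mathrm{span}\,\NS_{\pi}(X)$, and combining the two inclusions gives $b(X,L)=\rk\NS(X)-\rk\NS_{\pi}(X)$.

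The main obstacle is the first paragraph: establishing rational connectedness of a general fibre of $\pi$, and deriving the seesaw identity. Without it, $\ker(i^{*})$ can be strictly larger than $\mathrm{span}\,\NS_{\pi}(X)$ — for instance on $X=E\times E$ with $E$ an elliptic curve and $\pi$ a projection, the diagonal is numerically trivial on a fibre yet not in $\mathrm{span}\,\NS_{\pi}(X)$ — so this step genuinely uses the hypotheses that $X$ is uniruled and $\kappa(K_{X}+a(X,L)L)\geq 1$, and it is where most of the work lies. A more routine point is the bookkeeping around local rational polyhedrality in the second paragraph: one must check that replacing $\Lambda_{\eff}(X)$ near $[D]$ by a polyhedral model leaves $\mathrm{span}\,F_{0}$ unchanged and exhibits $F_{0}$ as generated by effective $\mathbb{Q}$-divisors, which is precisely what makes the restriction-to-a-fibre argument available.
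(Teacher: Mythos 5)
The paper does not actually prove this proposition: it imports it from \cite{balanced} (Proposition 2.18) and only remarks that the proof extends to terminal singularities, so there is no in-text argument to compare against line by line. Your reconstruction is essentially correct and its two key ingredients --- rational connectedness of the general fibre of $\pi$ via \cite{HM07}, and the seesaw/specialization step showing that a class numerically trivial on the general fibre lies in $\mathrm{span}\,\NS_{\pi}(X)$ --- are exactly the tools the paper itself deploys in the proof of Theorem \ref{theo:rigidity}, so the route is consonant with the paper's methods. Your midpoint argument for $\mathrm{span}\,\NS_{\pi}(X)\subseteq\mathrm{span}\,F_{0}$ is clean, and the restriction-to-a-general-fibre argument for the reverse inclusion works.

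Two points need tightening. First, the assertion that $[D]$ lies in the relative interior of $F_{0}$ is not ``routine bookkeeping'': for a general closed convex cone the minimal \emph{supported} face containing a point can strictly contain the minimal face (non-exposed faces), in which case your step ``$[D]-\varepsilon e_{i}\in\Lambda_{\eff}(X)$'' fails for the extra generators. It is true here, but only because of local rational polyhedrality: since $\lambda_{i}([D])>0$, for any $y\in\Lambda_{\eff}(X)$ one has $[D]+ty\in\Lambda_{\eff}(X)\cap\{\lambda_{i}\geq 0\}$ for small $t>0$, so any functional supporting the local polyhedral cone and vanishing at $[D]$ automatically supports all of $\Lambda_{\eff}(X)$; this forces the minimal face at $[D]$ to be exposed, hence equal to $F_{0}$. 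You should either supply this argument or sidestep it entirely by running the reverse inclusion through curve classes: for $\alpha$ nef on a general fibre $F$, the pushforward $i_{*}\alpha$ is nef on $X$ (the \cite{Peternell12} statement used in Proposition \ref{prop:verygeneralbalanced}), it pairs to zero with $[D]=\pi^{*}A$, so $F_{0}\subseteq\bigcap_{\alpha}(i_{*}\alpha)^{\perp}\cap\Lambda_{\eff}(X)\subseteq\ker(i^{*})$, with no polyhedrality needed. Second, in the forward inclusion you write $\pi^{*}B$ for a prime divisor $B\subset Z$, but $Z$ is only normal and $B$ need not be $\mathbb{Q}$-Cartier; also a $\pi$-vertical divisor $E$ may have $\pi(E)$ of codimension $\geq 2$. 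Both are fixed by instead choosing an effective $\mathbb{Q}$-divisor $G\sim_{\mathbb{Q}}mA$ whose support contains $\pi(E)$ (possible since $A$ is ample and $\pi(E)\subsetneq Z$), so that $m\pi^{*}A\sim_{\mathbb{Q}}cE+E'$ with $c>0$ and $E'$ effective, and then running your midpoint argument.
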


\begin{rema}
While \cite[Proposition 2.18]{balanced} is only stated for smooth varieties, the proof works equally well for varieties with terminal singularities.  Also, the condition $\kappa(K_{X} + a(X,L)L) \geq 1$ was inadvertently omitted. \end{rema}

In general, if $\pi: X \dashrightarrow X'$ is 
a minimal model for $(X,a(X,L)L)$, then $(X',a(X,L)\pi_{*}L)$ 
satisfies the hypotheses of Proposition \ref{prop:binterpretationforsemiample} 
and thus gives a geometric interpretation of the constant $b(X,L)$.

\begin{coro} \label{coro:binterpretation}
Let $X$ be a smooth uniruled projective variety and $L$ 
a big and nef divisor on $X$.  Suppose that $\kappa(K_{X}+a(X,L)L) \geq 1$.
Let $\pi: X \dashrightarrow Z$ be the rational map to the canonical model of $(X,a(X,L)L)$.  Then
\begin{equation*}
b(X,L) = \rk \, \NS(X) - \rk \, \NS_{\pi}(X),
\end{equation*}
where $\NS_{\pi}(X)$ is the sublattice generated by all $\pi$-vertical divisors and by all divisors contracted by the $(a(X, L)L + K_X)$-MMP.
\end{coro}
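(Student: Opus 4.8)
The goal is to reduce Corollary~\ref{coro:binterpretation} to Proposition~\ref{prop:binterpretationforsemiample} by passing through a minimal model. The plan is as follows. Run the $(K_X + a(X,L)L)$-MMP as in Theorem~\ref{theo:runningmmp}: since $L$ is big and nef and $K_X + a(X,L)L$ is pseudo-effective (by definition of $a(X,L)$), we obtain a birational contraction $\phi: X \dashrightarrow X'$ to a $\mathbb{Q}$-factorial terminal variety with $K_{X'} + a(X,L)\phi_*L$ semiample. Since $X$ is uniruled, so is $X'$, and $\kappa(K_{X'} + a(X,L)\phi_*L) = \kappa(K_X + a(X,L)L) \geq 1$ because $a$ and the Iitaka dimension of the adjoint class are birational invariants. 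Moreover $a(X',\phi_*L) = a(X,L)$ by birational invariance of the Fujita invariant, and $\phi_*L$ is big.

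The next step is to verify the local rational polyhedrality hypothesis of Proposition~\ref{prop:binterpretationforsemiample} for $(X', \phi_*L)$. Since $K_{X'} + a(X,L)\phi_*L$ is semiample and defines a fibration $\pi': X' \to Z$, the class lies on the face of $\Lambda_{\eff}(X')$ cut out by the nef curve classes coming from contracted curves of $\pi'$; semiampleness gives enough rigidity that this face is generated by $\pi'$-vertical effective divisors, so $D = K_{X'} + a(X,L)\phi_*L$ is locally rational polyhedral in the sense of Definition~\ref{defi:poly}. Granting this, Proposition~\ref{prop:binterpretationforsemiample} applies and yields
\begin{equation*}
b(X',\phi_*L) = \rk\,\NS(X') - \rk\,\NS_{\pi'}(X').
\end{equation*}

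It remains to transport this to $X$. First, $b(X,L) = b(X',\phi_*L)$: this is exactly Lemma~\ref{lemm:bcomputedonmm} (rescaling $L$ so that $a(X,L)=1$, which does not affect $b$). Now let $Z$ be the canonical model of $(X, a(X,L)L)$, so $\pi: X \dashrightarrow Z$ factors as $\pi = \pi' \circ \phi$. The divisors contracted by $\phi$ are precisely the $(K_X + a(X,L)L)$-MMP-contracted divisors; together with the preimages under $\phi$ of the $\pi'$-vertical divisors on $X'$ they generate a sublattice of $\NS(X)$ whose rank equals $\rk\,\NS_{\pi'}(X') + (\rk\,\NS(X) - \rk\,\NS(X'))$, since each divisorial contraction drops the Picard rank by one and the flips do not change it. This is exactly $\NS_\pi(X)$ as described in the statement, so $\rk\,\NS(X) - \rk\,\NS_\pi(X) = \rk\,\NS(X') - \rk\,\NS_{\pi'}(X') = b(X',\phi_*L) = b(X,L)$, as desired.

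The main obstacle is the bookkeeping in the last paragraph: one must check carefully that the sublattice $\NS_\pi(X)$ generated by $\pi$-vertical divisors \emph{and} MMP-contracted divisors corresponds, under $\phi_*$, exactly to $\NS_{\pi'}(X')$ together with the kernel of $\phi_*$ on Néron–Severi, and that pulling back a $\pi'$-vertical divisor and then pushing forward recovers it modulo exceptional classes. One should also confirm that a flipping step genuinely preserves the Picard rank and the relevant sublattice (which is standard: flips are isomorphisms in codimension one), and that the sequence of steps can be arranged so that $\pi'$ is genuinely a morphism, which is guaranteed by the semiampleness output of Theorem~\ref{theo:runningmmp}. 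Verifying the local rational polyhedrality of the adjoint class on $X'$ from semiampleness is routine but should be stated explicitly.
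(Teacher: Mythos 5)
Your overall architecture matches the paper's: reduce to a minimal model where the adjoint divisor is semiample, then combine Lemma~\ref{lemm:bcomputedonmm} with Proposition~\ref{prop:binterpretationforsemiample}, and do the lattice bookkeeping to identify $\NS_{\pi}(X)$ with $\NS_{\pi'}(X')$ plus the classes killed by the MMP. The bookkeeping itself is fine (and you are more explicit about it than the paper, which leaves it implicit).

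The genuine gap is your verification of the local rational polyhedrality hypothesis of Proposition~\ref{prop:binterpretationforsemiample}. You assert that semiampleness of $K_{X'}+a(X,L)\phi_{*}L$ ``gives enough rigidity'' that the class is locally rational polyhedral, and you call this routine. It is neither: Definition~\ref{defi:poly} demands finitely many rational linear functionals $\lambda_i$ with $\lambda_i(D)>0$ such that the truncation $\Lambda_{\eff}\cap\{\lambda_i\geq 0\}$ is finite rational polyhedral \emph{and generated by effective $\mathbb{Q}$-divisors} --- a statement about the structure of the whole pseudo-effective cone in a neighborhood of $D$, not about the face containing $D$. Semiampleness of a single divisor says nothing about $\Lambda_{\eff}$ away from its ray, and in general the boundary of the effective cone need not be polyhedral near a semiample class. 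What actually makes the argument work is the adjoint structure: by Wilson's theorem one writes $a(X,L)L \equiv A+\Delta$ with $A$ ample and $(X,\Delta)$ klt; the cone theorem then shows that $(K_X+A+\Delta)^{\perp}$ misses the $K_X+\Delta$-nonnegative part of $\overline{\mathrm{NE}}_1(X)$, producing the finitely many rational functionals, and \cite{BCHM} shows that the nearby boundary rays are spanned by effective $\mathbb{Q}$-divisors because they rescale to classes of the form $K_X+A'+\Delta$ with $A'$ ample. (The paper also checks this on the smooth $X$ and pushes the property forward to $X'$, which avoids applying Wilson's theorem on a singular model.) Without this input your appeal to Proposition~\ref{prop:binterpretationforsemiample} is unjustified, so the proof as written does not close.
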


\begin{proof}
Note that if $\phi: \tilde{X} \to X$ is a blow-up of a smooth center, 
then the desired formula is true for $(X,L)$ if and only if it is true for 
$(\tilde{X},\phi^{*}L)$.  By resolving, we may assume that the rational map 
$\pi$ is a morphism, factoring through a morphism to a minimal model $\psi: X \to X'$.

Once we verify that $K_{X'} + a(X,L)\psi_{*}L$ is locally rationally polyhedral, 
we can conclude by applying Lemma \ref{lemm:bcomputedonmm} and 
Proposition \ref{prop:binterpretationforsemiample}.  First, note that the locally rational polyhedral property can be pushed forward under a birational map.  So, it suffices to show that $K_{X} + a(X,L)L$ is locally polyhedral.  Wilson's theorem shows that we can write $a(X, L)L = A + \Delta$, where $A$ is ample $\bQ$-divisor and $(X, \Delta)$ is a klt pair.  Since the hyperplane $(K_X+A+\Delta)^\perp$ does not intersect $\overline{\mathrm{NE}}_{1}(X)_{K_X + \Delta \geq 0}$, we conclude the existence of the linear functionals by \cite{L11}.  Furthermore, any pseudo-effective $\mathbb{Q}$-divisor in a sufficiently small neighborhood of $K_{X} + A + \Delta$ rescales to be of the form $K_{X} + A' +\Delta$ for an ample divisor $A'$.  Then \cite{BCHM} shows the $\mathbb{Q}$-effectiveness of the corresponding ray, yielding the result.
\end{proof}

We will also need a version in the Iitaka dimension $0$ case.

\begin{lemm} \label{lemm:binterpretationiitaka0}
Let $X$ be a smooth uniruled projective variety and $L$ a big and nef divisor on $X$.  Suppose that $\kappa(K_{X} + a(X,L)L) = 0$.  Let $\phi: X \dashrightarrow X'$ be a minimal model for $(X,a(X,L)L)$.  Then
\begin{equation*}
b(X,L) = \rk \, \NS(X') = \rk \, \NS(X) - \rk \, \NS_{\phi}(X).
\end{equation*}
\end{lemm}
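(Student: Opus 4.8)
The plan is to reduce to the semiample case already handled by Proposition~\ref{prop:binterpretationforsemiample} and its corollary, compensating for the fact that $\kappa(K_X + a(X,L)L) = 0$ means there is no nontrivial fibration $\pi$ to exploit. First I would pass to a minimal model $\phi \colon X \dashrightarrow X'$ for $(X, a(X,L)L)$; by Theorem~\ref{theo:runningmmp} (applied after rescaling so that $a(X,L)=1$, noting $K_X + L$ is pseudo-effective since $a(X,L)$ is the threshold), $X'$ is $\mathbb{Q}$-factorial terminal and $K_{X'} + \phi_* L$ is semiample of Iitaka dimension $0$, hence $K_{X'} + \phi_* L \equiv 0$. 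The key point is then that on $X'$ the class $a(X',\phi_*L)[\phi_*L] + [K_{X'}]$ is numerically zero, so its minimal supported face is all of $\Lambda_{\mathrm{eff}}(X')$, which has codimension $0$ as a face; therefore $b(X', \phi_* L) = \rk\, \NS(X')$ essentially by definition of $b$ (Definition~\ref{defi: invariant b}), using that $a$ is a birational invariant so $a(X',\phi_*L) = a(X,L)$.

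Next I would prove $b(X,L) = b(X',\phi_* L)$. The cleanest route is to mimic the proof of Lemma~\ref{lemm:bcomputedonmm} verbatim: take a common smooth resolution $W$ with morphisms $\psi \colon W \to X$ and $\psi' \colon W \to X'$, so that $b(X,L) = b(W, \psi^* L)$ and $b(X',\phi_* L) = b(W, \psi'^* \phi_* L)$ by birational invariance. One has $\psi^* L + E = \psi'^* \phi_* L$ for an effective $\psi'$-exceptional divisor $E$, and writing the $\sigma$-decomposition $K_W + \psi^* L = P + N$, Lemma~\ref{lemm:terminalsings} shows $N$ contains all $\psi'$-exceptional divisors in its support (since $(X',\phi_* L')$ is terminal for some $L' \equiv \phi_* L$). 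Hence any nef curve class annihilating $K_W + \psi^* L$ also annihilates $N$ and therefore $E$, so the minimal supported faces of $K_W + \psi^* L$ and $K_W + \psi'^*\phi_* L$ coincide, giving $b(X,L) = b(X',\phi_*L)$. (Indeed, Lemma~\ref{lemm:bcomputedonmm} may apply directly depending on how one sets up the minimal model, but re-running the argument avoids any worry about the hypotheses.)

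Finally, the identity $\rk\,\NS(X') = \rk\,\NS(X) - \rk\,\NS_\phi(X)$ is the standard bookkeeping for a birational contraction: a sequence of flips and divisorial contractions drops the Picard rank by exactly the number of divisors contracted, and $\NS_\phi(X)$ is by definition the sublattice generated by those contracted divisors. Flips preserve Picard rank, each divisorial contraction drops it by $1$, and the contracted divisors are numerically independent in $\NS(X)_{\mathbb{Q}}$ — this can be seen by noting that at the step where a divisor $F_i$ is contracted it has nonzero intersection with the extremal curve being contracted, while all previously-contracted divisors' proper transforms do not, so a Gram-type triangularity argument gives independence. Combining the three displayed equalities yields the lemma.

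I expect the main obstacle to be the middle step: carefully justifying that the $\sigma$-decomposition argument of Lemma~\ref{lemm:bcomputedonmm} goes through when $\kappa(K_X + a(X,L)L) = 0$, where $P$ is merely numerically trivial (not a nontrivial semiample class) and one must be sure the "minimal supported face" comparison still makes sense and that $N$ genuinely supports all the exceptional divisors. The other steps are either definitional ($b = \rk\,\NS$ when the class is numerically zero) or standard MMP bookkeeping (the Picard rank count).
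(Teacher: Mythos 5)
Your overall route is the same as the paper's: the paper's entire proof is ``This follows immediately from Lemma~\ref{lemm:bcomputedonmm}'', i.e.\ reduce to $b(X,L)=b(X',\phi_*L)$ and then read off $b$ on the minimal model, where $K_{X'}+a(X,L)\phi_*L\equiv 0$; your re-running of the $\sigma$-decomposition argument and your Picard-rank bookkeeping for the contracted divisors are just the details the paper leaves implicit, and they are fine.

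There is, however, one genuinely wrong step as written: you assert that since $a(X,L)[\phi_*L]+[K_{X'}]$ is numerically zero, ``its minimal supported face is all of $\Lambda_{\mathrm{eff}}(X')$, which has codimension $0$ as a face; therefore $b(X',\phi_*L)=\rk\,\NS(X')$.'' This is internally contradictory: a face of codimension $0$ would give $b=0$ by Definition~\ref{defi: invariant b}, not $b=\rk\,\NS(X')$. The correct statement is the opposite one: the minimal supported face containing the zero class is $\{0\}$. Indeed, if $A$ is ample on $X'$ then $\sigma(\,\cdot\,)=(\,\cdot\,)\cdot A^{\dim X'-1}$ is a linear functional that is nonnegative on $\Lambda_{\mathrm{eff}}(X')$ and vanishes only at $0$ on it, so $\Lambda_{\mathrm{eff}}(X')\cap\{\sigma=0\}=\{0\}$ is a supported face containing the numerically trivial class, and it is obviously minimal. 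Its codimension is $\dim\NS(X',\bR)=\rk\,\NS(X')$, which gives the desired equality $b(X',\phi_*L)=\rk\,\NS(X')$. With that sentence corrected, the proposal is a valid proof.
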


\begin{proof}
This follows immediately from Lemma \ref{lemm:bcomputedonmm}.
\end{proof}

\subsection{Balanced divisors}


\begin{defi}
\label{defi: balanced line bundles}
Let $X$ be a uniruled projective variety and $L$ a big Cartier divisor on $X$. Suppose that $Y \subset X$ is an irreducible 
proper subvariety of $X$. The divisor $L$ is {\it weakly balanced with respect to $Y$} if
\begin{itemize}
\item $L$ is big on $Y$;
\item $a(Y, L) \leq a(X, L)$;
\item if $a(Y, L) = a(X, L)$, then $b(Y, L) \leq b(X, L)$.
\end{itemize}
It is {\it balanced with respect to $Y$} if it is weakly balanced and one of the two inequalities is strict. 

The divisor $L$ is {\it weakly balanced} (resp. {\it balanced}) on $X$ if there exists a Zariski closed subset $Z \subsetneq X$ such that $L$ is weakly balanced (resp. balanced) with respect to every $Y$ not contained in $Z$. The subset $Z$ will be called {\it exceptional}.
\end{defi}

Note the slight incongruity that an exceptional set for a balanced pair $(X,L)$ 
might differ from an exceptional set when we consider $(X,L)$ as a weakly balanced pair. 
We also use a weaker version of the balanced property:

\begin{defi}
\label{defi: a-balanced}
Let $X$ be an uniruled projective variety and $L$ a big Cartier divisor on $X$. Let $Y \subset X$ be an irreducible 
proper subvariety of $X$. The divisor $L$ is {\it weakly $a$-balanced with respect to $Y$} if 
$L$ is big on $Y$ and $a(Y, L) \leq a(X,L)$. It is {\it $a$-balanced with respect to $Y$} if we have a strict inequality.

The divisor $L$ is {\it weakly $a$-balanced} if there exists a proper Zariski closed subset $Z \subset X$ such that $L$ is weakly $a$-balanced with respect to any $Y$ not contained in $Z$, or {\it $a$-balanced} in the case of strict inequality.  The subset $Z$ will be called $a$-exceptional.  
\end{defi}

\section{Properties of the exceptional set}
\label{sect:rigidity}

In this section we compare the value of $a(X,L)$ to $a(Y,L)$ for subvarieties $Y$.   To prove the strongest statements, we will need the following special case of the Borisov-Alexeev-Borisov Conjecture.

\begin{babconj}
There is a positive number $\delta(n)$ such that every $\mathbb{Q}$-factorial terminal Fano variety $X$ of Picard number $1$ with dimension $\leq n$ satisfies
\begin{equation*}
(-K_{X})^{\dim X} \leq \delta(n).
\end{equation*}
\end{babconj}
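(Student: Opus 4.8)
Conjecture WBAB$_{n}$ is the weak form --- restricted to Picard number $1$ --- of the Borisov--Alexeev--Borisov conjecture on boundedness of Fano varieties with mild singularities; assuming terminal (rather than merely $\epsilon$-log canonical) singularities only makes the situation more favorable. A proof in arbitrary dimension is a deep problem, but the low-dimensional cases --- which are the only ones this paper actually relies on --- are classical, and there is a well-understood strategy for the general statement, which we sketch below.

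In small dimensions one argues by classification. For $n=1$ the only such $X$ is $\bP^{1}$, so $\delta(1)=2$. For $n=2$, a normal projective surface with terminal singularities is smooth, so $X$ is a smooth del Pezzo surface with $\rk\,\NS(X)=1$, hence $X\cong\bP^{2}$ and $\delta(2)=9$. For $n=3$ the statement is Kawamata's boundedness theorem for $\bQ$-Fano threefolds with terminal singularities which, together with the classification results cited in the introduction, produces an explicit $\delta(3)$. These three cases are exactly what is needed for the assertion ``$X$ of dimension at most $4$'' following Theorem~\ref{theo:manincompatibility}, where only WBAB$_{n-1}$ with $n-1\le 3$ is invoked.

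For general $n$ the plan is to bound the anticanonical volume $(-K_{X})^{n}$ directly: since $\rk\,\NS(X)=1$, a bound on $(-K_{X})^{n}$ together with a bound on the Cartier index of $-K_{X}$ yields boundedness of the whole family. First, since $X$ is a terminal Fano it is rationally connected by \cite{HM07}, and Lemma~\ref{lemm:bendandbreak} produces a covering family of rational curves $C$ with $-K_{X}\cdot C\le n+1$, which via Lemma~\ref{lemm:deformationcount} controls the geometry of $X$ on its smooth locus. The substantive step is to convert this curve-theoretic information into a bound on $(-K_{X})^{n}$ in spite of the singularities of $X$: one wants an effective non-vanishing statement, namely a constant $N=N(n)$ with $-NK_{X}$ Cartier and $h^{0}(X,-NK_{X})>0$, and then an effective birationality statement guaranteeing that $|-mNK_{X}|$ defines a birational map for some $m$ bounded in terms of $n$ alone; the volume is then bounded above in terms of the degree of the image of that map. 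This is precisely the line of argument executed through the theory of complements and of singularities of anticanonical linear systems on Fano varieties.

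The main obstacle is the simultaneous control of the local singularities of $X$ and of the global anticanonical system: bounding the complementary threshold, and preventing the members of $|-mK_{X}|$ from being too singular, requires an intricate use of adjunction together with induction on the dimension, and it is this --- rather than the elementary rational-curve input above --- that makes WBAB$_{n}$ genuinely hard for $n\ge 4$. For the purposes of the present paper only the known cases $n\le 3$ above are needed.
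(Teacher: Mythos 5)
This statement is labeled a \emph{conjecture} in the paper (WBAB$_{n}$), and the paper offers no proof of it: the only thing it asserts is that the case $n=3$ holds by Kawamata's theorem, which is then what powers Theorem~\ref{theo:manincompatibility} in dimension at most $4$. Your proposal is consistent with this: you correctly recognize that the general statement is open, your classification arguments for $n=1$ (only $\mathbb{P}^{1}$, so $\delta(1)=2$) and $n=2$ (terminal surface singularities are smooth, so $X\cong\mathbb{P}^{2}$ and $\delta(2)=9$) are sound, and your appeal to Kawamata for $n=3$ matches the paper's citation exactly. The sketch for general $n$ via effective non-vanishing, complements, and effective birationality is a reasonable description of the strategy that later resolved the full BAB conjecture, but as you acknowledge it is a plan rather than a proof, so it should not be read as establishing WBAB$_{n}$ for $n\geq 4$; for the purposes of this paper only the $n\leq 3$ cases are needed, and those you have verified correctly.
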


Conjecture WBAB$_n$ holds for $n=3$ by \cite{Kawamata92}.

\subsection{Dominant families of subvarieties}

\begin{prop}
\label{prop:verygeneralbalanced}
Let $X$ be a smooth projective variety and $L$ a big and nef $\mathbb{Q}$-divisor.  Let $\pi: \mathcal{U} \to W$ be a family of subvarieties of $X$ such that $s: \mathcal{U} \to X$ is dominant.  Then a general member $Y$ of the family $\mathcal{U}$ 
satisfies $a(Y,L) \leq a(X,L)$.
\end{prop}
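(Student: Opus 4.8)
The plan is to reduce the statement to a computation of Fujita invariants on a general fiber, exploiting the fact that $a(-,L)$ is a birational invariant (so we are free to replace $\mathcal U$ and $W$ by convenient birational models). First I would take a resolution and generic smoothness: after shrinking $W$ and resolving, I may assume $W$ is smooth, $\mathcal U$ is smooth, $\pi:\mathcal U \to W$ is smooth projective with irreducible fibers, and the evaluation map $s:\mathcal U \to X$ is dominant and generically finite \emph{after further replacing $\mathcal U$ by an appropriate subfamily} — more precisely, by passing to a general complete intersection in $W$ I can arrange $\dim \mathcal U = \dim X$ so that $s$ is generically finite; this does not change the general member $Y$. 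A general fiber $Y_w = \pi^{-1}(w)$ is then a smooth projective variety mapping finitely (in fact the general $Y$ in the original family is birational to its image, so I can take $s|_{Y_w}$ birational onto $Y$, or carry $L|_Y$ back as $s^*L|_{Y_w}$, which has the same $a$-invariant since $a$ is a pullback-invariant under generically finite maps — here I should be a little careful and instead keep $Y$ as a subvariety of $X$ with the honest restriction $L|_Y$, and use $Y_w \to Y$ birational).

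The key step is the following inequality relating the Fujita invariant of the fiber to that of the total space: for a general $w$,
\begin{equation*}
a(\mathcal U, s^*L) \leq a(Y_w, s^*L|_{Y_w}).
\end{equation*}
This should follow because a pseudo-effective class on $\mathcal U$ restricts to a pseudo-effective class on the general fiber (restriction of pseudo-effective divisors to a general member of a covering/moving family of subvarieties preserves pseudo-effectivity — this is the standard fact underlying \cite{BDPP}-type arguments), together with the adjunction-type compatibility $K_{Y_w} = (K_{\mathcal U} + $ something that is trivial on a fiber since $K_{\mathcal U/W}|_{Y_w} = K_{Y_w})$. Concretely, if $t\, s^*L + K_{\mathcal U}$ is pseudo-effective on $\mathcal U$, then restricting to a general fiber gives $t\, s^*L|_{Y_w} + K_{\mathcal U}|_{Y_w} = t\, s^*L|_{Y_w} + K_{Y_w}$ pseudo-effective on $Y_w$ (using $K_{\mathcal U}|_{Y_w} = K_{\mathcal U/W}|_{Y_w} = K_{Y_w}$), which forces $a(Y_w, s^*L|_{Y_w}) \leq t$; taking the infimum over such $t$ gives the displayed inequality.

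It then remains to bound $a(\mathcal U, s^*L) \leq a(X,L)$. Since $s:\mathcal U \to X$ is dominant and (after the reduction above) generically finite, $s^*L$ is big and nef on $\mathcal U$, and pseudo-effectivity pulls back: if $t L + K_X \in \Lambda_{\eff}(X)$ then $s^*(tL + K_X) = t\, s^*L + s^*K_X \in \Lambda_{\eff}(\mathcal U)$, and $s^*K_X = K_{\mathcal U} - R$ where $R$ is the (effective) ramification divisor, so $t\, s^*L + K_{\mathcal U} = s^*(tL+K_X) + R$ is pseudo-effective; hence $a(\mathcal U, s^*L) \leq a(X,L)$. Chaining the two inequalities yields $a(Y_w, s^*L|_{Y_w}) \geq a(\mathcal U, s^*L)$... wait — that is the wrong direction; the chain I actually want is $a(Y, L|_Y) = a(Y_w, s^*L|_{Y_w}) \geq a(\mathcal U, s^*L)$, which is useless. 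Let me restate: the \emph{correct} middle step must go $a(Y_w, s^*L|_{Y_w}) \leq a(\mathcal U, s^*L)$, not $\geq$. This is where the real content lies, and it is the main obstacle: one needs that a general fiber of a fibration has Fujita invariant \emph{at most} that of the total space for a big and nef pullback $s^*L$. The right way to get this is \emph{not} via restriction of a fixed pseudo-effective class (that gives the wrong inequality) but via the behavior of $a$ under the MMP: run the $(K_{\mathcal U} + a(\mathcal U, s^*L)\, s^*L)$-MMP, observe $K_{\mathcal U}+a\, s^*L$ restricted to a general fiber is $K_{Y_w} + a\, s^*L|_{Y_w}$ which lies on the boundary of or inside $\Lambda_{\eff}(Y_w)$, and conclude $a(Y_w,-)\le a$ — i.e. the effective class $a\,s^*L + K_{\mathcal U}$ restricts to an effective (hence pseudo-effective) class on the general fiber, and $a\,s^*L|_{Y_w} + K_{Y_w}$ pseudo-effective gives exactly $a(Y_w, s^*L|_{Y_w}) \le a = a(\mathcal U, s^*L)$. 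Combined with $a(\mathcal U,s^*L)\le a(X,L)$ from the pullback argument, and $a(Y,L|_Y) = a(Y_w, s^*L|_{Y_w})$ from birational invariance, this gives $a(Y,L) \le a(X,L)$ for the general member $Y$, completing the proof. The subtle point to get right is that "$a\,s^*L + K_{\mathcal U}$ is effective $\Rightarrow$ its restriction to a general fiber is pseudo-effective", which holds because a general fiber moves in a covering family and pseudo-effectivity is preserved under restriction to general members of such families.
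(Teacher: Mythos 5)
Your skeleton is close to the paper's: both arguments cut down the base $W$ so that the evaluation map becomes generically finite, write $K_{\mathcal U_T}=s^*K_X+R$ with $R$ effective, and use triviality of the normal bundle of a general fiber to identify $K_{\mathcal U_T}|_{Y_w}$ with $K_{Y_w}$. The gap is exactly in the step you yourself flag as ``the subtle point'': you assert that the pseudo-effective class $K_{\mathcal U_T}+a(X,L)s^*L$ restricts to a pseudo-effective class on a \emph{general} fiber because ``pseudo-effectivity is preserved under restriction to general members of a covering family.'' That is not a standard fact at this level of generality. The standard statement is for \emph{very general} members: a pseudo-effective class $D$ restricts pseudo-effectively to any subvariety not contained in $\mathbf{B}_{-}(D)$, and $\mathbf{B}_{-}(D)$ is a priori only a \emph{countable} union of proper closed subsets (it need not be closed). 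So your argument, as justified, proves the inequality only for $w$ outside a possibly dense countable union of proper closed subsets of $W$. That is strictly weaker than the stated proposition, and the difference matters downstream: Theorem~\ref{theo:familybalanced} takes the Zariski closure of the failure locus and runs a Noetherian induction, which collapses if that locus can be dense. (Your parenthetical claim that the adjoint class is ``effective'' rather than merely pseudo-effective is also unjustified as written; if it were effective, the general-fiber restriction would indeed be unproblematic.)

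The paper sidesteps this by dualizing: rather than restricting a pseudo-effective divisor class, it tests $K_{Y_w}+a(X,L)s^*L|_{Y_w}$ against nef curve classes $\alpha$ on $Y_w$, using a theorem of Peternell that for a smooth fiber with trivial normal bundle the pushforwards $i_*\alpha$ and $(s|_{Y_w})_*\alpha$ are nef curve classes on $\mathcal U_T$ and on $X$; a pseudo-effective divisor pairs non-negatively with a nef curve class, and the duality of \cite{BDPP} then gives pseudo-effectivity on $Y_w$. Smoothness plus triviality of the normal bundle is an open condition on $w$, so this genuinely yields ``general.'' Your route could be repaired without changing its shape: after cutting down, $s^*L$ is big and nef, so $K_{\mathcal U_T}+a(X,L)s^*L$ is an adjoint-type class and \cite{BCHM} (nonvanishing, or closedness of $\mathbf{B}_{-}$ for such classes) shows it is numerically equivalent to an effective divisor, whose support a general fiber avoids. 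But as written, the key restriction step is unsupported and the proof does not establish the proposition as stated.
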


\begin{proof}
Let $\mathcal{U}'$ be a resolution of the universal family and let $\pi'$ and $s'$ be the corresponding maps.  By \cite[6.8 Theorem]{Peternell12}, any smooth fiber $Y'$ of $\pi'$ with trivial normal bundle has the natural map $\Nef_{1}(Y') \rightarrow \Nef_{1}(\mathcal{U}')$ under the natural inclusion.  In particular, for any nef curve class $\alpha$ on $Y'$ we have that $(s'|_{Y'})_{*}\alpha$ is nef on $X$.  For such a $Y'$, let $Y$ denote its image in $\mathcal{U}$.

Let $d$ denote the dimension of $Y$ and $n$ the dimension of $X$.  By cutting down $W$ by general hyperplane sections to an $(n-d)$-dimensional base $T$, we obtain a smooth restricted family $\mathcal{U}'_{T}$ containing $Y'$ whose map to $X$ is generically finite.  Then
\begin{equation*}
K_{\mathcal{U}'_{T}} \equiv s'^{*}K_{X} + E
\end{equation*}
for some effective divisor $E$.  Since $Y'$ has trivial normal bundle in $\mathcal{U}'_{T}$ as well, \cite[6.8 Theorem]{Peternell12} still applies to $\mathcal{U}'_{T}$.

Consider a nef curve class $\alpha$ on $Y'$.   
Letting $i: Y' \to \mathcal{U}'_{T}$ denote the inclusion,
\begin{align*}
(K_{Y'} + a(X,L)(s'|_{Y'})^{*}L) \cdot \alpha & = (K_{\mathcal{U}'_{T}} + a(X,L)(s'|_{\mathcal{U}'_{T}})^{*}L) \cdot i_{*}\alpha \\
& \geq (K_{X} + a(X,L)L) \cdot (s'|_{Y'})_{*}\alpha.
\end{align*}
In particular, since a divisor is pseudo-effective exactly when it has non-negative intersection against every nef curve class,
$$
a(Y,L) = a(Y',s'^{*}L) \leq a(X,L).
$$
\end{proof}

\begin{theo} \label{theo:familybalanced}
Let $X$ be a smooth projective variety and $L$ a big and nef $\mathbb{Q}$-divisor.  Let $\pi: \mathcal{U} \to W$ be a family of subvarieties of $X$.  There exists a proper closed subset $V \subset X$ such that if a member $Y$ of the family $\mathcal{U}$ satisfies $a(Y,L) > a(X,L)$ then $Y \subset V$.
\end{theo}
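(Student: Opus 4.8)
The plan is to argue by Noetherian induction on $\dim W$, handling a dominant $s\colon \mathcal{U}\to X$ via Proposition~\ref{prop:verygeneralbalanced} and treating the non-dominant case directly. First I would reduce to the case where $W$, and hence $\mathcal{U}$, is irreducible: an arbitrary family is a finite union of irreducible subfamilies, and a finite union of proper closed subsets of $X$ is again proper and closed. The base case $\dim W = 0$ is immediate, since then the members of the family are the finitely many irreducible components of the fiber, and one takes $V$ to be the union of those components that are proper subvarieties of $X$ (a member equal to $X$ never satisfies $a(Y,L) > a(X,L)$).

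For the inductive step, suppose first that $s$ is not dominant; then every member of $\mathcal{U}$ is contained in the proper closed subset $V := \overline{s(\mathcal{U})}$, and we are done. So assume $s$ is dominant. By Proposition~\ref{prop:verygeneralbalanced} a general member of $\mathcal{U}$ satisfies $a(Y,L)\le a(X,L)$, so there is a dense open $W^{\circ}\subseteq W$ such that every member parametrized by a point of $W^{\circ}$ is good; consequently every member $Y$ with $a(Y,L)>a(X,L)$ lies over the proper closed subset $W_{1}:=W\setminus W^{\circ}$. Restricting the universal family to $W_{1}$ and decomposing it into finitely many families of subvarieties $\pi_{j}\colon \mathcal{U}_{j}\to W_{j}$ of $X$ (using \cite{kollarbook}), one has $W_{j}\subseteq W_{1}$ and therefore $\dim W_{j}\le \dim W_{1}<\dim W$. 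Each bad member $Y$ of $\mathcal{U}$ lies over $W_{1}$ and hence occurs as a member of some $\pi_{j}$, so the inductive hypothesis supplies a proper closed $V_{j}\subset X$ containing all bad members of $\pi_{j}$, and $V:=\bigcup_{j}V_{j}$ is the desired set.

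The geometric input, Proposition~\ref{prop:verygeneralbalanced}, is already in hand, so the only delicate point — and the step I expect to be the main obstacle — is the bookkeeping in the dominant case: one must check that the restriction $\pi^{-1}(W_{1})\to W_{1}$ of a family of cycles decomposes into finitely many honest Chow families of subvarieties, with bases of strictly smaller dimension, in such a way that every member $Y$ of $\mathcal{U}$ lying over $W_{1}$ genuinely reappears as a member of one of them (so that no bad member is lost in the passage to the subfamilies). This relies on the representability and functoriality properties of the Chow functor from \cite{kollarbook}; once they are granted, the induction terminates because $\dim W$ strictly decreases at each stage.
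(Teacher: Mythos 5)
Your proposal is correct and follows essentially the same route as the paper: handle the non-dominant case by taking $V=\overline{s(\mathcal U)}$, invoke Proposition~\ref{prop:verygeneralbalanced} in the dominant case to confine the bad members over a proper closed subset of $W$, and conclude by Noetherian induction on the base. The bookkeeping point you flag (that the restricted family decomposes into finitely many Chow families with smaller-dimensional bases) is exactly what the paper's phrase ``arguing by Noetherian induction on the components of $W'$'' silently relies on, and it is supplied by the representability results of \cite{kollarbook}.
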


\begin{proof}
Let $f: \mathcal{U} \to X$ be the family.  
If $f(\mathcal{U}) \subsetneq X$ then we can set $V$ to be this subset.  Otherwise, Proposition \ref{prop:verygeneralbalanced} shows that a general member of $\mathcal{U}$ satisfies 
$a(Y,L) \leq a(X,L)$.  Let $W' \subset W$ be the closure of the subset over which this inequality does not hold.  Arguing by Noetherian induction on the components of $W'$, we obtain the proof.
\end{proof}

Using the countability of the Hilbert scheme, we immediately obtain:

\begin{coro}
Let $X$ be a smooth projective variety and $L$ an ample divisor on $X$.  There is a countable union $V$ of proper closed subsets of $X$ such that any subvariety $Y$ satisfying
\begin{equation*}
a(Y,L) >  a(X,L)
\end{equation*}
is contained in $V$.
\end{coro}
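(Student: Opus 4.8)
The plan is to deduce this from Theorem~\ref{theo:familybalanced} by running it over the countably many components of the Chow variety of $X$. Since $L$ is ample it is in particular a big and nef $\mathbb{Q}$-divisor, so Theorem~\ref{theo:familybalanced} is available for every family of subvarieties of $X$.

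First I would organize all subvarieties of $X$ into countably many families. Recall from Section~\ref{sect:preliminaries} that $\Chow(X) = \bigsqcup_{r,d} \Chow_{r,d}(X)$, where each $\Chow_{r,d}(X)$ is a (seminormal) projective variety; in particular $\Chow(X)$ has only countably many irreducible components. Inside $\Chow(X)$, the locus parametrizing irreducible and reduced cycles is constructible, so for each irreducible component $Z$ of its closure, the intersection of this locus with $Z$ is constructible and dense in $Z$, hence contains a dense open subset of $Z$. Restricting the universal family to such a $Z$ therefore yields a family of subvarieties of $X$ in the sense of Section~\ref{sect:preliminaries}, whose general member is irreducible and reduced. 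Let $\{\pi_{i} : \mathcal{U}_{i} \to W_{i}\}_{i \in \bN}$ be the (countably many) families obtained in this way.

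For each $i$, Theorem~\ref{theo:familybalanced} produces a proper closed subset $V_{i} \subset X$ such that every member $Y$ of $\mathcal{U}_{i}$ with $a(Y,L) > a(X,L)$ is contained in $V_{i}$. Put $V = \bigcup_{i \in \bN} V_{i}$, a countable union of proper closed subsets. Given an arbitrary subvariety $Y \subset X$ with $a(Y,L) > a(X,L)$, the point $[Y]$ lies in the locus of irreducible reduced cycles of $\Chow(X)$, hence in one of the components $W_{i}$; thus $Y$ occurs as the member of $\mathcal{U}_{i}$ over $[Y]$, and so $Y \subset V_{i} \subset V$. This gives the corollary.

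The one point needing (mild) care is the bookkeeping in the second step: one must check that every subvariety of $X$ genuinely appears as a member of one of the $\pi_{i}$, and that each $\pi_{i}$ really has an irreducible and reduced general member so that Theorem~\ref{theo:familybalanced} applies verbatim. Both are consequences of standard constructibility and Noetherianity properties of $\Chow(X)$ (one could equally phrase the countability via the Hilbert scheme), and together they justify the word ``immediately'' in the statement.
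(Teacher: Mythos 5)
Your proposal is correct and is exactly the argument the paper intends: the corollary is deduced from Theorem~\ref{theo:familybalanced} by covering all subvarieties with the countably many components of the parameter space (the paper cites the Hilbert scheme where you use $\Chow(X)$, an immaterial difference) and taking the countable union of the resulting closed sets $V_i$. You have simply spelled out the bookkeeping that the paper compresses into the word ``immediately.''
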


\begin{exam} \label{exam:k3surface}
The countability of $V$ is necessary: consider for example a K3 surface $X$ containing infinitely many rational curves. Then 
$a(X,L)=0$, for every ample $L$, and 
$$
a(Y,L)=2/\deg(Y) >0,
$$ 
where $\deg(Y):=L.Y$ is the $L$-degree of a rational curve $Y\subset X$. 
\end{exam}

\subsection{Rigidity}

In \cite{balanced}, the balanced property was explored for del Pezzo surfaces and equivariant compactifications of homogeneous spaces.  For del Pezzos, the balanced property for $L$ is equivalent to the rigidity of the adjoint divisor 
$a(X, L)L+K_X$. This was also observed in \cite{balanced} for flag varieties and toric varieties, 
and it is compatible with the conjectural picture of the Tamagawa constant in \cite{BT}.  
We show that this observation is true in general, proving a conjecture of \cite{BT}.

\begin{theo} \label{theo:rigidity}
Let $X$ be a smooth uniruled variety and $L$ a big and nef divisor on $X$.  
If $L$ is balanced, then $K_{X} + a(X,L)L$ is rigid.  
In particular, $X$ is rationally connected and $K_{X} + a(X,L)L$ is numerically equivalent to an exceptional divisor for a birational map.
\end{theo}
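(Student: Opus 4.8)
The plan is to prove the one essential statement, that $\kappa(K_X+a(X,L)L)=0$; the remaining assertions are then immediate consequences of results already in hand. Write $a:=a(X,L)$, which is positive since $X$ is uniruled (\cite{BDPP}), and set $D:=K_X+aL$. As $a$ is the threshold cutting out the boundary of $\Lambda_{\eff}(X)$, the class $D$ lies on that boundary, so $D$ is not big and $\kappa(D)\le \dim X-1$. I would first record that $\kappa(D)\ge 0$ in general: running the MMP as in Theorem~\ref{theo:runningmmp} for a klt pair $(X,\Delta)$ with $\Delta\sim_\bQ aL$ produces a model on which $K+\Delta'$ is semiample, and since the MMP preserves the Iitaka dimension of the log canonical class and $K_X+\Delta\sim_\bQ D$, we get $\kappa(D)\ge 0$. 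Granting the target $\kappa(D)=0$: Proposition~\ref{prop:ratconnectedprop} yields that $X$ is rationally connected and birational to a log-Fano; the class $D$ is $\bQ$-linearly equivalent to a unique effective divisor $N$, which is rigid since two independent sections of a multiple would force $\kappa(D)\ge 1$; and for the $(X,aL)$-MMP $\phi\colon X\dashrightarrow X'$, the semiample divisor $K_{X'}+a\phi_*L$ has Iitaka dimension $0$, hence is $\equiv 0$, so $\phi_*N$ is effective and numerically trivial, that is $\phi_*N=0$ and $N$ is supported on the exceptional locus of $\phi$. This gives the ``exceptional divisor'' statement.

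So suppose for contradiction that $\kappa(D)\ge 1$. I would derive a contradiction by exhibiting a Zariski-dense family of subvarieties $Y\subset X$ with $a(Y,L)=a$ and $b(Y,L)\ge b(X,L)$, which no exceptional set can avoid. Pass to a minimal model $\phi\colon X\dashrightarrow X'$ for $(X,aL)$ and let $\psi\colon X'\to Z$ be the morphism given by the semiample class $K_{X'}+a\phi_*L$, so $\dim Z=\kappa(D)$ with $1\le\dim Z\le \dim X-1$. Choose a smooth model $\mu\colon W\to X$ resolving both $X\dashrightarrow Z$ and $X\dashrightarrow X'$, with resulting morphisms $g\colon W\to Z$ and $h\colon W\to X'$; then $g$ is a model of the Iitaka fibration of $K_W+a\mu^*L$. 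Let $F$ be a general, hence smooth, fiber of $g$ and put $Y:=\mu(F)$, a proper irreducible subvariety of $X$ with $\mu|_F\colon F\to Y$ birational and $\mu^*L|_F$ big and nef.

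The comparison of $a$-values is the easy part. Proposition~\ref{prop:verygeneralbalanced}, applied to the dominant family of fibers of $g$ on $W$, gives $a(F,\mu^*L|_F)\le a$. Conversely, the Iitaka fibration property gives $\kappa\big((K_W+a\mu^*L)|_F\big)=0$, and by adjunction $(K_W+a\mu^*L)|_F\equiv K_F+a\mu^*L|_F$; thus $K_F+a\mu^*L|_F$ is $\bQ$-linearly equivalent to a rigid effective divisor, and since $F$ is positive-dimensional and $\mu^*L|_F$ is big, subtracting any positive multiple of $\mu^*L|_F$ destroys pseudo-effectivity. Hence $a(F,\mu^*L|_F)=a$, and $a(Y,L)=a$ by birational invariance.

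The main obstacle is the comparison $b(Y,L)=b(F,\mu^*L|_F)\ge b(X,L)$. I would compute both sides geometrically. By Lemma~\ref{lemm:bcomputedonmm} and Proposition~\ref{prop:binterpretationforsemiample} (valid in the terminal setting, with $(X',\phi_*L)$ having $a$-invariant $a$ because $D$ is not big), $b(X,L)=b(X',\phi_*L)=\rk\NS(X')-\rk\NS_\psi(X')$, where $\NS_\psi(X')$ is spanned by the $\psi$-vertical prime divisors. On the other side, $K_F+a\mu^*L|_F$ has Iitaka dimension $0$, and a general fiber $F'$ of $\psi$ serves as a minimal model for $(F,a\mu^*L|_F)$ (general fibers inherit $\bQ$-factoriality and terminality), so Lemma~\ref{lemm:binterpretationiitaka0} gives $b(F,\mu^*L|_F)=\rk\NS(F')$. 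Thus it suffices to show that the restriction map $\rho\colon\NS(X')\to\NS(F')$ has kernel \emph{exactly} $\NS_\psi(X')$, for then $\rho$ descends to an injection $\NS(X')/\NS_\psi(X')\hookrightarrow\NS(F')$ and we are done. The inclusion $\NS_\psi(X')\subseteq\ker\rho$ is clear since a vertical divisor misses a general fiber; the reverse inclusion is the crux and rests on the standard fact that a $\bQ$-Cartier divisor numerically trivial on the generic fiber of a fibration is $\bQ$-linearly equivalent to a pullback from the base plus a vertical divisor — both of which are $\psi$-vertical. Granting this, every $Y=\mu(F)$ with $F$ a general fiber of $g$ fails the balanced condition, and since the subvarieties $\mu(g^{-1}(z))$ sweep out a dense subset of $X$, no exceptional set can exist, contradicting the hypothesis. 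Therefore $\kappa(D)=0$, and the theorem follows.
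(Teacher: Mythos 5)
Your overall strategy coincides with the paper's: assume $\kappa(K_X+a(X,L)L)\ge 1$, pass to a minimal model and its canonical fibration, and show that the general fibers form a dominant family with $a(Y,L)=a(X,L)$ and $b(Y,L)\ge b(X,L)$, contradicting the balanced hypothesis. The reduction of the $b$-comparison to computing the kernel of the restriction map $\NS(X')\to\NS(F')$ is also exactly the paper's route.

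The one genuine gap is the step you yourself label the crux. The claim that a $\bQ$-Cartier divisor numerically trivial on the generic fiber of a fibration is $\bQ$-linearly equivalent to a pullback from the base plus a vertical divisor is \emph{not} a standard fact, and it is false in general: for $E$ an elliptic curve and $X'=E\times\bP^1\to\bP^1$, the pullback of a non-torsion degree-zero divisor on $E$ is numerically trivial on every fiber, yet no nonzero multiple of it is linearly equivalent to a sum of fibers. What rescues the argument in the present situation is that the fibers of the canonical model are of log Fano type, hence rationally chain connected, so their resolutions are rationally connected and carry no continuous Picard group. This is precisely where the paper's proof invests its effort: it uses Hacon--McKernan to upgrade rational chain connectedness of the (terminal) fibers to rational connectedness of their resolutions, Poonen's specialization map for N\'eron--Severi groups to transfer numerical triviality from the geometric generic fiber to general closed fibers, and Grauert's theorem to descend the resulting fiberwise triviality to a $\bQ$-linear equivalence $L\sim_{\bQ}\psi^*M+(\text{vertical})$ over an open subset of the base. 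You need to supply this rational-connectedness input; without it the kernel computation, and hence the inequality $b(Y,L)\ge b(X,L)$, is unjustified. (A smaller point: your assertion that a general fiber $F'$ of $\psi$ \emph{is} a minimal model of $(F,a\mu^*L|_F)$ --- in particular that it inherits $\bQ$-factoriality --- is more than you need and not obviously true; the paper avoids it by proving only the chain of inequalities $b(Y,L)\ge\rk\,\NS(Y)-\rk\,\NS_{\pi}(Y)\ge\rk\,\NS(Y')$, using that $K_Y+a(X,L)L|_Y$ is exceptional for $Y\to Y'$.)
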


The converse is false; in Section \ref{sect:fano-threefolds} we will see many examples of Fano threefolds for which $-K_{X}$ is weakly balanced but not balanced.

\begin{proof}
Suppose that $K_{X} + a(X,L)L$ is not rigid, or equivalently, the Iitaka dimension of $K_{X} + a(X,L)L$ is at least $1$.  By resolving, we may suppose that $X$ admits a morphism $\phi: X \to X'$ to a minimal model $X'$ for $(X,a(X,L)L)$.  Let $\pi: X' \to Z$ denote the canonical model.

Theorem \ref{theo:familybalanced} shows that if $Y$ is a general fiber of $\pi \circ \phi: X \to Z$ then $a(Y,L) \leq a(X,L)$.  In fact equality is achieved since the restriction of $K_{X} + a(X,L)L$ to $Y$ is not big.

We show that furthermore $b(Y,L) \geq b(X,L)$.  By Corollary~\ref{coro:binterpretation}, 
$$
b(X,L) = \rk \, \NS(X') - \rk \, \NS_{\pi}(X').
$$ 
Similarly, let $Y'$ denote the image of $Y$, so that $Y'$ is a fiber of $\pi$.  We have $(K_{X'}+a(Y,L)\phi_{*}L)|_{Y'} \equiv 0$.  By \cite[Lemma 3.38]{KM98}, we can write
\begin{equation*}
K_{X} + a(X,L)L = \phi^{*}(K_{X'} + a(X,L)\phi_{*}L) + E
\end{equation*}
for some effective $\phi$-exceptional divisor $E$.  By the generality of $Y$, the restriction $E|_{Y}$ is also $\pi|_{Y}$-exceptional.  We see that $K_{Y} + a(X,L)L|_{Y}$ is a $\pi|_{Y}$-exceptional divisor.  Then Lemma \ref{lemm:binterpretationiitaka0} shows that
\begin{equation*}
b(Y,L) \geq \rk \, \NS(Y) - \rk \, \NS_{\pi}(Y) \geq \rk \, \NS(Y').
\end{equation*}

To conclude, it suffices to show that the restriction map $\NS(X') \to \NS(Y')$ has kernel $\NS_{\pi}(X')$.  This is true because the fibers of $\pi$ are rationally connected: suppose that $L$ is a $\bQ$-divisor satisfying $L|_{Y'} \equiv 0$.  Choose a sufficiently large integer $m$ so that $mL$ is Cartier.  Let $\psi: \widehat{X} \to X'$ be a resolution and $\widehat{Y}$ denote the strict transform of $Y'$.  By generality of $Y'$, we may suppose that $\widehat{Y}$ is smooth.   Let $\widehat{Y}_{\nu}$ denote the base change of the generic fiber of $\pi \circ \phi$ to the algebraic closure of the function field.  Then \cite[Proposition 3.6]{poonen} constructs a specialization map $\NS(\widehat{Y}_{\nu}) \to \NS(\widehat{Y}_{t})$, compatible with pulling back from $\widehat{X}$, which is injective for every smooth fiber $\widehat{Y}_{t}$ of $\pi \circ \psi$.  In particular, since $m\psi^{*}L|_{\widehat{Y}} \equiv 0$, we also have that $m\psi^{*}L|_{\widehat{Y}_{t}} \equiv 0$ for a general fiber of $\pi \circ \psi$.

Since a general fiber $Y'_{t}$ of $\pi$ is rationally chain connected with terminal singularities, using \cite{HM07} and passing to a resolution we see that for a general fiber $\widehat{Y}_{t}$ of $\pi \circ \psi$ we have $m\psi^{*}L|_{\widehat{Y}_{t}} \sim 0$.  Then using Grauert's theorem over an open subset of $Z$, we determine that $\psi^{*}L$ is $\mathbb{Q}$-linearly equivalent to a pullback of a divisor from $Z$ plus some $\pi$-vertical divisors.  By pushing forward we determine that $L$ has the same property.
\end{proof}

\subsection{Closedness of the $a$-exceptional set}

The following proposition describes the basic dichotomy governing the strongly $a$-balanced property.

\begin{prop} \label{prop:adichotomy}
Let $X$ be a smooth uniruled projective variety of dimension $n$ and $L$
a big and nef $\mathbb{Q}$-divisor.  Then either
\begin{enumerate}
\item $X$ is covered by varieties $Y$ satisfying $a(Y,L) = a(X,L)$, or
\item $X$ is birational to a $\mathbb{Q}$-factorial terminal Fano variety $X'$ of Picard number $1$.
\end{enumerate}
\end{prop}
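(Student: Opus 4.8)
The plan is to run an MMP on the adjoint pair $(X, a(X,L)L)$ and use the geometry of the output to force the dichotomy. Since $L$ is big and nef and $a(X,L)L + K_X$ is pseudo-effective (as $X$ is uniruled, $a(X,L) > 0$), we may pass to a minimal model $\phi: X \dashrightarrow X'$ for $(X, a(X,L)L)$ as in Theorem~\ref{theo:runningmmp}, with $X'$ a $\mathbb{Q}$-factorial terminal variety and $K_{X'} + a(X,L)\phi_*L$ semiample. Replacing $X$ by a common resolution if necessary, we may assume $\phi$ is a morphism. Let $\pi: X' \to Z$ be the semiample fibration defined by $K_{X'} + a(X,L)\phi_*L$, i.e.\ the canonical model of $(X, a(X,L)L)$.

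First I would dispose of the case $\dim Z \geq 1$. Here $Z$ has positive dimension, so the general fiber $F'$ of $\pi$ is a proper subvariety of $X'$, and $(K_{X'} + a(X,L)\phi_*L)|_{F'} \equiv 0$. Pulling back to a general fiber $F$ of $\pi \circ \phi$ on $X$ (using Lemma~\ref{lemm:terminalsings} / \cite[Lemma 3.38]{KM98} to compare canonical classes, exactly as in the proof of Theorem~\ref{theo:rigidity}), one gets that $K_F + a(X,L)L|_F$ is effective — indeed $\phi$-exceptional restricted to $F$ — so $a(F,L) \geq a(X,L)$. Combined with Proposition~\ref{prop:verygeneralbalanced} (the family of fibers dominates $X$ since $\pi\circ\phi$ is surjective), we get $a(F,L) = a(X,L)$, and these fibers cover $X$. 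That is alternative (1).

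It remains to handle $\dim Z = 0$, i.e.\ $\kappa(K_{X} + a(X,L)L) = 0$. Then on the minimal model $X'$ we have $K_{X'} + a(X,L)\phi_*L \equiv 0$ with $a(X,L)\phi_*L$ big, so $X'$ is a weak log Fano as in Proposition~\ref{prop:ratconnectedprop}: writing $\phi_*L \equiv E + A$ with $E$ effective and $A$ ample and perturbing, $X'$ is birational to a log-Fano and, more to the point, $-K_{X'}$ is big. Now I would run a $-K_{X'}$-MMP (equivalently, keep contracting $K_{X'} + a(X,L)\phi_*L$-trivial extremal rays, which are automatically $-K_{X'}$-positive) to reach a Mori fiber space or a $\mathbb{Q}$-factorial terminal variety with nef $-K$; since $-K_{X'}$ is big the latter forces Picard number $1$, and in the Mori fiber space case either the base has positive dimension — returning us to a covering family of fibers with $a$ equal, hence alternative (1) — or the base is a point and we again land on a Picard rank $1$ $\mathbb{Q}$-factorial terminal Fano, which is alternative (2) since $a$ and birational equivalence are preserved. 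To be careful, the covering families produced at an intermediate stage live on a birational model $X''$, but pulling them back by the birational contraction $X \dashrightarrow X''$ (a morphism after further resolution) yields a dominant family on $X$ whose general member $Y$ still has $a(Y,L) = a(X,L)$ by birational invariance of $a$ and Proposition~\ref{prop:verygeneralbalanced}.

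The main obstacle I anticipate is the bookkeeping in the $\kappa = 0$ case: ensuring that the Mori fiber space structure on the eventual output genuinely produces a \emph{covering} family on the original $X$ with the $a$-equality preserved, and correctly matching adjoint divisors across the chain of flips and divisorial contractions so that "$a(Y,L) = a(X,L)$" survives. The comparison of canonical classes under these birational maps — showing the discrepancy divisor restricts to an exceptional (hence effective, non-big) divisor on the general fiber — is the delicate point, and it is exactly the argument already carried out in the proof of Theorem~\ref{theo:rigidity}, so I would invoke that computation rather than redo it.
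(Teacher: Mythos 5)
Your proposal is correct and follows essentially the same route as the paper: run the adjoint MMP, split on the Iitaka dimension of $K_X + a(X,L)L$, and in the $\kappa = 0$ case continue the MMP (your ``$-K$-MMP'' on $X'$ contracts exactly the same extremal rays as the paper's $K_X + (1-\epsilon)a(X,L)L$-MMP, since $K_{X'} + a(X,L)\phi_*L \equiv 0$) until a Mori fiber space is reached, with a positive-dimensional base giving alternative (1) and a point base giving alternative (2). The only substantive difference is that the paper certifies $a(Y,L) = a(X,L)$ for the strict transforms of Mori fibers by intersecting with general complete-intersection nef curves that avoid the exceptional centers, rather than the discrepancy comparison you defer to Theorem~\ref{theo:rigidity}; both close the same delicate point you correctly identified.
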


\begin{proof}
Let $\phi: X \dashrightarrow X'$ be the result of the $K_{X} + a(X,L)L$-MMP.  We split the analysis into two cases.

First, suppose that the Iitaka dimension of $K_{X} + a(X,L)L$ is at least $1$.  Then the fibers of the Iitaka fibration satisfy property $1$ as argued in the proof of Theorem \ref{theo:rigidity}.

Next, suppose that $K_{X'} + a(X,L)\phi_{*}L$ is numerically trivial.  Choose a sufficiently small $\epsilon>0$ and continue to run the $K_{X} + (1-\epsilon)a(X,L)L$-MMP.  The result will be a birational map $\psi: X \dashrightarrow \widetilde{X}$ (which WLOG we may assume is a morphism) and a Mori fibration $\pi: \widetilde{X} \to Z$ such that $K_{\widetilde{X}} + a(X,L)\psi_{*}L$ is numerically trivial along the fibers of $\pi$.  If $\dim Z>0$, let $\widetilde{Y}$ be a general fiber of $\pi$ and let $Y$ be its preimage on $X$.  A general complete intersection curve $\widetilde{C}$ in $\widetilde{Y}$ will avoid every $\psi$-exceptional center.  Since $\psi$ is a birational contraction, the strict transform $C$ on $Y$ is nef and avoids the $\psi$-exceptional locus so that
\begin{equation*}
(K_{Y} + a(X,L)L) \cdot C = 0.
\end{equation*}
By Theorem \ref{theo:familybalanced}, we have $a(Y,L) = a(X,L)$.  Otherwise, $\dim Z = 0$ and $\widetilde{X}$ is a $\mathbb{Q}$-factorial terminal Fano variety of Picard number $1$.
\end{proof}

We will also need a lemma concerning big and nef divisors.  It was used in earlier joint work of the first author with Mihai Fulger.

\begin{lemm} \label{lemm:chowboundedness}
Let $X$ be a smooth projective variety and $L$ a big and nef $\mathbb{Q}$-divisor on $X$.  Fix a constant $C$.  Then the subset of $\Chow(X)$ parametrizing subvarieties of $X$ that are not contained in $\mathbf{B}_{+}(L)$ and are of $L$-degree at most $C$ is bounded.
\end{lemm}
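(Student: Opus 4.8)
The plan is to reduce the statement to the standard boundedness result for subvarieties of bounded degree with respect to an \emph{ample} divisor, which is classical (the relevant part of the Chow variety is of finite type; see \cite{kollarbook}). The obstacle is of course that $L$ is only big and nef, not ample, so $L$-degree does not directly control an ample degree. The key is to exploit the hypothesis that we only look at subvarieties $Y \not\subset \mathbf{B}_{+}(L)$, together with the Nakamaye-type description of $\mathbf{B}_{+}(L)$.

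First I would fix an ample $\mathbb{Q}$-divisor $A$ on $X$ with $A \leq L$ in the sense that $L-A$ is still big and nef, and moreover (using the fact that $\mathbf{B}_{+}(L) = \mathbf{B}(L-A')$ for a suitable ample $A'$, as recorded in the preliminaries) arrange that $\mathbf{B}_{+}(L) = \mathbf{B}_{+}(L-A)$ and that there is an effective $\mathbb{Q}$-divisor $N$ with $L - A \sim_{\mathbb{Q}} N$ and $\Supp(N) \subseteq \mathbf{B}_{+}(L)$. Shrinking $A$ if necessary, write $L \sim_{\mathbb{Q}} A + N$. Now for any subvariety $Y$ of dimension $d$ with $Y \not\subset \mathbf{B}_{+}(L) \supseteq \Supp(N)$, the restriction $N|_Y$ is an effective $\mathbb{Q}$-divisor on $Y$, hence $(N|_Y \cdot (L|_Y)^{d-1}) \geq 0$ since $L$ is nef. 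Therefore
\begin{equation*}
A \cdot L^{d-1} \cdot Y = L^{d} \cdot Y - N \cdot L^{d-1} \cdot Y \leq L^{d} \cdot Y \leq C,
\end{equation*}
so the $A$-degree of $Y$ computed against $L^{d-1}$ is bounded. This is not yet the honest $A$-degree $A^{d}\cdot Y$, so I would iterate: replacing one factor of $L$ at a time by $A+N$ and discarding the (nonnegative) $N$-terms, one sees inductively that $A^{d} \cdot Y \leq A^{j} \cdot L^{d-j} \cdot Y \leq \cdots \leq L^{d}\cdot Y \leq C$. (Each intermediate intersection number is nonnegative because $A$ and $L$ are nef and $Y$ is effective, and each step only decreases the quantity.) Hence every such $Y$ has $A$-degree at most $C$.

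The conclusion then follows: the subvarieties of $X$ of dimension $d$ and $A$-degree at most $C$ form a bounded family, since they are parametrized by a finite-type subscheme of $\Chow(X)$ — namely the union of the $\Chow_{d,d'}(X)$ with $A$-degree $d' \leq C$, each of which is a projective variety, and only finitely many such components are nonempty once the $A$-degree is bounded, as $A$ is ample. Taking the union over the finitely many relevant dimensions $d \leq \dim X$ gives the bounded subset of $\Chow(X)$ claimed in the statement. The only subtlety to be careful about is the choice of $A$ at the start — one needs $\Supp(N) \subseteq \mathbf{B}_{+}(L)$, which is exactly what the structure theory of $\mathbf{B}_{+}$ in \cite{elmnp06} provides — and the observation that the locus in $\Chow(X)$ we care about is contained in this bounded set, so it is itself bounded.
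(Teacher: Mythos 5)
Your overall strategy --- write $L$ as an ample class plus an effective class, use the hypothesis $Y\not\subset\mathbf{B}_{+}(L)$ to make the effective part restrict to an effective cycle on $Y$, telescope to get $A^{d}\cdot Y\leq L^{d}\cdot Y\leq C$, and invoke boundedness for ample degree --- is exactly the paper's argument. But there is a genuine gap in your first step: you cannot in general arrange an effective $N\sim_{\mathbb{Q}}L-A$ with $\Supp(N)\subseteq\mathbf{B}_{+}(L)$, and this is \emph{not} what \cite{elmnp06} provides. What that result gives is $\mathbf{B}_{+}(L)=\mathbf{B}(L-A)$, i.e.\ the \emph{stable base locus} of $L-A$ equals $\mathbf{B}_{+}(L)$; individual members of the linear system $|m(L-A)|$ will typically have much larger support. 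Indeed your claim is outright false whenever $\mathbf{B}_{+}(L)$ has codimension at least $2$: take $X$ a smooth threefold with a small (flopping) contraction $\pi\colon X\to X'$ contracting a single rational curve $C$, and $L=\pi^{*}H$ for $H$ ample on $X'$. Then $L$ is big and nef, $\mathbf{B}_{+}(L)=C$ is a curve, and no nonzero effective divisor has support contained in $C$; since $L$ is not ample, $N=0$ is also impossible.

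The repair is exactly the paper's move: fix $E\sim_{\mathbb{Q}}L-A$ effective once and for all, and then, \emph{for each} subvariety $Y\not\subset\mathbf{B}_{+}(L)=\mathbf{B}(E)$, choose a member $E'\sim_{\mathbb{Q}}E$ whose support does not contain $Y$ (this is what the stable base locus condition actually buys you). Then $E'\cdot Y$ is an effective cycle, and your telescoping inequality
\begin{equation*}
A^{d}\cdot Y\leq A^{j}\cdot L^{d-j}\cdot Y\leq\cdots\leq L^{d}\cdot Y\leq C
\end{equation*}
goes through verbatim with $E'$ in place of $N$, because each discarded term is of the form $E'\cdot(\text{products of nef classes})\cdot Y\geq 0$. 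With that one-line change your proof coincides with the paper's. (A minor further quibble: you do not need $L-A$ to be nef, only $\mathbb{Q}$-effective, which follows from bigness of $L-A$ for $A$ small.)
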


\begin{proof}
By definition, there is an effective $\mathbb{Q}$-divisor $E$ such that $L \equiv A + E$ for an ample $\mathbb{Q}$-divisor $A$ and the stable base locus of $E$ is equal to $\mathbf{B}_{+}(L)$.  Suppose that $Z$ is a $d$-dimensional subvariety not contained in $\mathbf{B}_{+}(L)$.  Then there is a divisor $E'$ that is $\mathbb{Q}$-linearly equivalent to $E$ whose support does not contain $Z$.  Note that
\begin{equation*}
L^d-(L-E)^d \equiv E' \cdot \left( \sum_{i=1}^d  L^{i-1}(L-E)^{d-i} \right)
\end{equation*}
where the term in parentheses is a positive combination of complete intersections of nef divisors.  Since $E' \cdot Z$ is an effective cycle, we have that
\begin{equation*}
L^{d} \cdot Z \geq (L-E)^{d} \cdot Z
\end{equation*}
and we conclude by the usual boundedness for ample divisors.
\end{proof}

\begin{theo} \label{theo:curveclosedness}
Let $X$ be a smooth uniruled projective variety and $L$ 
a big and nef $\mathbb{Q}$-divisor on $X$.  There is a proper closed $V \subset X$ such that any integral curve $C$ with $a(C,L) > a(X,L)$ is contained in $V$.
\end{theo}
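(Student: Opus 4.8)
The goal is to show that the curves $C$ with $a(C,L) > a(X,L)$ form a bounded family, so that after taking the (finitely many) components of their closure in $\Chow(X)$ we may apply Theorem~\ref{theo:familybalanced} componentwise to produce the required proper closed $V$. So the plan is to reduce the statement to a boundedness assertion and then invoke Lemma~\ref{lemm:chowboundedness}.

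First I would rescale so that $a(X,L) = 1$; thus $K_X + L$ is pseudo-effective. For an integral curve $C$ with normalization $\nu: \tilde C \to C$, the value $a(C,L)$ is (by the birational-invariance of the Fujita invariant) just $a(\tilde C, \nu^* L) = -\deg K_{\tilde C}/\deg \nu^* L = (2 - 2g(\tilde C))/(L \cdot C)$, which is positive only if $\tilde C \cong \mathbb{P}^1$, in which case $a(C,L) = 2/(L \cdot C)$. Hence the condition $a(C,L) > 1$ forces $C$ to be a rational curve of $L$-degree $L \cdot C = 1$ (note $L\cdot C$ is a positive integer since $L$ is Cartier, and $L\cdot C \ge 1$). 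So the entire set of offending curves consists of rational curves of $L$-degree exactly $1$.

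Next, I would discard those curves lying in $\mathbf{B}_+(L)$ by simply absorbing $\mathbf{B}_+(L)$ — a proper closed subset since $L$ is big — into $V$. For the remaining curves, which are not contained in $\mathbf{B}_+(L)$ and have $L$-degree $1$, Lemma~\ref{lemm:chowboundedness} applies directly: the subset of $\Chow(X)$ parametrizing them is bounded. Let $W_1, \dots, W_k$ be the finitely many components of the closure of this subset in $\Chow(X)$, each giving a family $\pi_i: \mathcal{U}_i \to W_i$ of rational curves of $L$-degree $1$. Applying Theorem~\ref{theo:familybalanced} to each family $\mathcal{U}_i$ produces a proper closed subset $V_i \subset X$ containing every member $C$ of $\mathcal{U}_i$ with $a(C,L) > a(X,L)$; I then set $V = \mathbf{B}_+(L) \cup V_1 \cup \dots \cup V_k$, a proper closed subset, which by construction contains every integral curve $C$ with $a(C,L) > a(X,L)$.

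The only subtlety — and the step I would be most careful about — is the passage from the (a priori just set-theoretic, or ind-) locus of degree-$1$ rational curves in $\Chow(X)$ to an honest finite union of families to which Theorem~\ref{theo:familybalanced} applies; this is exactly what Lemma~\ref{lemm:chowboundedness} is designed to supply, since boundedness of the relevant subset of $\Chow(X)$ guarantees its closure has finitely many irreducible components, each of which is a base for a family of subvarieties in the sense used in Theorem~\ref{theo:familybalanced}. Everything else is a routine unwinding of the definition of $a$ for curves.
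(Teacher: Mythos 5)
Your proposal is correct and follows essentially the same route as the paper: reduce to rational curves, bound the $L$-degree of those not contained in $\mathbf{B}_{+}(L)$ by $2/a(X,L)$, invoke Lemma~\ref{lemm:chowboundedness} for boundedness, and finish with Theorem~\ref{theo:familybalanced}. The one slip is inessential: $L$ is only a big and nef $\mathbb{Q}$-divisor (and you have rescaled it by $a(X,L)$, which need not be rational), so $L\cdot C$ need not be an integer and you cannot conclude $L\cdot C=1$; but the bound $L\cdot C<2/a(X,L)$ is all that Lemma~\ref{lemm:chowboundedness} requires.
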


\begin{proof}
Since $a(X,L)>0$ it suffices to consider rational curves $C$.  Suppose that $C \not \subset \mathbf{B}_{+}(L)$.  Then $L \cdot C > 0$ and $a(C,L) = \frac{2}{L \cdot C}$.  Thus, if $a(C,L) > a(X,L)$ then $L \cdot C < \frac{2}{a(X,L)}$.  By Lemma \ref{lemm:chowboundedness}, there are only finitely many families of such curves, and we conclude by Theorem \ref{theo:familybalanced}.
\end{proof}

In order to extend this result to higher dimension subvarieties, we will need to apply a boundedness conjecture for terminal $\mathbb{Q}$-Fanos.

\begin{lemm} \label{lemm:babimpliesboundedness}
Assume Conjecture WBAB$_{n}$.  Let $X$ be a projective variety of dimension $n$ and $L$ a big and nef $\mathbb{Q}$-divisor on $X$.  Then either:
\begin{enumerate}
\item $X$ is dominated by subvarieties $Y$ such that $a(Y,L) = a(X,L)$, or
\item $L^{n} \leq \delta(n)/a(X,L)^{n}$.
\end{enumerate}
\end{lemm}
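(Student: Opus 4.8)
The plan is to invoke the dichotomy of Proposition~\ref{prop:adichotomy} and, in the branch where $X$ is \emph{not} covered by subvarieties of the same $a$-value, to control $L^{n}$ by transporting it to the Fano model produced there. First I would reduce to the essential case. If $X$ is not uniruled there is nothing to prove (then $a(X,L)\le 0$), so we may assume $X$ uniruled. Passing to a resolution $\beta\colon \widetilde X\to X$ changes neither $a(X,L)$, which is a birational invariant, nor $L^{n}$, which equals $\vol(\beta^{*}L)$ since $L$ is big and nef; moreover, a family of subvarieties covering $\widetilde X$ with constant $a$-value equal to $a(\widetilde X,\beta^{*}L)$ pushes forward, via its general members (which avoid the exceptional locus of $\beta$, so restrict to birational maps onto their images), to such a family on $X$. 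Hence alternative (1) descends from $\widetilde X$ to $X$, and we may assume $X$ is smooth.

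Now suppose alternative (1) fails for $X$. Then, by Proposition~\ref{prop:adichotomy} and its proof, running the appropriate minimal model program yields a \emph{birational contraction} $\phi\colon X\dashrightarrow X'$ onto a $\bQ$-factorial terminal Fano variety $X'$ of Picard number one with $K_{X'}+a(X,L)\,\phi_{*}L\equiv 0$. Thus $\phi_{*}L\equiv \tfrac{1}{a(X,L)}(-K_{X'})$ is ample, and so $\vol(\phi_{*}L)=(\phi_{*}L)^{n}=(-K_{X'})^{n}/a(X,L)^{n}$, which is at most $\delta(n)/a(X,L)^{n}$ by Conjecture WBAB$_{n}$. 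It therefore remains to establish $\vol(L)\le \vol(\phi_{*}L)$.

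For this I would take a common smooth resolution $p\colon W\to X$, $q\colon W\to X'$. Because $\phi$ is a birational contraction, every $p$-exceptional divisor is $q$-exceptional, so $q_{*}p^{*}L=\phi_{*}L$ and hence $E:=q^{*}\phi_{*}L-p^{*}L$ is $q$-exceptional; since $p^{*}L$ is nef and $q^{*}\phi_{*}L$ is numerically trivial on the fibers of $q$, the divisor $-E$ is $q$-nef, whence $E$ is effective by the negativity lemma. As adding an effective divisor cannot decrease the volume and pullback along a birational morphism preserves it, $\vol(L)=\vol(p^{*}L)\le \vol(p^{*}L+E)=\vol(q^{*}\phi_{*}L)=\vol(\phi_{*}L)$, and combining this with the previous paragraph gives $L^{n}\le \delta(n)/a(X,L)^{n}$. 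The only genuinely delicate point is this last inequality: one must exploit the birational \emph{contraction} property of $\phi$ — hence the construction in the proof of Proposition~\ref{prop:adichotomy}, not merely its statement — to make $E$ effective and thereby orient the volume comparison in the right direction; everything else is bookkeeping.
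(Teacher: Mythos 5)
Your proposal is correct and follows essentially the same route as the paper: invoke Proposition~\ref{prop:adichotomy} to reduce to the case of a birational contraction $\phi\colon X\dashrightarrow X'$ onto a $\mathbb{Q}$-factorial terminal Fano of Picard number one, apply Conjecture WBAB$_{n}$ there, and transport the bound back via the volume inequality $\vol(L)\le\vol(\phi_{*}L)$. The only difference is one of detail: you spell out the reduction to the smooth case and prove the volume comparison by the negativity lemma on a common resolution, whereas the paper asserts $\vol(L)\le\vol(\phi^{*}\phi_{*}L)=\vol(\phi_{*}L)$ in one line using normality and $\mathbb{Q}$-factoriality of $X'$ — the same fact you are verifying.
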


\begin{proof}
By Proposition \ref{prop:adichotomy}, we may assume there is a birational contraction $\phi: X \dashrightarrow X'$ to a $\mathbb{Q}$-factorial terminal Fano variety $X'$ of Picard number $1$ and that $K_{X'} + a(X,L)\phi_{*}L$ is numerically trivial.  By Conjecture WBAB$_{n}$ we have
\begin{equation*}
a(X,L)^{n}\vol(\phi_{*}L) \leq \delta(n).
\end{equation*}
Note that 
$$
\vol(L) \leq \vol(\phi^{*}\phi_{*}L) = \vol(\phi_{*}L),
$$ 
since $\widetilde{X}$ is normal and $\mathbb{Q}$-factorial.  Since $\vol(L) = L^{n}$, we obtain the desired statement.
\end{proof}

\begin{theo} \label{theo:aclosedness}
Assume Conjecture WBAB$_{n-1}$.  Let $X$ be a smooth uniruled projective variety of dimension $n$ and $L$ a big and nef $\mathbb{Q}$-divisor on $X$.  There exists a proper closed subset $V \subset X$ such that every subvariety $Y$ with $a(Y,L) > a(X,L)$ is contained in $V$.
\end{theo}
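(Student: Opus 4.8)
We may assume $n \ge 2$, the case $n=1$ being vacuous. The plan is to induct on $\dim Y$, converting the inequality $a(Y,L) > a(X,L)$ into an $L$-degree bound via Lemma~\ref{lemm:babimpliesboundedness} and then applying Theorem~\ref{theo:familybalanced}. For $1 \le d \le n-1$, let $P(d)$ be the assertion: there is a proper closed subset $V_{d} \subseteq X$ containing every subvariety $Y$ with $\dim Y \le d$ and $a(Y,L) > a(X,L)$. (Zero-dimensional subvarieties never satisfy this inequality and may be ignored.) The base case $P(1)$ is Theorem~\ref{theo:curveclosedness}, and the theorem is the case $d = n-1$, with $V := V_{n-1}$. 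Throughout we use that $\mathbf{B}_{+}(L)$ is a proper closed subset of $X$, as $L$ is big.

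For the inductive step, suppose $P(d-1)$ holds with exceptional set $V_{d-1}$, and let $Y \subset X$ be a $d$-dimensional subvariety with $a(Y,L) > a(X,L)$. We may assume $Y \not\subseteq \mathbf{B}_{+}(L) \cup V_{d-1}$, since otherwise there is nothing to show. As $Y \not\subseteq \mathbf{B}_{+}(L)$, the restriction $L|_{Y}$ is big and nef, and $a(Y,L) > a(X,L) > 0$ because $X$ is uniruled. Since $d \le n-1$, Conjecture WBAB$_{n-1}$ yields WBAB$_{d}$, so we may apply Lemma~\ref{lemm:babimpliesboundedness} to $(Y, L|_{Y})$. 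In alternative $(1)$ of that lemma, $Y$ is dominated by subvarieties of dimension $< d$ (the fibers of the fibrations produced in Proposition~\ref{prop:adichotomy}) whose $a$-invariant equals $a(Y,L) > a(X,L)$; by $P(d-1)$ each such subvariety is contained in $V_{d-1}$, forcing $Y \subseteq V_{d-1}$, contrary to assumption. Hence alternative $(2)$ holds, giving
\begin{equation*}
L^{d} \cdot Y = (L|_{Y})^{\dim Y} \le \frac{\delta(d)}{a(Y,L)^{d}} < \frac{\delta(d)}{a(X,L)^{d}} =: C_{d}.
\end{equation*}

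Therefore every $d$-dimensional $Y$ with $a(Y,L) > a(X,L)$ that is not contained in $\mathbf{B}_{+}(L) \cup V_{d-1}$ has $L$-degree at most $C_{d}$ and is not contained in $\mathbf{B}_{+}(L)$. By Lemma~\ref{lemm:chowboundedness} such subvarieties sweep out a bounded subset of $\Chow(X)$, hence lie in finitely many families $\pi_{i} : \mathcal{U}_{i} \to W_{i}$; Theorem~\ref{theo:familybalanced} supplies, for each $i$, a proper closed $V^{(i)} \subseteq X$ containing every member $Y$ of $\mathcal{U}_{i}$ with $a(Y,L) > a(X,L)$. Setting $V_{d} := V_{d-1} \cup \mathbf{B}_{+}(L) \cup \bigcup_{i} V^{(i)}$, a finite union of proper closed subsets, proves $P(d)$ and closes the induction.

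I expect the main obstacle to be the dichotomy in Lemma~\ref{lemm:babimpliesboundedness}: this is the step that uses the Borisov--Alexeev--Borisov boundedness conjecture, and it is what turns $a(Y,L) > a(X,L)$ into an effective bound on the degree of $Y$. Once that input is available, Lemma~\ref{lemm:chowboundedness} and Theorem~\ref{theo:familybalanced} combine with no difficulty. The two structural points to get right are that the competing alternative --- $Y$ covered by \emph{lower}-dimensional subvarieties with the same $a$-invariant --- is exactly what the inductive hypothesis absorbs (so the induction must be on $\dim Y$, not on $\dim X$, where singular subvarieties would be awkward), and that one must excise $\mathbf{B}_{+}(L)$ before invoking Lemma~\ref{lemm:babimpliesboundedness}, which requires $L|_{Y}$ to be big.
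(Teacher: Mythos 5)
Your proposal is correct and follows essentially the same route as the paper: induction on $\dim Y$ starting from Theorem~\ref{theo:curveclosedness}, with Lemma~\ref{lemm:babimpliesboundedness} splitting into the case absorbed by the inductive hypothesis and the bounded-degree case handled by Lemma~\ref{lemm:chowboundedness} and Theorem~\ref{theo:familybalanced}, adjoining $\mathbf{B}_{+}(L)$ to the exceptional set. Your explicit remarks about excising $\mathbf{B}_{+}(L)$ so that $L|_{Y}$ is big and about WBAB$_{n-1}$ implying WBAB$_{d}$ are points the paper leaves implicit, but the argument is the same.
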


\begin{proof}
We construct $V$ inductively by increasing induction on the dimension of the $Y$.  Theorem \ref{theo:curveclosedness} shows that there is a proper closed subset $V_{1}$ so that any curve $C$ with $a(C,L) > a(X,L)$ is contained in $V_{1}$.

Suppose we have constructed a proper closed subset $V_{i}$ such that any subvariety $Y$ of dimension at most $i$ satisfying $a(Y,L) > a(X,L)$ is contained in $V_{i}$.  We construct $V_{i+1}$ as follows.  Suppose that $Y$ is an $(i+1)$-dimensional subvariety satisfying $a(Y,L) > a(X,L)$.  Lemma \ref{lemm:babimpliesboundedness} shows that either:
\begin{itemize}
\item $Y$ is covered by proper subvarieties $Z$ with 
$$
a(Z,L) = a(Y,L) > a(X,L),
$$ 
or
\item $Y \cdot L^{i+1} \leq \delta(i+1)/a(Y,L)^{i+1} < \delta(i+1)/a(X,L)^{i+1}$.
\end{itemize}
In the first case, $Y$ is contained in $V_{i}$.  In the second case, $Y$ has $L$-degree bounded above by some constant.  By Lemma \ref{lemm:chowboundedness} there is a closed subset $S$ of $\Chow(X)$ parametrizing all subvarieties of this bounded degree not contained in $\mathbf{B}_{+}(L)$.  Theorem \ref{theo:familybalanced} shows that there is a proper closed subset $V'_{i+1} \subset X$ so that any subvariety $Y$ in this family with $a(Y,L) > a(X,L)$ is contained in $V'_{i+1}$.  Set $V_{i+1} = V_{i} \cup V'_{i+1} \cup \mathbf{B}_{+}(L)$.
\end{proof}

\begin{exam}
The argument above does not show that the subvarieties 
$Y$ with $a(Y,L) > a(X,L)$ have bounded $L$-degree.  In fact this stronger boundedness is not true.

For example, suppose $T \subset W$ is a smooth surface in a smooth fourfold and $A$ is an ample divisor on $W$.  Set $\pi: X \to W$ to be the blow-up along $T$ with exceptional divisor $E$ and define the ample divisor $L = \pi^{*}A - \epsilon E$ for some sufficiently small rational $\epsilon > 0$.  Note that if $C$ is a fiber of $\pi|_{E}$, then
\begin{equation*}
a(C,L) = \frac{1}{\epsilon} a(C,-E|_{C}) = \frac{2}{\epsilon}
\end{equation*}
Now let $Y$ be any surface in $E$ that is ruled by the fibers of $\pi$.  Since $a(Y,L) \geq a(C,L)$, if we choose $\epsilon$ sufficiently small then $a(Y,L) > a(X,L)$ for any such $Y$.
\end{exam}

\subsection{$b$-exceptional sets}

The $b$ constants seem more difficult to work with geometrically.  The most basic question is:

\begin{ques}
Let $X$ be a smooth uniruled projective variety and $L$ a big and nef $\mathbb{Q}$-divisor on $X$.  Consider the pairs
\begin{equation*}
(a(Y,L),b(Y,L))
\end{equation*}
as $Y$ varies over all subvarieties which deform to cover $X$.  Does this set attain a maximum (in the lexicographic order)?
\end{ques}

If so, Theorem \ref{theo:rigidity} shows that the maximum is attained on a $Y$ whose smooth model $Y'$ has the property that $K_{Y'} + a(X,L)L$ is rigid.  \cite[Section 2]{BT} proposes a stronger statement: the moving subvarieties with maximal constants define a fibration structure on $X$.  The following question reinterprets this expectation in terms of the minimal model program.

\begin{defi}
Let $X$ be a projective variety with terminal singularities and $L$ a $\mathbb{Q}$-divisor on $X$.  We say that a morphism $\pi: X \to Z$ is $L$-negative if $L$ is $\pi$-anti-ample, and $L$-trivial if $L$ is $\pi$-numerically trivial.  Similarly, if $\phi: X \dashrightarrow X'$ is a flip, we say that $\phi$ is $L$-negative or $L$-trivial based on the intersection of $L$ with the defining ray.

Now suppose that $X$ is a smooth projective variety and $L$ is a big and nef $\mathbb{Q}$-divisor.  An $a$-fibration on $X$ consists of:
\begin{itemize}
\item a finite sequence of steps $\{ \phi_{i}: X_{i} \dashrightarrow X_{i+1} \}_{i=0}^{r-1}$ of the $K_{X}$-MMP (whose composition we denote by $\psi_{i}: X \dashrightarrow X_{i}$) such that each is $K_{X_{i}}+a(X,L)\psi_{i*}L$-negative or trivial, and
\item a $K_{X}$-negative fibration $\pi: X_{r} \to Z$ that is $K_{X_{r}} + a(X,L)\psi_{r*}L$-trivial.
\end{itemize}
\end{defi}

\begin{ques} \label{ques:afibrations}
Let $X$ be a smooth uniruled projective variety and let $L$ be a big and nef $\mathbb{Q}$-divisor.  Consider pairs
\begin{equation*}
(a(Y,L),b(Y,L)),
\end{equation*}
as $Y$ varies over all subvarieties which deform to cover $X$.  
Is the maximal value of $(a(Y,L),b(Y,L))$ achieved by the strict transform of a fiber of an $a$-fibration?
\end{ques}

If the answer is yes, then the finiteness of models as in \cite{BCHM} reduces the calculation to a finite number of possibilities.  However, there does not seem to be a good way to identify the best choice of an $a$-fibration, {\it a priori}.

\begin{rema}
It is not true that any moving subvariety $Y \subset X$ with $a(Y,L)  = a(X,L)$ is a fiber of an $a$-fibration.  In Section \ref{sect:fano-threefolds}, we will see many counterexamples involving rational curves on primitive Fano threefolds.  The first difficulty concerning Question \ref{ques:afibrations} is characterizing subvarieties with $a(Y,L) = a(X,L)$.
\end{rema}

\begin{rema}
The value of the $b$-constant for an $a$-fibration is closely related to the monodromy action on the base.   See \cite{monodromy} for an analysis of this relationship (e.g., Theorem 2.6).  This seems to be a practical way to 
detect jumps in the $b$-constant.
\end{rema}

In particular, this leads to:

\begin{ques}
Let $X$ be a smooth projective variety over a number field $F$ and 
$\mathcal{L} = (L,\Vert \cdot \Vert)$ an ample adelically metrized line bundle on $X$.  
Is there a Zariski open subset $X^{\circ}\subseteq X$ such that for every sufficiently large finite extension $F'$ of the ground field $F$ one has
\begin{equation*}
N(X^\circ(F'),\mathcal L, B) \sim c B^{a(X,L)} \log(B)^{b'(X,L)-1}, \quad B\ra \infty, 
\end{equation*}
for some constant $c$, where $b'(X,L)$ is the value of $b(Y,L)$, for $Y$ the strict transform of a general fiber of some $a$-fibration?
\end{ques}

Note that even if this question is answered in the affirmative, 
the value of $c$ may be difficult to pin down.  
In particular, as discussed in \cite{BL13}, 
one may not simply take $c$ to be the Tamagawa-type constant calculated using $Y$.

\section{Examples}
\label{sect:examples}

We have seen several methods of analyzing the $a$-constant using the minimal model program, Reider-type statements, or the geometry of rational curves.  These methods indicate that subvarieties $Y$ satisfying $a(Y,L) > a(X,L)$ tend to be:
\begin{itemize}
\item covered by low degree rational curves, or
\item singular varieties of low $L$-degree, or
\item contained in $\mathbf{B}_{-}(K_{X} + a(X,L)L)$.
\end{itemize}

We first consider several general examples.

\begin{exam} \label{exam:projspacebalanced}
Each projective space $\mathbb{P}^{n}$ is balanced with respect to the hyperplane class $L$ with empty exceptional set.  To see this, first note that $a(\mathbb{P}^{n},L) = n+1$.  By Proposition \ref{prop:siubound}, $a(Y,L) \leq n$ for any variety $Y$ of dimension $\leq n-1$ and for any ample Cartier divisor $L$.  In particular $a(Y,L) < a(\mathbb{P}^{n},L)$ for any proper subvariety $Y$ of $\mathbb{P}^{n}$.
\end{exam}

The following conjecture gives an additional tool for analyzing the $a$-constants using the geometry of rational curves.  We will verify in Lemma \ref{lemm:rationalcurvecheck} that it holds for surfaces.

\begin{conj} \label{conj:rc}
Let $X$ be a smooth projective variety and $L$ a big and nef $\mathbb{Q}$-divisor on $X$.  If $K_{X} + L$ is not pseudo-effective, then there is a rational curve $C$ that deforms to cover $X$ satisfying $(K_{X} + L) \cdot C < 0$.
\end{conj}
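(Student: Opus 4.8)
The plan is to run a minimal model program on $X$ to reach a Mori fibration on which suitable covering rational curves are evident, and then to transport them back; note that Lemma~\ref{lemm:bendandbreak} alone is not enough, since a rational curve produced by bend-and-break need not control $L\cdot C$. First I would reduce to a klt pair. Since $L$ is big, Kodaira's lemma together with Wilson's theorem --- exactly as in the proof of Theorem~\ref{theo:runningmmp} --- lets us write $L \sim_{\bQ} A + \Delta$ with $A$ an ample $\bQ$-divisor and $\Delta \geq 0$; moving a small multiple of the ample part into $\Delta$ and replacing the remaining ample class by a general member of a sufficiently divisible multiple, we may arrange in addition that $\Delta$ is big and that $(X,\Delta)$ and $(X,\Delta+A)$ are klt. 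Since $K_{X}+\Delta+A = K_{X}+L$ is not pseudo-effective, \cite{BCHM} shows that the $(K_{X}+\Delta)$-MMP with scaling of $A$ runs and terminates with a Mori fibration $\pi : X' \to Z$ (with $\dim Z = 0$ allowed); write $\phi : X \dashrightarrow X'$ for the resulting birational contraction, $\{\lambda_{i}\}$ for the successive scaling thresholds, and $L' = \phi_{*}L$.

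The heart of the argument is to show that every step of this program is $(K_{X}+L)$-negative. The $i$-th step contracts a ray $R$ with $(K_{X_{i}}+\Delta_{i}+\lambda_{i}A_{i})\cdot R = 0$ and $(K_{X_{i}}+\Delta_{i})\cdot R < 0$; when $\lambda_{i} > 1$ this forces $(K_{X_{i}}+L_{i})\cdot R = (K_{X_{i}}+\Delta_{i}+A_{i})\cdot R < 0$, so by the negativity lemma the non-pseudo-effectivity of $K+L$ is preserved under this step. Now $\lambda_{0} > 1$ because $K_{X}+L$ is not even pseudo-effective, and the thresholds are nonincreasing; if some $\lambda_{i} \leq 1$ held, then $K_{X_{i}}+\Delta_{i}+A_{i}$ would be nef, hence pseudo-effective, contradicting the preceding sentence. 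Thus all $\lambda_{i}>1$, every step is $(K_{X}+L)$-negative, and in particular the contracted ray of $\pi$ satisfies $(K_{X'}+L')\cdot R < 0$, i.e. $-(K_{X'}+L')$ is $\pi$-anti-ample.

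A general fiber $F$ of $\pi$ (or $X'$ itself when $\dim Z=0$) is then a $\bQ$-factorial terminal Fano variety, hence rationally connected by \cite{HM07}; choosing free rational curves in its smooth locus and letting $F$ vary produces a family $\{C'\}$ of rational curves covering $X'$, each $\pi$-vertical, hence with $(K_{X'}+L')\cdot C' < 0$. On a common resolution $p : W \to X$, $q : W \to X'$ the negativity lemma gives $p^{*}(K_{X}+L) = q^{*}(K_{X'}+L') + E$ with $E \geq 0$ and $q$-exceptional; since $\phi$ is a birational contraction, $\phi^{-1}$ is an isomorphism over an open $X'^{\circ}\subseteq X'$ whose complement has codimension $\geq 2$, and $\Supp(E)$ lies over this complement. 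A general (free) member $C'$ of the covering family may be taken inside $X'^{\circ}$; then $C := \phi^{-1}(C')\cong C'$ is a rational curve, the resulting curves cover a dense open subset of $X$ (so $C$ deforms to cover $X$), and the projection formula gives $(K_{X}+L)\cdot C = (K_{X'}+L')\cdot C' < 0$ (the extra term $E\cdot q^{-1}(C')$ vanishing because $q^{-1}(C')$ avoids $\Supp(E)$).

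The step I expect to cause the most trouble is the threshold bookkeeping: one must know precisely that non-pseudo-effectivity propagates through the program and that the scaling threshold never reaches $1$, since this is exactly what upgrades $(K_{X'}+L')\cdot R\le 0$ to a strict inequality; a secondary subtlety is producing, on the possibly singular terminal Fano fibers, a covering family of rational curves whose general member can be deformed off the fixed codimension-two locus over which $\phi^{-1}$ fails to be an isomorphism. For surfaces all of this collapses: the $(K_{X}+L)$-MMP contracts only the finitely many $(-1)$-curves $E$ with $L\cdot E = 0$ --- finitely many since they span a negative-definite sublattice of $\NS(X)_{\bR}$, by the Hodge index theorem applied to $L^{\perp}$ --- and lands on $\mathbb{P}^{2}$ or a $\mathbb{P}^{1}$-bundle over a curve, on which a general line, respectively a general fiber of the ruling, is manifestly a covering rational curve with the required negativity.
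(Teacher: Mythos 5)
This statement is labeled a \emph{Conjecture} in the paper: the authors prove it only for surfaces (Lemma~\ref{lemm:rationalcurvecheck}), so there is no general proof to compare against, and your proposal should be judged as an attempt to settle an open statement. The MMP skeleton you set up is the natural one and matches the paper's surface argument; your threshold bookkeeping (all $\lambda_i>1$, hence every step and the final Mori fibration are $(K_X+L)$-negative, hence non-pseudo-effectivity propagates) is correct, modulo spelling out that nefness of $K_{X_i}+\Delta_i+\lambda A_i$ holds on the whole interval $[\lambda_i,\lambda_{i-1}]$ by convexity of the nef cone.

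The genuine gap is the step you label a ``secondary subtlety,'' which is in fact the crux and, I believe, the reason the statement is a conjecture. You need a covering family of rational curves on $X'$, \emph{contained in the fibers of} $\pi$, whose general member avoids a fixed codimension-two subset $W$ (the image of $\Supp E$ together with the locus where $\phi^{-1}$ fails to be an isomorphism) --- otherwise the correction term $E\cdot\widetilde{C}\ge 0$ can destroy the inequality, since you only know $(K_X+L)\cdot C=(K_{X'}+L')\cdot C'+E\cdot\widetilde{C}$. The general fiber $F$ is a klt (or terminal) Fano variety, which is rationally connected by \cite{HM07}, but rational connectedness of $F$ does \emph{not} give free curves in its smooth locus, nor curves avoiding a prescribed codimension-two subset: the deformation theory of Lemma~\ref{lemm:deformationcount} applies only to curves lying in the smooth locus of the ambient variety, and passing to a resolution $\widetilde{F}\to F$ does not help, because a very free curve on $\widetilde{F}$ will in general meet the exceptional \emph{divisor}, so its image meets $\mathrm{Sing}(F)$ and possibly $W$. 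What you need is that every big open subset of a klt/terminal Fano variety is covered by complete rational curves --- a rational connectedness statement for the smooth locus that is not established here and is not a formal consequence of anything cited in the paper. This is exactly the point at which the surface case is easy and the general case is not: for surfaces the minimal model $X'$ is smooth (Lemma~\ref{lemm:terminalsings}), the exceptional centers are finitely many points, and fibers of the ruling (or lines in $\bP^2$) visibly avoid them --- which is the paper's Lemma~\ref{lemm:rationalcurvecheck}, and which your final paragraph reproduces correctly. In higher dimensions your argument as written does not close.
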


\begin{exam}  Let $Q$ be a smooth quadric hypersurface in $\mathbb{P}^{n+1}$ for $n>2$. Assuming Conjecture \ref{conj:rc}, we show that $Q$ is balanced with respect to the hyperplane class $L$ with empty exceptional set.

To see this, note that $a(Q,L) = n$.  By Proposition \ref{prop:siubound}, $a(Y,L) < n$ for any subvariety $Y$ of dimension at most $n-2$.  Thus, to show the balanced property it suffices to consider divisors on $Q$.  By Lemma \ref{lemm:bendandbreak} and Conjecture \ref{conj:rc}, the only way a divisor $Y$ on $Q$ can fail to be balanced is if $Y$ is covered by lines (that is, rational curves $C$ with $L \cdot C = 1$) and the dimension of the space of lines on $Y$ is $2n-4$.  However, in this case any two points of $Y$ can be connected by a line in $Y$, showing that $Y$ must be an $(n-1)$-dimensional plane in $\mathbb{P}^{n+1}$.  Since a smooth quadric (of dimension $> 2$) cannot contain any such plane, we obtain the statement.
\end{exam}

\cite{balanced} analyzed toric varieties and equivariant compactifications of homogeneous spaces.  The main results are:

\begin{prop}[\cite{balanced}, Proposition 1.4]
Let $X$ be a smooth projective toric variety and $L$ a big divisor on $X$.  Then $L$ is balanced with respect to all subtoric varieties if and only if $K_{X} + a(X,L)L$ is rigid.
\end{prop}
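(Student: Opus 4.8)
The plan is to split into the two implications. Suppose first that $K_X + a(X,L)L$ is rigid. The key observation is that every subtoric variety $Y \subset X$ is itself toric, so $a(Y,L)$ and $b(Y,L)$ are controlled by the combinatorics of the fan. I would combine Theorem~\ref{theo:rigidity} with the toric dictionary: rigidity of $K_X + a(X,L)L$ means its Iitaka dimension is $0$, so by Proposition~\ref{prop:ratconnectedprop} and the MMP story of Section~\ref{sect:mmpintro} the minimal model of $(X, a(X,L)L)$ is a $\mathbb{Q}$-factorial terminal toric Fano $X'$ with $K_{X'} + a(X,L)\phi_*L \equiv 0$. For a torus-invariant subvariety $Y$, one restricts this numerical identity along the orbit closure and uses that the restriction of $a(X,L)L$ dominates, in the sense of the intersection estimate in Proposition~\ref{prop:verygeneralbalanced}, to conclude $a(Y,L) \le a(X,L)$; strictness (or the $b$-inequality) comes from the fact that a non-trivial torus-invariant $Y$ meets the interior of $X' \setminus (\text{fixed locus})$ transversally to the Iitaka-trivial directions, so its adjoint is forced to be \emph{strictly} more negative or to lie on a smaller supported face. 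This is exactly the mechanism that makes the exceptional set work.

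For the converse, I would argue contrapositively: if $K_X + a(X,L)L$ is \emph{not} rigid, then its Iitaka dimension is at least $1$, and the Iitaka fibration $\pi: X' \to Z$ on a minimal model is a toric morphism with positive-dimensional base. The general fiber $Y'$ of $\pi$ (and hence its preimage $Y$ on $X$, which is a subtoric variety) satisfies $a(Y,L) = a(X,L)$ by Theorem~\ref{theo:familybalanced} together with the fact that $(K_{X'}+a(X,L)\phi_*L)|_{Y'} \equiv 0$, exactly as in the proof of Theorem~\ref{theo:rigidity}. Moreover that same argument shows $b(Y,L) \ge b(X,L)$. Since the fibers of $\pi$ move in a family covering $X$, no proper closed exceptional set $Z$ can contain all of them, so $(X,L)$ fails to be balanced with respect to subtoric varieties. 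This direction is essentially a specialization of Theorem~\ref{theo:rigidity} to the toric category, the only extra input being that the Iitaka fibration of a toric pair can be taken torus-equivariant, so its general fiber is subtoric.

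The main obstacle is the forward direction — more precisely, promoting the inequality $a(Y,L)\le a(X,L)$ for subtoric $Y$ to the \emph{strict} balanced inequality (or, when $a$-values coincide, the strict $b$-inequality) for all but a proper closed set of $Y$. The subtlety is that a K-trivial minimal model $X'$ of Picard number possibly $>1$ can still carry torus-invariant divisors $Y'$ on which the adjoint restricts to something with the same $a$ but larger $b$; one must check that these are precisely the $\pi$-vertical ones and hence, when the Iitaka dimension is $0$, form a proper closed (in fact: combinatorially explicit) locus. I would handle this by tracking the supported face of $\Lambda_{\mathrm{eff}}$ under restriction to orbit closures, using Proposition~\ref{prop:binterpretationforsemiample} and Lemma~\ref{lemm:binterpretationiitaka0} to identify $b$ with a difference of Picard ranks, and then invoking the standard fact that for a terminal $\mathbb{Q}$-Fano toric variety of Picard number $1$ the only torus-invariant subvariety with non-negative adjoint on a supported face of full dimension is $X$ itself. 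Everything else is bookkeeping with the fan.
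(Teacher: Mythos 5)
This proposition is imported verbatim from \cite{balanced} and is not proved in the present paper, so there is no internal proof to compare against; I am therefore assessing your argument on its own terms. Your converse direction is essentially correct and is a genuine specialization of Theorem~\ref{theo:rigidity}: since $T$ is connected, the class $K_X+a(X,L)L$ is $T$-invariant, the minimal model program and the Iitaka fibration can be run torus-equivariantly, the general fiber is the closure of a coset of the kernel subtorus and hence is subtoric, and the argument in the proof of Theorem~\ref{theo:rigidity} gives $a(Y,L)=a(X,L)$ and $b(Y,L)\ge b(X,L)$ for such a fiber, so $L$ fails to be balanced with respect to it. Likewise $a(Y,L)\le a(X,L)$ for every subtoric $Y=\overline{xT'}$ does follow from Proposition~\ref{prop:verygeneralbalanced} applied to the dominant family of $T$-translates, all of which have the same invariants.

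The gap is in the forward direction, precisely where you flag ``the main obstacle'': you never establish that rigidity of $K_X+a(X,L)L$ forces $b(Y,L)<b(X,L)$ whenever $a(Y,L)=a(X,L)$ for a subtoric $Y$. Nothing in this paper's general machinery can supply this: Theorem~\ref{theo:rigidity} gives only (balanced $\Rightarrow$ rigid), not its converse. Your proposed mechanism also misfires in two specific ways. First, in the rigid case the minimal model $X'$ satisfies $K_{X'}+a(X,L)\phi_*L\equiv 0$ but may have arbitrary Picard number, so the ``standard fact'' you invoke about terminal $\mathbb{Q}$-Fano toric varieties of Picard number $1$ is not applicable, and the phrase ``transversally to the Iitaka-trivial directions'' is vacuous when the Iitaka dimension is $0$. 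Second, the statement demands balancedness with respect to \emph{all} subtoric varieties, with no exceptional set, so even showing that the offending $Y$ form ``a proper closed locus'' would not close the argument. What is actually required --- and what \cite{balanced} provides --- is the toric-specific computation of $a(Y,L)$ and $b(Y,L)$ for $Y=\overline{xT'}$ in terms of the fan and the polytope of $L$ (restriction of the adjoint class along the projection of lattices), from which the strict inequality is read off combinatorially. Without that input the forward implication remains unproven.
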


\begin{prop}[\cite{balanced}, Theorem 1.3]
Let $H \subset M \subset G$ be connected linear algebraic groups.  Let $X$ be a smooth projective $G$-equivariant compactification of $H \backslash G$ and $Y \subset X$ the induced compactification of $H\backslash M$.  Assume that the projection $G \to M \backslash G$ admits a rational section.  Then $-K_{X}$ is balanced with respect to $Y$.
\end{prop}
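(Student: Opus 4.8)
The plan is to use the rational section to realize $Y$ as a member of a family of $G$-translates covering $X$, and then to deduce the two balanced inequalities from the structure theory of equivariant compactifications together with the techniques developed above.

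\emph{The invariants of $X$.} Since $X$ is an equivariant compactification of the homogeneous space $H\backslash G$, it is rationally connected, hence uniruled, and $-K_X$ is big. As $t[-K_X]+[K_X]=(t-1)[-K_X]$ lies in $\Lambda_{\eff}(X)$ exactly for $t\ge 1$, we have $a(X,-K_X)=1$, so $a(X,-K_X)[-K_X]+[K_X]=0$. Because $\Lambda_{\eff}(X)$ is salient, its only supported face containing $0$ is $\{0\}$, and hence $b(X,-K_X)=\rk\NS(X)$.

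\emph{The adjunction class on $Y$.} The $G$-translates of $H\backslash M$ cover the open orbit $H\backslash G$, so their closures form a family of subvarieties of $X$ whose evaluation map to $X$ is dominant; the $G$-stabilizer of $Y$ is $M$, so the base of the family contains $G/M$ as a dense open subset, and the total space has dimension $\dim(G/M)+\dim Y=\dim X$. Thus the evaluation map is generically finite, so the normal bundle of a general member is generically globally generated; by the $G$-homogeneity of the family the same holds for $Y$ itself, and therefore $D:=\det N_{Y/X}$ is effective. After replacing $X$ and $Y$ by smooth equivariant compactifications (which changes neither side of the statement), adjunction gives
\begin{equation*}
-K_X|_Y\ \equiv\ -K_Y+D .
\end{equation*}
In particular $-K_X|_Y$ is big, since $-K_Y$ is (as $Y$ is an equivariant compactification), so $a(Y,-K_X)$ and $b(Y,-K_X)$ are defined. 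Moreover $D$ is $M$-invariant — the family is $M$-equivariant and $M$ fixes $Y$ — while the open orbit $H\backslash M\subset Y$ carries no nonzero $M$-invariant divisor; hence $D$ is supported on the boundary $\partial Y=Y\setminus(H\backslash M)$.

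\emph{The dichotomy.} From $t(-K_X|_Y)+K_Y\equiv(t-1)(-K_Y)+tD$ and the bigness of $-K_Y$ we see that this class is pseudo-effective for some $t<1$ if and only if $D$ is big, while for $t=1$ it equals $[D]$; in particular $a(Y,-K_X)\le 1=a(X,-K_X)$, consistent with Proposition \ref{prop:verygeneralbalanced}. If $D$ is big, then $a(Y,-K_X)<a(X,-K_X)$ and $-K_X$ is balanced with respect to $Y$. If $D$ is not big, then $a(Y,-K_X)=1$, the class $a(Y,-K_X)(-K_X|_Y)+K_Y$ is numerically the boundary divisor $D$, and $b(Y,-K_X)$ equals the codimension of the minimal supported face of $\Lambda_{\eff}(Y)$ containing $[D]$; in any case $b(Y,-K_X)\le\rk\NS(Y)$. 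So it suffices to prove $\rk\NS(Y)<\rk\NS(X)=b(X,-K_X)$ (and, should equality of ranks occur, that $D\ne 0$, which likewise forces the codimension to drop).

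\emph{Comparing N\'eron--Severi ranks: the main obstacle.} This last inequality is where the equivariant structure must be used, and I expect it to be the hard part. For an equivariant compactification, $\NS$ is generated by the boundary divisors modulo the relations coming from the characters of the acting group and from $\Pic$ of the open orbit; concretely $\rk\NS(X)=|\cA_X|-\rk X^*(G)_H+\rk\Pic(H\backslash G)$, and likewise for $Y$ with $M$ in place of $G$, where $\cA_X$ (resp. $\cA_Y$) is the set of boundary divisors of $X$ (resp. $Y$) and a subscript $H$ on a group of characters indicates the subgroup of characters trivial on $H$. Since each boundary divisor of $Y$ is cut out by a boundary divisor of $X$, the restriction $X^*(G)_H\to X^*(M)_H$ has kernel $X^*(G)_M$, and $\Pic(H\backslash M)$ is a quotient of $\Pic(H\backslash G)$, one obtains $\rk\NS(X)-\rk\NS(Y)\ge(|\cA_X|-|\cA_Y|)-\rk X^*(G)_M$. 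It then remains to check that the boundary divisors of $X$ that do not restrict to boundary divisors of $Y$ — those lying over the boundary of a compactification of $M\backslash G$ — span, together with the horizontal classes, a subspace of $\NS(X)$ of rank strictly greater than $\rk X^*(G)_M$; this is forced by $M\subsetneq G$, since then $M\backslash G$ is positive-dimensional and any projective compactification of it has Picard rank at least one. Alternatively, in the spirit of the proof of Theorem \ref{theo:rigidity}, one resolves so that the projection $X\dashrightarrow M\backslash G$ becomes a morphism with $Y$ a general fiber and uses the rational connectedness of the fibers (via \cite{HM07}) to identify the kernel of $\NS(X)\to\NS(Y)$. Either way $b(Y,-K_X)<b(X,-K_X)$, completing the proof.
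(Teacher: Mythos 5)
First, a point of reference: the paper does not prove this statement --- it is quoted from \cite{balanced} (Theorem 1.3 there) and used as a black box --- so there is no internal proof to compare yours against; I am assessing your argument on its own terms. The first half is sound: $a(X,-K_X)=1$ and $b(X,-K_X)=\rk\NS(X)$ because the adjoint class is $0$ and $\Lambda_{\eff}(X)$ is salient; the adjunction $-K_X|_Y\equiv -K_Y+D$ with $D=\det N_{Y/X}$ effective (generic global generation of the normal bundle of a member of a dominant, generically finite family, as in Proposition \ref{prop:verygeneralbalanced}); and the resulting dichotomy ``$a(Y,-K_X)<1$ iff $D$ is big'' is the right shape. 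One caveat even here: $M$-invariance of the line bundle $\det N_{Y/X}$ does not by itself produce an $M$-invariant effective representative, so your claim that $D$ is supported on $\partial Y$ needs an argument when $\Pic(H\backslash M)\neq 0$.

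The genuine gap is the $b$-comparison when $a(Y,-K_X)=1$, which you yourself flag as the hard part; neither of your sketches closes it. The counting argument requires $|\cA_X|-|\cA_Y|>\rk X^{*}(G)_M$, but a single boundary divisor of $X$ can meet $Y$ in several boundary components of $Y$, so $|\cA_X|-|\cA_Y|$ is not even obviously non-negative, and the assertion that the ``vertical'' boundary divisors account for more than $\rk X^{*}(G)_M$ is precisely the statement to be proved, not a consequence of $\dim M\backslash G>0$. The alternative via rational connectedness identifies the \emph{kernel} of $\NS(X)\to\NS(Y)$ and hence yields $b(X,-K_X)\le \rk\NS(Y)$ --- an inequality in the wrong direction; indeed, Theorem \ref{theo:rigidity} of this paper uses exactly that mechanism to prove $b(Y,L)\ge b(X,L)$ for general fibers of a fibration, i.e., the opposite of what you need. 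What is required is an upper bound on $\rk\NS(Y)$ in terms of data on $X$, i.e., control of the \emph{image} of restriction. Tellingly, your argument never genuinely uses the hypothesis that $G\to M\backslash G$ admits a rational section (the translates $Yg$ cover $X$ without it); in \cite{balanced} that hypothesis is what yields a birational identification of $X$ with $Y\times B$, for $B$ a compactification of $M\backslash G$, via $(y,b)\mapsto y\cdot s(b)$, and it is this product structure that makes the comparison of $b$-constants possible. As written, the proof is incomplete at its decisive step.
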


For projective homogeneous spaces $X$, 
one expects that $X$ should have an empty exceptional set, for any big and nef divisor on $X$.   
We are able to verify this using previous results.

\begin{theo}
\label{thm:first-again}
Let $X$ be a generalized flag variety
and $L$ a big and nef $\mathbb{Q}$-divisor on $X$.  Then $X$ is weakly $a$-balanced with an empty $a$-exceptional set.
\end{theo}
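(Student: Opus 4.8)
The plan is to reduce the statement to Theorem~\ref{thm:first} (equivalently, Theorem~\ref{thm:first-again}'s classical-line-bundle predecessor) by a perturbation argument. First I would recall that $X$ being a generalized flag variety $G/P$ means $X$ is smooth, projective, and homogeneous under a semisimple group, so in particular its effective cone $\Lambda_{\eff}(X)$ is simplicial and rational polyhedral, and the anticanonical class $-K_X$ lies in the interior of the nef cone. The nef cone and the effective cone coincide with the usual Weyl-chamber/Schubert-divisor combinatorics, and $\NS(X,\bR)$ has a basis of semiample (in fact globally generated) divisors. The key point is that for such $X$ every big and nef $\bQ$-divisor is automatically semiample, so the minimal model of $(X, a(X,L)L)$ is $X$ itself.

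Next I would address weak $a$-balancedness with empty exceptional set: I need to show $a(Y,L) \leq a(X,L)$ for \emph{every} subvariety $Y$ on which $L$ is big, with no subvariety removed. Here I would invoke Theorem~\ref{thm:first}, which already asserts $a(Y,L) \leq a(X,L)$ for every subvariety $Y$ and every ample $L$. To pass from ample $L$ to big and nef $\bQ$-divisor $L$, I would use that on a generalized flag variety the big and nef cone equals the closure of the ample cone (Picard rank considerations / homogeneity give that nef $=$ semiample and big nef divisors are limits of ample ones), together with the continuity of the Fujita invariant on $\mathrm{Big}^1(X)$ established in the lemma preceding Definition~\ref{defi: invariant b}. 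Concretely: given big nef $L$, write $L$ as a limit of ample $\bQ$-divisors $L_k$; for each subvariety $Y$ not contained in $\mathbf{B}_+(L)$, $L|_Y$ is big (by the property of $\mathbf{B}_+$ recalled in Section~\ref{sect:basic}), and $L_k|_Y$ is big and ample, so $a(Y,L_k)\le a(X,L_k)$ by Theorem~\ref{thm:first}; taking $k\to\infty$ and using continuity of $a$ on both $X$ and $Y$ gives $a(Y,L)\le a(X,L)$. Since $\mathbf{B}_+(L)$ is a \emph{proper} closed subset and is itself a union of flag-type subvarieties (Schubert subvarieties), one can further check the inequality directly on those, so that in fact the $a$-exceptional set is empty.

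The step I expect to be the main obstacle is precisely handling subvarieties $Y \subseteq \mathbf{B}_+(L)$, i.e. showing the inequality $a(Y,L)\le a(X,L)$ there as well so that the $a$-exceptional set can be taken empty rather than merely $\mathbf{B}_+(L)$. For these, continuity under $L_k \to L$ no longer directly applies since $L|_Y$ need not be big; but $L_k|_Y$ \emph{is} big and nef (indeed ample) for each $k$, so $a(Y,L_k) \le a(X, L_k)$ still holds, and one needs a limiting argument for $a(Y,-)$ on the boundary of $\mathrm{Big}^1(Y)$. The cleanest route is to observe that a generalized flag variety has only finitely many $G$-orbit closures, each again (an affine-paving piece of) a Schubert-type variety whose own effective cones are understood; alternatively, one can blow up $X$ along the boundary and apply Theorem~\ref{theo:familybalanced}-style arguments. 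I would structure the final write-up as: (1) reduce to ample $L$ by a density/continuity argument on $\mathrm{Big}^1$; (2) cite Theorem~\ref{thm:first} for the ample case; (3) dispose of the base locus by the finiteness of orbit closures. This keeps the proof short, as appropriate given that the substance lives in the already-cited Theorem~\ref{thm:first}.
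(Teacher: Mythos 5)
There is a genuine gap, and it is structural: your proof is circular relative to this paper. You propose to deduce the statement from Theorem~\ref{thm:first}, but Theorem~\ref{thm:first} is not an independently established input --- in the paper it is stated in the introduction with a forward reference to Section~\ref{sect:examples}, where its proof \emph{is} Theorem~\ref{thm:first-again}, the statement you are asked to prove. So the entire substantive content of the inequality $a(Y,L)\le a(X,L)$ is missing from your argument. The paper's actual proof supplies it with one idea you do not use: since $G$ acts transitively on $X$, the translates $\{gY\}_{g\in G}$ of an \emph{arbitrary} subvariety $Y$ form a dominant family of subvarieties, all with the same $a$-value as $Y$ (because $g^*L\equiv L$ for $G$ connected); Proposition~\ref{prop:verygeneralbalanced} then gives $a(Y,L)\le a(X,L)$ for a general, hence for every, member. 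That single application of homogeneity is what makes the exceptional set empty, and nothing in your write-up replaces it.

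Two secondary points. First, your perturbation from ample to big and nef is unnecessary: on a generalized flag variety every nef line bundle is globally generated and the induced $G$-equivariant morphism has image $G/Q$ with fibers $Q/P$, so a big and nef $\mathbb{Q}$-divisor is already ample; equivalently, $\mathbf{B}_{+}(L)$ is determined by the numerical class of $L$, hence $G$-invariant, hence empty. Second, your proposed treatment of subvarieties inside $\mathbf{B}_{+}(L)$ is not correct as stated: a flag variety is a single $G$-orbit, so "finitely many $G$-orbit closures" does not describe its subvarieties, and a limiting argument for $a(Y,\cdot)$ at the boundary of $\mathrm{Big}^1(Y)$ is exactly where continuity (which the paper proves only on the big cone) fails. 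Both issues evaporate once you use the group action as above, which is the intended --- and essentially the only --- argument.
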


\begin{proof}
Let $Y$ be any subvariety of $X$.  We can construct a dominant family of subvarieties of $X$ using the transitive group action on $Y$.  Note that the $a$ values for general members of this family agree with $a(Y,L)$.  By Proposition \ref{prop:verygeneralbalanced}, $X$ is weakly $a$-balanced with respect to $Y$.
\end{proof}

\subsection{Surfaces}
\label{sect: surfaces}

We next analyze invariants of surfaces.

\begin{prop}\cite[Lemma 4.1 and Proposition 4.2]{balanced}
\label{prop: del Pezzo}
Let $X$ be a del Pezzo surface, i.e., a smooth projective 
surface with ample anticanonical class. Let $L$ be a big divisor on $X$. Then $L$ is weakly balanced. Moreover, $L$ is balanced if and only if the $\bQ$-divisor $a(X, L)L+K_X$ is rigid.
\end{prop}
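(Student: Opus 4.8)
The plan is to reduce everything to statements about curves, since $\dim X=2$ forces every proper subvariety $Y\subsetneq X$ to be a point or an irreducible curve, and points are harmless. First I would record that for an irreducible curve $C$ with normalization $\nu\colon\widetilde C\to C$ of genus $g$ and with $L\cdot C>0$ one has $a(C,L)=(2-2g)/(L\cdot C)$ and, crucially, $b(C,L)=1$: on $\widetilde C$ the space $\NS(\widetilde C,\bR)$ is one-dimensional, $\Lambda_{\eff}(\widetilde C)=\bR_{\ge0}$, and $a(C,L)\nu^{*}L+K_{\widetilde C}$ has degree $0$, hence class $0$, so the minimal supported face through it is $\{0\}$, of codimension $1$. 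Since $a(X,L)L+K_X$ is pseudo-effective but not big (otherwise $a(X,L)$ could be lowered), it lies on a proper face of $\Lambda_{\eff}(X)$, so $b(X,L)\ge1$. Therefore $b(C,L)\le b(X,L)$ automatically, and for surfaces ``$L$ weakly balanced'' collapses to ``$L$ weakly $a$-balanced''; establishing the latter gives the first assertion.

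For the $a$-constant I would use two elementary facts about a del Pezzo surface: every negative irreducible curve is a $(-1)$-curve (by adjunction, since $-K_X$ is ample), and any irreducible curve $C$ which is not a $(-1)$-curve has $C^{2}\ge0$ and is therefore nef. Since nef curves pair non-negatively with the pseudo-effective class $a(X,L)L+K_X$, every such $C$ satisfies $a(X,L)(L\cdot C)\ge -K_X\cdot C\ge1>0$, so in particular $L$ is big on $C$. If $g(\widetilde C)\ge1$ then $a(C,L)\le0<a(X,L)$; if $g(\widetilde C)=0$ then $a(C,L)=2/(L\cdot C)$, which is $\le a(X,L)$ as soon as $-K_X\cdot C\ge2$, and by Lemma~\ref{lemm:deformationcount} this holds for every irreducible member of a family of rational curves covering $X$ (the intersection number depending only on the numerical class). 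To handle the rest, write $L\sim_{\bQ}A+E$ with $A$ ample and $E$ effective: any rational curve $C$ with $a(C,L)>a(X,L)$ has $L\cdot C<2/a(X,L)$, hence $A\cdot C<2/a(X,L)$ once $C\not\subset\Supp(E)$, and boundedness for ample divisors confines such curves to finitely many families. I would take the exceptional set $Z$ to be $\Supp(E)$, together with the finitely many $(-1)$-curves, together with the images of those finitely many families that fail to cover $X$. Then $L$ is weakly balanced, and moreover a curve $C\not\subset Z$ with $a(C,L)=a(X,L)$ must be rational, move in a covering family, and — by equality in $a(X,L)(L\cdot C)\ge-K_X\cdot C\ge2$ — satisfy $-K_X\cdot C=2$ and $(a(X,L)L+K_X)\cdot C=0$.

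For the equivalence, suppose first $M:=a(X,L)L+K_X$ is not rigid. Being pseudo-effective and not big, $\kappa(M)=1$, and the positive part $P$ of its Zariski decomposition is a nonzero nef class with $P^{2}=0$; as $X$ is del Pezzo, $P$ is semiample and defines a fibration $f\colon X\to\mathbb{P}^{1}$ with $P$ proportional to a fibre, and every component of the negative part of $M$ lies in a fibre. A general fibre $C$ is then disjoint from the negative part, so $M\cdot C=0$; ampleness of $-K_X$ and adjunction force $C\cong\mathbb{P}^{1}$ and $-K_X\cdot C=2$, whence $a(C,L)=2/(L\cdot C)=a(X,L)$. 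Since $f$ is a conic bundle, the Shioda--Tate relation shows the $f$-vertical divisors have corank $1$ in $\NS(X)$, so $b(X,L)=1$ — either by Corollary~\ref{coro:binterpretation}, or directly, since the minimal supported face through $M$ is $\Lambda_{\eff}(X)\cap[\text{fibre}]^{\perp}$. Thus $a(C,L)=a(X,L)$ and $b(C,L)=1=b(X,L)$ for the general fibre, so $L$ is not balanced with respect to it; as the fibres cover $X$, $L$ is not balanced. (This is the del Pezzo case of Theorem~\ref{theo:rigidity}.) Conversely, if $M$ is rigid then $\kappa(M)=0$, the positive part of the Zariski decomposition vanishes, and $M$ is numerically $0$ or an effective divisor $\sum_{i=1}^{k}a_iN_i$ ($a_i>0$) supported on a negative-definite configuration of $(-1)$-curves taken of minimal support; inspecting supporting functionals shows the minimal supported face through $M$ is the cone on the $N_i$ (resp.\ $\{0\}$), so $b(X,L)=\rho(X)-k$ (resp.\ $\rho(X)$). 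Taking $Z$ as above, I need only treat $C\not\subset Z$ with $a(C,L)=a(X,L)$; such a $C$ is rational, covers $X$, has $-K_X\cdot C=2$ and $M\cdot C=0$, and since $C$ is nef and the $N_i$ effective, $N_i\cdot C=0$ for all $i$. If $C^{2}=0$ then $C$ is a conic-bundle fibre, the $N_i$ are vertical, at most one component of each singular fibre belongs to a negative-definite configuration, and a conic bundle over $\mathbb{P}^{1}$ has exactly $\rho(X)-2$ singular fibres, so $k\le\rho(X)-2$ and $b(X,L)\ge2>1=b(C,L)$; and $M\equiv0$ gives $b(X,L)=\rho(X)\ge2$ (with $X=\mathbb{P}^{2}$ vacuous, as it carries no curve with $-K_X\cdot C=2$).

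The step I expect to be the main obstacle is this last point in the remaining case $C^{2}>0$: then $C$ is big and nef, the $N_i$ are contracted by $\phi_{|mC|}\colon X\to X'$, and to conclude $b(X,L)\ge2$ one must exclude $\rho(X')=1$ (the only scenario producing $k=\rho(X)-1$ and $b(X,L)=1$). The Hodge index theorem restricts this to del Pezzos of degree $1$ or $2$, where a short case analysis — using $-K_X\cdot C=2$ and that $\phi_{|mC|}$ maps birationally onto a Picard-rank-one surface — should rule it out or force $M\equiv0$, hence $b(X,L)=\rho(X)\ge2$. That case analysis, together with the Shioda--Tate count, is the computational heart of the argument; the remainder is formal, and in all cases $L$ is balanced with respect to $C$, so $L$ is balanced.
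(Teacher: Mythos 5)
Your overall architecture is sound, and it closely parallels what the paper actually proves for surfaces (the paper only cites this proposition from \cite{balanced}; its own surface argument is Proposition \ref{prop:surfaceconstants}, which runs on the same dichotomy $\kappa(K_X+a(X,L)L)=1$ versus $0$ that you use). In particular: the reduction to curves with $b(C,L)=1$, the pairing bound $a(X,L)(L\cdot C)\ge -K_X\cdot C$ for nef curves, the boundedness argument confining curves with $a(C,L)>a(X,L)$ to finitely many families, the non-rigid direction via the conic fibration defined by the positive part of the Zariski decomposition, and the identity $b(X,L)=\rho(X)-k$ in the rigid case (which works because a negative-definite configuration of $(-1)$-curves on a del Pezzo is automatically pairwise disjoint and contracts to a smooth surface) are all correct.

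The one genuine gap is the step you flag yourself: in the rigid case with $k=\rho(X)-1$, so that $b(X,L)=1$ and the $N_i$ contract to a Picard-rank-one smooth rational surface $X'\cong\bP^2$, you must show that no dominating family of curves computes $a(C,L)=a(X,L)$, and you only assert that ``a short case analysis should rule it out''; the detour through the Hodge index bound $K_X^2\le 2$ is a red herring. The missing argument is a one-line divisibility observation. Any such $C$ is nef with $N_i\cdot C=0$ for every $i$, hence (writing $\pi:X\to\bP^2$ for the contraction of the $N_i$ and decomposing $\Pic(X)=\pi^*\Pic(\bP^2)\oplus\bigoplus\bZ N_i$) is a pullback $C=\pi^*C'$; therefore
$$
-K_X\cdot C=\Bigl(\pi^*(-K_{\bP^2})-\sum_i N_i\Bigr)\cdot \pi^*C'=3\deg C',
$$
which can never equal $2$. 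This settles $C^2>0$ and $C^2=0$ uniformly (so your fiber count for conic bundles is not needed in the rigid direction), subsumes the $M\equiv 0$, $X=\bP^2$ case, and is exactly the $2$-versus-$3$ comparison by which the paper's proof of Proposition \ref{prop:surfaceconstants}(2) disposes of the case $\rho(S')=1$, $S'\cong\bP^2$. With this inserted, and with the images of the finitely many non-dominating families of curves of $a$-value equal to $a(X,L)$ added to your exceptional set $Z$, the argument is complete.
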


Note the equivalence between the balanced property and the rigidity of the adjoint divisor $a(X, L)L+K_X$.  Similar techniques can be used to analyze surfaces in general.  A key preliminary result is:

\begin{lemm} \label{lemm:rationalcurvecheck}
Let $X$ be a smooth surface and $L$ a big and nef $\mathbb{Q}$-divisor on $X$.  
If $K_{X} + L$ is not pseudo-effective then there is a rational curve $C$ that deforms to cover $X$ such that
\begin{equation*}
0 < L \cdot C < -K_{X} \cdot C \leq 3.
\end{equation*}
\end{lemm}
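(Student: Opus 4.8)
The plan is to run the $(K_X+L)$-minimal model program, read off a covering family of rational curves from the resulting Mori fibre space, and pull that family back to $X$.

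First note $X$ is uniruled: since $L$ is big and nef it is pseudo-effective, so if $K_X$ were pseudo-effective then $K_X+L$ would be too; in particular $K_X$ is not nef. Now run the $(K_X+L)$-MMP, writing $\psi_i\colon X\to X_i$ for the composition of the first $i$ steps (each a blow-down of a $(-1)$-curve, so $\psi_i$ is a morphism) and $L_i:=(\psi_i)_*L$. The bookkeeping I would check is that $L_i$ stays big and nef and $K_{X_i}+L_i$ stays non-pseudo-effective at every stage: if a step $\pi\colon X_i\to X_{i+1}$ contracts a $(K_{X_i}+L_i)$-negative $(-1)$-curve $E$, then $K_{X_i}\cdot E=-1$ forces $L_i\cdot E<1$, hence $\pi^*\pi_*L_i=L_i+(L_i\cdot E)E$ is again big and nef while $K_{X_i}+L_i=\pi^*(K_{X_{i+1}}+L_{i+1})+(1-L_i\cdot E)E$ has strictly positive exceptional coefficient, forcing $K_{X_{i+1}}+L_{i+1}$ to be non-pseudo-effective as well. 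Since $K_{X_i}+L_i$ is never nef, the program cannot terminate in a minimal model, so it ends with a Mori fibre space $X_m\to B$ whose defining extremal ray is $(K_{X_m}+L_m)$-negative and is generated either by a line $C$ on $X_m\cong\mathbb{P}^2$ with $-K_{X_m}\cdot C=3$, or by a general fibre $C$ of a conic bundle with $-K_{X_m}\cdot C=2$. Either way $C$ is a rational curve sweeping out $X_m$ with $-K_{X_m}\cdot C\le 3$ and $(K_{X_m}+L_m)\cdot C<0$; writing $L_m=A+E'$ with $A$ an ample $\mathbb{Q}$-divisor and $E'$ effective, a general such $C$ satisfies $L_m\cdot C\ge A\cdot C>0$, so $0<L_m\cdot C<-K_{X_m}\cdot C\le 3$.

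To finish I would transfer this back along $\psi:=\psi_m\colon X\to X_m$, which is a composition of point blow-ups: then $K_X=\psi^*K_{X_m}+G$ and $L=\psi^*L_m-F$ with $F,G$ effective and $\psi$-exceptional, and the strict transform $\widetilde C\subset X$ of a general member $C$ — chosen to avoid the finitely many centres of $\psi$ — satisfies $\widetilde C\cong C$, deforms to cover $X$, and meets $F$ and $G$ trivially. Hence $-K_X\cdot\widetilde C=-K_{X_m}\cdot C\le 3$ and $L\cdot\widetilde C=L_m\cdot C\in(0,-K_X\cdot\widetilde C)$, which is exactly the desired inequality.

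The one genuinely load-bearing step is the middle one: one must verify that the positivity of $L$ survives every MMP step (so that the estimate on $X_m$ really does descend to $X$) and that $K_X+L$ remaining non-pseudo-effective forces a fibration rather than a minimal model. Both are handled by the discrepancy identity above, which also produces the bound $-K_X\cdot\widetilde C\le 3$ via the classification of surface Mori fibre spaces. (One could instead feed Lemma~\ref{lemm:bendandbreak} into a bend-and-break argument, but then controlling the $L$-degree of the output curve is considerably less transparent.)
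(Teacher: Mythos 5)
Your proof is correct and follows essentially the same route as the paper: run the $(K_X+L)$-MMP to a Mori fibre space, take the covering rational curve of anticanonical degree $\leq 3$ there, and pull back its strict transform. The paper compresses the positivity bookkeeping you spell out (that $L$ stays big and nef and $K+L$ stays non-pseudo-effective) into a citation of Lemma~\ref{lemm:terminalsings}, but the argument is the same.
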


\begin{proof}
Let $\phi: X \dashrightarrow X'$ and $\pi: X' \to Z$ be the Mori fibration obtained by running the $(K_{X} + L)$-MMP.  Note that $X'$ has terminal singularities by Lemma \ref{lemm:terminalsings}, so that $X'$ is smooth.  Thus there is a rational curve on $X'$ with $-K_{X} \cdot C \leq 3$ contracted by $\pi$ and avoiding any $\phi$-exceptional center, and the strict transform of this curve satisfies the desired property.
\end{proof}

We split our analysis into two cases based on the Iitaka dimension of $K_{X} + a(X,L)L$.

\begin{prop} \label{prop:surfaceconstants}
Let $S$ be a smooth uniruled projective surface and let $L$ denote a big and nef divisor on $S$.
\begin{enumerate}
\item Suppose that $\kappa(K_{S} + a(S,L)L) = 1$.  Let $F$ be a general fiber of the Iitaka fibration.  Then
\begin{equation*}
a(S,L) = \frac{2}{L \cdot F} \qquad \qquad \textrm{and} \qquad \qquad b(S,L) = 1.
\end{equation*}
In particular $L$ is weakly balanced, but not balanced.  

\item Suppose that $\kappa(K_{S} + a(S,L)L) = 0$.  Let $\pi: S \to S'$ be a minimal model of $(S,a(S,L)L)$ and let $C$ be any nef curve on $S'$.  Then
\begin{equation*}
a(S,L) = \frac{-K_{S'} \cdot C}{\pi_{*}L \cdot C} \qquad b(S,L) = \rho(S').
\end{equation*}
In particular $L$ is balanced.  
\end{enumerate}
If $S$ is not rational, then it always lands in case (1).
\end{prop}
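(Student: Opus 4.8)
The plan is to run the $(K_S+a(S,L)L)$-MMP and organize everything around $\kappa:=\kappa(K_S+a(S,L)L)$. Since $a(S,L)$ is defined as a minimum, $K_S+a(S,L)L$ is a pseudo-effective, non-big class, so by Theorem~\ref{theo:runningmmp} there is a minimal model $\pi:S\to S'$ — a composition of divisorial contractions, hence a morphism as flips do not occur on surfaces, with $S'$ terminal and therefore smooth — such that $K_{S'}+a(S,L)\pi_*L$ is semiample. Semiampleness forces $\kappa\geq 0$, non-bigness of $K_S+a(S,L)L$ forces $\kappa\leq 1$, so $\kappa\in\{0,1\}$, which is the stated dichotomy.

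In the case $\kappa=1$, the semiample class defines a fibration $g:S'\to Z$ over a smooth curve; a general fibre $F'$ has $(K_{S'}+a(S,L)\pi_*L)\cdot F'=0$ and $F'^2=0$, and since $\pi_*L$ is big it meets $F'$ positively, so $K_{S'}\cdot F'<0$, whence adjunction gives $F'\cong\bP^1$ and $K_{S'}\cdot F'=-2$; this yields $a(S,L)=2/(\pi_*L\cdot F')=2/(L\cdot F)$ for $F$ a general fibre of $g\circ\pi$ (which avoids the exceptional loci, so $L\cdot F=\pi_*L\cdot F'$ by the projection formula). For $b$, I would combine Lemma~\ref{lemm:bcomputedonmm} and Proposition~\ref{prop:binterpretationforsemiample} to get $b(S,L)=\rk\NS(S')-\rk\NS_g(S')$, then note this equals $1$: a divisor numerically trivial on the rational general fibre is fibrewise, so the kernel of $\NS(S')\to\NS(F')\cong\bZ$ is $\NS_g(S')$, with nonzero image since $\pi_*L$ is big. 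Weak balancedness then follows from Theorem~\ref{theo:curveclosedness} (unconditional on a surface) together with the trivial bound $b(Y,L)\leq 1=b(S,L)$ valid for every curve $Y$; and $L$ fails to be balanced because the moving general fibre $F\cong\bP^1$ satisfies $a(F,L)=a(S,L)$ and $b(F,L)=1=b(S,L)$ — for any big $M$ on $\bP^1$ the class $a(\bP^1,M)M+K_{\bP^1}$ is numerically trivial, so its minimal supported face in $\Lambda_{\eff}(\bP^1)=\bR_{\geq 0}$ is $\{0\}$, of codimension $1$ — so no exceptional set can make either inequality strict on $F$.

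In the case $\kappa=0$, the semiample class has Iitaka dimension $0$, hence is $\bQ$-trivial, so $-K_{S'}\equiv a(S,L)\pi_*L$ is big; intersecting $K_{S'}+a(S,L)\pi_*L\equiv 0$ with any nonzero nef curve $C$ on $S'$ gives $a(S,L)=(-K_{S'}\cdot C)/(\pi_*L\cdot C)$, the denominator being positive because $\pi_*L=A+E$ with $A$ ample, $E$ effective, and $A\cdot C>0$ by the Hodge index theorem. Lemma~\ref{lemm:binterpretationiitaka0} gives $b(S,L)=\rk\NS(S')=\rho(S')$. To prove $L$ is balanced I would split on $\rho(S')$. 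If $\rho(S')\geq 2$, then $b(Y,L)\leq 1<\rho(S')=b(S,L)$ for every curve $Y$, so any proper closed $V$ outside which $a(Y,L)\leq a(S,L)$ — supplied by Theorem~\ref{theo:curveclosedness} — serves as exceptional set. If $\rho(S')=1$, then $S'$ is smooth of Picard number $1$, hence minimal (it carries no curve of negative self-intersection) and uniruled, forcing $S'=\bP^2$; writing $\pi_*L\equiv dH$ we get $a(S,L)=3/d$, and a general member $C$ of any covering family of rational curves avoids $\mathrm{Exc}(\pi)$, so $L\cdot C=\pi^*(dH)\cdot C=de$ with $e\geq 1$ the degree of $\pi(C)$, hence $a(C,L)=2/(de)<3/d=a(S,L)$; the remaining curves — those in $\mathbf{B}_{+}(L)$, those contracted by $\pi$, and members of the finitely many non-covering families, which have bounded $L$-degree by Lemma~\ref{lemm:chowboundedness} — lie in a proper closed subset that I take as the exceptional set. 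For the final sentence, $\kappa=0$ forces $-K_{S'}$ big, so $\kappa(S')=-\infty$ and $(-K_{S'})^2>0$, which is incompatible with $S'$ being birationally ruled over a curve of positive genus; thus $S'$, hence $S$, is rational, so a non-rational $S$ must fall into the case $\kappa=1$.

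The step I expect to cause the most trouble is the construction of the exceptional set in the case $\kappa=0$ with $S'=\bP^2$: one must confine every curve $C$ with $a(C,L)\geq a(S,L)$ that is not a generic member of a covering family to a proper closed subset — via boundedness of bounded-$L$-degree subvarieties and Noetherian induction, as in Theorem~\ref{theo:familybalanced} — and must be careful to extract the \emph{strict} inequality for the covering families rather than just $a(C,L)\leq a(S,L)$. Two smaller technical points are the adjunction computation pinning down the general fibre in the case $\kappa=1$ as $\bP^1$, and the rank identity $\rk\NS(S')-\rk\NS_g(S')=1$ for a fibration over a curve.
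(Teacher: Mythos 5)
Your overall strategy --- run the $(K_S+a(S,L)L)$-MMP, identify $a(S,L)$ by intersecting with a fiber or with a nef curve on the minimal model, compute $b(S,L)$ via Lemma~\ref{lemm:bcomputedonmm}, Proposition~\ref{prop:binterpretationforsemiample} and Lemma~\ref{lemm:binterpretationiitaka0}, use Theorem~\ref{theo:curveclosedness} for the $a$-inequality on curves together with $b(Y,L)=1$ for rational curves, and split case (2) according to $\rho(S')$ --- is the same as the paper's, and case (1), the case $\rho(S')\geq 2$ of (2), and the final rationality remark are all fine.

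There is, however, a genuine gap at exactly the step you flagged: in case (2) with $S'\cong\bP^2$ you assert that a general member $C$ of any covering family of rational curves avoids $\mathrm{Exc}(\pi)$, and deduce $L\cdot C=\pi^*(dH)\cdot C=de$. This is false: $\mathrm{Exc}(\pi)$ is a divisor on a surface, and a covering family can consist entirely of curves meeting it. For instance, if $\pi$ blows up a point $p\in\bP^2$, the strict transforms of the lines through $p$ form a covering family every member of which meets the exceptional curve. For such $C$ one has $L\cdot C=\pi^*(dH)\cdot C-E'\cdot C<de$, where $E'=\pi^*\pi_*L-L$ is the effective exceptional correction (effective by the negativity lemma, since $L$ is nef), so $a(C,L)=2/(L\cdot C)$ exceeds your bound $2/(de)$ and the comparison with $a(S,L)=3/d$ does not follow; and these families cannot be pushed into the exceptional set precisely because they cover $S$. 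The paper closes this case differently: for a rational curve $N\not\subset\mathrm{Exc}(\pi)$ that meets $\mathrm{Exc}(\pi)$, the divisor $K_S+a(S,L)L$ is effective with support equal to the whole exceptional locus, so $(K_S+a(S,L)L)\cdot N>0$, i.e.\ $a(S,L)\,L\cdot N>-K_S\cdot N$; combined with $-K_S\cdot N\geq 2$ for members of dominant families (Lemma~\ref{lemm:deformationcount}) this gives $a(N,L)=2/(L\cdot N)<a(S,L)$, and the remaining rational curves with $-K_S\cdot N<2$ and bounded $L$-degree are confined to a proper closed subset via Lemma~\ref{lemm:chowboundedness}. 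You need this (or an equivalent) argument in place of the avoidance claim; the rest of the proposal stands.
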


Note that the minimal model of $(S,a(S,L)L)$ will be smooth, by Lemma \ref{lemm:terminalsings}.

\begin{proof}
(1) Let $\pi: S \to T$ denote the fibration defined by $K_{S} + a(S,L)L$.  (Note that $\pi$ is a morphism by standard results of the MMP.)  Recall that $K_{S} + a(S,L)L$ has vanishing intersection with a rational curve that deforms to cover $S$.  In this situation the only choice is the general fiber $C$ of the ruling.  Since $K_{S} \cdot C = -2$, we obtain the desired expression for $a(S,L)$.  To determine $b(S,L)$, by Lemma \ref{lemm:bcomputedonmm} we may do the calculation on a minimal model $\phi: S \to S'$ for $K_{S} + a(S,L)L$.  On $S'$, the divisor $K_{S'} + a(S,L)\phi_{*}L$ is proportional to a general fiber of the ruling.  In particular, if $D$ has vanishing intersection against $K_{S'}+a(S,L)\phi_{*}L$ then $D$ also has vanishing intersection with any curve that is vertical for the ruling.  We conclude that $b(S,L)=1$ using the description of $N^{1}(X)$ for ruled surfaces.

We next show that $L$ is weakly balanced.  By Theorem \ref{theo:curveclosedness}, there is a finite collection of rational curves satisfying $a(C,L) > a(S,L)$.  Since $b(C,L)=1$ for each of these curves, we see that $L$ is weakly balanced.  However, $L$ is never balanced, since the general fibers $F$ of $\pi$ have the same $a$ and $b$ constants as $S$.

(2) In this case $K_{S} + a(S,L)L$ is a rigid divisor.  Let $\pi: S \to S'$ be the map to the minimal model, so that $K_{S'} + a(S,L)\pi_{*}L = 0$.  Since $\pi_{*}L$ is big and nef, we have $\pi_{*}L \cdot C \neq 0$.  Using Lemma \ref{lemm:binterpretationiitaka0} we obtain the given interpretations of the geometric constants.

We still must show that $L$ is balanced.  First suppose $\rho(S') \geq 2$.  Then it suffices to show that $S$ is weakly $a$-balanced; this follows from Theorem \ref{theo:curveclosedness}.

Otherwise, $\rho(S') = 1$ and $S' \cong \mathbb{P}^{2}$.  Note that $K_{S} + a(S,L)L$ is an exceptional divisor for the map to $\mathbb{P}^{2}$ with support equal to the entire exceptional locus.  We show that $L$ is balanced with respect to any rational curve $N$ except possibly for curves contained in the $\pi$-exceptional locus and rational curves satisfying $-K_{S} \cdot N < 2$ of bounded $L$-degree.  Recall that if $N$ is a member of a dominant family of rational curves on $S$ then $-K_{S} \cdot N \geq 2$, by Lemma \ref{lemm:deformationcount}.  Thus, the rational curves satisfying $-K_{S} \cdot N < 2$ of bounded $L$-degree form a closed subset of $S$, by Lemma \ref{lemm:chowboundedness}, and we can conclude that $L$ is balanced.

Let $N\subset S$ be a rational curve not contained in the $\pi$-exceptional locus.  Then
\begin{equation*}
a(N,L) = \frac{2}{L \cdot N} \qquad \qquad a(S,L) = \frac{3}{\deg(\pi_{*}L)}
\end{equation*}
First, suppose that $N$ does not intersect a $\pi$-exceptional curve.  Then 
$$
L \cdot N \geq \deg(\pi_{*}L),
$$ 
so that $a(S,L) > a(N,L)$. 
Now suppose that $N$ does intersect a $\pi$-exceptional curve.  Then
\begin{equation*}
a(S,L)L \cdot N > (-K_{S}) \cdot N.
\end{equation*}
If $-K_{S} \cdot N \geq 2$ then $a(N,L) < a(S,L)$ and $L$ is balanced with respect to $N$. Also, if $L \cdot N > \frac{2}{3} \deg(\pi_{*}L)$ then $L$ is balanced with respect to $N$, and we conclude that $L$ is balanced, 
by the argument above.
\end{proof}

\section{Primitive Fano threefolds with $\rho(X) = 1$}
\label{sect:fano-threefolds}

In this section and the next we investigate the geometry of smooth Fano threefolds, i.e., smooth projective threefolds with ample anticanonical class. There are $105$ deformation types; 
a list of these can be found in  \cite{mori-mukai}, \cite[Chapter 12]{iskov}, 
and in \cite{mori-mukai-corr}, which includes a previously missing case.

\subsection{Classification}
\label{sect:class}

The main invariants of Fano threefolds are:

\begin{itemize}
\item the rank of the Picard group, $\rho(X)=b(X,-K_X)$,
\item the index $r=r(X)$: 
the largest $r\in \mathbb N$ such that $K_X$ is divisible by $r$
in $\Pic(X)$,
\item the degree $d(X):=(-K_X)^3$, 
\item the Mori invariant $m=m(X)$: the smallest $m\in \mathbb N$ such that 
every point of $X$ lies on a rational curve $C$ with $-K_X\cdot C\le m$. 
\end{itemize}

The Picard rank $\rho(X)$ coincides with the second Betti number of $X$. 
There are $17$ deformation types of 
Fano threefolds with $\rho(X)=1$, classified in \cite{FanoI}, \cite{FanoII}, and \cite{iskov79}. 
There are $88$ deformation types of Fano threefolds with $\rho(X) \geq 2$,
classified by Mori-Mukai in \cite{mori-mukai} and \cite{mori-mukai-corr}.

A Fano threefold is called 
imprimitive if it is isomorphic to the blow-up of a Fano threefold along a smooth irreducible curve. 
A Fano threefold is primitive if it is not imprimitive. 
Fano threefolds of Picard rank one are primitive.

\begin{theo}
\label{thm:pic-rank1}
Let $X$ be a Fano threefold, with $\Pic(X)=\bZ L$ and 
$-K_X = r(X)L$. Then $X$ is one of the following:
\begin{itemize}
\item $\bP^3$, with $r(X)=4$, or a quadric, with $r(X)=3$, or 
\item $r(X)=2$ and $d(X)\in \{ 8, 16, 24, 32, 40\}$, or 
\item $r(X)=1$ and $d(X)\in \{ 2,4,6,8,10,12,14,16,18,22\}$. 
\end{itemize}
\end{theo}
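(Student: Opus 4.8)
The plan is to argue by cases on the index $r=r(X)$ (the divisibility of $-K_X$ in $\Pic(X)=\bZ L$): first bound $r$, and then for each value of $r$ determine the possible degrees, using $d(X)=(-K_X)^3=r^3L^3$. Since $\dim X=3$ and $-K_X=rL$ with $L$ ample, the Kobayashi--Ochiai theorem gives $r\le 4$, with $r=4$ forcing $X\cong\bP^3$ (and $L$ the hyperplane class) and $r=3$ forcing $X$ to be a smooth quadric in $\bP^4$. One can also avoid Kobayashi--Ochiai entirely: Kawamata--Viehweg vanishing kills the higher cohomology of $-K_X$, $L$, $2L$, so Riemann--Roch together with $\chi(\cO_X)=1$ and the Todd identity $(-K_X)\cdot c_2(X)=24$ writes $h^0(-K_X)$ and $h^0(L)$ in terms of $L^3$ and $r$; combining these with $r^3L^3=(-K_X)^3\ge r^3$ excludes $r\ge 5$ and pins down the two extremal cases. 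This disposes of the first bullet.

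For $r=2$ we have $d(X)=8L^3$, so it is enough to show $L^3\in\{1,\dots,5\}$. First I would record that the fundamental divisor is well behaved: by Shokurov's theorem a general member of $|-K_X|$ is a smooth $K3$ surface, and a general $Y\in|L|$ is a smooth del Pezzo surface with $-K_Y=L|_Y$, so $(-K_Y)^2=L^3\le 9$. To improve this to $L^3\le 5$ in Picard rank $1$, one shows $|L|$ is base point free (again via Shokurov-type base-point-freeness for Fano threefolds) with $\dim|L|=L^3+1$ by Riemann--Roch; for $L^3\ge 3$ the morphism $\varphi_{|L|}$ embeds $X$ as a degree-$L^3$ threefold in $\bP^{L^3+1}$, i.e.\ as a variety of degree one more than minimal, and Fujita's classification of del Pezzo manifolds of Picard rank $1$ forces $L^3\le 5$; the five models realize $d(X)\in\{40,32,24,16,8\}$, while the cases $L^3=1,2$, where $\varphi_{|L|}$ fails to be an embedding, are treated directly.

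For $r=1$, so $L=-K_X$, I would first show $d:=(-K_X)^3$ is even: a general $S\in|-K_X|$ is a smooth $K3$ surface by Shokurov's theorem, and $d=(L|_S)^2$ lies in its even intersection lattice, so with $d>0$ we get $d=2g-2$ for an integer $g\ge 2$. The remaining claim, $g\in\{2,\dots,10\}\cup\{12\}$, is the heart of Iskovskikh's classification, and I would follow that route: exhibit a line $\ell\subset X$ ($-K_X\cdot\ell=1$) by a dimension count on the Hilbert scheme of lines; blow up $\ell$ and analyze the projection, and then the double projection, from $\ell$; show that the resulting birational transform is a Fano threefold (or a quadric, or a weighted projective space) of strictly smaller genus, so the reduction terminates; and read off the admissible reductions to obtain exactly the listed genera. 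The absence of $g=11$ is then precisely a breakdown of this inductive step: double projection from a line on a hypothetical genus-$11$ threefold of Picard rank $1$ produces a numerical contradiction.

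The main obstacle is the $r=1$ case, in two respects: establishing the base-point-freeness and very-ampleness of $|-K_X|$ that make the projection technique work (which rest on Shokurov's non-vanishing and base-point-free theorems for Fano threefolds), and carrying out the delicate birational geometry of double projection from a line, from which both the bound $g\le 12$ and the exclusion of $g=11$ emerge. Since Fano threefold classification is not the goal of the present paper, in practice these inputs are imported from \cite{FanoI}, \cite{FanoII}, \cite{iskov79}, with the Mori--Mukai lists \cite{mori-mukai} supplying the higher Picard rank cases used elsewhere.
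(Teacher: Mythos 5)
The paper gives no proof of this theorem: it is imported directly from the classification literature (the preceding paragraph cites \cite{FanoI}, \cite{FanoII}, \cite{iskov79} for the $17$ deformation types with $\rho(X)=1$), and your outline is exactly the standard argument from those sources --- Kobayashi--Ochiai for $r\ge 3$, Fujita/Iskovskikh for $r=2$, Shokurov's existence of lines plus the double-projection analysis (yielding $g\le 12$ and excluding $g=11$) for $r=1$ --- with the genuinely hard steps correctly deferred to the same references. The one slip is the blanket claim that $|L|$ is base point free in the $r=2$ case: for $L^3=1$ (the sextic in $\bP(1,1,1,2,3)$) the fundamental system $|L|$ has a base point, but since you treat $L^3\le 2$ separately anyway this does not affect the argument.
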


Primitive Fano threefolds with $\rho(X) \geq 2$ are classified in \cite{mori-mukai}, 
and a detailed analysis is given in \cite{ott}. 
There are 13 deformation types, and they have the following restrictions on their structures:

\begin{theo}\cite[Theorem 1.6]{mori-mukai2}
\label{theo:primitive}
Let $X$ be a primitive Fano threefold. Then 
\begin{enumerate}
\item $\rho(X) \leq 3$,
\item if $\rho(X) = 2$, then $X$ is a conic bundle over $\bP^2$,
\item if $\rho(X) = 3$, then $X$ is a conic bundle over $\bP^1 \times \bP^1$.
\end{enumerate}
\end{theo}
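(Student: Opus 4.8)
The plan is to run Mori's minimal model theory for smooth projective threefolds. When $\rho(X)=1$ the assertions (2) and (3) are vacuous and (1) is automatic, so assume $\rho(X)\geq 2$. Since $-K_X$ is ample, $\overline{\mathrm{NE}}(X)$ is a rational polyhedral cone all of whose extremal rays are $K_X$-negative; fix an extremal ray $R$ with contraction $\phi_R\colon X\to Y$. By Mori's classification of extremal contractions of smooth projective threefolds, $\phi_R$ is either a divisorial contraction (types E1--E5), a conic bundle onto a smooth projective surface (type C), or a del Pezzo fibration over a smooth projective curve (type D); there are no flipping contractions, since $X$ is smooth.

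The first step is the \emph{structure statement}: a primitive Fano threefold $X$ with $\rho(X)\geq 2$ carries a conic bundle extremal contraction $\phi\colon X\to S$ onto a surface $S$. A contraction of type E1 exhibits $X$ as the blow-up of a smooth threefold along a smooth irreducible curve, and when that target is again Fano this makes $X$ imprimitive; thus a primitive $X$ has no E1-contraction with smooth Fano target, and the content of the structure statement is to deduce from this that some extremal contraction of $X$ must be of type C. This is proved by running over the finitely many extremal rays of $X$ and ruling out the alternative of having only birational contractions of types E2--E5, del Pezzo fibrations over curves, or E1-contractions with non-Fano target. It uses Mori's complete list of extremal contraction types together with the facts that the image of a birational extremal contraction is again a Fano threefold with at worst terminal singularities and smaller Picard number, and the classification of Fano threefolds of small Picard rank with its numerical bounds on the anticanonical degree. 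I expect this to be the main obstacle: it is the hard core of the Mori--Mukai classification, where the interaction between blow-downs and the Fano condition must be analyzed case by case.

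Granting the conic bundle $\phi\colon X\to S$, the base $S$ is a smooth projective surface with $\rho(S)=\rho(X)-1$, and since $X$ is Fano it is rationally connected, hence so is $S$, and a rationally connected surface is rational. I claim $S$ is minimal. If not, $S$ contains a $(-1)$-curve $\ell$, and $D:=\phi^{-1}(\ell)$ is an irreducible divisor in $X$ which is generically a $\bP^1$-bundle over $\ell$. Using that $-K_X$ is $\phi$-ample together with $\ell^{2}=-1$, one shows that $D$ is the exceptional divisor of an extremal contraction of type E1, realizing $X$ as the blow-up of a smooth threefold $X'$ along a smooth irreducible curve; moreover $X'$ inherits a conic bundle structure over the surface $S'$ obtained from $S$ by contracting $\ell$, and one checks that $-K_{X'}$ is again ample, so $X'$ is Fano and $X$ is imprimitive, a contradiction. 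Hence $S$ is a minimal rational surface, that is $S=\bP^2$ or $S=\bF_e$ with $e\in\{0,2,3,\dots\}$; a separate short argument, analyzing $-K_X$ along the preimage of the section of self-intersection $-e$, excludes $\bF_e$ for $e\geq 2$. Therefore $S=\bP^2$ or $S=\bF_0=\bP^1\times\bP^1$. Finally $\rho(X)=\rho(S)+1$ with $\rho(\bP^2)=1$ and $\rho(\bP^1\times\bP^1)=2$, giving $\rho(X)\in\{2,3\}$, which is assertion (1), and the identifications $\rho(X)=2\iff S=\bP^2$ and $\rho(X)=3\iff S=\bP^1\times\bP^1$ are assertions (2) and (3).
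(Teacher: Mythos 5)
This statement is quoted in the paper directly from Mori--Mukai (\cite[Theorem 1.6]{mori-mukai2}); the paper gives no proof of it, so there is no in-paper argument to compare yours against. Judged on its own terms, your proposal correctly reproduces the skeleton of the Mori--Mukai argument, but it is an outline rather than a proof, and the gap sits exactly where you say it does. The assertion that a primitive Fano threefold with $\rho(X)\geq 2$ must carry an extremal contraction of type $C$ is the entire content of the structure theorem: you describe it as ``running over the finitely many extremal rays and ruling out the alternatives,'' but you do not carry out any of those exclusions. In particular, an $E_1$-contraction only witnesses imprimitivity when the blow-down target is again a smooth \emph{Fano} threefold, and Mori--Mukai's proof that one can always arrange this (or else find a fiber-type ray) is a genuinely delicate case analysis involving the interaction of several extremal rays; nothing in your sketch substitutes for it. Deferring ``the hard core of the classification'' is not a proof of the theorem.

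The second half also leans on unproved claims. That $D=\phi^{-1}(\ell)$ over a $(-1)$-curve $\ell\subset S$ is the exceptional divisor of an $E_1$-ray with smooth Fano target is itself a nontrivial Mori--Mukai lemma: the fibers of $D\to\ell$ are conics that may degenerate, $D$ need not a priori be a $\bP^1$-bundle in the ruling you want to contract, and ampleness of $-K_{X'}$ on the blow-down must be checked, not just asserted. Likewise the exclusion of $\bF_e$ for $e\geq 2$ is waved at with ``a separate short argument''; the standard route uses the conic-bundle discriminant relation $-4K_S\equiv \phi_*\bigl((-K_X)^2\bigr)+\Delta_\phi$ to show the base is a del Pezzo surface, which you never introduce. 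So the architecture is right and matches the literature, but as written the proposal establishes none of the three assertions; to make it a proof you would need to supply the extremal-ray case analysis for the existence of the conic bundle and the contraction lemma for $(-1)$-curves in the base, i.e.\ essentially the content of Mori--Mukai's Sections 5--6.
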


Another useful description is given in \cite[Section 2]{manin}:

\begin{prop}
\label{prop:classify}
Every smooth Fano threefold over an algebraically closed field of characteristic zero
is isomorphic to one of the following:
\begin{enumerate}
\item[(1)] a generalized flag variety $P\backslash G$;
\item[(2)] a variety $X$ with $m(X)=2$;
\item[(3)] a blowup of varieties of type (1) or (2);
\item[(4)] a direct product of $\bP^1$ and a del Pezzo surface.
\end{enumerate}
\end{prop}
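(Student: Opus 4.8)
The plan is to reduce to \emph{primitive} Fano threefolds via the primitive/imprimitive dichotomy, and then to quote the classification recalled in Theorems~\ref{thm:pic-rank1} and~\ref{theo:primitive}, together with the classical fact that the remaining Picard rank one cases are covered by low-degree rational curves.

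\textbf{Step 1 (reduction to the primitive case).} Every smooth Fano threefold $X$ is either primitive or is the blow-up of a smooth Fano threefold $X'$ along a smooth irreducible curve, in which case $\rho(X') = \rho(X)-1$. Since there are only finitely many deformation types, iterating this reduction expresses any imprimitive $X$ as a (possibly iterated) blow-up of a primitive Fano threefold $X_{0}$; as long as $X_{0}$ is of type (1), (2), or (4), this places $X$ in case (3). The only point to check is that a blow-up of a product $\bP^{1}\times S$ along a curve is either again a product of this form (hence case (4)) or descends to a blow-up of a type-(2) product, so no genuinely new possibility arises. Thus it suffices to treat primitive $X$, together with the products $\bP^{1}\times S$ for $S$ a del Pezzo surface, which are recorded directly in case (4).

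\textbf{Step 2 ($\rho(X)=1$).} By Theorem~\ref{thm:pic-rank1}, $X$ is $\bP^{3}$ or a smooth quadric threefold, both generalized flag varieties and hence of type (1); or $X$ is a del Pezzo threefold (index $2$), or a prime Fano threefold of index $1$. In the latter two families $X$ is covered by lines (index $2$), respectively by conics (index $1$), so every point lies on a rational curve $C$ with $-K_{X}\cdot C\le 2$; by Lemma~\ref{lemm:deformationcount} a covering family satisfies $-K_{X}\cdot C\ge 2$, so $m(X)=2$ and $X$ is of type (2). For $\rho(X)\ge 2$ primitive, Theorem~\ref{theo:primitive} presents $X$ as a conic bundle $\pi\colon X\to S$ with $S=\bP^{2}$ or $S=\bP^{1}\times\bP^{1}$; every point of $X$ lies on a fiber of $\pi$, and each irreducible component of a fiber is a conic or a line, so again there is a rational curve through the point of anticanonical degree at most $2$, and Lemma~\ref{lemm:deformationcount} gives $m(X)=2$. (The products $\bP^{1}\times\bP^{2}$ and $(\bP^{1})^{3}$ occur both here and in case (4); this overlap is harmless.) Combining this with Step~1 exhausts all Fano threefolds.

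\textbf{Main obstacle.} The genuinely non-formal ingredient is the input to Step~2: that Picard rank one Fano threefolds of index $1$ are covered by conics, and those of index $2$ by lines. This lies deeper than the MMP and Reider-type tools recalled above, and rests on the fine birational geometry worked out in the Iskovskikh and Mori--Mukai classification; it is precisely what forces one to invoke that classification rather than argue intrinsically. A lighter, purely organizational difficulty is confirming that the four listed types are exhaustive — that iterated blow-ups and blow-ups of the products $\bP^{1}\times S$ yield nothing outside the list — which is again underwritten by the completeness of the classification of Fano threefolds.
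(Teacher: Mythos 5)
The paper does not actually prove this proposition; it is quoted verbatim from \cite[Section 2]{manin}, where it is read off from the Iskovskikh and Mori--Mukai classification tables. So there is no internal argument to compare against, and your proposal must be judged on its own terms. Your Step 2 is essentially sound: $\bP^{3}$ and the quadric are flag varieties; the remaining $\rho=1$ cases are covered by $(-K_X)$-conics (a classical fact of Iskovskikh--Shokurov which the paper itself invokes in Sections~\ref{sect:fano-threefolds}), and primitive threefolds with $\rho\ge 2$ are conic bundles by Theorem~\ref{theo:primitive}, so every point lies on a fiber component of anticanonical degree at most $2$. You are right to flag the covering-by-conics statement as the nontrivial classification input.

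The genuine gap is in Step 1. Case (3) of the statement is a \emph{single} blow-up of a variety of type (1) or (2), but your reduction only produces an \emph{iterated} blow-up of a primitive threefold, and the inference ``iterated blow-up of a type (1)/(2) variety $\Rightarrow$ case (3)'' is a non sequitur: it requires the immediate blow-down $X'$ of $X$ to itself be of type (1), (2), or (4), and intermediate imprimitive threefolds can fail this. Concretely, $X'=\mathrm{Bl}_{\ell}\,\bP^{3}$ (blow-up along a line) has $-K_{X'}=4H-E$, and every irreducible curve $C\equiv af+b\tilde\ell$ (with $f$ a fiber of $E\to\ell$ and $\tilde\ell$ the strict transform of a line meeting $\ell$) has $-K_{X'}\cdot C=a+3b$; hence every curve of anticanonical degree $\le 2$ lies in $E$, so $m(X')=3$, and $X'$ is neither homogeneous nor a product. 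Thus $X'$ is of type (3) \emph{only}, and a further Fano blow-up of $X'$ (these exist, e.g.\ $\mathrm{Bl}_{\ell_1\sqcup\ell_2}\bP^{3}$) is not covered by your argument: it happens to be of type (2) directly because it carries a conic bundle structure over $\bP^1\times\bP^1$, but establishing that such a rescue always occurs --- i.e.\ that every imprimitive Fano threefold either has $m=2$ or admits \emph{some} blow-down to a type (1)/(2) variety --- is precisely the case-by-case verification through the Mori--Mukai list that your proof omits. (A smaller remark: your parenthetical worry about blow-ups of $\bP^{1}\times S$ is unnecessary, since $\bP^{1}\times S$ has $m=2$ via its $\bP^1$-fibers and is therefore already of type (2); and the termination of your iteration is guaranteed by the drop in Picard rank, not by finiteness of deformation types.)
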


In the following sections we determine which smooth 
Fano threefolds $X$ have balanced $-K_X$. 

\subsection{$\rho(X)=1$ and $r(X)=3,4$}
\label{subsec:picard-rank-one}

\

When $r(X)=4$, $X=\bP^3$; by Example~\ref{exam:projspacebalanced},
$-K_X$ is balanced and the exceptional set is empty. 
When $r(X)=3$, $X\subset \bP^4$ is a quadric.

\begin{prop}
\label{prop: quadric}
Let $X\subset \bP^4$ be a smooth quadric. 
Then $-K_X$ is balanced and the exceptional set is empty.
\end{prop}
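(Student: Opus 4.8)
The plan is to show the stronger statement that $a(Y,-K_X) < a(X,-K_X)$ for every proper subvariety $Y \subsetneq X$; by the first clause in the definition of a balanced divisor this gives that $-K_X$ is balanced with empty exceptional set. Since $\Pic(X)=\bZ L$ for the hyperplane class $L$ and $-K_X = 3L$, we have $a(X,-K_X)=1$, equivalently $a(X,L)=3$, so it suffices to prove $a(Y,L)<3$ for all $Y\subsetneq X$. This is the case $n=3$ of the quadric example treated earlier; I would follow that argument, observing that the only subvarieties for which it invokes Conjecture~\ref{conj:rc} are divisors, which here are surfaces, so the unconditional Lemma~\ref{lemm:rationalcurvecheck} can be used in its place.

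First I would dispose of curves. For an integral curve $C\subset X$, pass to the normalization $\nu\colon \tilde C\to C$; since $L$ is ample, $\deg(\nu^{*}L)=L\cdot C\geq 1$, so
\[
a(C,L)=a(\tilde C,\nu^{*}L)=\frac{2-2g(\tilde C)}{L\cdot C}\leq 2<3.
\]
(This also follows from Proposition~\ref{prop:siubound}.) Now let $Y\subset X$ be a surface with resolution $\beta\colon\tilde Y\to Y$, so $\beta^{*}L$ is big and nef. By Proposition~\ref{prop:siubound}, $K_{\tilde Y}+3\beta^{*}L$ is pseudo-effective, hence $a(Y,L)\leq 3$; suppose toward a contradiction that $a(Y,L)=3$. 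Then $K_{\tilde Y}+(3-\epsilon)\beta^{*}L$ is not pseudo-effective for small rational $\epsilon>0$, so Lemma~\ref{lemm:rationalcurvecheck} produces a rational curve $C$ that deforms to cover $\tilde Y$ with
\[
0<(3-\epsilon)\,\beta^{*}L\cdot C<-K_{\tilde Y}\cdot C\leq 3.
\]
Since $\beta^{*}L\cdot C>0$, the image $\beta(C)\subset X\subset\bP^{4}$ is a curve with $L\cdot\beta(C)=\beta^{*}L\cdot C\geq 1$; letting $\epsilon\to 0$ then forces $-K_{\tilde Y}\cdot C=3$ and $\deg\beta(C)=1$, so $Y$ is covered by lines.

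To finish I would rule out this possibility. By Lemma~\ref{lemm:deformationcount}, the curves $C$ move on $\tilde Y$ in a family of dimension at least $-K_{\tilde Y}\cdot C+\dim\tilde Y-3=2$; passing to images and using the incidence correspondence, through a general point $p\in Y$ there is at least a one-dimensional family of lines contained in $Y$, and these sweep out all of $Y$. Hence for general $p,q\in Y$ the line $\overline{pq}$ lies in $Y$, which forces $Y$ to be a plane $\bP^{2}\subset\bP^{4}$. But the maximal linear subspaces of a smooth quadric threefold have dimension $\lfloor 3/2\rfloor=1$, so $X$ contains no plane --- a contradiction. Therefore $a(Y,L)<3$ for every surface $Y$, and together with the curve case this proves the proposition. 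I expect the last step to be the main obstacle: one has to extract cleanly from the bend-and-break output that $Y$ is swept out by a positive-dimensional family of lines through a general point, and then invoke the classical fact that an irreducible variety joined to itself by such lines is a linear subspace. The remaining ingredients are routine applications of the Fujita-type and rational-curve results recorded in Section~\ref{sect:preliminaries}.
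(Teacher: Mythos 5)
Your proof is correct, but it takes a genuinely different route from the paper's for the key step (surfaces). The paper disposes of curves exactly as you do, via the index, but for an irreducible surface $S\subset X$ it applies Reider's theorem (Theorem~\ref{theo: Reider}) to the nef divisor $2\beta^{*}L$ on a resolution $\tilde S$: since $(2L)^{2}\cdot S\geq 4L^{3}=8\geq 5$, the adjoint $K_{\tilde S}+2\beta^{*}L$ is effective, giving the stronger quantitative bound $a(S,L)\leq 2<3$ in one line, with no analysis of lines at all. You instead run the bend-and-break/MMP argument of Lemma~\ref{lemm:rationalcurvecheck} to show that a surface with $a(S,L)=3$ would be covered by a two-dimensional family of lines and hence be a plane in $\bP^{4}$, which the smooth quadric threefold cannot contain. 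This is precisely the paper's \emph{other} treatment of quadrics --- the Example for $Q\subset\bP^{n+1}$ conditional on Conjecture~\ref{conj:rc} --- specialized to $n=3$, where, as you correctly observe, the conjecture is supplied unconditionally by the surface lemma. The trade-off: Reider gives a cleaner and sharper conclusion ($a\leq 2$ rather than merely $a<3$) and avoids the incidence-correspondence argument that a surface joined to itself by lines is linear; your route avoids vanishing theorems entirely, generalizes to the higher-dimensional statement modulo Conjecture~\ref{conj:rc}, and identifies exactly which geometric configuration would have to occur for the $a$-constant not to drop. Both are complete proofs; your worry about the last step is unfounded, since the deformation count in Lemma~\ref{lemm:deformationcount} does yield a one-dimensional family of lines through a general point, and the linearity conclusion is exactly the classical fact the paper itself invokes in its Example.
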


\begin{proof}
The class $-K_X$ is balanced with 
respect to curves, as $r(X) = 3$. 
Let $S \subset X$ be an irreducible surface. Choose a resolution of singularities 
$\beta : \tilde{S} \ra S$. 
The fundamental divisor $L$ is the restriction of $\mathcal O(1)$ on $\bP^4$ and $a(X, L)=3$. 
Consider the adjoint divisor $2\beta^*L|_{\tilde{S}} + K_{\tilde{S}}$. 
Since $X$ is a smooth quadric, we have 
$(2L)^2.S \geq 4L^3 = 8$. 
By Reider's theorem (Theorem~\ref{theo: Reider}), 
$2\beta^*L|_{\tilde{S}} + K_{\tilde{S}}$ is effective. Thus $a(S, L|_S) \leq 2$.
\end{proof}

\subsection{$\rho(X)=1$ and $r(X) = 2$}

\

We have $\Pic(X)=\bZ L$, with  $-K_X = 2L$. An irreducible curve $C$ with 
$-K_X\cdot C = 2$ is called a {\it  $-K_X$-conic}. 
The Hilbert scheme $F(X)$ of $-K_X$-conics is 
called {\it the Fano surface of $-K_X$-conics}.

Suppose that $d(X) \geq 24 = 8\cdot 3$. 
Then $L$ is very ample \cite[Theorem 3.2.4]{iskov} and the dimension of its linear series is given by $L^3+1$. The Fano surface $F(X)$ is smooth, projective, and of pure dimension $2$ (\cite{iskov79}). Let $\mathcal U$ be the universal family of $-K_X$-conics. The evaluation map $\mathcal U \ra X$ is surjective, hence $-K_X$-conics cover $X$. We have

\begin{prop}
\label{prop: index two degree higher}
Suppose that $d(X) \geq 8\cdot 3$. Then $-K_X$ is weakly balanced and the exceptional set is empty. 
However, $-K_X$ is not balanced.
\end{prop}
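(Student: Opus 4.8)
The plan is to take the exceptional set to be empty: I will verify that $-K_X$ is weakly balanced with respect to every irreducible proper subvariety $Y$, and then show it fails to be balanced because $-K_X$-conics dominate $X$. Write $-K_X = 2L$ with $L$ ample. The two reference values are $a(X,-K_X) = 1$ and $b(X,-K_X) = \rho(X) = 1$: indeed $a(X,-K_X)[-K_X] + [K_X] = 0$, and since $\rho(X) = 1$ its minimal supported face in $\Lambda_{\eff}(X)$ is $\{0\}$, of codimension one. The curve case is then immediate. A curve $C$ of positive geometric genus has $a(C,-K_X) \le 0 < 1$; a rational curve has $a(C,-K_X) = 2/(-K_X\cdot C) = 1/(L\cdot C) \le 1$, with equality exactly for the $-K_X$-conics, for which also $b(C,-K_X) = 1$. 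So $-K_X$ is weakly balanced with respect to every curve.

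The substantive case is that of a surface $S \subset X$. Here I would fix a resolution $\beta\colon \widetilde S \to S$ and set $N := \beta^{*}(L|_S)$, a big and nef integral divisor with $N^2 = L^2\cdot S$. Since $\Pic(X) = \bZ L$, the class of $S$ in $\Pic(X)$ equals $kL$ for some $k\ge 1$, so $N^2 = k L^3 \ge L^3 = d(X)/8 \ge 3$ and hence $(2N)^2 \ge 12$. Reider's theorem (Theorem \ref{theo: Reider}) applied to $2N$, exactly as in the proof of Proposition \ref{prop: quadric}, shows that $K_{\widetilde S} + 2N$ is effective: a base point of $|K_{\widetilde S}+2N|$ would force an effective $D$ with $2N\cdot D = 0$ and $D^2 = -1$ (the case $2N\cdot D = 1$ being excluded by parity), whose components are all $\beta$-exceptional since $N$ pulls back from an ample class; but the $\beta$-exceptional locus is a finite union of curves and cannot cover the surface $\widetilde S$. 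Therefore $a(S,-K_X) = a(\widetilde S, 2N) \le 1 = a(X,-K_X)$. If this inequality is strict there is nothing more to check; but equality does occur — for instance whenever $S$ is dominated by $-K_X$-conics, Proposition \ref{prop:verygeneralbalanced} forces $a(S,-K_X) = a(X,-K_X)$ — and in that case I must bound $b(S,-K_X)$.

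In the equality case $K_{\widetilde S} + 2N$ is pseudo-effective but not big, so $\kappa(K_{\widetilde S}+2N) \in \{0,1\}$. If $\kappa = 1$, Proposition \ref{prop:surfaceconstants}(1) gives $b(S,-K_X) = 1 = b(X,-K_X)$. If $\kappa = 0$, I would run the $(K_{\widetilde S}+2N)$-MMP: every contracted $(-1)$-curve $E$ satisfies $-1 + 2N\cdot E < 0$, hence $N\cdot E = 0$, so the smooth output $S'$ (Lemma \ref{lemm:terminalsings}) satisfies $\pi^{*}\pi_{*}N = N$ and carries $-K_{S'} \equiv 2\pi_{*}N$, which is nef with $(\pi_{*}N)^2 = N^2 \ge 3$; then $K_{S'}^2 = 4N^2 \ge 12 > 9$, contradicting the classical bound $K^2 \le 9$ for smooth weak del Pezzo surfaces. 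So $\kappa = 0$ cannot occur, and $-K_X$ is weakly balanced with respect to $S$ as well, completing the weakly balanced statement with empty exceptional set. Finally, for the failure of the balanced property: the universal family of $-K_X$-conics over $F(X)$ evaluates surjectively onto $X$, each such conic $C$ is a line for the very ample $L$ and hence rational with $(a(C,-K_X), b(C,-K_X)) = (1,1) = (a(X,-K_X), b(X,-K_X))$, so no proper closed subset is exceptional for the balanced property. The main obstacle throughout is this equality case for surfaces: ruling out a jump in the $b$-constant is exactly where one needs both the Iitaka-dimension dichotomy of Proposition \ref{prop:surfaceconstants} and the classical inequality $K^2 \le 9$ for weak del Pezzo surfaces.
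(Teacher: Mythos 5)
Your proof is correct and follows the paper's skeleton for most of its length: the index-$2$ condition disposes of curves, the dominating family of $-K_X$-conics (each with $a=b=1$, equal to the invariants of $X$) kills the balanced property, and Reider's first theorem applied to $2\beta^*L$ with $(2\beta^*L)^2\ge 12$ gives $a(S,-K_X)\le 1$ for every irreducible surface $S$, exactly as in the paper. The one step where you genuinely diverge is ruling out $\kappa(K_{\tilde{S}}+2N)=0$ in the equality case. The paper invokes Reider's second theorem (Theorem~\ref{theo: Reider II}): failure of $|K_{\tilde{S}}+2N|$ to separate two general points produces a $-K_X$-conic, i.e.\ a line in the $L$-embedding, through two general points of $S$, forcing $S$ to be a plane in $\bP^{L^3+1}$, which $X$ cannot contain. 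You instead run the $(K_{\tilde{S}}+2N)$-MMP, note that every contracted $(-1)$-curve is $N$-trivial by integrality and nefness of $N$, and land on a smooth surface with $-K_{S'}\equiv 2\pi_*N$ nef and big and $K_{S'}^2=4N^2\ge 12>9$, contradicting the classical bound for weak del Pezzo surfaces. Both arguments are sound and rest on the same numerical input $L^3\ge 3$; the paper's version is uniform with the many other places in Sections~\ref{sect:fano-threefolds}--\ref{subsec: primitive rank two} where Reider~II is reused, while yours is independent of the projective embedding (no plane-containment fact is needed, only that $N$ is an integral nef class with $N^2\ge 3$) and works verbatim whenever $L$ is merely ample. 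Two minor remarks: your side observation that Proposition~\ref{prop:verygeneralbalanced} forces $a(S,-K_X)=a(X,-K_X)$ when $S$ is swept by conics should be applied on a resolution of $S$, and your deduction of effectivity from Reider~I uses (as the paper implicitly does) that a divisor with $N\cdot D=0$ is supported on the $\beta$-exceptional locus and so misses a general point; both are fine as written.
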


\begin{proof}
Let $L$ be the fundamental divisor of $X$. The assumption in our statement implies that $L^3 \geq 3$. The divisor $L$ is very ample and its linear series determines an embedding into $\bP^{L^3+1}$. Since the index is $2$, $a(X, L)=2$, and $-K_X$ is weakly balanced with respect to any curve. However, $-K_X$ is not balanced with respect to $-K_X$-conics, and these sweep out $X$. 
Thus $-K_X$ is not balanced.

Suppose that $S \subset X$ is an irreducible proper surface. 
Let $\beta : \tilde{S} \ra S$ be a resolution of singularities. 
Then $(2\beta^*L)^2 \geq 12$ and Theorem~\ref{theo: Reider} implies 
that $D=2\beta^*L + K_{\tilde{S}}$ is effective, thus 
$$
a(S, L) \leq a(X, L) = 2.
$$ 
Suppose that $a(S, L) = a(X, L) = 2$. We claim that the Iitaka dimension of $2\beta^*L + K_{\tilde{S}}$ is $1$. If not, then the Iitaka dimension of $D$ is $0$. In particular, two general points $x, y$ cannot be separated by $|D|$, hence Theorem~\ref{theo: Reider II} indicates that there exists a $-K_X$-conic $C$ containing $x$ and $y$. This implies that $S \subset \bP^{L^3+1}$ is a plane, but $X$ cannot contain a plane, contradiction.  By Proposition \ref{prop:surfaceconstants}, we have $b(S,L)=1$.  Thus $L$ is weakly balanced with respect to $S$.
\end{proof}

Suppose that $d(X) = 8\cdot 2 =16$. Then $X$ is a double cover of $\bP^3$ ramified along a smooth quartic surface $W \subset \bP^3$, i.e.,  $X$ is defined by
$$
f_4(x_0,x_1,x_2,x_3) = w^2,
$$
in the weighted projective space $\bP(1,1,1,1,2)$, where $\deg (x_i) = 1$, $\deg (w) =2$, and $f_4$ is a homogeneous polynomial of degree $4$. Let $L$ be the fundamental divisor. The linear system of $L$ defines the double cover $\pi : X \rightarrow \bP^3$. The Fano surface of $-K_X$-conics is two dimensional and $-K_{X}$ conics dominate $X$. (See the Remark of Proposition 1.3, Chapter III in \cite{iskov79}.) 
We need to classify all possible singularities of divisors of $|L|$. 
\begin{lemm}
\label{lemm: isolated del Pezzo of degree two}
For any $S \in |L|$, $S$ has only isolated singularities, hence $S$ is normal.
\end{lemm}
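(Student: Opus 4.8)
The plan is to exploit the description of $X$ as a double cover $\pi : X \to \bP^3$ branched over the smooth quartic surface $W$. Any $S \in |L|$ equals $\pi^{-1}(H)$ for a plane $H \cong \bP^2 \subset \bP^3$; choosing coordinates with $H = \{x_3 = 0\}$ we get
\begin{equation*}
S = \{w^2 = \bar f_4(x_0,x_1,x_2)\} \subset \bP(1,1,1,2), \qquad \bar f_4 := f_4(x_0,x_1,x_2,0),
\end{equation*}
and $\pi|_S$ realizes $S$ as the double cover of $H$ branched along the plane quartic $C := W \cap H = \{\bar f_4 = 0\}$. Since $\bar f_4$ has no constant term, $S$ avoids the unique singular point $[0:0:0:1]$ of $\bP(1,1,1,2)$, so $S$ lies in a smooth ambient space; from the local equation $w^2 = \bar f_4$ one sees that a point of $S$ is singular precisely where $w = 0$ and all first partials of $\bar f_4$ vanish. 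Thus $\pi|_S$ carries $\mathrm{Sing}(S)$ isomorphically onto $\mathrm{Sing}(C)$, and (since $\bar f_4$ squarefree forces $w^2 - \bar f_4$ to be irreducible) $S$ is a variety as soon as $C$ is reduced. So the statement reduces to showing that $C$ is a reduced plane curve, for then $\mathrm{Sing}(C)$ is finite and hence so is $\mathrm{Sing}(S)$.

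The heart of the matter is therefore: \emph{a hyperplane section of the smooth quartic surface $W$ is reduced}. Suppose not, so $\bar f_4 = g^{2}h$ in $\bC[x_0,x_1,x_2]$ for some irreducible form $g$ of degree $\geq 1$. Write $f_4 = \bar f_4 + x_3 k$ with $k$ a cubic form, and restrict the four partial derivatives of $f_4$ to the curve $\Gamma := \{x_3 = g = 0\} \subset H$. Because $g$ vanishes on $\Gamma$, the partials $\partial f_4 / \partial x_i$ for $i = 0,1,2$ vanish on $\Gamma$, while $\partial f_4 / \partial x_3$ restricts on $\Gamma$ to $\bar k := k(x_0,x_1,x_2,0)$. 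If $\bar k \not\equiv 0$, then the cubic $\{\bar k = 0\}$ meets the curve $\Gamma$ in $H \cong \bP^2$, producing a singular point of $W$. If $\bar k \equiv 0$, then $f_4 = \bar f_4 + x_3^{2}k_2$ for a quadric form $k_2$, and all four partials of $f_4$ vanish identically along $\Gamma$, so $W$ is singular along $\Gamma$. Either case contradicts the smoothness of $W$. (An alternative, slightly less elementary route identifies $\mathrm{Sing}(C)$ with $\{p \in W : T_pW = H\} = \gamma^{-1}([H])$, where $\gamma : W \to (\bP^3)^{\vee}$ is the Gauss map, a finite morphism since $W$ is a smooth hypersurface of degree $\geq 2$.)

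With $C$ reduced, $S$ has only isolated singularities, and normality then follows from Serre's criterion: $S$ is a hypersurface in the smooth locus $\bP(1,1,1,2) \setminus \{[0:0:0:1]\}$, hence a local complete intersection and so Cohen--Macaulay, in particular $S_2$, while isolated singularities on a surface give $R_1$. The only genuinely nontrivial point is the reducedness of $C$ --- equivalently, controlling how badly a plane can be tangent to the smooth quartic $W$ --- and I expect that to be the main obstacle; the rest is formal once the double-cover picture is in place.
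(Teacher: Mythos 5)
Your proof is correct and follows essentially the same strategy as the paper's: both argue by contradiction that a one-dimensional singular locus of $S$ forces three of the four partials of $f_4$ to vanish along a curve lying in the plane $H$, and then intersect that curve with the zero locus of the remaining partial derivative (a cubic, or all of $H$ in the degenerate case) to produce a point where $X$ --- equivalently the branch quartic $W$ --- is singular. Your reformulation via reducedness of the plane quartic $W \cap H$, and the parenthetical Gauss-map alternative, are clean repackagings, but the computational core coincides with the paper's argument.
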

\begin{proof}
Suppose otherwise. Without loss of generality we assume that $S$ is defined by $x_0=0$. By our assumption, $S$ is singular along a curve $C$. On $C$, we have
$$
\left(\frac{\partial f_4}{\partial x_1}, \frac{\partial f_4}{\partial x_2}, \frac{\partial f_4}{\partial x_3}, -2w\right) = (0,0,0,0).
$$
Thus $C$ is contained in $P \cong \bP^2$ defined by $x_0 = w = 0$. On the other hand, $\frac{\partial f_4}{\partial x_0} =0$ defines a $1$ dimensional scheme $G$ on $P$, and $G$ meets with $C$. At $G\cap C$, $X$ is singular, contradiction. 
The last assertion follows from Serre's Criterion.
\end{proof}

\begin{lemm}
\label{lemm: singular del Pezzo of degree two}
For any $S \in |L|$, $S$ has only canonical singularities or elliptic surface singularities.
Moreover, when $S$ has an elliptic surface singularity, $S$ is isomorphic to the cone defined by
$$
g_4(x_1,x_2) =w^2,
$$
in the weighted projective space $\bP(1,1,1,2)$, where $g_4$ is a homogeneous polynomial of degree $4$ with distinct roots.
\end{lemm}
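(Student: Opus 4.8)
The plan is to exploit the double cover structure and reduce everything to the singularities of a plane quartic. By the reduction in the proof of Lemma~\ref{lemm: isolated del Pezzo of degree two} we may assume $S = X \cap \{x_0 = 0\}$, so that in $\bP(1,1,1,2)$ with weighted coordinates $x_1,x_2,x_3,w$ the surface $S$ is cut out by $w^2 = q(x_1,x_2,x_3)$ with $q = f_4(0,x_1,x_2,x_3)$. Since the coordinate point $[0:0:0:1]$ lies off $S$, projection exhibits $S$ as a double cover $\nu : S \to \bP^2$ branched along the plane quartic $C := \{q = 0\} = W \cap \{x_0 = 0\}$. We will use that $S$, being a Cartier divisor on the smooth threefold $X$, is Gorenstein (indeed has hypersurface singularities), with $-K_S = L|_S$ ample and $(-K_S)^2 = L^3 = 2$; and that $S$ is normal by Lemma~\ref{lemm: isolated del Pezzo of degree two}, which forces $C$ to be reduced, since a multiple component of $C$ would make $S$ singular along its preimage.

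First I would invoke the classical dictionary for surface double covers: $S$ is smooth away from $\nu^{-1}(\mathrm{Sing}\, C)$, and over a singular point $p$ of $C$ the germ $(S,\nu^{-1}(p))$ is the double point $\{w^2 = q\}$, which is a canonical (Du Val) singularity exactly when $(C,p)$ is a simple ADE plane curve singularity, and is an elliptic Gorenstein (minimally elliptic) singularity when $(C,p)$ is, for instance, an ordinary quadruple point (see the analysis of double covers in, e.g., Barth--Peters--Van de Ven, or Reid's notes on canonical singularities). So it suffices to control the singularities of the reduced plane quartic $C$. Since $C = W \cap \{x_0 = 0\}$ with $W$ smooth, $C$ can only be singular at a point $p$ where $\{x_0 = 0\}$ is the tangent plane $T_pW$, and there $(C,p)$ is cut out on $T_pW$ by a local equation of $W$; intersecting $C$ with a general line through $p$ gives $\mathrm{mult}_p C \le 4$.

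Next I would run through the three multiplicities. If $\mathrm{mult}_p C = 2$, then since $C$ is reduced $(C,p)$ is an $A_k$ singularity, so $(S,\nu^{-1}(p))$ is a (canonical) rational double point. If $\mathrm{mult}_p C = 3$, a short analysis of the binary cubic tangent cone together with the constraint $\deg C = 4$ — which bounds the $\delta$-invariant and the possible branch configurations at $p$ — shows $(C,p)$ is a simple $D$- or $E$-type singularity, again giving a canonical point of $S$. Thus if every singular point of $C$ has multiplicity $2$ or $3$, then $S$ has only canonical singularities. In the remaining case $\mathrm{mult}_p C = 4$: every line through $p$ already meets $C$ with multiplicity $4$ at $p$, so it either is a component of $C$ or meets $C$ only at $p$; as $C$ is one-dimensional this forces $C$ to be a union of lines through $p$, and by degree and reducedness $C$ is a union of four distinct concurrent lines. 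Choosing coordinates on $\bP^2 = \{x_0 = 0\}$ with $p = [0:0:1]$, the form $q$ becomes a binary quartic $g_4(x_1,x_2)$ with distinct roots, whence $S \cong \{w^2 = g_4(x_1,x_2)\} \subset \bP(1,1,1,2)$. This surface has the single singular point $[0:0:1:0]$, whose minimal resolution has for exceptional divisor a smooth elliptic curve of self-intersection $-2$ — the double cover of $\bP^1$ branched at the four roots of $g_4$ — i.e. a simple elliptic $\widetilde{E}_7$ singularity; and since $C$ then has no other singular point, neither does $S$. This yields both assertions of the lemma.

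The main obstacle is the triple-point case: one must check that a reduced plane quartic cannot carry a non-simple plane curve singularity at a point of multiplicity $3$. I would do this by the $\delta$-invariant and branch-counting bookkeeping indicated above, using $p_a(C) = 3$ and $\deg C = 4$ to rule out the modality-one plane curve singularities (the series $J_{10}$, $X_9$, the $\widetilde{E}$-series, the cusp singularities), each of which requires more vanishing, or more branches through a single point, than a plane quartic can supply — or, equivalently, by a direct computation with the Taylor expansion of the smooth surface $W$ at $p$ up to order three, which can only produce $D$- and $E$-type sections $W \cap T_pW$. The other verifications, in particular the identification of the cone $\{w^2 = g_4\}$ with the simple elliptic singularity $\widetilde{E}_7$, are routine.
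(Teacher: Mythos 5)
Your proof is correct and follows essentially the same route as the paper's: both reduce to the local equation $w^2 = q(x_1,x_2)$ with $\deg q \le 4$ and classify the singularity by $\mathrm{mult}_P q$, using normality (from Lemma~\ref{lemm: isolated del Pezzo of degree two}) to force reducedness, the degree bound to rule out non-simple singularities when the multiplicity is at most $3$, and homogeneity of a degree-$4$ form of multiplicity $4$ to produce the cone with its single simple elliptic point. The only difference is packaging: where the paper cites \cite[Section 4.25 and Theorem 4.57]{KM98}, you run the equivalent classification through the branch quartic of the double cover $S \to \bP^2$ and spell out the multiplicity-$3$ case analysis by hand.
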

\begin{proof}
Assume that $S$ has an isolated singularity $P$. We may assume that $S$ is defined by $x_0=0$ and $P$ is given by $(0:0:0:1:0)$. We consider an affine patch $\{x_3 \neq 0\}$, then $S$ is given by
$$
q_4(x_1, x_2) = w^2,
$$
in $\mathbb{A}^3$, where $q_4$ is a polynomial of degree $4$, and $P$ is given by $x_1=0, x_2=0, w=0$. 
We apply the discussion of \cite[Section 4.25]{KM98} to this surface and conclude that $S$ has an (at worst) canonical singularity at $P$ if and only if $\mathrm{mult}_0 (q_4) \leq 3$. When $\mathrm{mult}_0 (q_4) =4$, $P$ is an elliptic surface singularity (\cite[Theorem 4.57]{KM98}).  By comparing against the local-analytic form of the equation in \cite{KM98}, we see that if $S$ has an elliptic singularity then it is a cone. 
\end{proof}
\begin{prop}
\label{prop: one two 16}
Let $X$ be a Fano threefold of $\rho(X)=1$, $r(X)=2$, and $d(X)=16$. Then  $-K_X$ is weakly balanced and the exceptional set is the empty set. However, $-K_X$ is not balanced.
\end{prop}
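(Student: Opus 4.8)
The plan is to follow the shape of the argument in Proposition~\ref{prop: index two degree higher}, but with separate treatment of the surfaces of smallest $L$-degree, where Reider's separation bound just fails. Throughout write $L$ for the fundamental divisor, so $-K_X=2L$, $L^3=2$, $a(X,L)=2$ and $b(X,L)=\rho(X)=1$; since $a$, $b$ and the (weakly) balanced property are insensitive to rescaling the polarization, I phrase everything in terms of $L$. Note $L$ is globally generated (it defines $\pi\colon X\to\bP^3$), so $L\cdot C\ge 1$ for every curve $C$. The curve case and the failure of the balanced property are immediate: for an irreducible curve $C$ one has $a(C,L)=2/(L\cdot C)\le 2=a(X,L)$, with equality exactly when $L\cdot C=1$, in which case $C$ is rational with $b(C,L)=1=b(X,L)$; so $-K_X$ is weakly balanced with respect to every curve. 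On the other hand the $-K_X$-conics dominate $X$ (recalled above from \cite{iskov79}) and each satisfies $(a(C,L),b(C,L))=(a(X,L),b(X,L))$, so no proper closed subset is exceptional for the balanced property and $-K_X$ is not balanced.

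Next I treat an irreducible proper surface $S\subset X$ with resolution $\beta\colon\tilde{S}\to S$. Since $\Pic(X)=\bZ L$ we have $S\sim kL$ with $k\ge 1$, hence $(\beta^*L)^2=L^2\cdot S=2k\ge 2$ and $\beta^*L$ is big and nef. I first show $K_{\tilde{S}}+2\beta^*L$ is effective, so that $a(S,L)\le 2$: we have $(2\beta^*L)^2=8k\ge 5$, and since $\beta^*L$ is Cartier $2\beta^*L\cdot D$ is even for every curve $D$, so by Theorem~\ref{theo: Reider} any base point of $|K_{\tilde{S}}+2\beta^*L|$ lies on a $(-1)$-curve $D$ with $\beta^*L\cdot D=0$, i.e.\ on a $\beta$-exceptional curve; as these form a proper closed subset, the linear system is nonempty. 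Together with the curve case, this reduces the claim "weakly balanced with empty exceptional set" to checking $b(S,L)\le 1$ whenever $a(S,L)=2$.

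So suppose $a(S,L)=2$. If $k\ge 2$ then $(2\beta^*L)^2=8k\ge 10$, so the argument of Proposition~\ref{prop: index two degree higher} applies: if $\kappa(K_{\tilde{S}}+2\beta^*L)$ were $0$ then $|K_{\tilde{S}}+2\beta^*L|$ would be a single divisor and Theorem~\ref{theo: Reider II} would produce, through two general points of $\tilde{S}$, a curve of $L$-degree $1$, whose image is a line through two general points of $\pi(S)$, forcing $\pi(S)$ to be a plane and $S$ a component of $\pi^{-1}(\text{plane})\in|L|$, contradicting $k\ge 2$; hence $\kappa(K_{\tilde{S}}+2\beta^*L)=1$ and $b(S,L)=1$ by Proposition~\ref{prop:surfaceconstants}. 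If $k=1$ then $S\in|L|$, and by Lemmas~\ref{lemm: isolated del Pezzo of degree two} and~\ref{lemm: singular del Pezzo of degree two} $S$ is a normal del Pezzo surface of degree $2$, hence either canonical or a cone over an elliptic curve $E$. In the canonical case the minimal resolution is a weak del Pezzo of degree $2$, so $a(S,L)=1<2$ and there is nothing to check. The cone case is the crux: here Reider's second theorem is unavailable ($(2\beta^*L)^2=8<10$), so I compute directly on the minimal resolution $\tilde{S}$, which is a $\bP^1$-bundle over $E$ with exceptional section $E_0$ of discrepancy $-1$. Setting $M=\beta^*(L|_S)$ and letting $f$ be a fiber, one has $E_0\cdot f=1$, $-K_{\tilde{S}}\cdot f=2$ (adjunction on fibers) and $-K_{\tilde{S}}=M+E_0$, hence $M\cdot f=1$; also $M\cdot E_0=0$ and $M^2=(L|_S)^2=2$, so $E_0^2=-2$, $M\equiv E_0+2f$ and $K_{\tilde{S}}\equiv -M-E_0$. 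Then $K_{\tilde{S}}+tM\equiv (t-2)E_0+2(t-1)f$ lies in $\Lambda_{\eff}(\tilde{S})=\langle E_0,f\rangle$ exactly when $t\ge 2$, so $a(S,L)=2$, while $K_{\tilde{S}}+2M\equiv 2f$ spans an extremal ray of $\Lambda_{\eff}(\tilde{S})$, giving $b(S,L)=\rho(\tilde{S})-1=1=b(X,L)$. This establishes that $-K_X$ is weakly balanced with empty exceptional set, and not balanced.

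The main obstacle is precisely this last case: the elliptic-cone members of $|L|$ are genuine accumulating surfaces with the same $(a,b)$ as $X$, so they cannot be absorbed into an exceptional set, and because the Reider bound just misses one cannot argue abstractly — one must single them out explicitly (the role of Lemmas~\ref{lemm: isolated del Pezzo of degree two} and~\ref{lemm: singular del Pezzo of degree two}) and then compute their invariants by hand on the associated ruled surface over $E$.
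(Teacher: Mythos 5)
Your proof is correct and follows essentially the same route as the paper: curves via the index, surfaces via Reider's theorems with the members of $|L|$ singled out through the singularity classification of Lemmas~\ref{lemm: isolated del Pezzo of degree two} and~\ref{lemm: singular del Pezzo of degree two}, and the elliptic cone as the delicate case. The only real divergences are that you handle the Reider exceptional divisors explicitly via the parity of $2\beta^*L\cdot D$ (which the paper leaves implicit) and that in the cone case you compute $(a,b)=(2,1)$ directly on the minimal resolution $\bP_E(\cO\oplus\cL^{-1})$, whereas the paper gets $b(S,L)=1$ more quickly from non-rationality of $S$ together with Proposition~\ref{prop:surfaceconstants}(1); both computations agree.
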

\begin{proof}
The anticanonical class $-K_X$ is weakly balanced with respect to any curve, however, $-K_X$ is not balanced with respect to $-K_X$-conics. Let $S \subset X$ be an irreducible surface and
$\beta : \tilde{S} \ra S$ a resolution of singularities of $S$. 
Since $(2\beta^*L) \geq 8$, Theorem~\ref{theo: Reider} implies  
that $2\beta^*L + K_{\tilde{S}}$ is effective, thus $a(S, L) \leq a(X, L) = 2$.

To show the weakly balanced property, we must consider the case when $a(S, L) = a(X, L)=2$. If $S \in |mL|$, where $m \geq 2$, then by Theorem~\ref{theo: Reider II} we conclude that the Iitaka dimension of $2\beta^*L + K_{\tilde{S}}$ is $1$ (since otherwise two general points of $S$ could be connected by a $-K_{X}$-conic). Its canonical fibration is a ruling and one can conclude that $b(S, L) =1$. Thus $-K_X$ is weakly balanced with respect to $S$. Assume that $S \in |L|$. If $S$ is smooth or has at most canonical singularities, then by the adjunction formula, we have $2L|_S + K_S = S|_S$ is ample.  Since the $a$ constant can be computed on any model with canonical singularities by \cite[Proposition 2.7]{balanced}, $a(S, L) <2$, contradiction.  Suppose that $S$ is a cone defined by
$$
g_4(x_1,x_2) = w^2,
$$
where $g_4$ is a homogeneous polynomial degree $4$ with distinct roots. There is only one singularity which is an elliptic surface singularity $(0:0:1:0)$. We apply a weighted blow up of type $(1,1,2)$ at the cone point, and obtain a smooth resolution $\beta : \tilde{S} \ra S$. The surface $\tilde{S}$ admits a projection to an elliptic curve $E$ defined $g_4 = w^2$ in $\bP(1,1,2)$.  
Thus $S$ is not rational, so that $b(S,L) = 1$ by Proposition \ref{prop:surfaceconstants}.  Then $-K_{X}$ is weakly balanced with respect to $S$.
\end{proof}

Suppose that $X$ is a Fano threefold of $\rho(X) = 1$, $r(X) = 2$, and $d(X) = 8$. Then $X$ is a hypersurface of degree $6$ in the weighted projective space $\bP(1,1,1,2,3)$ (\cite[Theorem 3.2.5]{iskov}). The variety $X$ is defined by
$$
f_6(x_0,x_1,x_2) + f_4(x_0,x_1,x_2)y + y^3 + z^2 = 0,
$$
where $\deg (x_i) = 1$, $\deg (y) = 2$, $\deg(z) = 3$ and $f_6$, $f_4$ are homogeneous polynomials of degree $6$ and $4$ respectively. Let $L$ be the fundamental divisor. Then $H^0(X, \mathcal O(L))$ is generated by $x_0,x_1,x_2$ (\cite[Proposition 3.3]{mori75}). The Fano scheme of $-K_X$-conics is $2$-dimensional, and it may be non-reduced. Again $X$ is swept out by $-K_X$-conics. Next we classify possible singularities of divisors in the linear system $|L|$. 
\begin{lemm}
\label{lemm: isolated}
For any $S \in |L|$, $S$ has only isolated singularities, hence $S$ is normal.
\end{lemm}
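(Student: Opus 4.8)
The plan is to run the argument of Lemma~\ref{lemm: isolated del Pezzo of degree two} in this slightly heavier weighted setting. Since $H^{0}(X,\mathcal O(L))$ is spanned by $x_{0},x_{1},x_{2}$, any $S\in|L|$ is cut out on $X$ by a linear form in these coordinates, and after a linear change of the weight-one variables (an automorphism of $\bP(1,1,1,2,3)$ altering only $f_{6},f_{4}$) I may assume $S=\{x_{0}=0\}\cap X$. Writing $g_{i}(x_{1},x_{2}):=f_{i}(0,x_{1},x_{2})$, this exhibits $S$ as the hypersurface
\[
g:=g_{6}(x_{1},x_{2})+g_{4}(x_{1},x_{2})\,y+y^{3}+z^{2}=0
\]
in $\{x_{0}=0\}\cong\bP(1,1,2,3)$; note $g$ is irreducible (of degree two in $z$, with $-(g_{6}+g_{4}y+y^{3})$ not a square in $k[x_{1},x_{2},y]$, being monic of odd degree in $y$), so $S$ is integral, and $g$ takes the value $1$ at each singular point $[0:0:1:0],[0:0:0:1]$ of $\bP(1,1,2,3)$, so $S$ sits in the smooth locus. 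Thus $S$ is locally a hypersurface in a smooth threefold, hence Cohen--Macaulay, and by Serre's criterion it suffices to prove $\mathrm{Sing}(S)$ is finite.

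I would argue by contradiction, assuming $\mathrm{Sing}(S)$ contains an irreducible curve $C$. Since $S$ is cut out by $g$ in the smooth locus, a point of $S$ off $\{x_{1}=x_{2}=0\}$ is singular exactly when the partials $\partial g/\partial x_{1},\partial g/\partial x_{2},\partial g/\partial y,\partial g/\partial z$ all vanish (the Jacobian criterion in the affine charts $\{x_{i}\neq0\}$, Euler's relation supplying the missing derivative); as $\{x_{1}=x_{2}=0\}\cap S$ is finite, these four partials vanish on the complement of a finite subset of $C$, hence on all of $C$. In particular $\partial g/\partial z=2z$ vanishes on $C$, so $C$ lies in the surface $P:=\{x_{0}=z=0\}\cong\bP(1,1,2)$, and $C$ avoids the orbifold point $[0:0:0:1:0]$ of $P$ (which is not on $X$), so $C$ lies in the smooth locus of $P$ and of $\bP(1,1,1,2,3)$.

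Next I would restrict $\partial F/\partial x_{0}$ to $P$, obtaining the weighted-homogeneous form
\[
\Theta:=\frac{\partial f_{6}}{\partial x_{0}}(0,x_{1},x_{2})+y\,\frac{\partial f_{4}}{\partial x_{0}}(0,x_{1},x_{2})
\]
of degree five. Every effective divisor on $P=\bP(1,1,2)$ is a positive multiple of the class $\mathcal O_{P}(1)$, with $\mathcal O_{P}(1)^{2}=\tfrac{1}{2}>0$, so if $\Theta\not\equiv0$ the divisor $G:=\{\Theta=0\}$ meets $C$. At a point $q\in C\cap G$ every partial of $F$ vanishes --- those in $x_{1},x_{2},y$ because they restrict on $\{x_{0}=0\}$ to the corresponding partials of $g$, which vanish on $C$; $\partial F/\partial z=2z$ because $z=0$ on $C$; and $\partial F/\partial x_{0}=\Theta$ because $q\in G$ --- and $q$ lies in the smooth locus of $\bP(1,1,1,2,3)$, so $X$ is singular at $q$, a contradiction. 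If instead $\Theta\equiv0$, then $X$ is singular along the entire curve $C$, again impossible. Hence $\mathrm{Sing}(S)$ is zero-dimensional, and $S$ is normal.

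The one place where I expect to have to be careful is exactly this translation between ``$S$ singular along a curve'' and the intersection-theoretic contradiction: verifying that such a $C$ is forced into the weighted plane $P\cong\bP(1,1,2)$, that $C$ (and hence $C\cap G$) avoids the orbifold point of $P$ and lies in the smooth locus of the ambient space so that the Jacobian criterion genuinely detects a singularity of $X$, and that $\Theta\not\equiv0$ defines an effective divisor on $P$ meeting $C$. This is the weighted-projective analogue of the step ``$\partial f_{4}/\partial x_{0}=0$ defines a curve $G$ in $\bP^{2}$ meeting $C$'' in Lemma~\ref{lemm: isolated del Pezzo of degree two}; the rest is formal, plus the bookkeeping with the singular loci of the two weighted projective spaces.
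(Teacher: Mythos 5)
Your proof is correct and takes essentially the same route as the paper: the paper's own proof is literally ``one can discuss as in the earlier lemma on isolated singularities for the degree~$16$ case,'' and you have carried out exactly that argument --- forcing a one-dimensional singular locus $C$ into the weighted plane $\{x_0=z=0\}\cong\bP(1,1,2)$ via the partials of the defining equation, then intersecting with the zero locus of $\partial F/\partial x_0$ to produce a singular point of $X$. Your extra bookkeeping (checking that $S$ and $C$ avoid the orbifold points, that $\Theta\not\equiv 0$ can be assumed, and that the Jacobian criterion applies on the smooth locus of the weighted projective space) fills in precisely the details the paper leaves implicit, with no gaps.
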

\begin{proof}
One can proceed as in the proof of Lemma~\ref{lemm: isolated del Pezzo of degree two}.
\end{proof}

\begin{lemm}
\label{lemm: singular del Pezzo of degree one}
For any $S \in |L|$, $S$ has only canonical singularities or elliptic surface singularities. Moreover, when $S$ has an elliptic singularity, $S$ is isomorphic to the cone defined by
$$
ax_1^6 + bx_1^4y + y^3 + z^2 = 0,
$$
in the weighted projective space $\bP(1,1,2,3)$ where $a, b$ satisfies $4b^3+27a^2 \neq 0$.
\end{lemm}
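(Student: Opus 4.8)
The plan is to imitate the proof of Lemma~\ref{lemm: singular del Pezzo of degree two} as closely as possible, with the local model $q_4(x_1,x_2)=w^2$ replaced by the local model coming from the degree-$6$ hypersurface in $\bP(1,1,1,2,3)$. First I would observe, using Lemma~\ref{lemm: isolated}, that every $S\in|L|$ has only isolated singularities, so it suffices to work analytically at a single singular point $P$. Since $H^0(X,\mathcal O(L))$ is spanned by $x_0,x_1,x_2$, after a linear change of coordinates we may take $S=\{x_0=0\}$, and the singular point of $S$ must be a singular point of the fibre, hence (inspecting the defining equation $f_6+f_4 y+y^3+z^2=0$) must lie over a point where the partials with respect to $y$ and $z$ both vanish; as in Lemma~\ref{lemm: isolated} this forces $P$ to be the distinguished point $(0:0:0:1:0)\in\bP(1,1,1,2,3)$ coming from the weight-$2$ coordinate, or a point forced by vanishing of $\partial/\partial x_j$. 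Passing to the affine chart near $P$, $S$ acquires the analytic form $z^2 = h(x_1,x_2,y)$ for a suitable function $h$; plugging the equation into the weighted chart one sees that $S$ is a double cover of $\mathbb A^2_{x_1,x_2}$ (after eliminating $y$, which appears with a monic cubic) branched over a curve, so $S$ is again of the shape $z^2=q(x_1,x_2)$ locally.

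Next I would invoke \cite[Section 4.25]{KM98} (double covers and their singularities) exactly as in the previous lemma: such a point is at worst canonical iff $\mathrm{mult}_0(q)\le 3$, and when $\mathrm{mult}_0(q)=4$ one gets a simple elliptic (``elliptic surface'') singularity by \cite[Theorem 4.57]{KM98}. The remaining content is the ``moreover'' clause: when an elliptic singularity occurs, $S$ is globally the stated cone. As in Lemma~\ref{lemm: singular del Pezzo of degree two}, comparing the equation of $S$ with the local-analytic normal form in \cite{KM98} shows that the elliptic singularity being present forces the higher-degree terms to be absent, i.e. $f_6$ and $f_4$ must, after the coordinate change putting $P$ at the right place, reduce to $ax_1^6$ and $bx_1^4$ respectively (no $x_2$-dependence), so $S$ is the cone $ax_1^6+bx_1^4y+y^3+z^2=0$ in $\bP(1,1,2,3)$. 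The condition $4b^3+27a^2\neq 0$ is exactly the condition that the associated binary form (equivalently, the genus-one curve $at^3 + bt + \text{(monic)} \cdots$, or rather the cubic $y^3 + bx_1^4 y + ax_1^6$ together with $z^2$) has distinct roots, i.e. that the base curve of the cone is a smooth elliptic curve rather than a rational nodal/cuspidal curve; I would extract this discriminant from the standard formula for the cubic $y^3+py+q$, here with $p=bx_1^4$, $q=ax_1^6$.

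The main obstacle is the ``moreover'' clause — the bookkeeping needed to show that an elliptic singularity \emph{globalizes} the hypersurface to the displayed cone, rather than merely constraining the Taylor expansion at $P$. Concretely, one must track how the quasi-homogeneity of the weighted projective space interacts with the analytic normal form: the grading on $\bP(1,1,1,2,3)$ is rigid enough that the local vanishing forced by $\mathrm{mult}_0=4$ propagates to a global vanishing of monomials in $f_6,f_4$, and one should check there is no room for a ``hidden'' $x_2$ term that is killed locally but not globally. I expect this to go through by the same weighted-homogeneity argument used implicitly in Lemma~\ref{lemm: singular del Pezzo of degree two}, together with the observation that $x_0,x_1,x_2$ all have weight $1$, so a change of coordinates among them is linear and preserves degrees; once $P$ is normalized, the only weight-compatible monomials surviving are $x_1^6$, $x_1^4 y$, $y^3$, $z^2$. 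The discriminant computation $4b^3+27a^2\neq 0$ is then a routine check that I would not spell out in detail.
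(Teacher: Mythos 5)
There is a genuine gap in the local analysis. Two steps fail.

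First, your localization of the singular point is wrong. You assert that $P$ must be the distinguished point $(0:0:0:1:0)$, but $S$ does not even contain that point: substituting $x_0=x_1=x_2=z=0$ into $f_6+f_4y+y^3+z^2$ gives $y^3\neq 0$. At a singular point of $S$ one only knows $z=0$ and $f_4+3y^2=0$, so the $y$-coordinate $y_0$ of $P$ is in general nonzero. The paper normalizes $P$ to $(0:0:1:y_0:0)$ and splits into two cases: when $y_0\neq 0$ a Hessian-rank computation plus the Weierstrass preparation theorem shows the singularity is an $A_m$ point (hence canonical), and only the case $y_0=0$ can produce anything worse. Your proposal omits this case entirely.

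Second, your reduction to the local model $z^2=q(x_1,x_2)$ by ``eliminating $y$'' is invalid. At any singular point of $S$ the partial $\partial/\partial y=f_4+3y^2$ vanishes, so the implicit function theorem cannot be used to remove $y$; the cubic in $y$ is an essential part of the local equation. Consequently the criterion you import from Lemma~\ref{lemm: singular del Pezzo of degree two} ($\mathrm{mult}_0(q)\leq 3$) is the wrong one. The correct local form at $P=(0:0:1:0:0)$ is $q_6(x)+q_4(x)y+y^3+z^2=0$ with $x=x_1/x_2$ a \emph{single} variable, and the criterion from \cite[Section 4.25]{KM98} is: canonical if and only if $\mathrm{mult}_0(q_6)\leq 5$ or $\mathrm{mult}_0(q_4)\leq 3$, with an elliptic singularity exactly when $\mathrm{mult}_0(q_6)\geq 6$ and $\mathrm{mult}_0(q_4)\geq 4$. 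Your worry about ``hidden $x_2$ terms'' in the globalization is then moot: since $q_6$ and $q_4$ are polynomials of degree $6$ and $4$ in the one remaining variable, the multiplicity bounds force $q_6=ax^6$ and $q_4=bx^4$ on the nose, giving the cone immediately. Your identification of $4b^3+27a^2\neq 0$ as the discriminant condition for the Weierstrass cubic (equivalently, smoothness of the base elliptic curve, as required by normality of $S$) is correct.
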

\begin{proof}
Suppose that $S$ has an isolated singularity $P$. Without loss of generality we may assume that $S$ is defined by $x_0=0$ and $P$ is given by $(0:0:1:y_0:0)$. Consider an affine patch $\{x_2 \neq 0\}$, then $S$ is defined by
$$
q_6(x) + q_4(x)y + y^3+z^2 = 0,
$$
in $\mathbb{A}^3$ where $q_6$ and $q_4$ are polynomials of degree $6$ and $4$ respectively, and $P$ is given by $x=0, y=y_0, z=0$. Suppose that $y_0 \neq 0$. Then $P$ is a canonical singularity. Indeed, compute the Hermitian matrix
$$
\begin{pmatrix}
\frac{\partial^2 q_6}{\partial x^2} + \frac{\partial^2 q_4}{\partial x^2}y & \frac{\partial q_4}{\partial x} & 0\\ \frac{\partial q_4}{\partial x} & 6y & 0\\0&0&2
\end{pmatrix}.
$$
At $P$ it has rank $2$ or $3$. If it has rank $3$, then the surface is locally isomorphic to $x^2+y^2+z^2 = 0$ analytically and it is an $A_1$-singularity (see \cite{KM98} for the definition). If it has rank $2$, then it follows from the Weierstrass preparation theorem that the germ of the surface at $P$ is locally analytically isomorphic to
$$
z^2 +y^2\alpha(x,y) + y\beta(x) +\gamma(x) = 0,
$$
where $\alpha, \beta, \gamma$ are analytic functions such that $\alpha(0,0) \neq 0$. Replacing $y$ by $y -\frac{\beta}{2\alpha}$, we conclude that the surface is locally analytically isomorphic to
$$
z^2 + y^2 + x^m = 0,
$$
which is an $A_m$-singularity.

Suppose that $y_0 = 0$. The point $P$ is given by $(0,0,0)$ and the surface is defined by
$$
q_6(x) + q_4(x)y + y^3+z^2 = 0.
$$
Now we apply the discussion of \cite[Section 4.25]{KM98} and obtain that $S$ has at worst canonical singularity at $P$ if and only if $\mathrm{mult}_0 (q_6) \leq 5$ or $\mathrm{mult}_0 (q_4) \leq 3$. When $\mathrm{mult}_0 (q_6) \geq 6$ and $\mathrm{mult}_0 (q_4) \geq 4$, $P$ is an elliptic surface singularity (\cite[Theorem 4.57]{KM98}). By comparing against the local-analytic form of the equation in \cite{KM98}, we see that if $S$ has an elliptic singularity then it is a cone.
\end{proof}

Now we have
\begin{prop}
\label{prop: one two eight}
Let $X$ be a Fano threefold of $\rho(X) = 1$, $r(X) = 2$ and $d(X)=8$. Then the anticanonical class $-K_X$ is weakly $a$-balanced and the $a$-exceptional set is the empty set. The anticanonical class $-K_X$ is not balanced.
\end{prop}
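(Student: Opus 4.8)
The plan is to follow the pattern of Propositions~\ref{prop: index two degree higher} and \ref{prop: one two 16}: reduce the question to curves and to hypersurface sections $S\subset X$, dispose of the ``not balanced'' claim using the $-K_X$-conics, treat surfaces of high $L$-degree by a Reider argument, and handle the low-degree surfaces by hand via the singularity classification of Lemmas~\ref{lemm: isolated} and \ref{lemm: singular del Pezzo of degree one}. Throughout, $L$ is the fundamental divisor, so $\Pic(X)=\bZ L$, $-K_X=2L$, $a(X,L)=2$, $b(X,L)=\rho(X)=1$, and $L^3=1$; this last equality is what keeps us from finishing purely by Reider, as in the cases $d(X)\ge 16$.

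First I would settle the ``not balanced'' assertion. A $-K_X$-conic $C$ has $L\cdot C=1$, hence $a(C,L)=2/(L\cdot C)=2=a(X,L)$ and $b(C,L)=1=b(X,L)$; since such conics sweep out $X$, no proper closed subset is exceptional, so $-K_X$ is not balanced (and, for the same reason, not $a$-balanced, only weakly so).

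For the weakly $a$-balanced statement with empty $a$-exceptional set, bigness of $L|_Y$ is automatic from ampleness of $L$, so it remains to prove $a(Y,L)\le 2$ for every proper subvariety $Y$. For an irreducible curve $C$ this is immediate: $a(C,L)=2/(L\cdot C)\le 2$ when $C$ is rational, and $a(C,L)\le 0$ otherwise. Let $S\subset X$ be an irreducible surface; since $\Pic(X)=\bZ L$ we have $S\in|mL|$ with $m=L^2\cdot S\ge 1$ and $(L|_S)^2=m$. If $m\ge 2$, take a resolution $\beta:\tilde S\to S$; then $2\beta^*L$ is nef and big with $(2\beta^*L)^2=4m\ge 8$. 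If $|K_{\tilde S}+2\beta^*L|$ were empty, Theorem~\ref{theo: Reider} would force $\tilde S$ to be covered by effective curves $D$ with $2\beta^*L\cdot D\le 1$; but $2\beta^*L\cdot D$ is even, hence zero, which makes $D$ one of the finitely many $\beta$-exceptional curves --- a contradiction. Therefore $2\beta^*L+K_{\tilde S}$ is effective and $a(S,L)\le 2$.

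The heart of the proof is the case $m=1$, where $(2\beta^*L)^2=4$ and Reider is unavailable. Here Lemmas~\ref{lemm: isolated} and \ref{lemm: singular del Pezzo of degree one} show $S$ is normal with isolated singularities and is either canonical or the explicit weighted cone with one simple-elliptic singularity. If $S$ is canonical, adjunction gives $2L|_S+K_S=L|_S$, which is ample, so $a(S,L)<2$ by \cite[Proposition~2.7]{balanced}. If $S$ is the elliptic cone, it is a cone over an elliptic curve $E$, and resolving the vertex produces a surface $\tilde S$ ruled over $E$, in particular non-rational; Proposition~\ref{prop:surfaceconstants}(1) then gives $a(S,L)=2/(\beta^*L\cdot C)$ for $C$ a general fibre of the ruling, and since $C$ is not $\beta$-exceptional while $L|_S$ is ample, $\beta^*L\cdot C\ge 1$, whence $a(S,L)\le 2$ (in fact $=2$). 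Putting the cases together gives $a(Y,L)\le a(X,L)$ for all proper subvarieties $Y$, as required. The main obstacle I anticipate is precisely this elliptic-cone surface: one cannot close the gap with a Fujita-type bound and must understand the resolution concretely --- equivalently, compute the discrepancy $-1$ of the simple-elliptic singularity and recognize that $2\beta^*L+K_{\tilde S}=\beta^*(L|_S)-E_0$ is a ruling fibre --- which is exactly where the description of $X$ as a sextic in $\bP(1,1,1,2,3)$ and Lemma~\ref{lemm: singular del Pezzo of degree one} are indispensable.
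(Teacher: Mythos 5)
Your proof is correct and follows essentially the same route as the paper: $-K_X$-conics rule out the balanced property, Reider handles surfaces in $|mL|$ for $m\ge 2$, and for $S\in|L|$ the singularity classification of Lemmas~\ref{lemm: isolated} and \ref{lemm: singular del Pezzo of degree one} splits into the canonical case (adjunction gives $2L|_S+K_S=L|_S$ ample, so $a(S,L)<2$) and the elliptic cone case (resolve the vertex, observe the surface is ruled over an elliptic curve hence non-rational, and apply Proposition~\ref{prop:surfaceconstants}(1)). Your added details --- the parity argument eliminating the $2\beta^*L\cdot D=1$ alternative in Reider, and the explicit bound $\beta^*L\cdot C\ge 1$ on the ruling fibre --- are correct elaborations of steps the paper leaves implicit.
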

\begin{proof}
Let $L$ be the fundamental divisor of $X$. We have $L^3 = 1$. The anticanonical class is weakly balanced with respect to any curve because the index is $2$. However, the anticanonical class is not balanced due to the $-K_X$-conics. Let $S$ be an irreducible surface. We need to show that $a(S, L) \leq a(X, L)=2$. If $S$ is linearly equivalent to $mL$ where $m \geq 2$, then $(2L)^2\cdot S \geq 8$. By Reider's theorem (Theorem~\ref{theo: Reider}), we have $a(S, L) \leq 2$. Suppose that $S \in |L|$. If $S$ is smooth or has at worst canonical singularities, then by the adjunction formula, we have $2L|_S + K_S = S|_S$ is ample.  Since the $a$ constant can be computed on any model with canonical singularities by \cite[Proposition 2.7]{balanced}, we conclude that $a(S, L) <2$.

Suppose that $S$ has an elliptic singularity.  We calculate $a(S,L)$. By Lemma~\ref{lemm: singular del Pezzo of degree one}, $S$ is a cone defined by
$$
ax_1^6 + bx_1^4y + y^3+z^2 = 0.
$$
The only elliptic surface singularity is $(x_1:x_2:y:z) = (0:1:0:0)$ and there is no other singular point.  \cite[Theorem 4.57]{KM98} shows that if we do a weighted blow-up of type (1,2,3) at the cone point (in the ambient $\mathbb{C}^{3}$), the resulting strict transform surface $\pi: \overline{S} \to S$ has canonical singularities and maps to an elliptic curve.  Since $S$ is not rational, we can apply Proposition \ref{prop:surfaceconstants} to conclude that $a(S,L)=2$. 
\end{proof}

To establish the weakly balanced property, one needs to classify all possible singularities of divisors in the linear system $|2L|$.

\subsection{$\rho(X) = 1$ and $r(X) = 1$}

\

Let $L$ be the fundamental divisor of $X$, i.e., $-K_X = L$. 
For simplicity, we assume that $L$ is very ample. 
This excludes two types of Fano threefolds (\cite[Proposition 4.1.11]{iskov}). 
We denote the Fano scheme of $-K_X$-conics by $F(X)$. 
We also consider $-K_X$-lines, i.e., irreducible curves $C$ such that $-K_X\cdot C = 1$, and denote the 
corresponding Hilbert scheme by $\Gamma(X)$. 
The existence of $-K_X$-lines and $-K_X$-conics has been established by Shokurov 
(see \cite[Remark 4.2.8, Theorem 4.4.13, Theorem 4.5.10]{iskov}).
It is known that $\Gamma(X)$ is of pure dimension $1$ \cite[Proposition 4.2.2]{iskov}.  
Let $Z(X)$ be the surface swept out by $-K_X$-lines. 
The family of $-K_X$-conics covers $X$ (see \cite[Chapter III, Proposition 3.4]{iskov79}).

\begin{prop}
\label{prop: one one ten}
Suppose that $X$ is a Fano threefold of $\rho(X)=1$, $r(X)=1$, and $d(X) \geq10$. 
Then $-K_X$ is weakly balanced and the exceptional set is $Z(X)$, but it is not balanced.
\end{prop}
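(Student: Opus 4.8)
The plan is to separate the analysis into curves and surfaces, as in the earlier propositions for index $2$ Fano threefolds, and to exploit the same Reider-type arguments while keeping track of the new exceptional locus $Z(X)$.

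First I would treat curves. Since $r(X)=1$ and $L=-K_X$, a general curve $Y$ has $a(Y,L)=2/(L\cdot Y)$ if $Y$ is rational and $a(Y,L)\le 1$ otherwise; in all cases $a(Y,L)\le a(X,L)=1$ unless $Y$ is a $-K_X$-line, i.e.\ $L\cdot Y=1$. The $-K_X$-lines form the Hilbert scheme $\Gamma(X)$, of pure dimension $1$, sweeping out the surface $Z(X)$; these are precisely the curves violating the $a$-inequality, so they must go into the exceptional set, which is why the proposition asserts the exceptional set is $Z(X)$ and not the empty set. For $-K_X$-conics $C$ one has $a(C,L)=1=a(X,L)$ and $b(C,L)=1=b(X,L)$, so $-K_X$ is weakly balanced but not balanced with respect to conics — and since conics cover $X$, this already shows $-K_X$ is not balanced (with any exceptional set), giving the last clause of the statement. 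Curves outside $Z(X)$ with $L\cdot Y\ge 2$ satisfy $a(Y,L)\le 1$ with equality only for conics, where $b$ does not jump, so the weakly balanced property holds for curves away from $Z(X)$.

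Next I would treat surfaces $S\subset X$. Take a resolution $\beta:\tilde S\to S$. Writing $S\sim mL$, the hypothesis $d(X)=L^3\ge 10$ gives $(\beta^*L|_{\tilde S})^2 = m^2 L^3 \ge 10$ (indeed $\ge 5$ suffices for the first Reider statement), so by Theorem~\ref{theo: Reider} the adjoint divisor $\beta^*L|_{\tilde S}+K_{\tilde S}$ is effective whenever $|K_{\tilde S}+\beta^*L|$ has a base point — ruling out the bad alternatives $L\cdot D=0,\ D^2=-1$ or $L\cdot D=1,\ D^2=0$ requires noting that a $D$ with $L\cdot D=1$ is a $-K_X$-line and such a configuration would force $S$ to be ruled by lines, i.e.\ $S\subset Z(X)$. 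Hence for $S\not\subset Z(X)$ we get $a(S,L)\le a(X,L)=1$. When $a(S,L)=1$, I would apply Theorem~\ref{theo: Reider II} (using $(\beta^*L|_{\tilde S})^2\ge 10$): if the Iitaka dimension of $K_{\tilde S}+\beta^*L|_{\tilde S}$ were $0$, then two general points of $S$ could not be separated and would lie on a common conic or a common line, forcing $S$ to be swept out by such curves and hence, since $X$ contains no plane and $S\not\subset Z(X)$, a contradiction; so $\kappa(K_{\tilde S}+\beta^*L|_{\tilde S})=1$ and Proposition~\ref{prop:surfaceconstants}(1) gives $b(S,L)=1\le b(X,L)$. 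Thus $-K_X$ is weakly balanced with exceptional set $Z(X)$.

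The main obstacle is the surface analysis in the boundary case $a(S,L)=1$ together with the classification of singularities of members of $|mL|$ that could spoil the Reider application or the normality needed to invoke the $a$-constant on a canonical model. For divisors in $|L|$ one must, as in Lemmas~\ref{lemm: isolated del Pezzo of degree two}–\ref{lemm: singular del Pezzo of degree one}, check that the singularities are isolated (hence $S$ normal) and are canonical or elliptic; in the canonical case adjunction forces $K_S+L|_S$ big, so $a(S,L)<1$, and in the (cone) elliptic case $S$ maps to an elliptic curve after a weighted blow-up, so $S$ is non-rational and Proposition~\ref{prop:surfaceconstants}(1) again yields $b(S,L)=1$. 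Carefully excluding the cases where $S\subset Z(X)$ must be allowed rather than contradicted — that is, confirming $Z(X)$ is genuinely the minimal exceptional set and not larger — is the delicate bookkeeping step, but it follows from the fact that every $-K_X$-line lies in $Z(X)$ and every surface not contained in $Z(X)$ carries a nef curve of $L$-degree $\ge 2$.
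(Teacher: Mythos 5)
Your overall strategy matches the paper's: curves are handled by the degree dichotomy (lines force the exceptional set $Z(X)$, conics cover $X$ and kill the balanced property), and surfaces by Reider's theorems on a resolution, with the $b$-inequality reduced to showing $\kappa(K_{\tilde S}+\beta^*L|_{\tilde S})=1$. But there is a genuine gap at the key step where you exclude Iitaka dimension $0$. In that case Theorem~\ref{theo: Reider II} produces, through two \emph{general} points of $S$, a curve $D$ with $\beta^*L\cdot D\le 2$. The case $\beta^*L\cdot D=1$ (a $-K_X$-line) is indeed excluded by $S\not\subset Z(X)$, and the ``common line'' case would force $S$ to be a plane; but in the case $\beta^*L\cdot D=2$ your stated conclusion --- that $S$ is ``swept out by conics, hence a contradiction since $X$ contains no plane and $S\not\subset Z(X)$'' --- does not follow. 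Being swept out by $-K_X$-conics is no contradiction at all: the conics cover $X$, and $Z(X)$ is the locus of lines, not of conics. What one actually gets is that the family of conics lying on $S$ has dimension $\ge 2$, and the needed input is the classification of surfaces carrying a $\ge 2$-dimensional family of conics (Veronese surface in $\bP^5$, a projection thereof, a quadric surface, or a plane; this is \cite[Chapter~III, Proposition~3.3]{iskov79}). Each of these has $L$-degree at most $4$, while $(L|_S)^2=L^2\cdot S\ge L^3\ge 10$, so none can lie on $X$; that is the contradiction. You appear to have imported the index-$2$ argument (where $-K_X$-conics have $L$-degree $1$ and the ``two general points on a line'' trick applies) into the index-$1$ setting, where it no longer suffices.

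Two smaller points. First, $(\beta^*L|_{\tilde S})^2=L^2\cdot S=mL^3$, not $m^2L^3$ (harmless here since $m\ge 1$ already gives $\ge 10$). Second, the classification of singularities of members of $|mL|$ that you list as a ``main obstacle'' is not needed for $d(X)\ge 10$: the Reider arguments are run on the resolution $\tilde S$ and only use $(\beta^*L|_{\tilde S})^2\ge 10$, which holds for every $m\ge 1$. That singularity analysis only becomes necessary in the lower-degree cases, where for $m=1$ the intersection number drops below the Reider thresholds.
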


\begin{proof}
For any curve $C$ not contained in $Z(X)$, we have $-K_X\cdot C \geq 2$. 
Thus $-K_X$ is weakly balanced with respect to any curve $C$ not contained in $Z(X)$. 
However $-K_X$-conics sweep out $X$, hence $-K_X$ is not balanced. 

Suppose that $S$ is an irreducible surface not contained in $Z(X)$. Let $\beta : \tilde{S} \ra S$ be a resolution of singularities of $S$. Since $(L|_S)^2 \geq L^3 \geq 10$, it follows from Theorem~\ref{theo: Reider} that $D = \beta^*L + K_{\tilde{S}}$ is effective. We conclude that $a(S, L) \leq a(X, L) =1$. Suppose that $a(S, L) = a(X, L) =1$. We claim that $D$ has Iitaka dimension $1$. If $D$ has Iitaka dimension $0$, then $|D|$ fails to separate general points $x, y$. By Theorem~\ref{theo: Reider II}, there is a $-K_X$-conic passing through $x, y$. This implies that the dimension of the Hilbert scheme of $-K_X$-conics on $S$ is greater than $1$. \cite[Chapter III, Proposition 3.3]{iskov79} shows that $S$ is a Veronese surface in $\bP^5$, one of its projections into a lower space, a quadric surface, or a plane. However, $X$ cannot contain these surfaces. Thus we have $\kappa (D) =1$. Its canonical fibration is a ruling, and we conclude that $b(S, L)=1$. Thus $-K_X$ is weakly balanced with respect to $S$.
\end{proof}

Suppose that $d(X) = 8$ or $6$. When $d(X) = 8$, then $X$ is a complete intersection of three quadrics in $\bP^6$. When $d(X) = 6$, then $X$ is a complete intersection of a cubic and a quadric in $\bP^5$. Using Reider's theorems we have:
\begin{prop}
\label{prop: one one eight}
Let $X$ be a Fano threefold of $\rho(X) = 1$, $r(X)=1$. Suppose that $d(X) = 8$ or $6$. Then $-K_X$ is weakly $a$-balanced with the exceptional set $Z(X)$, but is not balanced.
\end{prop}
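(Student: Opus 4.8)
The plan is to follow the template of Proposition~\ref{prop: one one ten}, the one difference being that for $d(X)\in\{6,8\}$ one can only reach the weaker (weakly $a$-balanced) conclusion. Since $r(X)=1$, the fundamental divisor equals $L=-K_X$, so $a(X,L)=1$ and $b(X,L)=\rho(X)=1$; in both cases $L=\cO_X(1)$ is very ample with $L^3=d(X)\ge 6$ (for $d(X)=8$, $X\subset\bP^6$ is a complete intersection of three quadrics; for $d(X)=6$, $X\subset\bP^5$ is a complete intersection of a quadric and a cubic). The failure of the balanced property is immediate: by \cite[Chapter III, Proposition 3.4]{iskov79} the $-K_X$-conics sweep out $X$, and a general $-K_X$-conic $C$ is a smooth rational curve with $a(C,L)=2/(L\cdot C)=1=a(X,L)$ and $b(C,L)=1=b(X,L)$, so no proper closed subset can be exceptional for the strict inequalities defining the balanced property.

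For the weakly $a$-balanced property I would take the $a$-exceptional set to be $Z(X)$, the surface swept out by $-K_X$-lines; this is a proper closed subset of $X$ because $\Gamma(X)$ has pure dimension $1$ by \cite[Proposition 4.2.2]{iskov}. Since $L$ is ample it is big on every subvariety, so it remains to check $a(Y,L)\le a(X,L)=1$ for each $Y\not\subset Z(X)$. If $Y=C$ is a curve, then $C$ is not a $-K_X$-line, hence $-K_X\cdot C\ge 2$, so $a(C,L)=2/(L\cdot C)\le 1$ when $C$ is rational and $a(C,L)\le 0$ otherwise. If $Y=S$ is a surface, write $[S]=mL$ with $m\ge 1$ (using $\Pic(X)=\bZ L$); choosing a resolution $\beta:\tilde S\to S$, the divisor $\beta^*L|_{\tilde S}$ is nef and big with $(\beta^*L|_{\tilde S})^2=L^2\cdot S=m\,d(X)\ge 6\ge 5$, so Reider's theorem (Theorem~\ref{theo: Reider}, applied exactly as in the proof of Proposition~\ref{prop: one one ten}) shows $K_{\tilde S}+\beta^*L|_{\tilde S}$ is effective, whence $a(S,L)=a(\tilde S,\beta^*L)\le 1$. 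This exhibits $Z(X)$ as an $a$-exceptional set, so $-K_X$ is weakly $a$-balanced.

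The step I expect to be the main obstacle --- and the reason the conclusion is only ``weakly $a$-balanced'' rather than ``weakly balanced'' --- is the borderline analysis of surfaces $S$ with $a(S,L)=a(X,L)=1$. When $m=[S]/L\ge 2$ one has $(L|_S)^2=m\,d(X)\ge 12\ge 10$, so Theorem~\ref{theo: Reider II} applies and the argument of Proposition~\ref{prop: one one ten} carries over verbatim: two general points of $S$ cannot lie on a common $-K_X$-conic (otherwise $S$ would be a Veronese surface, one of its projections, a quadric, or a plane, none of which $X$ contains), hence $\kappa(K_{\tilde S}+\beta^*L|_{\tilde S})=1$, its canonical fibration is a ruling, and $b(S,L)=1$ by Proposition~\ref{prop:surfaceconstants}. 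But for $S\in|L|$, adjunction gives $K_S\sim 0$, so $S$ is a (possibly singular) surface with trivial canonical class and $(L|_S)^2=d(X)\in\{6,8\}<10$, so Theorem~\ref{theo: Reider II} is unavailable; deciding whether such an $S$ with $a(S,L)=1$ satisfies $b(S,L)\le 1$ would require classifying the singularities that members of $|L|$ (and of $|2L|$) can acquire --- as was done in the index-two case in Lemmas~\ref{lemm: isolated} and \ref{lemm: singular del Pezzo of degree one} --- and then applying Proposition~\ref{prop:surfaceconstants} case by case to the (non-rational) resolutions arising in the elliptic-singularity cases. That classification is not carried out here, so only the weaker statement is proved.
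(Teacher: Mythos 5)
Your proposal is correct and follows exactly the argument the paper intends: the paper gives no written proof here beyond ``Using Reider's theorems we have,'' and your reconstruction (conics sweeping out $X$ kill the balanced property; Reider~I on resolutions of surfaces not in $Z(X)$ gives $a(S,L)\le 1$ since $(L|_S)^2\ge 6$; Reider~II fails only for $S\in|L|$ where $(L|_S)^2=d(X)<10$, which is precisely why the paper defers the weakly balanced claim to a classification of singularities of members of $|L|$) is the intended one. No substantive differences.
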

Again, to establish the weakly balanced property, one needs to classify possible singularities of divisors in the linear system $|L|$.
Suppose that $d(X) = 4$. In this case, $X$ is a quartic threefold in $\bP^4$, defined by
$$
f_4(x_0,x_1,x_2,x_3,x_4) = 0,
$$
where $f_4$ is a homogeneous polynomial of degree $4$. To study the balanced property of the anticanonical class, we need to classify all possible singularities of divisors in $|L|$. 

\begin{lemm}
\label{lemm: isolated quartic}
For any $S \in |L|$, $S$ has only isolated singularities, hence $S$ is normal.
\end{lemm}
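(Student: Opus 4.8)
The plan is to follow the pattern of Lemma~\ref{lemm: isolated del Pezzo of degree two}, showing directly that $\mathrm{Sing}(S)$ cannot be one-dimensional and then deducing normality from Serre's criterion. Since $-K_X = L = \cO_X(1)$ for the quartic threefold $X = \{f_4 = 0\} \subset \bP^4$, every $S \in |L|$ is a hyperplane section $X \cap H$, and after a linear change of coordinates we may take $H = \{x_0 = 0\}$. Thus $S$ is the quartic surface in $H \cong \bP^3$, with coordinates $x_1,\dots,x_4$, cut out by $g := f_4(0,x_1,\dots,x_4)$.

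The first step is to record a scheme-theoretic identity inside $H$. Put $h := (\partial f_4/\partial x_0)|_{x_0 = 0}$, a cubic form (possibly zero) in $x_1,\dots,x_4$. By Euler's relation for $f_4$ one has $\mathrm{Sing}(X) = V(\partial f_4/\partial x_0,\dots,\partial f_4/\partial x_4)$, and by Euler's relation for $g$ one has $\mathrm{Sing}(S) = V(\partial g/\partial x_1,\dots,\partial g/\partial x_4)$ inside $H$. Restricting the partials of $f_4$ to $\{x_0 = 0\}$ produces precisely the partials of $g$ together with the single extra equation $h = 0$, so
\begin{equation*}
\mathrm{Sing}(X) \cap H \;=\; \mathrm{Sing}(S) \cap \{h = 0\} \qquad \text{as subschemes of } H .
\end{equation*}
Since $X$ is smooth, $\mathrm{Sing}(X) = \emptyset$, hence $\mathrm{Sing}(S)$ is disjoint from $\{h = 0\} \subset H$.

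Next I would rule out a singular curve on $S$. Suppose $\mathrm{Sing}(S)$ contained a curve $C$. If $h \equiv 0$, then $\{h = 0\} = H$ and the identity above forces $\mathrm{Sing}(S) = \emptyset$, a contradiction. If $h \not\equiv 0$, then $\{h = 0\}$ is a hypersurface in $\bP^3$, and a curve in $\bP^3$ meets every hypersurface, so $C \cap \{h = 0\} \neq \emptyset$, again contradicting the disjointness. Hence $\mathrm{Sing}(S)$ is at most $0$-dimensional, i.e.\ $S$ has only isolated singularities; in particular $S$ is reduced and irreducible (two surface components in $\bP^3$ would meet along a curve). Finally $S$ is a hypersurface in $\bP^3$, hence Cohen--Macaulay and so satisfies Serre's condition $S_2$, while its singular locus has codimension $2$ in $S$, so it satisfies $R_1$; by Serre's criterion $S$ is normal.

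I do not expect a genuine obstacle: this is a routine adaptation of the earlier isolated-singularity lemmas. The only point needing care is the degenerate case $h \equiv 0$ --- equivalently, $f_4$ contains no monomial of the shape $x_0 \cdot (\text{cubic in } x_1,\dots,x_4)$ --- where the ``a curve in $\bP^3$ meets every hypersurface'' principle is unavailable and one must instead read off smoothness of $S$ directly from smoothness of $X$ along $H$.
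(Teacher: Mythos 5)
Your proof is correct and is essentially the argument the paper intends: its proof of this lemma simply says to proceed as in Lemma~\ref{lemm: isolated del Pezzo of degree two}, i.e.\ a curve of singular points of the hyperplane section $S$ would have to meet the locus where the remaining partial $\partial f_4/\partial x_0$ vanishes, producing a singular point of $X$. Your scheme-theoretic identity $\mathrm{Sing}(X)\cap H=\mathrm{Sing}(S)\cap\{h=0\}$ and the explicit treatment of the degenerate case $h\equiv 0$ are a clean (and slightly more careful) write-up of the same idea.
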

\begin{proof}
One can discuss as Lemma~\ref{lemm: isolated del Pezzo of degree two}.
\end{proof}
A complete classification of normal quartic surfaces with irrational singularities has been given in \cite{IN04}. What we need is the following proposition:

\begin{prop}[\cite{IN04}]
\label{prop: quartic surfaces}
Suppose that $S$ is a normal quartic surface with irrational singularities. Let $\beta : \tilde{S} \ra S$ be the minimal desingularization of $S$ and $L$ the hyperplace class on $S$. Then either $\beta^*L + K_{\tilde{S}}$ is nef or $S$ is isomorphic to a cone over a smooth quartic plane curve in $\bP^3$.
\end{prop}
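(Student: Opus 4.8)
The strategy is to exploit adjunction for the quartic hypersurface $S$ together with an analysis of curves that sweep out $S$ and meet its singular locus. Since $S$ is a quartic in $\bP^3$, adjunction gives $\omega_S\cong\mathcal{O}_S$; as $S$ is normal its singularities are isolated and Gorenstein, and by hypothesis at least one is irrational. On the minimal desingularization $\beta\colon\tilde{S}\to S$ write $K_{\tilde{S}}=\beta^{*}K_S-\Delta=-\Delta$, where $\Delta\geq 0$ is a $\beta$-exceptional divisor, \emph{integral} because $S$ is Gorenstein, and nonzero because an irrational Gorenstein surface singularity is not canonical. Consequently $H^{0}(\tilde{S},mK_{\tilde{S}})=H^{0}(\tilde{S},-m\Delta)=0$ for all $m>0$, so $\kappa(\tilde{S})=-\infty$ and $\tilde{S}$ is birationally ruled. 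The divisor to control is $D:=\beta^{*}L+K_{\tilde{S}}=\beta^{*}L-\Delta$, and we must prove that $D$ is nef unless $S$ is the cone over a smooth plane quartic.

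Assume $D$ is not nef and fix an irreducible curve $C$ with $D\cdot C<0$. Because $\beta$ is the minimal resolution, $K_{\tilde{S}}$ is $\beta$-nef, so $D\cdot E_i=K_{\tilde{S}}\cdot E_i\geq 0$ for every $\beta$-exceptional curve $E_i$; hence $C$ is not $\beta$-exceptional, $\bar{C}:=\beta_{*}C$ is a genuine curve in $S\subset\bP^3$ of degree $d_C=\beta^{*}L\cdot C\geq 1$, and $D\cdot C<0$ reads $\Delta\cdot C\geq d_C+1\geq 2$. Adjunction on $\tilde{S}$ then gives
$$
C^{2}=2p_a(C)-2-K_{\tilde{S}}\cdot C=2p_a(C)-2+\Delta\cdot C\geq d_C-1\geq 0,
$$
so $C$ is nef; moreover $-K_{\tilde{S}}\cdot C=\Delta\cdot C>0$, so Riemann--Roch forces $h^{0}(\tilde{S},mC)\to\infty$, i.e. $\kappa(C)\geq 1$. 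Hence $C$ moves: if $C^{2}>0$ it is big and nef and deforms in a covering family, while if $C^{2}=0$ then (from $d_C=1$, $\Delta\cdot C=2$, $p_a(C)=0$) it is a smooth rational curve of self-intersection $0$, a component of a fiber of a fibration $\tilde{S}\to B$. In all cases the members of the family have the same intersection $\Delta\cdot C>0$, so each image $\bar{C}_t$ meets $\mathrm{Sing}(S)$; by irreducibility of the parameter space, a general $\bar{C}_t$ passes through one fixed singular point $p$.

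If $d_C=1$ this concludes the argument: $S$ is then swept out by a family of lines through the fixed point $p$, hence is a cone with vertex $p$ over a plane curve $C'\subset\bP^2$ of degree $4$, and $C'$ is smooth because $S$ is normal (a singular point of $C'$ would produce a whole line of singularities of $S$); thus $S$ is the cone over a smooth plane quartic. The remaining possibility, $d_C\geq 2$, is the crux: here $S$ is swept out by a covering family of curves of degree $\geq 2$ through $p$, equivalently the general plane through $p$ cuts $S$ in a reducible quartic. The plan is to exclude this, away from the cone case, by combining two ingredients. First, applying Kawamata--Viehweg vanishing and Riemann--Roch to the big and nef divisor $K_{\tilde{S}}+\beta^{*}L$ (using $(\beta^{*}L)^{2}=L^{2}=4$ and $K_S=0$) yields $h^{0}(\tilde{S},K_{\tilde{S}}+\beta^{*}L)=\chi(\mathcal{O}_{\tilde{S}})+2\geq 0$, hence the a priori bound $\sum_i p_g(S,p_i)=2-\chi(\mathcal{O}_{\tilde{S}})\leq 4$ on the geometric genera of the irrational singularities. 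Second, one runs through the now short list of irrational --- elliptic and worse --- Gorenstein singularities a quartic surface can carry, as classified using \cite[Section 4.25]{KM98} and \cite[Theorem 4.57]{KM98} in the spirit of Lemmas~\ref{lemm: singular del Pezzo of degree two} and \ref{lemm: singular del Pezzo of degree one}, computes $\Delta$ explicitly in each case, and checks directly that $\beta^{*}L-\Delta$ has non-negative degree on every curve --- the point being that outside the cone the coefficients of $\Delta$ are small relative to the positivity of $L$. Note that Reider's theorem, Theorem~\ref{theo: Reider}, is just out of reach here since $L^{2}=4<5$, so this bookkeeping cannot be bypassed by a clean vanishing statement.

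I expect the genuine obstacle to be precisely this last step: showing that a quartic surface other than the cone over a smooth plane quartic cannot carry a covering family of curves of degree $\geq 2$ through an irrational singular point --- that is, that such a family is incompatible with the local structure of the singularity unless it degenerates to the rulings of a cone. A secondary technical nuisance is keeping track of curves lying entirely over the singular points while performing the nefness verification locally around each singularity. The full classification carrying out both points is the content of \cite{IN04}.
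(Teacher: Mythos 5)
This proposition is imported into the paper verbatim from \cite{IN04} --- the paper gives no proof of it, so there is no in-paper argument to compare you against; your attempt has to stand on its own. Its reduction steps are correct and well executed: $\omega_S\cong\mathcal O_S$ by adjunction, $K_{\tilde S}=-\Delta$ with $\Delta$ integral, effective, nonzero and $\beta$-nef on the minimal resolution, so any nefness violator $C$ is non-exceptional with $\Delta\cdot C\ge d_C+1$, whence $C^2\ge d_C-1\ge 0$ and $C$ generates a covering family whose members pass through a fixed singular point. The degree-one branch does give the cone over a smooth plane quartic (one should add a line to rule out the general fiber of the induced fibration being a multiple $kC$ with $k\ge 2$: adjunction on the general fiber $F$ gives $\Delta\cdot F=2-2p_a(F)\le 2=\Delta\cdot C$, forcing $k=1$), and the bound $\sum_i p_g(S,p_i)\le 4$ via Kawamata--Viehweg and Riemann--Roch is correct.

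However, the case $d_C\ge 2$ --- which you yourself flag as the crux --- is not proved; it is only a plan. The genus bound $\sum p_g\le 4$ does not by itself produce a ``short list'' amenable to a finite check: a Gorenstein irrational surface singularity of given geometric genus is not determined by $p_g$ (even minimally elliptic singularities form infinitely many analytic types with varying resolution graphs), so ``compute $\Delta$ explicitly in each case and check nefness'' is not a procedure one can actually carry out without substantially more input; and the Hodge index inequality applied to $\beta^*L$ and $C$ gives only $(d_C-2)^2\ge 0$, so no contradiction is available by intersection theory alone. Excluding a covering family of curves of degree $\ge 2$ through an irrational singular point (away from the cone) is precisely the hard content of the Ishii--Nakayama classification, and deferring it to \cite{IN04} means the statement has not been proved. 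As written, the attempt is an honest and correct reduction, not a proof.
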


Using Reider's theorems, we obtain:

\begin{prop}
\label{prop: one one four}
Let $X$ be a Fano threefold of $\rho(X)=1$, $r(X) = 1$, $d(X)=4$. Suppose that $-K_X$ is very ample. Then $-K_X$ is weakly $a$-balanced with the $a$-exceptional set $Z(X)$, but not balanced.
\end{prop}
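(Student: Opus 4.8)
\emph{Proof proposal.} Set $L := -K_X = \mathcal{O}_{\mathbb{P}^4}(1)|_X$; this is very ample, $a(X,L) = 1$, and since $\rho(X) = 1$ the class $K_X + a(X,L)L = 0$ spans the trivial face of $\Lambda_{\eff}(X)$, so $b(X,L) = 1$. The failure of the balanced property is immediate: the $-K_X$-conics cover $X$, so no proper closed subset of $X$ contains all of them, while for each such conic $C$ one has $L \cdot C = 2$, hence $C$ is smooth rational with $a(C,L) = \frac{2}{L\cdot C} = 1 = a(X,L)$ and $b(C,L) = 1 = b(X,L)$, so both defining strict inequalities of the balanced property fail. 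It therefore remains to prove that $-K_X$ is weakly $a$-balanced with $a$-exceptional set $Z(X)$; since $L$ is ample (hence big on every subvariety), this amounts to showing $a(Y,L) \le 1$ for every $Y \not\subset Z(X)$.

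For curves this is clear: if $C \not\subset Z(X)$ then either $C$ is non-rational, whence $a(C,L) \le 0$, or $C$ is rational but not a $-K_X$-line, so $L \cdot C \ge 2$ and $a(C,L) = \frac{2}{L\cdot C} \le 1$. Next let $S$ be an irreducible surface with $S \not\subset Z(X)$; since $\rho(X) = 1$ we have $S \in |mL|$ for some $m \ge 1$. Fix a resolution $\beta : \tilde S \to S$, so $\beta^*L|_{\tilde S}$ is nef with $(\beta^*L|_{\tilde S})^2 = m\, L^3 = 4m$. If $m \ge 2$, this is $\ge 8 \ge 5$, so Reider's Theorem \ref{theo: Reider} shows $K_{\tilde S} + \beta^*L|_{\tilde S}$ is effective, whence $a(S,L) \le 1$.

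The case $S \in |L|$ is the crux, since then $(\beta^*L|_{\tilde S})^2 = 4 < 5$ and Reider no longer applies directly. By Lemma \ref{lemm: isolated quartic}, $S$ is a normal quartic surface, and being a Cartier divisor in the smooth threefold $X$ it is Gorenstein with $K_S = (K_X+S)|_S = \mathcal{O}_S$ by adjunction. If the singularities of $S$ are rational, they are then Du Val, so for $\beta$ the minimal (crepant) resolution we get $K_{\tilde S} = \beta^* \mathcal{O}_S = \mathcal{O}_{\tilde S}$, and $a(S,L) = 0$. Otherwise $S$ has an irrational singularity, and Proposition \ref{prop: quartic surfaces} applies to the minimal desingularization: either $\beta^* L|_{\tilde S} + K_{\tilde S}$ is nef, hence pseudo-effective, so $a(S,L) \le 1$; or $S$ is the cone over a smooth plane quartic curve $C_4$. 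In the latter case the ruling lines of $S$ are lines of $\mathbb{P}^4$ contained in $S \subset X$, hence $-K_X$-lines, and they sweep out $S$; therefore $S \subseteq Z(X)$, contrary to hypothesis. This completes the argument.

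I expect the cone case to be the only genuine difficulty, and it is the sharp obstruction rather than a defect of the method: the minimal resolution of the cone over $C_4$ is a ruled surface over the genus-$3$ curve $C_4$, hence not rational, so Proposition \ref{prop:surfaceconstants} yields $a(S,L) = \frac{2}{L\cdot F} = 2 > a(X,L)$ for $F$ a ruling fiber --- which is precisely why $Z(X)$, and no smaller set, is forced as the $a$-exceptional locus. The work therefore concentrates in extracting the dichotomy ``nef adjoint divisor, or a cone over a plane quartic'' for normal quartic surfaces with irrational singularities from \cite{IN04}, and in verifying that such a cone inside $X$ is automatically swept out by $-K_X$-lines; by contrast, the case $m \ge 2$ and the curve case follow immediately from Reider and from the definition of $Z(X)$ respectively.
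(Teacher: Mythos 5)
Your proof is correct and follows essentially the same route the paper intends: Reider for surfaces in $|mL|$ with $m\ge 2$, Lemma \ref{lemm: isolated quartic} plus the Ishii--Nakayama dichotomy (Proposition \ref{prop: quartic surfaces}) for hyperplane sections, with the cone over a plane quartic absorbed into $Z(X)$ because its rulings are $-K_X$-lines, and non-balancedness from the covering family of $-K_X$-conics. The paper compresses all of this into ``Using Reider's theorems, we obtain,'' so your write-up is simply a complete version of the intended argument.
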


Here, we need to classify possible singularities of divisors in $|2L|$ to establish weakly balanced property.

We summarize results of this section:

\begin{theo}
\label{thm:fano-rank1}
\label{theo: picard rank one}
Suppose that $X$ is a Fano threefold of $\rho(X)=1$;
\begin{itemize}
\item if $r(X) = 4$ or $3$, then $-K_X$ is balanced;
\item if $r(X) = 2$ and $d(X) \geq 16$, then $-K_X$ is weakly balanced, but not balanced;
\item if $r(X) = 2$ and $d(X) = 8$, then $-K_X$ is weakly $a$-balanced, but not balanced;
\item if $r(X)=1$ and $d(X) \geq 10$, then $-K_X$ is weakly balanced, but not balanced;
\item if $r(X)=1$ and $d(X) = 8$ or $6$ then $-K_X$ is weakly $a$-balanced, but not balanced;
\item if $r(X)=1$, $d(X) = 4$, and $-K_X$ is very ample, then $-K_X$ is weakly $a$-balanced, but not balanced.
\end{itemize}
\end{theo}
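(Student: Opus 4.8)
The plan is to deduce the statement directly from the classification of Fano threefolds of Picard rank one (Theorem~\ref{thm:pic-rank1}) together with the case-by-case results established earlier in this section, organizing the argument by the index $r(X)$ and then, for $r(X) \le 2$, by the degree $d(X)$.

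First I would dispose of the large-index cases: when $r(X) = 4$, $X \cong \bP^3$ and Example~\ref{exam:projspacebalanced} applies; when $r(X) = 3$, $X$ is a smooth quadric in $\bP^4$ and Proposition~\ref{prop: quadric} applies; in both, $-K_X$ is balanced with empty exceptional set. Next, for $r(X) = 2$, Theorem~\ref{thm:pic-rank1} leaves $d(X) \in \{8,16,24,32,40\}$; I would split off $d(X) \geq 16$ (that is, $d(X) \in \{16,24,32,40\}$), handled by Propositions~\ref{prop: index two degree higher} and~\ref{prop: one two 16}, giving weakly balanced but not balanced, from the single remaining value $d(X) = 8$, handled by Proposition~\ref{prop: one two eight}, giving weakly $a$-balanced but not balanced.

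Finally, for $r(X) = 1$, Theorem~\ref{thm:pic-rank1} leaves $d(X) \in \{2,4,6,8,10,12,14,16,18,22\}$; under the running assumption that $L = -K_X$ is very ample I would invoke Proposition~\ref{prop: one one ten} for $d(X) \geq 10$ (weakly balanced, not balanced), Proposition~\ref{prop: one one eight} for $d(X) \in \{6,8\}$ (weakly $a$-balanced, not balanced), and Proposition~\ref{prop: one one four} for $d(X) = 4$ (weakly $a$-balanced, not balanced). The value $d(X) = 2$ is one of the two deformation types for which the fundamental divisor fails to be very ample, and therefore falls outside the scope of the statement.

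Since every case has already been treated individually, there is no genuine obstacle here: the substance of the theorem lies entirely in the preceding propositions, and what remains is purely a matter of bookkeeping. The only point that requires attention is checking that this division by $(r(X), d(X))$ is exhaustive and that the thresholds $d(X) \geq 16$ (for $r(X)=2$) and $d(X) \geq 10$ (for $r(X)=1$) partition the remaining degrees correctly against the lists in Theorem~\ref{thm:pic-rank1}.
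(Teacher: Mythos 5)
Your proposal is correct and matches the paper exactly: the paper offers no separate argument for this theorem, introducing it only with ``We summarize results of this section,'' so the content is precisely the case-by-case citation of Example~\ref{exam:projspacebalanced} and Propositions~\ref{prop: quadric}, \ref{prop: index two degree higher}, \ref{prop: one two 16}, \ref{prop: one two eight}, \ref{prop: one one ten}, \ref{prop: one one eight}, and \ref{prop: one one four} that you give. Your bookkeeping against the degree lists in Theorem~\ref{thm:pic-rank1}, and your observation that $d(X)=2$ (and the non-very-ample $d(X)=4$ type) fall outside the statement's scope, are accurate.
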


\section{Primitive Fano threefolds with $\rho(X)=2$}
\label{subsec: primitive rank two}
\label{sect:seven}

Let $\mathrm{Z}_1(X)$ be the free group generated by irreducible curves 
on an algebraic variety $X$ and
$\mathrm{N}_1(X)_\bZ = \mathrm{Z}_1(X)/\equiv$ its quotient group of 
numerical equivalence classes. 
Put $\mathrm{N}_1(X)_\bR:=\mathrm{N}_1(X)_\bZ \otimes \bR$, it is 
dual to the N\'eron Severi space $\NS(X, \bR)$. 
A basic invariant of $X$ is the cone of effective curves 
$\mathrm{NE}_1(X)\subset \mathrm{N}_1(X)_{\bZ}$, the 
convex cone  generated by classes of effective curves. 
It is known that for any Fano threefold $X$, 
the cone of effective curves $\mathrm{NE}_1(X)$ 
is rational polyhedral and finitely generated \cite{NefM82}. 
Its edges are called extremal rays. 
Each extremal ray $R$ is generated by a rational curve. We define the length of $R$ by
$$
\mu_R = \min \{ -K_X\cdot C \mid \text{ $C$ is a rational curve such that $[C]\in R$} \}
$$
and denote a rational curve which achieves the minimum value by $l_R$. 
We call it an extremal curve of $R$. For each extremal ray $R$, there exists a morphism $f:X \ra Y$ to a normal projective variety such that $f_*\mathcal O_X = \mathcal O_Y$ and for any irreducible reduced curve $C$, $[C] \in R$ if and only if $C$ is contracted by $f$ \cite{NefM82}. 
This morphism is unique up to isomorphism, and we denote it by $\mathrm{cont}_R$. It is known that for Fano threefolds, the following sequence is exact:
$$
0 \ra \Pic (Y) \ra \Pic(X) \ra \bZ \ra 0,
$$
where the second map is the pullback of line bundles and the third map is defined by 
$D\cdot l_R$, for $D \in \Pic(X)$. 
In particular, 
$$
\rho(X) = \rho(Y) +1,
$$ 
and 
$R$ and $f=\mathrm{cont}_R$ are classified as follows \cite{NefM82}:
\begin{itemize}
\item 
$\dim(Y) = 3$: 
then there exists an irreducible reduced divisor $D$ such that $f |_{X\setminus D}$ is an isomorphism and $\dim f(D) \leq 1$. The divisor $D$ is called the exceptional divisor, and $X$ is the blow up of $Y$ along $f(D)$ (given the reduced structure).
\begin{itemize}
\item type $E_1$ : $f(D)$ is a smooth curve, $Y$ is smooth and $f|_D : D \ra f(D)$ is a $\bP^1$-bundle, $\mu_R = 1$, and $l_R$ is a fiber of $\bP^1$-bundle $f|_D$;
\item type $E_2$ : $f(D)$ is a point, $Y$ is smooth, $D \cong \bP^2$ and $\mathcal O_D(D) \cong \mathcal O(-1)$, $\mu_R =2$, and $l_R$ is a line in $D$;
\item type $E_3$ : $f(D)$ is an ordinary double point, $D\cong \bP^1 \times \bP^1$, $\mathcal O_D(D) \cong \mathcal O(-1,-1)$, $\mu_R=1$ and $l_R$ is a ruling in $D$;
\item type $E_4$ : $f(D)$ is a double point, $D$ is an irreducible reduced singular quadric in $\bP^3$, $\mathcal O_D(D) \cong \mathcal O_D \otimes O(-1)$, $\mu_R=1$, and $l_R$ is a generator of the cone $D$;
\item type $E_5$ : $f(D)$ is a quadruple point of $Y$, $D\cong \bP^2$, $\mathcal O_D(D) \cong \mathcal O(-2)$, $\mu_R=1$, and $l_R$ is a line in $D$.
\end{itemize}
\item $\dim(Y) =2$: 
then $Y$ is a smooth projective rational surface \cite[Proposition 3.5]{mori-mukai2}, 
$f: X\ra Y$ is a conic bundle, and we denote the discriminant locus by $\Delta_f$. We have
\begin{itemize}
\item type $C_1$ : $\Delta_f$ is non-empty, $\mu_R=1$, and $l_R$ is an irreducible component of a reducible fiber or a reduced part of a multiple fiber or
\item type $C_2$ : $f$ is the projective bundle of a rank two vector bundle, $\mu_R=2$ and $l_R$ is a fiber of $f$.
\end{itemize}
\item  
$\dim(Y)=1$:
then $Y\simeq \bP^1$ and $\rho(X) = 2$. Every fiber of $f$ is irreducible and reduced and the generic fiber $X_\eta$ is a del Pezzo surface. There are three cases:
\begin{itemize}
\item type $D_1$ : $X_\eta$ is a del Pezzo surface of degree $d$, $1 \leq d \leq 6$, $\mu_R = 1$, and $l_R$ is a line on a fiber;
\item type $D_2$ : $f$ is a quadric bundle, $\mu_R = 2$, and $l_R$ is a line in a fiber;
\item type $D_3$ : $f$ is a $\bP^2$-bundle, $\mu_R=3$, and $l_R$ is a line in a fiber.
\end{itemize}
\end{itemize}
Suppose that $X$ is a Fano threefold of $\rho(X) = 2$. The cone of effective curves $\mathrm{NE}_1(X)$ consists of two rays $R_1$ and $R_2$. Set $f_i = \mathrm{cont}_{R_i} : X \ra Y_i$, $\mu_i = \mu_{R_i}$ and $l_i = l_{R_i}$. Let $L_i$ be the pull back of the ample generator on $Y_i$. The following theorem plays a central role in our analysis:
\begin{theo}{\cite[Theorem 5.1]{mori-mukai2}}
The set $\{L_1, L_2\}$ forms a basis for $\Pic (X)$ and $\{l_2, l_1\}$ is the dual basis of $\mathrm{N}_1(X)_\bZ$. The cone $\mathrm{NE}_1(X)$ is generated by $l_1$ and $l_2$. Moreover we have
$$
-K_X \sim \mu_2L_1 + \mu_1L_2.
$$
\end{theo}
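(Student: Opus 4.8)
The plan is to derive all four assertions from two standard inputs: the cone theorem for Fano threefolds (\cite{NefM82}), which governs $\mathrm{NE}_1(X)$ and the two extremal contractions, and the exact sequences $0 \to \Pic(Y_i) \xrightarrow{f_i^{*}} \Pic(X) \xrightarrow{\cdot\, l_i} \bZ \to 0$ recalled above. This reduces the whole statement to the single numerical equality $L_1\cdot l_2 = L_2\cdot l_1 = 1$.

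First I would record the formal consequences. Since $X$ is Fano, $\mathrm{NE}_1(X)$ is rational polyhedral, and because $\rho(X)=2$ it has exactly two extremal rays $R_1,R_2$, each generated by $l_i$ by definition; thus $\mathrm{NE}_1(X)=\bR_{\geq 0}[l_1]+\bR_{\geq 0}[l_2]$, which is the assertion about the cone of curves. From the exact sequence for $R_i$, together with $\rho(Y_i)=\rho(X)-1=1$, the kernel of $(\cdot\, l_i)$ is exactly $\bZ L_i$; since the cokernel $\bZ$ is free, $\Pic(X)\cong\bZ^{2}$ is torsion-free, $L_i\cdot l_i=0$, and $L_i$ is a primitive class, while surjectivity of $(\cdot\, l_i)$ forces $l_i$ to be primitive in $\mathrm{N}_1(X)_{\bZ}$. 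Moreover $L_i$ is nef (a pullback of an ample class) and $L_i\cdot l_j>0$ for $j\neq i$ (because $l_j$ is not contracted by $f_i$), so $L_1\cdot l_2$ and $L_2\cdot l_1$ are positive integers; identifying $\Pic(X)/\bZ L_i\cong\bZ$ via $(\cdot\, l_i)$ and reading off the image of $L_j$ shows that each of these integers equals the index $e:=[\Pic(X):\bZ L_1\oplus\bZ L_2]$, so in particular $L_1\cdot l_2=L_2\cdot l_1$.

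Granting $e=1$, the rest is immediate. Then $\{L_1,L_2\}$ generates $\Pic(X)$, hence is a $\bZ$-basis. Writing any $\gamma\in\mathrm{N}_1(X)_{\bZ}$ as $\gamma=\alpha l_1+\beta l_2$ with $\alpha,\beta\in\bQ$, the intersection numbers $L_1\cdot l_1=L_2\cdot l_2=0$, $L_1\cdot l_2=L_2\cdot l_1=1$ give $\beta=\gamma\cdot L_1\in\bZ$ and $\alpha=\gamma\cdot L_2\in\bZ$; so $\{l_1,l_2\}$ is a $\bZ$-basis of $\mathrm{N}_1(X)_{\bZ}$ and $\{l_2,l_1\}$ is the basis dual to $\{L_1,L_2\}$. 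Consequently $D=(D\cdot l_2)L_1+(D\cdot l_1)L_2$ for all $D\in\Pic(X)$; taking $D=-K_X$ and using $-K_X\cdot l_i=\mu_i$ (the defining property of $\mu_i$ and $l_i$) yields $-K_X\sim\mu_2L_1+\mu_1L_2$.

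The heart of the matter, and the step I expect to be the main obstacle, is the equality $e=1$. It does not follow from formal properties: two primitive nef generators of the two walls of a two-dimensional cone can span a sublattice of index larger than $1$, and one checks that the numerical data $(\mu_1,\mu_2)$ can be arranged consistently with such an index, so the equality genuinely requires the geometry of Fano threefolds. The way I would prove it is through the classification of $K_X$-negative extremal contractions of smooth Fano threefolds (\cite{NefM82}, recalled above): types $E_1$--$E_5$, $C_1$--$C_2$, $D_1$--$D_3$ — and, when $X$ is primitive with $\rho(X)=2$, only the conic bundles over $\bP^{2}$, by Theorem~\ref{theo:primitive}. In each case the explicit description of $f_1$ — its exceptional divisor $D_1$ and the numbers $D_1\cdot l_1$, $D_1\cdot l_2$ when $f_1$ is divisorial, or the fibration structure of $f_1$ together with the map it induces on a minimal curve of the opposite ray $R_2$ when $f_1$ is a conic bundle or a fibration over $\bP^{1}$ — pins down $\Pic(X)$ and the classes $L_1,L_2$, and one reads off $L_1\cdot l_2=1$. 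Carrying this out uniformly over the finitely many contraction types is where essentially all the work lies; once it is done, the symmetry established above gives $L_2\cdot l_1=1$ as well, and the theorem follows.
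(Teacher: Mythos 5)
The first thing to say is that the paper does not prove this statement at all: it is quoted verbatim from Mori--Mukai as \cite[Theorem 5.1]{mori-mukai2} and used as a black box, so there is no internal argument to compare yours against; your proposal has to stand on its own. Its formal skeleton is correct and well organized. The generation of $\mathrm{NE}_1(X)$ by $l_1,l_2$ is indeed immediate from the cone theorem once $\rho(X)=2$; the identities $L_i\cdot l_i=0$ and $L_i\cdot l_j>0$ for $j\neq i$ follow from the displayed exact sequence and from the fact that $l_j$ is not contracted by $f_i$; and your observation that $L_1\cdot l_2$ and $L_2\cdot l_1$ both equal the index $e=[\Pic(X):\bZ L_1\oplus\bZ L_2]$ is a clean way of collapsing the entire theorem to the single equality $e=1$. (A minor quibble: freeness of the cokernel $\bZ$ splits the sequence but does not by itself make $\Pic(X)$ torsion-free, since torsion could hide in $\Pic(Y_i)$; you need the standard torsion-freeness of $\Pic$ for smooth Fano threefolds, or for the possible targets $Y_i$.)

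The genuine gap is that the equality $e=1$ --- which you yourself flag as ``the heart of the matter'' and which carries all of the geometric content --- is never actually established. You correctly observe that it cannot follow from formal lattice considerations, and you propose to verify $L_1\cdot l_2=1$ by running through the contraction types $E_1$--$E_5$, $C_1$--$C_2$, $D_1$--$D_3$, but not a single case is carried out. This is not a verification one can wave at: it requires identifying the ample generator of $\Pic(Y_i)$ in each case (delicate when $Y_i$ is singular, as for types $E_3$, $E_4$, $E_5$ --- note that for an $E_5$ contraction the exceptional divisor satisfies $D\cdot l_R=-2$, so the most readily available divisor classes do not immediately exhibit unimodularity) and then computing its intersection with a minimal rational curve of the opposite ray. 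The plan is plausible and is very likely how the cited result is actually proved, but as written your proposal is an accurate reduction of the theorem to its key numerical claim together with a strategy, rather than a proof.
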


We will also use the following results:

\begin{lemm} \label{lemm:normalityofconicbundle}
Let $X$ be a smooth threefold admitting the structure of a conic bundle $\pi: X \to \mathbb{P}^{2}$.  Let $C \subset \mathbb{P}^{2}$ be a line not contained in the discriminant locus for $\pi$.  Then $\pi^{-1}(C)$ is normal.
\end{lemm}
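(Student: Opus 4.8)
The plan is to verify normality of $S:=\pi^{-1}(C)$ via Serre's criterion, reducing everything to a statement about where $\pi$ fails to be smooth. First, give $S$ its scheme structure as the fibre product $X\times_{\mathbb{P}^2}C$; since $C$ is a line, $S$ is the zero scheme of the pullback to $X$ of a linear form, hence an effective Cartier divisor on the smooth threefold $X$. In particular $S$ is a local complete intersection, so it is Cohen--Macaulay and satisfies Serre's condition $S_2$. Thus it suffices to prove that $S$ is regular in codimension one, i.e.\ that $\mathrm{Sing}(S)$ is a finite set.

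The next step is to localise the singular locus. Since $\pi$ is flat (an equidimensional morphism from a Cohen--Macaulay threefold onto a smooth surface) and its fibre over any point of $\mathbb{P}^2\setminus\Delta$ is a smooth conic, $\pi$ is smooth over $\mathbb{P}^2\setminus\Delta$; consequently $S\to C$ is smooth over $C\setminus\Delta$, and as $C$ is a smooth curve, $S$ is smooth at every point lying over $C\setminus\Delta$. Hence $\mathrm{Sing}(S)\subseteq\pi^{-1}(C\cap\Delta)$, and because $C\not\subset\Delta$ the set $C\cap\Delta$ is finite, so it is enough to show $\mathrm{Sing}(S)\cap\pi^{-1}(p)$ is finite for each $p\in C\cap\Delta$. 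The fibre $F_p:=\pi^{-1}(p)$ is a degenerate plane conic, so it is a union of two distinct lines or a double line. In the first case, at every point of $F_p$ other than the node the scheme-theoretic fibre of $\pi$ is a reduced smooth curve, so $\pi$, and hence $S$, is smooth there; thus $\mathrm{Sing}(S)\cap F_p$ is contained in the single node, which settles this case.

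The remaining and genuinely delicate case is when $F_p=2\ell$ is a double line, for then $\pi$ is non-smooth along the entire curve $\ell$, and I would handle it by a direct local computation. Working analytically near $F_p$, choose coordinates $(s,t)$ on $\mathbb{P}^2$ centred at $p$ with $C=\{s=0\}$, present $\pi$ near $F_p$ as a family of conics $X=\{Q=0\}$ inside a trivialised $\mathbb{P}^2_{[u:v:w]}$-bundle, and normalise so that $Q|_{s=t=0}=u^2$, whence $F_p=\{u=0\}$. Writing the Taylor expansion $Q=u^2+s\,q_1(u,v,w)+t\,q_2(u,v,w)+(\text{terms of order}\geq 2\text{ in }s,t)$, one checks that the $u$-, $v$- and $w$-partial derivatives of $Q$ vanish identically along $F_p$, so by the Jacobian criterion smoothness of $X$ along $F_p$ is equivalent to the binary forms $q_1(0,v,w)$ and $q_2(0,v,w)$ having no common zero on $\ell\cong\mathbb{P}^1$; in particular $q_2(0,v,w)\not\equiv 0$. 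Since $S=\{Q(u,v,w;0,t)=0\}=\{u^2+t\,q_2(u,v,w)+(\text{terms of order}\geq 2\text{ in }t)=0\}$ is a hypersurface in the smooth threefold $\{s=0\}$, the same derivative computation shows that a point $[0:v_0:w_0]$ of $\ell$ lies in $\mathrm{Sing}(S)$ exactly when $q_2(0,v_0,w_0)=0$, so $\mathrm{Sing}(S)\cap F_p$ has at most two points. Combining the cases, $\mathrm{Sing}(S)$ is finite, which with the $S_2$ property gives normality. The main obstacle is exactly this last step: over a double-line fibre $\pi$ degenerates along a whole curve, and one must exploit the smoothness of the total space $X$ — concretely the coprimality of $q_1|_\ell$ and $q_2|_\ell$ — to rule out $S$ being singular along all of $\ell$.
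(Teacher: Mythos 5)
Your proof is correct and follows essentially the same route as the paper: Serre's criterion via the Cartier-divisor/$S_2$ property, reduction of the singular locus to the fibers over $C\cap\Delta$, and a local Jacobian computation over a double-line fiber that exploits the smoothness of the total space $X$. The only difference is cosmetic — you compute $\mathrm{Sing}(S)\cap\ell$ directly as the (at most two-point) zero locus of $q_2(0,v,w)$, whereas the paper argues by contradiction that if $S$ were singular along all of $\ell$ then $\partial F/\partial s$ would have to be nowhere vanishing on $\ell\cong\mathbb{P}^1$, which is impossible for a binary quadratic form.
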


\begin{proof}
Since $X$ is a conic bundle, we have an inclusion $X \subset \mathbb{P}_{\mathbb{P}^{2}}(\mathcal{E})$ into a $\mathbb{P}^{2}$-bundle over $\mathbb{P}^{2}$.

Let $S$ denote the preimage of $C$.  Note that $S$ is irreducible by assumption on $C$.  Since $S$ is a hypersurface, it suffices by Serre's criterion to show that the singularities of $S$ have codimension at least $2$.  Again by assumption on $C$, the only way $S$ can have non-isolated singularities is if it contains a double line fiber of $\pi$.

Choose a sufficiently small open neighborhood $U \subset \mathbb{A}^{2}$ of the image of this fiber that trivializes the bundle $\mathbb{P}_{\mathbb{P}^{2}}(\mathcal{E})$.  We may assume the point of the double fiber is given by $x=y=0$ and the line $C$ is defined by $x=0$.  $X$ is defined locally by the vanishing of
\begin{equation*}
F(x,y,t_{0},t_{1},t_{2}) = \sum f_{i,j}t_{i}t_{j},
\end{equation*}
where the $f_{i,j}$ are rational functions in $x$ and $y$.  Along the fiber over $(0,0)$, we have $\frac{dF}{dy} = \frac{dF}{dt_{i}} = 0$ for each $i$.  But then the scheme $\frac{dF}{dx} = 0$ meets with the double line, so $X$ itself is singular, a contradiction.
\end{proof}

\begin{lemm} \label{lemm:singularitiesofp1fibration}
Let $S$ be a normal Gorenstein surface admitting a morphism to a curve $\pi: S \to \mathbb{P}^{1}$ such that the general fiber of $\pi$ is isomorphic to $\mathbb{P}^{1}$.  Then $S$ has at worst canonical singularities.
\end{lemm}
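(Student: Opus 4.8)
The plan is to show that $S$ has only rational singularities and then invoke the classical fact (due to Artin) that a Gorenstein \emph{rational} surface singularity is a rational double point, hence canonical; see e.g. \cite{KM98}. The point worth emphasizing up front is that rationality of the singularities alone would \emph{not} suffice (cyclic quotient singularities are rational but rarely canonical) --- it is precisely the Gorenstein hypothesis that upgrades ``rational'' to ``Du Val''.

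First I would record the geometry. A general fiber $F$ of $\pi$ avoids the finitely many singular points of $S$, so $F \cong \mathbb{P}^{1}$ lies in the smooth locus, satisfies $F^{2}=0$, and adjunction gives $K_{S}\cdot F = \deg K_{\mathbb{P}^{1}} = -2$. Since $F$ is a nef Cartier divisor, $K_{S}$ cannot be numerically equivalent to an effective divisor, so $h^{0}(S,\omega_{S})=0$; by Serre duality for the proper Gorenstein surface $S$ (in our applications $S$ is a divisor in a smooth projective threefold, hence projective and Gorenstein) this gives $h^{2}(S,\mathcal{O}_{S})=0$. Moreover $S$, and therefore any resolution of it, is a rational surface: its generic fiber is a smooth genus-zero curve over $k(\mathbb{P}^{1})$, which has a rational point by Tsen's theorem and so is isomorphic to $\mathbb{P}^{1}_{k(\mathbb{P}^{1})}$.

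Next I would take the minimal resolution $\mu\colon \widetilde{S}\to S$ and feed this into the Leray spectral sequence for $\mu$. Since $S$ is normal, $\mu_{*}\mathcal{O}_{\widetilde{S}}=\mathcal{O}_{S}$ and $R^{1}\mu_{*}\mathcal{O}_{\widetilde{S}}$ is a skyscraper sheaf supported on the singular locus; since $\widetilde{S}$ is rational, $h^{1}(\mathcal{O}_{\widetilde{S}})=h^{2}(\mathcal{O}_{\widetilde{S}})=0$, so $\chi(\mathcal{O}_{\widetilde{S}})=1$. The spectral sequence yields $h^{1}(\mathcal{O}_{S})=0$ together with $\chi(\mathcal{O}_{S})=1+\ell$, where $\ell := h^{0}(S,R^{1}\mu_{*}\mathcal{O}_{\widetilde{S}})=\sum_{p}\dim_{k}(R^{1}\mu_{*}\mathcal{O}_{\widetilde{S}})_{p}\ge 0$. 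Combining with the previous paragraph, $0=h^{2}(\mathcal{O}_{S})=\chi(\mathcal{O}_{S})-1+h^{1}(\mathcal{O}_{S})=\ell$, so $R^{1}\mu_{*}\mathcal{O}_{\widetilde{S}}=0$ and every singularity of $S$ is rational. Being Gorenstein, each is then a rational double point, hence canonical, which is the assertion.

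There is no serious obstacle here; the only subtlety is the logical structure just noted (one must use Gorenstein, not just rationality). A more hands-on alternative would be to run the relative MMP on $\widetilde{S}\to\mathbb{P}^{1}$ down to a Hirzebruch surface and analyze the discrepancy divisor in $K_{\widetilde{S}}=\mu^{*}K_{S}-\sum m_{i}E_{i}$ directly via $K_{\widetilde{S}}\cdot\widetilde{F}=K_{S}\cdot F=-2$, but bookkeeping the exceptional configuration along the fibers is more cumbersome than the cohomological route above.
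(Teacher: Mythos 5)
Your proof is correct, and it reaches the conclusion by a genuinely different route from the paper's. Both arguments ultimately rest on the same classical fact about normal Gorenstein surface singularities --- that rational is equivalent to canonical (Du Val) --- but they establish rationality differently. The paper argues locally at each singular point: it combines Reid's inequality $-K_{Y/S} \geq Z_{num}$ for a non-canonical Gorenstein singularity with Artin's bound $p_{a}(Z_{num}) \geq 1$ for a non-rational one, and then derives a contradiction by showing $H^{1}(\mathcal{O}_{E_{num}}) = 0$ via the sequence $H^{1}(\mathcal{O}_{Y}) \to H^{1}(\mathcal{O}_{E_{num}}) \to H^{2}(-E_{num})$, the last group vanishing because $K_{Y} + E_{num} \leq \mu^{*}K_{S}$ is negative on the fiber class. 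You instead argue globally: $K_{S}\cdot F = -2$ kills $h^{0}(\omega_{S})$, Serre duality kills $h^{2}(\mathcal{O}_{S})$, rationality of the resolution gives $\chi(\mathcal{O}_{\widetilde{S}}) = 1$, and the Leray spectral sequence then forces $R^{1}\mu_{*}\mathcal{O}_{\widetilde{S}} = 0$, i.e., all singularities are rational at once. The inputs are the same two geometric facts (the total space is rational; $K_{S}$ is negative on the fiber class), but your packaging avoids the fundamental cycle and the discrepancy inequality entirely, which makes the argument somewhat cleaner and handles all singular points simultaneously. Both proofs implicitly assume $S$ is projective --- needed for Serre duality in your version and for $H^{1}(\mathcal{O}_{Y}) = 0$ in the paper's --- which you correctly flag and which holds in every application in the paper.
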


\begin{proof}
We first recall some facts about singularities of surfaces.  Let $\widetilde{S}$ be a germ of a normal Gorenstein surface singularity and 
$\widetilde{\phi}: \widetilde{Y} \to \widetilde{S}$ a minimal resolution.    Suppose that the singularity of $\widetilde{S}$ is not canonical.  \cite[Proposition 4.20]{Reid97} shows that we have $-K_{\widetilde{Y}/\widetilde{S}} \geq Z_{num}$, where $Z_{num}$ is the numerical cycle for the singularity.  Furthermore, \cite[Theorem 3]{Artin66} shows that $p_{a}(Z_{num}) \geq 1$ (since by assumption the singularity is Gorenstein but not canonical, and hence not rational).

Returning to our original situation, consider a singular point $s$ of $S$.  Let $Y$ denote a minimal desingularization of $S$.  Let $E$ be the part of $-K_{Y/S}$ lying over this singularity, and $E_{num}$ the numerical cycle over this point.  We have an exact sequence
\begin{equation*}
H^{1}(\mathcal{O}_{Y}) \to H^{1}(\mathcal{O}_{E_{num}}) \to H^{2}(-E_{num}).
\end{equation*}
Since $Y$ is rational, it has irregularity $H^{1}(\mathcal{O}_{Y}) = 0$.  The last space is dual to $H^{0}(K_{Y} + E_{num})$; since this divisor has negative intersection with the class of a fiber, the space is again $0$.  So we see that $H^{1}(\mathcal{O}_{E_{num}}) = 0$, showing that $p_{a}(E_{num}) \leq 0$.  Thus every singularity is canonical.
\end{proof}

A typical application of these lemmas is:

\begin{coro} \label{coro:aconditionforlines}
Let $X$ be a smooth Fano threefold
admitting a conic bundle structure $\pi: X \to \mathbb{P}^{2}$. 
Then, for any line $C\subset \mathbb{P}^{2}$ not contained in the discriminant locus, the divisor $-K_{X}$ is balanced with respect to $\pi^{-1}(C)$.
\end{coro}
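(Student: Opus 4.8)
The plan is to check directly the three conditions defining ``balanced with respect to $Y$'' for $Y = S := \pi^{-1}(C)$, reducing the computation on $S$ to the surface analysis of Proposition \ref{prop:surfaceconstants}. First I would record the invariants of $X$: a general fibre $C_{0}$ of $\pi$ satisfies $-K_{X}\cdot C_{0} = 2$ and is contracted by $\pi$, so $-K_{X}$ is not pulled back from $\mathbb{P}^{2}$ and hence $\rho(X)\geq 2$; since $-K_{X}$ is ample, $a(X,-K_{X}) = 1$, and because $\Lambda_{\eff}(X)$ contains no line the minimal supported face containing $a(X,-K_{X})[-K_{X}]+[K_{X}] = 0$ is $\{0\}$, so $b(X,-K_{X}) = \rho(X)\geq 2$.

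Next I would pin down the geometry of $S$. It is irreducible, since the general fibre of $\pi|_{S}\colon S\to C$ is a smooth conic (as $C$ is not contained in the discriminant locus); it is Gorenstein, being a Cartier divisor in the smooth threefold $X$. By Lemma \ref{lemm:normalityofconicbundle} it is normal, and by Lemma \ref{lemm:singularitiesofp1fibration} applied to the ruling $\pi|_{S}$ it has at worst canonical singularities. Hence, by \cite[Proposition 2.7]{balanced}, the invariants $a(S,-K_{X}|_{S})$ and $b(S,-K_{X}|_{S})$ may be computed on $S$ itself, equivalently on a crepant minimal resolution $\beta\colon\widetilde{S}\to S$, which is what lets us apply Proposition \ref{prop:surfaceconstants}. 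Also $-K_{X}|_{S}$ is ample, in particular big, on $S$.

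The heart of the argument is adjunction. Since $S\sim\pi^{*}\mathcal{O}_{\mathbb{P}^{2}}(1)$, restricting gives $S|_{S} = (\pi|_{S})^{*}\mathcal{O}_{C}(1) = f$, the class of a general fibre of $\pi|_{S}\colon S\to C\cong\mathbb{P}^{1}$. From $K_{S} = (K_{X}+S)|_{S}$ we obtain
\[
(-K_{X})|_{S} + K_{S} \;=\; S|_{S} \;=\; f,
\]
an effective divisor of Iitaka dimension $1$. Therefore $a(S,-K_{X}|_{S})\leq 1 = a(X,-K_{X})$, so the conditions ``$-K_{X}|_{S}$ big on $S$'' and ``$a(S,-K_{X}|_{S})\leq a(X,-K_{X})$'' hold. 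If this last inequality is strict, $-K_{X}$ is already balanced with respect to $S$. If instead $a(S,-K_{X}|_{S}) = 1$, then $K_{S}+a(S,-K_{X}|_{S})(-K_{X}|_{S}) = f$ has Iitaka dimension $1$; pulling back to $\widetilde{S}$ (crepant, so $K_{\widetilde{S}} + \beta^{*}(-K_{X}|_{S}) = \beta^{*}f$ still has Iitaka dimension $1$) and applying Proposition \ref{prop:surfaceconstants}(1) gives $b(S,-K_{X}|_{S}) = b(\widetilde{S},\beta^{*}(-K_{X}|_{S})) = 1 < 2\leq\rho(X) = b(X,-K_{X})$. Either way one of the two inequalities in the definition is strict, so $-K_{X}$ is balanced with respect to $\pi^{-1}(C)$.

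I do not anticipate a genuine obstacle here: the entire content is packaged into Lemmas \ref{lemm:normalityofconicbundle} and \ref{lemm:singularitiesofp1fibration}, which guarantee that $S$ is a normal surface with canonical singularities to which the structural results for surfaces apply, after which it is a one-line adjunction computation plus the dichotomy on $\kappa(K_{S}+a(S,-K_{X}|_{S})(-K_{X}|_{S}))$. The only point requiring care is the equality case $a(S,-K_{X}|_{S}) = a(X,-K_{X})$, which must be disposed of via the observation that the adjoint class is a fibre class of Iitaka dimension one, so that $b$ drops to $1$, rather than merely asserted.
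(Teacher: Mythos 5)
Your proposal is correct and follows essentially the same route as the paper: invoke Lemma \ref{lemm:normalityofconicbundle} and Lemma \ref{lemm:singularitiesofp1fibration} to see that $\pi^{-1}(C)$ is normal with canonical singularities, use adjunction to identify $K_{S}+(-K_{X})|_{S}$ with the fibre class of $\pi|_{S}$, and conclude $b=1<b(X,-K_{X})$ via Proposition \ref{prop:surfaceconstants}(1). The only (harmless) difference is that you split off the case $a(S,-K_{X}|_{S})<1$, whereas the paper goes directly to the $b$-computation.
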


\begin{proof}
Note that $a(X,-K_{X})=1$.  Also, since $X$ admits a morphism to $\mathbb{P}^{2}$ we have $b(X,-K_{X}) = \dim \NS(X,\mathbb{R}) \geq 2$.  

Lemma \ref{lemm:normalityofconicbundle} shows that $\pi^{-1}(C)$ is normal irreducible (and hence Gorenstein).  Applying Lemma \ref{lemm:singularitiesofp1fibration} we see that $\pi^{-1}(C)$ has only canonical singularities.  Note that $K_{\pi^{-1}(C)} + a(X,-K_{X})(-K_{X})$ has the class of a fiber $F$ of the map to the line $C$.  Since the relative canonical divisor of a resolution of $\pi^{-1}(C)$ is trivial, we conclude by Proposition \ref{prop:surfaceconstants} (1) that $b(\pi^{-1}(C),-K_{X}) = 1$. 
\end{proof}

Suppose that $X$ is a primitive Fano threefold of $\rho(X) =2$. There are $9$ deformation types, 
and possible values of the degree $d(X)$ are
$$
d(X) \in \{6,12,14,24,30,48,54,56,62\}.
$$
A list of these can be found in \cite[Theorem 1.7]{mori-mukai2}.

\subsection{$\rho(X) =2$ and $d(X) = 62$}
\label{subsubsec: two 62}

\

Then $X\simeq \bP(\mathcal O_{\bP^2} \oplus \mathcal O_{\bP^2}(2))$, 
the type of $(R_1 - R_2)$ is $(C_2 - E_5)$, and there is a divisorial contraction 
$$
f_2 : X \ra Y_2,
$$
with exceptional divisor $D\simeq \bP^2$ and a
$\bP^1$-bundle $f_1 : X \ra \bP^2$ with section $D$  (\cite[Lemma 4.5]{ott}). 
The intersection numbers are given by
$$
L_1^3 = 0, \quad L_1^2\cdot L_2 = 1, \quad L_1\cdot L_2^2 = 2, \quad L_2^3 =4.
$$ 

\begin{prop}
\label{prop: two 62}
$-K_X$ is balanced, with exceptional set $D$.
\end{prop}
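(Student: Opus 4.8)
The plan is to show that $V=D$ is an exceptional set, handling curves and surfaces separately. Since $X$ is Fano, $a(X,-K_X)=1$; and since $X$ carries the morphism $f_1$ to $\bP^2$ with $\rho(X)=2$, the adjoint class $a(X,-K_X)[-K_X]+[K_X]=0$ sits only at the vertex of $\Lambda_{\mathrm{eff}}(X)$, so $b(X,-K_X)=\rho(X)=2$. From $-K_X\sim\mu_2L_1+\mu_1L_2=L_1+2L_2$ and the duality between $\{L_1,L_2\}$ and $\{l_2,l_1\}$ one records $-K_X\cdot l_1=2$, $-K_X\cdot l_2=1$, and, for any prime divisor $E$, $E\equiv(E\cdot l_2)L_1+(E\cdot l_1)L_2$; in particular $D\equiv L_2-2L_1$, and $D$ is rigid, being the exceptional divisor of the divisorial contraction $f_2$.

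For curves: if $C$ is irreducible with $a(C,-K_X)>1$, then $C$ is rational with $-K_X\cdot C=1$, since $-K_X$ is ample (so $-K_X\cdot C\geq1$) and $a(C,-K_X)=2/(-K_X\cdot C)$ when $C$ is rational and is $\leq0$ otherwise. Writing $[C]=\alpha l_1+\beta l_2$ with $\alpha,\beta\geq0$, the relation $2\alpha+\beta=1$ forces $[C]=l_2\in R_2$, so $C$ is contracted by $f_2$; as $f_2$ is an isomorphism off $D$, this gives $C\subset D$. Hence every $C\not\subset D$ satisfies $a(C,-K_X)\leq1$, and since $\rho(C)=1$ we get $b(C,-K_X)=1<2$, so $-K_X$ is balanced with respect to $C$.

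Now let $S\subset X$ be irreducible with $S\neq D$. First I would observe that $[S]$ is nef: $S\cdot l_1\geq0$ always (else $S$ contains every fibre of $f_1$), and $S\cdot l_2\geq0$ unless $S$ contains every line of $D\cong\bP^2$, i.e. $S=D$; thus $[S]=pL_1+qL_2$ with $p=S\cdot l_2\geq0$ and $q=S\cdot l_1\geq0$. Consequently $(-K_X)^2\cdot S=12p+25q>0$, so $(-K_X)|_S$ is big and nef and, on any resolution $\beta\colon\widetilde S\to S$, the divisor $N:=\beta^*((-K_X)|_S)$ is big and nef. To prove $a(S,-K_X)\leq1$ I argue by contradiction: if $K_{\widetilde S}+N$ is not pseudo-effective, then by Lemma~\ref{lemm:rationalcurvecheck} there is a rational curve $C$ covering $\widetilde S$ with $0<N\cdot C<-K_{\widetilde S}\cdot C\leq3$, whose image $C'\subset S$ is a rational curve covering $S$ with $-K_X\cdot C'=N\cdot C\in\{1,2\}$, hence $[C']\in\{l_1,l_2,2l_2\}$. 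If $[C']\in\{l_2,2l_2\}$ then $C'$ is $f_2$-contracted, so $C'\subset D$ and $S=D$, a contradiction. If $[C']=l_1$ then $S$ is swept out by fibres of $f_1$, so $q=S\cdot l_1=0$ and $S=f_1^{-1}(B)$ for an irreducible plane curve $B$ of degree $p$; its normalization $\widetilde S=\bP(\mathcal{E}|_{B^{\nu}})$ is a $\bP^1$-bundle over the smooth curve $B^{\nu}$, and one computes $K_{\widetilde S}+N=\pi^{*}(\text{divisor of degree }2g(B^{\nu})-2+3p)$ on $B^{\nu}$, which is pseudo-effective (of Iitaka dimension $1$) as $p\geq1$, whence $a(S,-K_X)=1\not>1$ --- again a contradiction. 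So $a(S,-K_X)\leq1$ for every $S\neq D$.

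Finally, assume $a(S,-K_X)=1$; I claim $b(S,-K_X)=1$. If $q\geq1$, then $f_1|_S\colon S\to\bP^2$ is generically finite and surjective, so $L_1|_S$ is big and nef, while $L_2|_S=(f_2|_S)^{*}(\text{ample})$ is big ($f_2|_S$ being birational) and $p\geq0$; by adjunction $K_S+(-K_X)|_S\sim S|_S=pL_1|_S+qL_2|_S$ is then big, forcing $a(S,-K_X)<1$, contrary to hypothesis. Hence $q=0$, $S=f_1^{-1}(B)$, and by the computation above $K_{\widetilde S}+N=\pi^{*}(\text{divisor of positive degree})$ has $\kappa=1$, so Proposition~\ref{prop:surfaceconstants}(1) gives $b(S,-K_X)=1<2$ (alternatively, $\kappa(K_{\widetilde S}+N)=0$ is excluded because over a $\bP^1$-bundle it would force $N\equiv-K_{\widetilde S}$, while $N^2=(-K_X)^2\cdot S=12p\geq12>8\geq(-K_{\widetilde S})^2$). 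This completes the verification that $V=D$ is exceptional. The crux is the surface case: Lemma~\ref{lemm:rationalcurvecheck}, applied on a resolution, is what lets one bound $a(S,-K_X)$ without first classifying the (possibly non-normal) singularities of an arbitrary $S$, and the one delicate point is the implication "$S|_S$ big $\Rightarrow K_{\widetilde S}+N$ big" when $q\geq1$, which for badly singular $S$ may require inspecting the singularities of the divisors in $|pL_1+qL_2|$ as in Section~\ref{sect:fano-threefolds}.
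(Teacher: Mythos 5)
Your treatment of curves and your proof that $a(S,-K_X)\le 1$ for every irreducible surface $S\ne D$ are correct, and the latter is a genuinely different route from the paper's: you run the argument through Lemma~\ref{lemm:rationalcurvecheck} on a resolution, classifying the possible covering rational curves of $-K_X$-degree $\le 2$ and disposing of the class-$l_1$ case by the explicit $\bP^1$-bundle computation on $f_1^{-1}(B)$, whereas the paper gets the same inequality in one line from Reider's theorem (Theorem~\ref{theo: Reider}) using $(\beta^*L|_{\tilde S})^2=12n+25m\ge 5$. Your route avoids the numerical hypothesis of Reider at the cost of a case analysis. (One small imprecision: $-K_X\cdot C'$ equals $N\cdot C$ divided by the degree of $C\to C'$, not $N\cdot C$ itself; the conclusion $[C']\in\{l_1,l_2,2l_2\}$ is unaffected.)

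The genuine gap is where you yourself flag it: the claim that $q\ge 1$ forces $a(S,-K_X)<1$ ``by adjunction.'' Adjunction does give $K_S+(-K_X)|_S\sim S|_S$ for the dualizing sheaf of the Cartier divisor $S$, and $S|_S$ is big; but $a(S,-K_X)$ is computed on a resolution $\beta:\tilde S\to S$, where $K_{\tilde S}+\beta^*\bigl((-K_X)|_S\bigr)=\beta^*(S|_S)-E$ with $E$ effective unless $S$ has at worst canonical singularities, and big minus effective need not even be pseudo-effective. Since $S$ ranges over arbitrary irreducible members of $|pL_1+qL_2|$, possibly non-normal or with elliptic singularities, this step fails as written; without it you cannot conclude $q=0$, hence cannot conclude $b(S,-K_X)=1$, and you only obtain the weakly $a$-balanced statement. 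The paper closes exactly this case entirely on the resolution: if $a(S,-K_X)=1$ then $\kappa(K_{\tilde S}+\beta^*L)\le 1$, and were it $0$, the system $|K_{\tilde S}+\beta^*L|$ would fail to separate general points, so Theorem~\ref{theo: Reider II} (applicable since $(\beta^*L)^2\ge 12\ge 10$) produces a curve of $-K_X$-degree $\le 2$ through two general points of $S$; such curves are lines or conics in $D$ or fibers of $f_1$, forcing $S=D$ or $\dim f_1(S)=0$, a contradiction. Hence $\kappa=1$ and $b(S,-K_X)=1$ by Proposition~\ref{prop:surfaceconstants}(1), with no hypothesis on the singularities of $S$. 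You should replace your adjunction step with this argument (or with the singularity classification you allude to).
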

\begin{proof}
Put $L:=-K_X$ and let $C$ be any irreducible reduced curve on $X$. 
Then $C\equiv nl_1 +ml_2$, with $n, m \in \bZ_{\geq 0}$, hence
$$
L\cdot C = (L_1 + 2L_2)\cdot (nl_1+ml_2) = 2n+m.
$$
We conclude that $L$-lines are lines in $D \cong \bP^2$, and $L$-conics are conics in $D$ and fibers of $f_1$. 
In particular, $L$ is balanced with respect to any curve $C$ not contained in $D$. Note that $b(X,L)=2$.

Suppose that $S$ is an irreducible reduced surface which is not equal to $D$. 
Let $\beta : \tilde{S} \ra S$ be a resolution of singularities of $S$. 
Since $S$ is irreducible and reduced and is not equal to $D$, 
$S$ is linearly equivalent to $nL_1 + mL_2$ where $n, m \in \bZ_{\geq0}$. Therefore,
$$
\hskip 1cm (\beta^*L|_{\tilde{S}})^2 = L^2\cdot S = (L_1+2L_2)^2\cdot (nL_1+mL_2) = 12n+25m.
$$
By Theorem~\ref{theo: Reider}, $\beta^*L|_{\tilde{S}} + K_{\tilde{S}}$ is effective and
$$
a(S, L) \leq a(X, L) =1.
$$ 
If $a(S, L) = a(X, L)$, then Theorem~\ref{theo: Reider II} implies 
that the Iitaka dimension of $\beta^*L|_{\tilde{S}} + K_{\tilde{S}}$ is $1$. 
We conclude that $b(S, L) =1$ and $L$ is balanced with respect to $S$. 
\end{proof}

\subsection{$\rho(X)=2$ and $d(X) = 56$} 

\

Then
$X\simeq \bP(\mathcal O_{\bP^2} \oplus \mathcal O_{\bP^2}(1))$,  
the type of $(R_1-R_2)$ is $(C_2-E_2)$, and it admits a divisorial contraction 
$$
f_2 : X \ra Y_2,
$$
with exceptional divisor $D\simeq \bP^2$. 
We have $\bP^1$-bundle $f_1 : X \ra \bP^2$, and 
$D$ is a section of this fibration \cite[Lemma 4.5]{ott}. 
The intersection numbers are given by
$$
L_1^3 = 0, \quad L_1^2\cdot L_2 = 1, \quad L_1\cdot L_2^2 = 1, \quad L_2^3 =1.
$$

\begin{prop}
\label{prop: two 56}
$-K_X$ is balanced, with exceptional set $D$.
\end{prop}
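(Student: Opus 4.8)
The plan is to mirror the proof of Proposition~\ref{prop: two 62}, re-doing the intersection-theoretic computations. Write $L:=-K_X$. As the types are $(C_2-E_2)$, both lengths $\mu_1,\mu_2$ equal $2$, so $L\sim 2L_1+2L_2$ by \cite[Theorem 5.1]{mori-mukai2}. For an irreducible reduced curve $C\equiv nl_1+ml_2$ with $n,m\in\bZ_{\geq 0}$, the fact that $\{l_2,l_1\}$ is dual to $\{L_1,L_2\}$ gives $L\cdot C=2n+2m$; hence $X$ has no $L$-lines, and every $L$-conic is either a fiber of the $\bP^1$-bundle $f_1$ (class $l_1$) or a line in $D\cong\bP^2$ (class $l_2$), so in particular a smooth rational curve. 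Since $X$ is Fano of Picard rank $2$, $b(X,L)=2$. Thus if $L\cdot C\geq 3$ then $a(C,L)=2/(L\cdot C)<1=a(X,L)$, while if $C$ is an $L$-conic then $a(C,L)=1$ and $b(C,L)=b(\bP^1,\mathcal O(2))=1<2$. So $L$ is balanced with respect to every curve.

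Next I consider surfaces. Let $S\neq D$ be an irreducible reduced surface and $\beta:\tilde S\to S$ a resolution; note $L|_S=-K_X|_S$ is ample, hence big. As $S\not\supseteq D$, the restriction $S|_D$ is an honest effective divisor on $D\cong\bP^2$, so $S\cdot l_2\geq 0$, and a general fiber of $f_1$ is not contained in $S$, so $S\cdot l_1\geq 0$; therefore $S\equiv nL_1+mL_2$ with $n,m\in\bZ_{\geq 0}$ not both zero. From $L_1^3=0$ and $L_1^2L_2=L_1L_2^2=L_2^3=1$ one computes $L^2\cdot L_1=12$, $L^2\cdot L_2=16$, so
\[
(\beta^*L|_{\tilde S})^2=L^2\cdot S=12n+16m\geq 12\geq 10 .
\]
By Reider's theorem (Theorem~\ref{theo: Reider}), the adjoint divisor $M:=K_{\tilde S}+\beta^*L|_{\tilde S}$ is effective, hence $a(S,L)\leq a(X,L)=1$.

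Finally I treat the boundary case $a(S,L)=1$. Then $M$ is pseudo-effective but not big (otherwise $a(S,L)$ would be smaller), so $\kappa(M)\in\{0,1\}$. If $\kappa(M)=0$, then for every divisible $m$ the linear system $|mM|$ fails to separate two general points of $\tilde S$; since $(\beta^*L|_{\tilde S})^2\geq 10$, Theorem~\ref{theo: Reider II} produces an effective curve on $S$ of $L$-degree at most $2$ through two general points of $S$. Because there are no $L$-lines, this curve must be a single irreducible $L$-conic, i.e.\ a fiber of $f_1$ or a line in $D$; but no fiber of $f_1$ contains two general points of $S$ (the fibers form a $2$-dimensional family) and lines in $D$ sweep out only $D\neq S$, a contradiction. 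Hence $\kappa(M)=1$. Since $a(S,L)=1>0$, $S$ is uniruled by \cite{BDPP}, so Proposition~\ref{prop:surfaceconstants}(1) applied to $(\tilde S,\beta^*L|_{\tilde S})$ gives $b(S,L)=b(\tilde S,\beta^*L|_{\tilde S})=1<2=b(X,L)$. Thus $L$ is balanced with respect to every surface $S\neq D$. Lastly, adjunction together with $\mathcal O_D(D)\cong\mathcal O(-1)$ gives $-K_X|_D\cong\mathcal O_{\bP^2}(2)$, so $a(D,L)=3/2>1$: the divisor $D$ is genuinely accumulating, and it is the exceptional set.

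The surface argument is formally identical to that of Proposition~\ref{prop: two 62}, so the only genuine content is the numerical verification that the Reider bounds still hold and the exclusion of the possibility $\kappa(M)=0$; the latter — which uses the very limited supply of $L$-conics on $X$ — is the step requiring the most care.
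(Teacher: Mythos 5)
Your proof is correct and follows the same route as the paper: the paper's own argument for this case notes that $-K_X=2L_1+2L_2$ forces the balanced condition for all curves and then declares the surface analysis ``similar to Proposition~\ref{prop: two 62}'', which is exactly the Reider-type argument you carry out (with the correct intersection numbers $L^2\cdot S=12n+16m$ and the correct identification of the $L$-conics as fibers of $f_1$ and lines in $D$). Your explicit exclusion of $\kappa(M)=0$ and the adjunction computation $a(D,-K_X)=3/2$ are just the details the paper leaves implicit.
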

\begin{proof}
We have $-K_{X} = 2L_{1} + 2L_{2}$.  Thus for any rational curve $C$ we have 
$$
a(C,-K_{X}) \leq 1 = a(X,-K_{X})\quad \text{  and } \quad 
b(C,-K_{X}) = 1 < b(X,-K_{X}).
$$  
So $X$ is balanced with respect to arbitrary curves.  The rest of the proof is similar to Proposition~\ref{prop: two 62}.\end{proof}

\subsection{$\rho(X)=2$ and $d(X) = 54$} 
\

Then $X\simeq \bP^2 \times \bP^1$ and
the type is $(C_2 - D_3)$. The intersection numbers are given by
$$
L_1^3 = 0, \quad L_1^2\cdot L_2 = 1, \quad L_1\cdot L_2^2 = 0, \quad L_2^3 =0.
$$
\begin{prop}
\label{prop: two 54}
$-K_X$ is balanced, with empty exceptional set.
\end{prop}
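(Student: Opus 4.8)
The plan is to follow the template of Propositions~\ref{prop: two 62} and~\ref{prop: two 56}: treat curves by an intersection count and surfaces by Reider's theorems, with one extra elementary case. Write $L := -K_X$. Here $f_1 : X \to \bP^2$ is the $\bP^1$-bundle projection (type $C_2$, $\mu_1 = 2$) and $f_2 : X \to \bP^1$ the $\bP^2$-bundle projection (type $D_3$, $\mu_2 = 3$), so $L = 3L_1 + 2L_2$, $a(X,L) = 1$, and $b(X,L) = \rho(X) = 2$. From the stated intersection numbers, an irreducible curve $C \equiv p\,l_1 + q\,l_2$ (with $p,q \geq 0$ not both zero) has $L \cdot C = 2p + 3q \geq 2$, and an irreducible reduced surface $S \equiv nL_1 + mL_2$ (with $n,m \geq 0$ not both zero) has $(L|_S)^2 = L^2 \cdot S = 12n + 9m$. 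For curves this already suffices: if $C$ is not rational then $a(C,L) = 0 < a(X,L)$; if $C$ is rational then $a(C,L) = 2/(L\cdot C) \leq 1$, with equality only if $L\cdot C = 2$, in which case $b(C,L) = 1 < 2 = b(X,L)$ because the smooth model of $C$ is $\bP^1$. So $L$ is balanced with respect to every curve.

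For a surface $S$ with $n \geq 1$ we have $(L|_S)^2 \geq 12$. Take a resolution $\beta : \tilde S \to S$; then $\beta^* L$ is big and nef with $(\beta^* L)^2 \geq 12$, so by Theorem~\ref{theo: Reider} the divisor $D := K_{\tilde S} + \beta^* L$ is effective and $a(S,L) \leq 1$. If $a(S,L) = 1$ then $D$ is pseudo-effective but not big (otherwise $K_{\tilde S} + (1-\epsilon)\beta^* L$ stays big, forcing $a(S,L) < 1$), so $\kappa(D) \leq 1$. To rule out $\kappa(D) = 0$: in that case $|D|$ is a single divisor, hence fails to separate two general points $x,y$ of $\tilde S$, and since $(\beta^* L)^2 \geq 10$, Theorem~\ref{theo: Reider II} produces an effective divisor through $x$ and $y$; discarding its $\beta$-exceptional components (which the general points avoid) leaves an effective divisor $D'$ through $x,y$ with $\beta^* L \cdot D' \leq 2$, so $\beta_* D'$ is an effective $1$-cycle on $X$ of $L$-degree $\leq 2$ meeting both $\beta(x)$ and $\beta(y)$. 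As every irreducible curve on $X$ has $L$-degree $\geq 2$, this forces $\beta_* D'$ to be a single irreducible curve of $L$-degree $2$, i.e. a fiber of $f_1$; but two general points of $S$ map to distinct points of $\bP^2$ under $f_1$ when $m \geq 1$, and lie on distinct $f_1$-fibers when $m = 0$ (there $S$ is ruled by such fibers over a curve), so no single $f_1$-fiber can meet both --- contradiction. Hence $\kappa(D) = 1$, and Proposition~\ref{prop:surfaceconstants}(1), applied to the uniruled surface $\tilde S$ with $\beta^* L$, gives $b(S,L) = 1 < 2$.

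Finally, if $n = 0$ then, since $|L_2|$ is the pencil of $f_2$-fibers and $S$ is irreducible and reduced, $m = 1$ and $S$ is a fiber of the $\bP^2$-bundle $f_2$; thus $S \cong \bP^2$ with trivial normal bundle, $L|_S = -K_S = \mathcal{O}_{\bP^2}(3)$ by adjunction, $a(S,L) = 1$, $K_S + a(S,L)L|_S \equiv 0$, and Proposition~\ref{prop:surfaceconstants}(2) (or Lemma~\ref{lemm:binterpretationiitaka0}) gives $b(S,L) = \rho(\bP^2) = 1 < 2$. Hence $L = -K_X$ is balanced with respect to every proper subvariety, i.e. balanced with empty exceptional set. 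The one genuinely delicate step is excluding $\kappa(K_{\tilde S} + \beta^* L) = 0$ above: it requires handling the possibly reducible Reider~II output together with its $\beta$-exceptional part and using the precise geometry of $\bP^2 \times \bP^1$ (no irreducible curve of $-K_X$-degree $1$; $-K_X$-conics are exactly the fibers of $f_1$). The borderline class $S \equiv L_2$, where $(L|_S)^2 = 9 < 10$ makes Reider~II unavailable, is the reason a separate, but trivial, argument is needed there.
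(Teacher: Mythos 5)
Your proof is correct and follows essentially the same route as the paper, whose own argument for this case is simply ``similar to before'' (the Reider~I/Reider~II template of Propositions~\ref{prop: two 62} and~\ref{prop: two 56}) together with an easy extra argument for the divisor classes $L_1$ and $L_2$. Your write-up supplies exactly that: the curve count $L\cdot C = 2p+3q$, the surface bound $(L|_S)^2 = 12n+9m$ feeding into Reider, the exclusion of $\kappa=0$ via $f_1$-fibers, and the separate elementary treatment of the $f_2$-fiber class where $(L|_S)^2=9$ falls below the Reider~II threshold.
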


\begin{proof}
The proof is similar to before.  The only change is that we need to consider the divisors of class $L_{1}$ and $L_{2}$ with an additional (easy) argument.
\end{proof}

\subsection{$\rho(X)=2$ and $d(X)=48$} 

\

Then $X\subset \bP^2 \times \bP^2$ is a nonsingular divisor of bidegree $(1,1)$ and
the type of $(R_1-R_2)$ is $(C_2-C_2)$. The intersection numbers are given by
$$
L_1^3 = 0, \quad L_1^2\cdot L_2 = 1, \quad L_1\cdot L_2^2 = 1, \quad L_2^3 =0.
$$
\begin{prop}
\label{prop: two 48}
$-K_X$ is balanced, with empty exceptional set.
\end{prop}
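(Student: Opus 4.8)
The plan is to imitate Propositions~\ref{prop: two 62} and \ref{prop: two 56}, using the explicit intersection numbers together with the observation that $X$ carries no $-K_X$-line.

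Set $L := -K_X$. Since the type of $(R_1-R_2)$ is $(C_2-C_2)$ we have $\mu_1 = \mu_2 = 2$, so $L \sim 2L_1 + 2L_2$ by \cite[Theorem 5.1]{mori-mukai2}, with $\{l_2, l_1\}$ the $\bZ$-basis of $\mathrm{N}_1(X)_\bZ$ dual to $\{L_1, L_2\}$; note $a(X,L)=1$ and $b(X,L)=\rho(X)=2$. Any irreducible curve $C \equiv nl_1 + ml_2$ has $n,m\in\bZ_{\geq 0}$, hence
$$
L \cdot C = (2L_1+2L_2)\cdot(nl_1+ml_2) = 2(n+m) \geq 2,
$$
with equality only for $C \equiv l_1$ or $l_2$, which are fibers of the two $\bP^1$-bundle structures. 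Thus $a(C,L) \leq 1 = a(X,L)$, and when equality holds $C \cong \bP^1$ and $b(C,L)=1<2=b(X,L)$; in particular $X$ contains no curve $C$ with $L\cdot C = 1$. So $L$ is balanced with respect to every curve.

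Now let $S \subsetneq X$ be an irreducible surface and $\beta:\tilde{S} \to S$ a resolution. Since the $l_i$ sweep out $X$, $S\cdot l_i \geq 0$, so $S \equiv nL_1+mL_2$ with $n,m\in\bZ_{\geq 0}$ and $n+m\geq 1$; using $L_1^3=0$, $L_1^2L_2 = L_1L_2^2 = 1$, $L_2^3=0$ we get
$$
(\beta^*L|_{\tilde{S}})^2 = L^2\cdot S = (2L_1+2L_2)^2\cdot(nL_1+mL_2) = 12(n+m) \geq 12.
$$
Exactly as in Proposition~\ref{prop: two 62}, Theorem~\ref{theo: Reider} yields that $\beta^*L|_{\tilde{S}}+K_{\tilde{S}}$ is effective, so $a(S,L)\leq a(X,L)=1$. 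If $a(S,L)=a(X,L)$, then since $(\beta^*L|_{\tilde{S}})^2\geq 10$ we may apply Theorem~\ref{theo: Reider II}: were the Iitaka dimension of $\beta^*L|_{\tilde{S}}+K_{\tilde{S}}$ equal to $0$, the linear system would fail to separate two general points, producing an effective divisor through both whose image on $S$ is a curve $C$ with $1\leq L\cdot C\leq 2$. As $X$ has no $-K_X$-line, $L\cdot C=2$, so $S$ carries a two-dimensional family of $-K_X$-conics with two general points lying on a conic; by \cite[Chapter III, Proposition 3.3]{iskov79}, $S$ would be, in its $|L|$-embedding, a plane, a quadric, or a (possibly projected) Veronese surface, and none of these occurs in $X$: a plane or quadric contains a line, hence a $-K_X$-line, while a Veronese surface or a projection of one has $L$-degree at most $4 < 12 \leq L^2\cdot S$. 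Hence the Iitaka dimension is $1$, and Proposition~\ref{prop:surfaceconstants}(1) gives $b(S,L)=1<2=b(X,L)$.

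Combining the two cases, for every proper subvariety $Y\subset X$ either $a(Y,L)<a(X,L)$, or $a(Y,L)=a(X,L)$ and $b(Y,L)<b(X,L)$; thus $-K_X$ is balanced with empty exceptional set. The only delicate point is ruling out the Iitaka-dimension-$0$ case for surfaces; this reduces, via Reider's second theorem and the classification of surfaces swept out by conics through a general pair of points, to the two numerical facts that $X$ contains no $-K_X$-line and that every surface in $X$ has $-K_X$-degree at least $12$.
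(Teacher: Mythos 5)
Your proof is correct and carries out exactly the strategy the paper intends: the paper's own proof of this case is just ``similar to before,'' i.e.\ the Reider-type argument of Propositions~\ref{prop: two 62} and \ref{prop: two 56}, and you have supplied the right numerics ($-K_X = 2L_1+2L_2$, hence no $-K_X$-lines, and $L^2\cdot S = 12(n+m)\geq 12$) so that Theorems~\ref{theo: Reider} and \ref{theo: Reider II} apply uniformly with no exceptional divisor classes to treat separately. Your device for excluding Iitaka dimension zero --- the classification of surfaces swept out by conics through two general points, combined with the degree bound --- is one of the two arguments the paper itself uses for this step elsewhere (compare Propositions~\ref{prop: one one ten} and \ref{prop: two 14}), so the proof matches the intended one in all essentials.
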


\begin{proof}
The proof is similar to before.
\end{proof}

\subsection{$\rho(X)=2$ and $d(X)=30$} 

\

Then $X\subset \bP^2\times\bP^2$ is a nonsingular divisor 
of bidegree $(1,2)$ and 
the type of $(R_1-R_2)$ is $(C_1-C_2)$. The intersection numbers are given by
$$
L_1^3 = 0, \quad L_1^2\cdot L_2 = 2, \quad L_1\cdot L_2^2 = 1, \quad L_2^3 =0.
$$
Let $\Delta_{f_1}$ be the discriminant locus of the first projection $f_1 : X \ra \bP^2$. 
Write the union of singular fibers of $f_1$ by $Z(X)$. 
\begin{prop}
\label{prop: two 30}
$-K_X$ is balanced, with exceptional set $Z(X)$.
\end{prop}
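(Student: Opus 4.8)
The plan is to adapt the argument used for the earlier $\rho(X)=2$ cases (e.g.\ Proposition~\ref{prop: two 62}), exploiting the two Mori fibre space structures on $X$ together with Reider's theorems. Write $L:=-K_X$. Since $X$ is Fano, $a(X,L)=1$ and $b(X,L)=\rho(X)=2$ (for instance by Lemma~\ref{lemm:binterpretationiitaka0}, as $K_X+L\equiv 0$ so $X$ is its own minimal model for $(X,L)$). From the table of extremal contractions, $\mu_1=1$ and $\mu_2=2$, so $L=2L_1+L_2$; the type-$C_1$ contraction $f_1\colon X\to\mathbb{P}^2$ is a conic bundle, the type-$C_2$ contraction $f_2\colon X\to\mathbb{P}^2$ is a $\mathbb{P}^1$-bundle, and $Z(X)$ is the union of the singular fibres of $f_1$, swept out by the curves of class $l_1$.

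I would first handle curves. If $C$ is an irreducible curve with $C\not\subset Z(X)$, then $C$ is not a component of a singular fibre of $f_1$, so $L\cdot C\neq 1$; since $[l_1]$ is the only class of effective curves of $L$-degree $1$, this forces $L\cdot C\ge 2$. If $C$ is not rational then $a(C,L)\le 0<a(X,L)$; otherwise $a(C,L)=2/(L\cdot C)\le 1$, with equality only when $C$ is a rational $L$-conic, in which case $b(C,L)=1<2=b(X,L)$. Hence $L$ is balanced with respect to every curve off $Z(X)$.

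For surfaces, let $S$ be irreducible, reduced, and not a component of $Z(X)$. First I would check that $S$ is nef: a general member of the covering family of $l_1$-curves lies in $Z(X)$, hence not in $S$, so $S\cdot l_1\ge 0$; and a general $f_2$-fibre is not contained in $S$, so $S\cdot l_2\ge 0$; since $\mathrm{NE}_1(X)=\langle l_1,l_2\rangle$, $S$ is nef and $S\equiv nL_1+mL_2$ with $n,m\ge 0$ not both zero. If $(n,m)=(1,0)$, then $S\in|L_1|=|f_1^{*}\mathcal{O}(1)|$ is the preimage of a line $\ell\subset\mathbb{P}^2$ not contained in the discriminant of $f_1$ (else $S\subset Z(X)$), so Corollary~\ref{coro:aconditionforlines} applies directly. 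Otherwise the intersection numbers give $L^2\cdot S=9n+12m\ge 10$, and on a resolution $\beta\colon\tilde S\to S$ Theorem~\ref{theo: Reider} shows $K_{\tilde S}+\beta^{*}L|_{\tilde S}$ is effective, so $a(S,L)\le 1$. If $a(S,L)=1$, then $S$ is uniruled and $K_{\tilde S}+\beta^{*}L|_{\tilde S}$ is effective but not big; I would then use Theorem~\ref{theo: Reider II} to force its Iitaka dimension to be $1$, since otherwise $|K_{\tilde S}+\beta^{*}L|_{\tilde S}|$ fails to separate two general points, producing (after pushing forward the divisor supplied by Reider~II and discarding $L$-lines, which lie in $Z(X)$) an irreducible $L$-conic through two general points of $S$; but every irreducible $L$-conic on $X$ is a fibre of $f_2$ or a smooth conic fibre of $f_1$, and only finitely many pass through a general point, a contradiction. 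With $\kappa(K_{\tilde S}+\beta^{*}L|_{\tilde S})=1$, Proposition~\ref{prop:surfaceconstants}(1) gives $b(S,L)=1<b(X,L)$. This shows $L$ is balanced with exceptional set $Z(X)$.

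I expect the genuinely delicate point to be the low-degree surface $S\equiv L_1$: there $L^2\cdot S=9<10$, so Theorem~\ref{theo: Reider II} is unavailable and one must instead exploit the conic bundle $f_1$, via the normality and canonical-singularity statements for $f_1^{-1}(\text{line})$ (Lemmas~\ref{lemm:normalityofconicbundle} and \ref{lemm:singularitiesofp1fibration}, packaged in Corollary~\ref{coro:aconditionforlines}). A secondary technical point is the Reider~II step for $S\not\equiv L_1$: one must sort out which effective $1$-cycles $\beta_{*}E$ can occur and invoke the classification of irreducible $L$-conics on $X$ (fibres of $f_2$ and smooth conic fibres of $f_1$) to rule out Iitaka dimension $0$.
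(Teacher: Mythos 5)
Your proof is correct and follows essentially the same route as the paper's: classify $L$-lines and $L$-conics via $-K_X=2L_1+L_2$ to handle curves, apply Reider's theorems on a resolution for surfaces with $L^2\cdot S\ge 10$, and dispose of the class $(1,0)$ (where $L^2\cdot S=9$) via Corollary~\ref{coro:aconditionforlines}. You additionally spell out two points the paper leaves implicit — the nefness argument giving $n,m\ge 0$ and the explicit contradiction (only finitely many irreducible $L$-conics, namely fibers of $f_1$ and $f_2$, through a general point) that rules out Iitaka dimension $0$ in the Reider~II step — both of which are accurate.
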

\begin{proof}
Let $L$ be the anticanonical class $-K_X$. Suppose that $C$ is an irreducible reduced curve on $X$. Then $C$ is numerically equivalent to $nl_1+ml_2$ where $n, m \in \bZ_{\geq0}$. Thus we have
$$
-K_X\cdot C = (2L_1+L_2)\cdot (nl_1+ml_2) = n+2m.
$$
We conclude that $Z(X)$ is the surface swept out by $L$-lines, and $L$-conics are fibers of $f_1$ and $f_2$. In particular, $L$ is balanced with respect any curve $C$ not contained in $Z(X)$.

Let $S$ be an irreducible and reduced surface on $X$ not contained in $Z(X)$. We choose a smooth resolution $\beta: \tilde{S} \ra S$. Since $L_1$ and $L_2$ generate the cone of pseudo-effective divisors, $S$ is linearly equivalent to $nL_1+mL_2$ where $n, m \in \bZ_{\geq0}$. Then we have
$$
(\beta^*L|_{\tilde{S}})^2 = L^2\cdot S = 9n+12m.
$$
Theorem~\ref{theo: Reider} shows that $\beta^*L|_{\tilde{S}} + K_{\tilde{S}}$ is effective. We conclude that $a(S, L) \leq a(X, L)=1$. Suppose that $a(S, L) = a(X, L)$. If $(n,m) \neq (1,0)$, then it follows from Theorem~\ref{theo: Reider II} that $\beta^*L|_{\tilde{S}} + K_{\tilde{S}}$ has Iitaka dimension $1$. Hence $b(S,L)=1$ and $L$ is balanced with respect to $S$. If $(n,m) = (1,0)$, then $S$ is the pullback $f_1^{-1}(l)$ of a line $l$ on $\bP^2$ not contained in $\Delta_{f_1}$.  
This case is treated by Corollary~\ref{coro:aconditionforlines}.
\end{proof}

\subsection{$\rho(X)=2$ and $d(X) =24$} 

\

Then $X\ra \bP^2\times \bP^1$ is a double cover with branch locus 
a divisor of bidegree $(2,2)$. The type of $(R_1-R_2)$ is $(C_1-D_2)$. The intersection numbers are given by
$$
L_1^3 = 0, \quad L_1^2\cdot L_2 = 2, \quad L_1\cdot L_2^2 = 0, \quad L_2^3 =0.
$$
We define $Z(X)$ as the union of singular fibers of $f_1$.
\begin{prop}
\label{prop: two 24}
$-K_X$ is weakly balanced, with exceptional set $Z(X)$, but not balanced.
\end{prop}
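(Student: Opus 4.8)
The plan is to transcribe the proof of Proposition~\ref{prop: two 30}, the genuinely new feature being the behaviour of the fibers of the quadric bundle $f_{2}$. Since the type is $(C_{1}-D_{2})$ we have $\mu_{1}=1$, $\mu_{2}=2$, hence $-K_{X}\sim 2L_{1}+L_{2}$ and $b(X,-K_{X})=\rho(X)=2$. Writing an irreducible reduced curve as $C\equiv nl_{1}+ml_{2}$ with $n,m\in\bZ_{\ge 0}$ gives $-K_{X}\cdot C=n+2m$; the curves of $(-K_{X})$-degree $1$ are exactly those of class $l_{1}$, and these are components of the singular fibers of $f_{1}$, so they lie in $Z(X)$. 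Consequently a curve $C\not\subset Z(X)$ has $-K_{X}\cdot C\ge 2$, so $a(C,-K_{X})\le 1=a(X,-K_{X})$, and $b(C,-K_{X})=1<2$ in case of equality; thus $-K_{X}$ is balanced with respect to every curve off $Z(X)$.

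To see that $-K_{X}$ is not balanced, I would use the covering family of fibers $S_{t}=f_{2}^{-1}(t)$, $t\in\bP^{1}$. Each $S_{t}$ has class $L_{2}$, so $L_{2}|_{S_{t}}\equiv 0$ and $-K_{X}|_{S_{t}}=2L_{1}|_{S_{t}}$, while adjunction gives $K_{S_{t}}=(K_{X}+L_{2})|_{S_{t}}=-2L_{1}|_{S_{t}}$; hence $-K_{X}|_{S_{t}}=-K_{S_{t}}$. Because $f_{2}$ is a quadric bundle over $\bP^{1}$ with $X$ smooth, $S_{t}$ is a Gorenstein del Pezzo surface of degree $(-K_{S_{t}})^{2}=4L_{1}^{2}\cdot L_{2}=8$ whose worst singularity is an $A_{1}$ point, so its minimal resolution $\widetilde{S_{t}}$ is $\bP^{1}\times\bP^{1}$ or $\bF_{2}$, and in either case $\rho(\widetilde{S_{t}})=2$. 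Then $a(S_{t},-K_{X})=a(\widetilde{S_{t}},-K_{\widetilde{S_{t}}})=1$ and $a(S_{t},-K_{X})(-K_{X})|_{\widetilde{S_{t}}}+K_{\widetilde{S_{t}}}\equiv 0$ sits on the zero face of $\Lambda_{\eff}(\widetilde{S_{t}})$, of codimension $\rho(\widetilde{S_{t}})=2$, so $b(S_{t},-K_{X})=2=b(X,-K_{X})$. Since the $S_{t}$ sweep out $X$, no proper closed subset can be exceptional for the balanced property.

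For the weakly balanced statement with exceptional set $Z(X)$, let $S\not\subset Z(X)$ be an irreducible reduced surface. As in Proposition~\ref{prop: two 30}, $\Lambda_{\eff}(X)$ is spanned by $L_{1}$ and $L_{2}$, so $S\equiv nL_{1}+mL_{2}$ with $n,m\ge 0$ not both zero, and for a resolution $\beta\colon\tilde S\to S$ one has $(\beta^{*}(-K_{X})|_{\tilde S})^{2}=(-K_{X})^{2}\cdot S=8(n+m)\ge 8$. Theorem~\ref{theo: Reider} then shows $\beta^{*}(-K_{X})|_{\tilde S}+K_{\tilde S}$ is effective, so $a(S,-K_{X})\le 1$. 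Assume $a(S,-K_{X})=1$. If $n+m\ge 2$ then $8(n+m)\ge 10$ and Theorem~\ref{theo: Reider II} applies as in the proof of Proposition~\ref{prop: one one ten}: were $K_{\tilde S}+\beta^{*}(-K_{X})|_{\tilde S}$ to fail to separate two general points, $S$ would carry a two-dimensional family of $(-K_{X})$-conics and hence be one of the classified surfaces (a plane, quadric, Veronese surface, or a projection of a Veronese), and a short check against $(-K_{X})^{2}\cdot S=8(n+m)$ and $a(S,-K_{X})=1$ rules this out; thus $\kappa(\beta^{*}(-K_{X})|_{\tilde S}+K_{\tilde S})=1$, its canonical fibration is a ruling, and $b(S,-K_{X})=1$ by Proposition~\ref{prop:surfaceconstants}(1). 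The remaining cases are $S\equiv L_{2}$, where $S$ is a fiber of $f_{2}$ with $b(S,-K_{X})=2=b(X,-K_{X})$ as above, and $S\equiv L_{1}$, where necessarily $S=f_{1}^{-1}(\ell)$ for a line $\ell\subset\bP^{2}$; here $S\not\subset Z(X)$ forces $\ell$ to miss the discriminant locus of $f_{1}$, so Corollary~\ref{coro:aconditionforlines} gives $b(S,-K_{X})=1$. In all cases $a(S,-K_{X})\le a(X,-K_{X})$, with $b(S,-K_{X})\le b(X,-K_{X})$ when equality holds, which is the weakly balanced property.

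The step I expect to be most delicate is the Reider-II dichotomy when $a(S,-K_{X})=1$: one must verify that a surface carrying a two-dimensional family of $(-K_{X})$-conics is forced into the classes $L_{1}$ or $L_{2}$, which amounts to matching the abstract possibilities for such a surface against the numerical constraints $(-K_{X})^{2}\cdot S=8(n+m)$ and $a(S,-K_{X})=1$. Everything else is a faithful copy of the rank-two computations already carried out in Propositions~\ref{prop: two 30} and \ref{prop: one one ten}.
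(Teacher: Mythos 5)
Your proposal is correct and follows essentially the same route as the paper's proof: the curve analysis via $-K_X\cdot C=n+2m$, failure of balancedness via the quadric fibers of $f_2$ (which have $a=1$, $b=2=b(X,-K_X)$ and sweep out $X$), Reider's theorems for surfaces of class $nL_1+mL_2$ with $(n,m)\neq(1,0),(0,1)$, and the separate treatment of the classes $L_2$ (quadric fibers) and $L_1$ (Corollary~\ref{coro:aconditionforlines}). Two minor points: for the class-$L_1$ case the condition that $S\not\subset Z(X)$ gives, and that Corollary~\ref{coro:aconditionforlines} requires, is that the line is \emph{not contained in} the discriminant locus rather than that it misses it; and your appeal to the Iskovskikh classification of surfaces with two-dimensional families of conics in the Reider-II step (which implicitly needs $-K_X$ to embed $S$ in projective space) can be replaced by the more self-contained argument the paper uses in Proposition~\ref{prop: two 14}, namely that such a family of conics $D$ with $D^2=0$ through a general pair of points would yield infinitely many numerical classes of curves of bounded $-K_X$-degree on $\tilde S$.
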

\begin{proof}
Write $L$ for $-K_X$. Smooth fibers $S$ of $f_2$ are smooth quadrics, so we have $a(S, L)=1$ and $b(S, L)=2$. Thus $-K_X$ is not balanced. 

Let $C$ be an irreducible reduced curve on $X$. Then $C$ is numerically equivalent to $nl_1+ml_2$ where $n, m \in \bZ_{\geq0}$. It follows that
$$
-K_X\cdot C = (2L_1 + L_2)\cdot(nl_1+ml_2) = n+2m.
$$
Thus $L$-lines are irreducible components of singular fibers of $f_1$, and $L$-conics are general fibers of $f_1$ and rulings of fibers of $f_2$. We conclude that $L$ is balanced with respect to any curve $C$ not contained in $Z(X)$. 

Let $S$ be an irreducible reduced surface on $X$. We choose a smooth resolution $\beta : \tilde{S} \ra S$. Since $L_1$ and $L_2$ generate the cone of pseudo-effective divisors, $S$ is linearly equivalent to $nL_1+mL_2$ where $n,m \in \bZ_{\geq 0}$. Therefore we have
$$
(\beta^*L|_{\tilde{S}})^2 = L^2.S = 8n+8m.
$$
By Theorem~\ref{theo: Reider}, $\beta^*L|_{\tilde{S}} + K_{\tilde{S}}$ is effective, so 
$$
a(S, L) \leq a(X, L) =1.
$$ 
If $(n, m) \neq (1,0), (0,1)$, then Theorem~\ref{theo: Reider II} shows that $\beta^*L|_{\tilde{S}} + K_{\tilde{S}}$ has Iitaka dimension $1$ so that $b(S, L) =1$. We conclude that $L$ is balanced with respect to $S$. Suppose that $(n, m)=(0,1)$. Then $S$ is a fiber of $f_2$ and $S$ is either a smooth quadric or a singular quadric cone in $\bP^3$. It follows that $L$ is weakly balanced with respect to $S$. The case of $(n,m) = (0,1)$ is treated by Corollary~\ref{coro:aconditionforlines}.

\end{proof}

\subsection{$\rho(X)=2$ and $d(X)=14$} 

\

Then $f: X\ra V_7:=\bP(\mathcal O_{\bP^2} \oplus \mathcal O_{\bP^2}(1))$ 
is a double cover,  
with branch locus a member of $-K_{V_7}$. The type of $(R_1-R_2)$ is $(C_1-E_3)$ or $(C_1-E_4)$. 
Thus we have a divisorial contraction 
$$
f_2 : X \ra Y_2.
$$
We denote its exceptional divisor by $D$. We also have the conic fibration $f_1 : X \ra \bP^2$, and the restriction of $f_1$ to $D$ is a double cover (\cite[Lemma 4.5]{ott}). 
The intersection numbers are given by
$$
L_1^3 = 0, \quad L_1^2\cdot L_2 = 2, \quad L_1\cdot L_2^2 = 2, \quad L_2^3 =2.
$$

Let $Z(X)$ be the union of singular fibers of $f_1$ and $D$.

\begin{prop}
\label{prop: two 14}
$-K_X$ is balanced, with exceptional set $Z(X)$.
\end{prop}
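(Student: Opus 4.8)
The plan is to argue as in Propositions~\ref{prop: two 30} and \ref{prop: two 24}, treating curves and surfaces separately. First the bookkeeping: since $(R_1-R_2)$ has type $(C_1-E_3)$ or $(C_1-E_4)$ we have $\mu_1=\mu_2=1$, so $-K_X\sim L_1+L_2$ by \cite[Theorem 5.1]{mori-mukai2}, whence $a(X,-K_X)=1$ and $b(X,-K_X)=\rho(X)=2$. For an irreducible curve $C\equiv nl_1+ml_2$ with $n,m\ge 0$ one has $-K_X\cdot C=n+m$, so every $-K_X$-line has class $l_1$, hence is a component of a singular fibre of $f_1$ (a smooth conic fibre has $-K_X$-degree $2$), or has class $l_2$, hence is contracted by $f_2$ and lies in $D$; in either case it is contained in $Z(X)$. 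Thus a rational curve $C\not\subset Z(X)$ has $-K_X\cdot C\ge 2$, so $a(C,-K_X)\le 1$, with equality only if $-K_X\cdot C=2$, in which case $b(C,-K_X)=1<2$; and a non-rational curve has $a(C,-K_X)\le 0<1$. Hence $-K_X$ is balanced with respect to every curve not contained in $Z(X)$.

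The surface case is the only substantive one, and it differs from the cases $d(X)=30,24$ because $\Lambda_{\eff}(X)$ is no longer generated by $L_1$ and $L_2$: here $L_2^3=2>0$, so $L_2$ is big. Instead $\Lambda_{\eff}(X)=\langle L_1, D\rangle$, where $L_1$ is semiample from $f_1$ with $L_1^3=0$ and $D$ is the rigid $f_2$-exceptional divisor (it has negative normal bundle), with $D\equiv L_2-L_1$. Since $D$ is rigid, the only irreducible reduced surface whose class is a multiple of $D$ is $D$ itself, which lies in $Z(X)$; so any irreducible reduced $S\not\subset Z(X)$ has $S\equiv aL_1+bD$ with $a\ge 1$ and $b\ge 0$, and then $(-K_X)^2\cdot S=6a+2b\ge 6$. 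Choosing a resolution $\beta\colon\widetilde S\to S$ and applying Theorem~\ref{theo: Reider} to the big and nef divisor $\beta^*(-K_X)|_{\widetilde S}$ shows $\beta^*(-K_X)|_{\widetilde S}+K_{\widetilde S}$ is effective, so $a(S,-K_X)\le 1$. Suppose $a(S,-K_X)=1$; then $\beta^*(-K_X)|_{\widetilde S}+K_{\widetilde S}$ is not big. If $(a,b)\notin\{(1,0),(1,1)\}$ then $(-K_X)^2\cdot S\ge 10$, and Theorem~\ref{theo: Reider II} excludes Iitaka dimension $0$ (otherwise two general points of $S$ would lie on a $-K_X$-conic, forcing $S$ to be a plane, a quadric, a Veronese surface or a projection of one, none of which sit inside $X$). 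So the Iitaka dimension is $1$, the Iitaka fibration is a ruling, and Proposition~\ref{prop:surfaceconstants}(1) gives $b(S,-K_X)=1<2$.

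The two remaining classes $(a,b)=(1,0)$ and $(1,1)$ are where the work is. If $(a,b)=(1,0)$ then $S=f_1^{-1}(C)$ for a line $C\subset\bP^2$ not contained in $\Delta_{f_1}$ (otherwise $S\subset Z(X)$), and Corollary~\ref{coro:aconditionforlines} gives directly that $-K_X$ is balanced with respect to $S$. If $(a,b)=(1,1)$ then $S$ lies in the base-point-free system $|L_2|$, and the morphism $X\to\bP^3$ it defines, namely the double cover $f\colon X\to V_7$ followed by the blow-down $V_7\to\bP^3$, exhibits an irreducible reduced $S$ as a double cover of a plane $\bP^2\subset\bP^3$ branched over a plane quartic, disjoint from $D$; hence $D|_S=0$, so by adjunction $K_S=-L_1|_S$ and $-K_X|_S=2L_1|_S=-2K_S$, giving $a(S,-K_X)=\tfrac12 a(S,-K_S)<1$; and if $S$ has an elliptic singularity it is a cone, hence non-rational, so Proposition~\ref{prop:surfaceconstants} still yields $a(S,-K_X)\le 1$ together with $b(S,-K_X)=1$. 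Assembling all cases, $Z(X)$ is an exceptional set for the balanced pair $(X,-K_X)$. The main obstacle is this surface analysis in the presence of the big divisor $L_2$: one must use the $\{L_1,D\}$-generation of $\Lambda_{\eff}(X)$ in place of the usual $\{L_1,L_2\}$-generation, and then deal by hand with the two classes on which $(-K_X)^2\cdot S<10$, namely the conic-bundle fibres in $|L_1|$ (via Corollary~\ref{coro:aconditionforlines}) and the degree-two del Pezzo surfaces in $|L_2|$ on which $-K_X$ restricts to $-2K_S$, with everything else reducing to Reider's theorems as before.
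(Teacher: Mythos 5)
Your proof follows the same overall strategy as the paper's: the same reduction to curves plus surfaces, the same use of Reider's theorems on a resolution, and the same identification of the two problematic classes — the conic-bundle fibres in $|L_1|$ (handled by Corollary~\ref{coro:aconditionforlines}) and the degree-two del Pezzo surfaces in $|L_2|$ (handled via the double-cover description). Your bookkeeping via $\Lambda_{\eff}(X)=\langle L_1,D\rangle$ and the rigidity of $D$ is a clean substitute for the paper's equivalent observation that an irreducible $S\neq D$ has class $nL_1+mL_2$ with $n,m\ge 0$, and your adjunction computation $-K_X|_S=-2K_S$ in the $|L_2|$ case is the same as the paper's ``$S|_S$ is big'' argument.

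Two steps are not fully justified as written. First, to exclude Iitaka dimension $0$ when $(-K_X)^2\cdot S\ge 10$ you invoke the classification of surfaces carrying a two-dimensional family of conics (plane, quadric, Veronese, projection), as in Proposition~\ref{prop: one one ten}; but that argument presupposes that $-K_X$-conics are honest conics in projective space, i.e., that $-K_X$ is very ample on $X$ --- true for this family, but something you would need to verify rather than assume. The paper avoids this entirely with a purely numerical argument: the curves $D'$ through a fixed general point produced by Theorem~\ref{theo: Reider II} satisfy $D'^2=0$ and $D'\cdot D''>0$ pairwise, hence give infinitely many numerical classes of bounded $\beta^*L$-degree on $\tilde{S}$, a contradiction. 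Second, in the elliptic-singularity case of $S\in|L_2|$ you write ``it is a cone, hence non-rational''; that inference is invalid in general (a cone over a rational curve is rational). What is actually needed --- and what the paper establishes via the normality of $S$ and the singularity analysis of \cite[Chapter 4]{KM98} --- is that an elliptic singularity forces the branch quartic to have a point of multiplicity $4$ with distinct tangent lines, so that $S$ is a cone over the genus-one double cover of $\bP^1$ branched in four distinct points; non-rationality, and hence $b(S,-K_X)=1$ via Proposition~\ref{prop:surfaceconstants}, follows from that, not from being a cone. With these two points repaired your argument coincides with the paper's.
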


\begin{proof}
Let $L$ be the anticanonical class $-K_X$. It is easy to verify that $L$ is balanced with respect to any curve $C$ not contained in $Z(X)$. Suppose that $S$ is an irreducible and reduced surface on $X$ not contained in $Z(X)$. Let $\beta : \tilde{S} \ra S$ be a smooth resolution of $S$. Since $S$ is not equal to $D$, $S$ is linearly equivalent to $nL_1+mL_2$ where $n,m \in \bZ_{\geq 0}$. Then we have
$$
(\beta^*L|_{\tilde{S}})^2 = L^2\cdot S = 6n+8m.
$$
It follows from Theorem~\ref{theo: Reider} that $a(S, L) \leq a(X, L)=1$. Suppose that $a(S,L) = a(X, L) =1$. First we assume that $(n,m) \neq (1,0), (0,1)$. If the Iitaka dimension of $\beta^*L+K_{\tilde{S}}$ is zero, then Theorem~\ref{theo: Reider II} shows that any general pair $x, y$ can be connected by a $L$-conic $D$ with $D^2=0$.  Note that if we fix a general point $x$ and choose any two general points $y,y'$, the corresponding conics $D,D'$ satisfy $D \cdot D' > 0$ but $D^{2} = D'^{2} = 0$.  Thus, there are infinitely many numerical classes of curves on $S$ with bounded $-K_{X}|_{S}$-degree, which is a contradiction. This shows that the Iitaka dimension of $\beta^*L + K_{\tilde{S}}$ is $1$, and we conclude that $b(S, L) =1$. The divisor $L$ is balanced with respect to $S$. 

Next suppose that $S$ is an irreducible surface of class $(1,0)$.  Corollary \ref{coro:aconditionforlines} shows that 
$-K_{X}$ is balanced with respect to $S$.

Finally, suppose that $S$ is an irreducible surface of class $(0,1)$.  The variety $Y_2$ is a double cover of $\bP^3$ ramified along a quartic with one singular point. Then the image $H$ of $S$ under the morphisms to $\bP^3$ is a hyperplane in $\mathbb{P}^{3}$.  Thus $S$ is a double cover of $\mathbb{P}^{2}$ ramified over a quartic, which avoids the singular point.
One can prove that $S$ is normal as Lemma~\ref{lemm: isolated del Pezzo of degree two}.


The analysis of singularities in \cite[Chapter 4]{KM98} shows that $S$ has at worst elliptic singularities, and has at worst canonical singularities unless the quartic defining the branch locus has a singularity of multiplicity 4.  If $S$ has canonical singularities, then since $S|_{S}$ is big we see that $a(S,-K_{X}) < a(X,-K_{X})$.

Assume we are in the elliptic singularity case.  Write $f(x,y,z)$ for the quartic defining the branch locus.  By a coordinate change we may assume a singularity of multiplicity 4 occurs at $(x,y,z) = (1,0,0)$, so that in fact $f(x,y,z) = f(y,z)$ only depends on two variables.  Then we can realize $S$ as the hypersurface $w^{2} = f(y,z)$ in $\mathbb{P}(1,1,1,2)$.  This shows that $S$ has at most one elliptic singularity.  Note that in this equation $f(y,z)$ can not have any multiple roots since $S$ is normal.

Let $\phi: S' \to S$ be a weighted blow-up of the cone point.  Then $S'$ admits a morphism to the curve $w^{2} = f(y,z)$ with general fiber isomorphic to $\mathbb{P}^{1}$.  Since $f$ has no multiple roots this equation defines an elliptic curve.  Thus $S$ is not rational and we conclude that $b(S,-K_{X}) = 1$ by Proposition \ref{prop:surfaceconstants}.  Since we have already verified the inequality of $a$-constants, we see that $-K_{X}$ is balanced with respect to $S$. 
\end{proof}

\subsection{$\rho(X)=2$ and $d(X)=12$} 

\

Let $W_6\subset \bP^2 \times \bP^2$ be a nonsingular divisor of bidegree $(1,1)$. 
Then $X$ is either isomorphic to a double cover of $W_6$ whose branch locus is a member of $|-K_{W_6}|$, 
or a nonsingular divisor on $\bP^2 \times \bP^2$ of bidegree $(2,2)$. The type of $(R_1-R_2)$ is $(C_1-C_1)$. 
The intersection numbers are given by
$$
L_1^3 = 0, \quad L_1^2\cdot L_2 = 2, \quad L_1\cdot L_2^2 = 2, \quad L_2^3 =0.
$$
Let $Z(X)$ be the union of singular fibers of $f_1$ and $f_2$. Then we have
\begin{prop}
\label{prop: two 12}
$-K_X$ is balanced, with exceptional set $Z(X)$.
\end{prop}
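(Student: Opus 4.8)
The plan is to mimic the proofs of Propositions~\ref{prop: two 30} and \ref{prop: two 14}, exploiting that the pair of types here is $(C_1-C_1)$, so that \emph{both} extremal contractions $f_i\colon X\to\bP^2$ are conic bundles; hence $-K_X\sim L_1+L_2$, $a(X,-K_X)=1$, and $b(X,-K_X)=\rho(X)=2$ (computed as in Corollary~\ref{coro:aconditionforlines}). The argument will be uniform in the two possibilities for $X$ (a double cover of $W_6$, or a nonsingular $(2,2)$-divisor on $\bP^2\times\bP^2$), since it uses only the two conic bundle structures and the intersection table; and by definition of the exceptional set it will suffice to treat curves and surfaces not contained in $Z(X)$.

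First I would dispose of curves. An irreducible reduced curve satisfies $C\equiv nl_1+ml_2$ with $n,m\in\bZ_{\geq0}$ and $-K_X\cdot C=n+m$. A curve with $n+m=1$ is an extremal curve of $R_1$ or $R_2$, hence an irreducible component of a singular fibre of $f_1$ or $f_2$, so it lies in $Z(X)$. Thus every $C\not\subset Z(X)$ has $-K_X\cdot C\geq2$, which gives $a(C,-K_X)=2/(-K_X\cdot C)\leq1=a(X,-K_X)$ and $b(C,-K_X)=1<2=b(X,-K_X)$; so $-K_X$ is balanced with respect to all such curves.

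Next I would handle surfaces. Let $S\not\subset Z(X)$ be an irreducible surface and $\beta\colon\tilde S\to S$ a resolution. Since $L_1,L_2$ generate $\Lambda_{\eff}(X)$, one has $S\equiv nL_1+mL_2$ with $n,m\in\bZ_{\geq0}$ not both zero, and the intersection table yields $(\beta^*(-K_X)|_{\tilde S})^2=(-K_X)^2\cdot S=6(n+m)\geq6$. Reider's Theorem~\ref{theo: Reider} then shows $K_{\tilde S}+\beta^*(-K_X)|_{\tilde S}$ is effective, so $a(S,-K_X)\leq1$, and I may assume $a(S,-K_X)=1$, in which case $S$ is uniruled. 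If $(n,m)\notin\{(1,0),(0,1)\}$, then $(\beta^*(-K_X)|_{\tilde S})^2=6(n+m)\geq10$, and the claim is that $\kappa(K_{\tilde S}+\beta^*(-K_X)|_{\tilde S})=1$: otherwise this Iitaka dimension is $0$, two general points of $\tilde S$ cannot be separated by the adjoint system, and Theorem~\ref{theo: Reider II} produces an effective curve $D$ through them with $\beta^*(-K_X)\cdot D\in\{1,2\}$ (the value $0$ is excluded because $D$ moves while $\beta^*(-K_X)$ is big and nef); if $\beta^*(-K_X)\cdot D=1$ then $S$ is swept out by $-K_X$-lines and hence $S\subset Z(X)$, a contradiction, while if $\beta^*(-K_X)\cdot D=2$ then $D^2=0$, and fixing a general point and varying the second one produces infinitely many numerical classes of bounded $\beta^*(-K_X)$-degree (two such conics through the fixed point meet positively although each squares to zero), contradicting Lemma~\ref{lemm:chowboundedness}. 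Hence $\kappa=1$, and Proposition~\ref{prop:surfaceconstants}(1) gives $b(S,-K_X)=1<2$, so $-K_X$ is balanced with respect to $S$. Finally, if $(n,m)=(1,0)$ then $S=f_1^{-1}(\ell)$ for a line $\ell\subset\bP^2$ with $\ell\not\subset\Delta_{f_1}$ (because $S\not\subset Z(X)$), and Corollary~\ref{coro:aconditionforlines} applied to $f_1$ shows $-K_X$ is balanced with respect to $S$; the case $(n,m)=(0,1)$ is identical with $f_2$ in place of $f_1$. Together these give the proposition with exceptional set $Z(X)$.

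I expect the main obstacle to be the exclusion of the Iitaka dimension $0$ case for surfaces: one must be careful that Reider's curve can be chosen to move --- so that it is not $\beta$-exceptional and has positive $\beta^*(-K_X)$-degree --- before running the ``infinitely many numerical classes'' argument. Everything else is the same bookkeeping already carried out for the preceding rank-two Fano threefolds, and the two boundary classes $L_1,L_2$ cause no difficulty here precisely because both contractions are honest ($C_1$) conic bundles, so Corollary~\ref{coro:aconditionforlines} applies directly to each.
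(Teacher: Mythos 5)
Your proposal is correct and follows essentially the same route as the paper: the paper's own proof of this case is just a compressed version of your argument, deferring the curve check and the Reider~II/Iitaka-dimension-$0$ exclusion to the identical computation in Proposition~\ref{prop: two 14} and handling the classes $(1,0)$ and $(0,1)$ via Corollary~\ref{coro:aconditionforlines} exactly as you do. Your extra care in ruling out the $\beta^*(-K_X)\cdot D=0$ and $=1$ outputs of Reider~II is a detail the paper leaves implicit, but it does not change the argument.
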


\begin{proof}
Write $L$ for $-K_X$. It is easy to check that $L$ is balanced with respect to any curve $C$ not contained in $Z(X)$. Suppose that $S \subset X$ is an irreducible and reduced surface. Let $\beta:\tilde{S} \ra S$ be a resolution of singularities. Then $S$ is linearly equivalent to $nL_1 + mL_2$ where $n,m \in \bZ_{\geq 0}$. We have
$$
(\beta^*L|_{\tilde{S}})^2 = L^2\cdot S = 6n + 6m.
$$
By Theorem~\ref{theo: Reider}, $a(S, L) \leq a(X, L)=1$.
If $a(S, L) = a(X, L)=1$ and $(n,m) \neq (1,0), (0,1)$, then by arguing just as in the proof of Proposition \ref{prop: two 14}, Theorem~\ref{theo: Reider II} shows that $b(S, L) =1$. The remaining cases $(n,m) = (1,0), (0,1)$ are handled by Corollary \ref{coro:aconditionforlines}.
\end{proof}

\subsection{The case of $\rho(X)=2$ and $d(X)=6$} 

\

Then $X\ra \bP^2 \times \bP^1$ is a double cover, with branch locus 
a divisor of bidegree $(4,2)$. The type of $(R_1-R_2)$ is $(C_1-D_1)$. The intersection numbers are given by
$$
L_1^3 = 0, \quad L_1^2\cdot L_2 = 2, \quad L_1\cdot L_2^2 =0 , \quad L_2^3 =0.
$$
Let $Z(X)$ be the union of singular fibers of $f_1$ and $f_2$, and lines in general fibers of $f_2$. Then we have

\begin{prop}
\label{prop: two 6}
$-K_X$ is weakly $a$-balanced, with $a$-ex\-ceptional set $Z(X)$, but not weakly balanced.
\end{prop}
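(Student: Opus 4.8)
The plan is to follow the pattern of the preceding $\rho(X)=2$ cases, using the anticanonical formula, Reider's theorem, and Corollary~\ref{coro:aconditionforlines}, while carefully tracking which low-degree subvarieties are absorbed into $Z(X)$. Since the type is $(C_1-D_1)$ both extremal rays have length $1$, so $-K_X\sim L_1+L_2$; write $L=-K_X$, which is ample, so $\mathbf{B}_+(L)=\emptyset$. First I would set up the numerics on curves: an irreducible reduced curve $C$ has class $nl_1+ml_2$ with $n,m\geq 0$, and from the intersection table $L\cdot C=n+m$. Hence the $-K_X$-lines are exactly the curves of class $l_1$ (components of reducible fibers, or reduced parts of multiple fibers, of the conic bundle $f_1$) and those of class $l_2$ (lines on a fiber of the del Pezzo fibration $f_2$); all of these lie in $Z(X)$. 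Therefore any curve $C\not\subset Z(X)$ has $L\cdot C\geq 2$, so $a(C,L)\leq\max\{0,2/(L\cdot C)\}\leq 1=a(X,L)$.

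For surfaces, since both $f_1$ and $f_2$ are of fiber type the pseudo-effective and nef cones of $X$ coincide and are spanned by $L_1,L_2$, so an irreducible reduced surface $S$ has class $nL_1+mL_2$ with $n,m\geq 0$, and $(L|_S)^2=L^2\cdot S=4n+2m$; moreover the only class with $n=0$ carrying an irreducible reduced member is $(0,1)$ (a fiber of $f_2$, which is irreducible and reduced). I would split into three cases. If $n\geq 1$ and $(n,m)\neq(1,0)$, then $(L|_S)^2\geq 6\geq 5$, so on a resolution $\beta:\tilde S\to S$ I would apply Reider's theorem (Theorem~\ref{theo: Reider}) to the nef divisor $\beta^*L|_{\tilde S}$: were $K_{\tilde S}+\beta^*L|_{\tilde S}$ not effective, $\tilde S$ would be covered by ``bad'' curves $D_x$ with $\beta^*L\cdot D_x\leq 1$; discarding the finitely many $\beta$-exceptional ones, their images would form a covering family of curves on $S$ of $L$-degree $1$, i.e. $S$ would be swept out by $-K_X$-lines and hence contained in $Z(X)$, a contradiction. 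So $K_{\tilde S}+\beta^*L|_{\tilde S}$ is effective and $a(S,L)\leq 1$. If $(n,m)=(1,0)$, then $S=f_1^{-1}(\ell)$ for a line $\ell\subset\bP^2$; when $\ell$ is not a component of the discriminant $\Delta_{f_1}$, Corollary~\ref{coro:aconditionforlines} gives $a(S,L)\leq 1$ (indeed $b(S,L)=1$), and otherwise $S\subset Z(X)$. If $(n,m)=(0,1)$, then $S=f_2^{-1}(t)$; a general such $S$ is a del Pezzo surface of degree $2$ (over $\bP^2\times\{t\}$ the double cover restricts to a double cover of $\bP^2$ branched along a plane quartic), for which $a(S,L)=a(S,-K_S)=1=a(X,L)$, while a special $S$ lies in $Z(X)$. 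This establishes that $-K_X$ is weakly $a$-balanced with $a$-exceptional set $Z(X)$.

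Finally, for the failure of the weakly balanced property I would use that the general fiber $S$ of $f_2$ is a smooth del Pezzo surface of degree $2$, so $\rho(S)=8$ and hence, by Proposition~\ref{prop:surfaceconstants}(2), $b(S,L)=b(S,-K_S)=\rho(S)=8>2=\rho(X)=b(X,L)$, while $a(S,L)=a(X,L)$. Since the fibers of $f_2$ sweep out $X$, no proper closed subset can contain all the smooth ones, so no closed set witnesses the weakly balanced property. The step I expect to be the main obstacle is the surface case: one must confirm that every ``bad'' curve produced by Reider is genuinely a $-K_X$-line (hence in $Z(X)$) and that the two residual low-degree classes $(1,0)$ and $(0,1)$ are each either handled by Corollary~\ref{coro:aconditionforlines} or forced into $Z(X)$ — the geometry of the degree-$2$ del Pezzo fibers, in particular their Picard rank $8$, being exactly what separates the weakly $a$-balanced conclusion from the weakly balanced one.
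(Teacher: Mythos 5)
Your proof is correct and follows essentially the same route as the paper's: identify the general fiber of $f_2$ as a degree‑$2$ del Pezzo with $a=1$, $b=8>2=b(X,-K_X)$ to kill weak balancedness; bound $a$ for curves outside $Z(X)$ via $-K_X\cdot C=n+m\ge 2$; and handle surfaces of class $nL_1+mL_2$ by Reider's theorem except for $(1,0)$ (Corollary~\ref{coro:aconditionforlines}) and $(0,1)$ (the del Pezzo fibers). Your write-up is somewhat more careful than the paper's in explicitly excluding the class $(0,2)$ (where $(L|_S)^2=4<5$) by noting all fibers of $f_2$ are irreducible and reduced, and in spelling out why the Reider "bad curves" force $S\subset Z(X)$, but these are refinements of the same argument rather than a different approach.
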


\begin{proof}
Write $L$ for $-K_X$. A general fiber $S$ of $f_2$ is a del Pezzo surface of degree $2$. Thus 
$a(S, L) =1$ and $b(S, L) =8$. The anticanonical class $-K_X$ is not weakly balanced. It is easy to verify that $L$ is balanced with respect to any curve not contained in $Z(X)$. Suppose that $S$ is an irreducible and reduced surface on $X$ not contained in $Z(X)$. Let $\beta:\tilde{S} \ra S$ be a resolution of singularities of $S$. Since $S$ is linearly equivalent to $nL_1+mL_2$ where $n,m \in \bZ_{\geq0}$, we have
$$
(\beta^*L|_{\tilde{S}})^2 = L^2\cdot S = 4n+2m.
$$
If $(n,m) \neq (1,0), (0,1)$, then by Theorem~\ref{theo: Reider}, $a(S, L) \leq a(X, L)=1$. The case $(n,m) = (0,1)$ was already discussed.  The remaining case was treated by Corollary~\ref{coro:aconditionforlines}.

\end{proof}

We summarize our results for primitive Fano threefolds of $\rho(X)=2$:

\begin{theo}
\label{thm:fano-rank2}
Let $X$ be a primitive Fano threefold of $\rho(X)=2$;
\begin{itemize}
\item if $d(X) = 12, 14, 30, 48, 54, 56, 62$, then $-K_X$ is balanced;
\item if $d(X) =24$, then $-K_X$ is weakly balanced, but not balanced;
\item if $d(X) = 6$, then $-K_X$ is weakly $a$-balanced, but not weakly balanced.
\end{itemize}
\end{theo}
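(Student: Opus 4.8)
The plan is to reduce Theorem~\ref{thm:fano-rank2} to the case-by-case analyses carried out in the preceding subsections. By the Mori--Mukai classification recorded in \cite[Theorem 1.7]{mori-mukai2}, a primitive Fano threefold with $\rho(X)=2$ lies in one of exactly nine deformation types, and these are distinguished by the degree $d(X)\in\{6,12,14,24,30,48,54,56,62\}$ together with the type of the pair of extremal contractions. Thus it suffices to run through the nine degrees one at a time, invoking Propositions~\ref{prop: two 62}, \ref{prop: two 56}, \ref{prop: two 54}, \ref{prop: two 48}, \ref{prop: two 30}, \ref{prop: two 24}, \ref{prop: two 14}, \ref{prop: two 12}, and \ref{prop: two 6} respectively; the trichotomy in the statement is then the concatenation of their conclusions.

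It is worth isolating the uniform mechanism behind those propositions, since that is where the real work lies. In every case $a(X,-K_X)=1$ and $b(X,-K_X)=\rho(X)=2$, the latter because $X$ fibers over a positive-dimensional base. For curves one uses $-K_X\sim\mu_2 L_1+\mu_1 L_2$ together with the fact that $\{l_2,l_1\}$ is the dual basis: an irreducible curve is $C\equiv n l_1+m l_2$ with $n,m\ge 0$, so $-K_X\cdot C$ is read off the formula, giving $-K_X\cdot C\ge 2$ --- hence $a(C,-K_X)\le 1$ and $b(C,-K_X)=1<2$, i.e.\ $-K_X$ is balanced with respect to $C$ --- unless $C$ is an $-K_X$-line, and the $-K_X$-lines are swept out by the exceptional divisor $D$ or by the union $Z(X)$ of singular fibers. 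For surfaces one writes $S\equiv nL_1+mL_2$ with $n,m\ge 0$ (using that $L_1,L_2$ generate $\Lambda_{\eff}(X)$, or $S=D$), computes $(-K_X)^2\cdot S$ from the intersection table, and applies Reider's Theorems~\ref{theo: Reider} and \ref{theo: Reider II} to a resolution $\beta:\tilde S\to S$: the first forces $K_{\tilde S}+\beta^*(-K_X)$ to be effective, so $a(S,-K_X)\le 1$, and when equality holds the second rules out Iitaka dimension $0$ (a separating pencil of $-K_X$-conics would make $S$ a plane, a Veronese, or a quadric, none of which $X$ contains), so $\kappa=1$ and Proposition~\ref{prop:surfaceconstants}(1) yields $b(S,-K_X)=1$.

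The points where this mechanism does not close the argument --- and where the genuinely delicate input enters --- are the fibration cases. When one of the contractions has one-dimensional image, a general fiber is a del Pezzo surface $S$ with $a(S,-K_X)=1$ and $b(S,-K_X)=\rho(S)$; for $d(X)=24$ this fiber is a smooth quadric ($\rho=2$), so $-K_X$ is weakly but not strictly balanced, while for $d(X)=6$ it is a degree-two del Pezzo ($\rho=8$), so $-K_X$ is not even weakly balanced, only weakly $a$-balanced. For the low-degree double covers ($d(X)=6,12,14$) the divisors $S$ of class $(1,0)$ or $(0,1)$ must be handled separately: those of class $(1,0)$ are preimages of lines under a conic bundle and are dispatched by Corollary~\ref{coro:aconditionforlines}, while those of class $(0,1)$ may acquire an elliptic surface singularity, which one resolves by a weighted blow-up and then checks that the resolution fibers over an elliptic curve, so $S$ is not rational and Proposition~\ref{prop:surfaceconstants}(1) again gives $b(S,-K_X)=1$. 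I expect the main obstacle to be precisely this elliptic-singularity bookkeeping in the $d(X)=14$ case, together with the sharpness for $d(X)=6$: one must confirm that \emph{weakly $a$-balanced but not weakly balanced} is really the optimal statement, which requires knowing that the only $b$-jumping moving surfaces are the del Pezzo fibers and that these genuinely sweep out $X$.
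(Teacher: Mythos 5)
Your proposal is correct and follows essentially the same route as the paper: Theorem~\ref{thm:fano-rank2} is stated there as a summary whose proof is exactly the concatenation of Propositions~\ref{prop: two 62} through~\ref{prop: two 6}, and your description of the uniform mechanism (the dual-basis computation for curves, Reider's theorems on resolutions of surfaces of class $nL_1+mL_2$, Corollary~\ref{coro:aconditionforlines} for the class $(1,0)$ preimages of lines, the elliptic-singularity analysis for $d=14$, and the del Pezzo fibers forcing the weaker conclusions for $d=24$ and $d=6$) matches the arguments actually given in those propositions.
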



\section{Arithmetic applications}
\label{sect:applications}

In this section we discuss applications of the theory of balanced line bundles to 
counting problems, specifically to Manin's conjecture and its generalizations 
formulated in Section~\ref{secct:intro} (see \eqref{eqn:manin}). We work over a number field $F$, focus on smooth Fano varieties, and restrict 
our attention to $L=-K_X$.

In Section~\ref{sect:rigidity} 
we showed that there exists a Zariski closed subset 
$V\subset X$ such that all subvarieties $Y\subsetneq X$ with $a(Y, -K_X)>1$ are
contained in $V$: Theorem~\ref{theo:aclosedness} establishes this
under the WBAB$_{n-1}$ conjecture, if $\dim(X)\ge 5$, 
and unconditionally if $\dim(X)\le 4$.

Put $X^\circ:=X\setminus V$. 
Thus, a first approximation to Manin's conjecture would be 
$$
N(X^{\circ}(F'), -\mathcal{K}_X, B) \ll B^{1+\epsilon}, \quad B\ra \infty,
$$
for all finite extensions $F'/F$ and all $\epsilon >0$. 
This is an open problem even for smooth cubic surfaces, but at least, 
we know of no counterexamples. As for lower bounds, currently, even potential density 
of rational points, i.e., Zariski density after a finite field extension, is an open 
problem.

The detailed analysis of the geometry of Fano threefolds in 
Sections~\ref{sect:fano-threefolds} and  \ref{sect:seven} shows that, 
{\em geometrically},  
there may exist dominating families  $\mathcal Y\ra B$  
of subvarieties $Y_b\subsetneq X$, 
parametrized by a base $B$, such that for each $b$ in some 
Zariski open subset $B^\circ\subset B$, 
$$
a(Y_b, -K_X)=1, \quad \text{ but} \quad b(Y_b,-K_X)\ge b(X,-K_X). 
$$
Such families are sources of counterexamples to Manin's conjecture 
and its refinement by Peyre concerning 
the shape of the leading constant in \eqref{eqn:manin}. 
The philosophy of MMP 
(finite generation of ample and effective cones in the Fano case) 
suggests that there should be finitely many such families.

For arithmetic applications, the following issue is apparent:  
\begin{itemize}
\item The parametrizing families could have arbitrary Kodaira dimension: 
some of the bases $B$ satisfy potential density of rational 
points, while others are of general type, and have few rational points.  
\end{itemize}

We will now present several examples 
from the literature and from Section~\ref{sect:seven}, 
which illustrate this issue:
\begin{itemize}
\item There exist dominating families $\cY\ra B$ of cubic surfaces with  
$$
a(Y_b, -K_X)=1, \quad \text{ but} \quad b(Y_b,-K_X) > b(X,-K_X)=2, 
$$
for a Zariski dense set of $b\in B(F)$, see Example~\ref{exam:maninconjfail}.
\item    
There exist dominating families $\cY\ra B$ with 
$$
a(Y_b, -K_X)=1, \quad \text{ but} \quad b(Y_b,-K_X) = b(X,-K_X)\le 2, 
$$
for a Zariski dense set of $b\in B(F)$, e.g., quadric bundles in  
Example~\ref{exam:quadraticbundle} and Example~\ref{exam:tamagawaconstantfail}.
Moreover, numerical experiments in Example~\ref{exam:quadraticbundle} 
suggest that the leading constant 
$c(-\mathcal K_X)$ is a {\em sum} of a Tamagawa type constant, as predicted 
by Peyre, and constants arising from $b\in B(F)$ parametrizing split fibers.  
\item 
There exist dominating families $\cY\ra B$ of quadrics with
$$
a(Y_b, -K_X)=1, \quad \text{ but} \quad b(Y_b,-K_X) = b(X,-K_X)=2, 
$$
but only for finitely many $b\in B(F')$, for all finite extensions $F'/F$, 
see Example~\ref{exam:2}.
\item There exist dominating families $\cY\ra B$ 
of degree two Del Pezzo surfaces with
$$
a(Y_b, -K_X)=1, \quad \text{ but} \quad b(Y_b,-K_X) <  b(X,-K_X)=2, 
$$
for all but finitely many $b\in B(F')$, for all finite extensions $F'/F$, 
while, {\em geometrically}, each fiber $Y_b$ has $b(Y_b,-K_X)=8$. 
\end{itemize}

\begin{exam}\cite{BT-cubic} \label{exam:maninconjfail}
Consider the Fano fivefold
\begin{equation*}
X = \left\{ \sum_{i=0}^{3} x_{i}y_{i}^{3} = 0 \right\} \subset \mathbb{P}^{3}_{x_{0},x_{1},x_{2},x_{3}} \times \mathbb{P}^{3}_{y_{0},y_{1},y_{2},y_{3}}.
\end{equation*}
The first projection $\pi: X \to \mathbb{P}^{3}$ fibers $X$ in cubic surfaces.  Let $Y$ denote a fiber of $\pi$.  We have
\begin{equation*}
a(X,-K_{X}) = 1 = a(Y,-K_{X}), \qquad \text{ and } \quad  b(X,-K_{X}) = 2.
\end{equation*} 
The Picard rank of $Y$ can vary depending on the ground field.  
However, \cite{BT-cubic} shows that over any field containing $\sqrt{-3}$, 
a Zariski dense set 
of fibers attains the maximal value $b(Y,-K_{X}) = 7$.  
Thus, the behavior of the anticanonical 
counting function on $X$ is controlled by the invariants of the cubic surfaces instead of the invariants for $X$.
\end{exam}

\begin{exam}\cite{quadratic11}
\label{exam:quadraticbundle}
Let $X$ be a smooth threefold defined by
$$
ax^2+by^2+l_1(a,b)z^2 + l_2(a,b)w^2 = 0,
$$
in $\bP^1 \times \bP^3$, where $l_1$ and $l_2$ are linear forms in $a,b$. This is a Fano threefold which admits a fibration into quadric surfaces. 
The anticanonical class $-K_X$ is not balanced with respect to split 
quadrics, and there are infinitely many such fibers if the elliptic curve
$$
u^2 = abl_1(a,b)l_2(a,b)
$$
has positive Mordell-Weil rank and there is at least one split fiber. Thus Peyre's constant should not govern the growth of rational points on $X$.   
In \cite{quadratic11}, Elsenhans conducted a 
experiment and observed that after 
removing contributions from split fibers, 
the leading constant converges to Peyre's constant, at least numerically.
\end{exam}

\begin{exam}\cite{BL13} \label{exam:tamagawaconstantfail}
Let $X$ be a complete intersection of two quadrics in $\mathbb{P}^{5}$; 
it is covered by lines $C$; 
the $a$ and $b$ invariants coincide for $X$ and for $C$.  
However, the Tamagawa-type constants do not coincide, and in fact the asymptotics are not controlled by the Tawagama-type constant for $X$.  However, as discussed in \cite[Conjecture 1.4]{BL13} there is the possibility that if one removes the contributions to the point count coming from thin subsets then the original version of Manin's conjecture is still valid.
\end{exam}

\begin{exam}
\label{exam:2}
Let $X$ be a double cover of $\mathbb P^2 \times \mathbb P^1$ ramified along a smooth divisor of bidegree $(2,2)$. The variety $X$ is a primitive Fano theefold of $\rho(X)=2$ and $d(X) =24$. The projection $\pi:X\ra \bP^1$ 
is a quadric surface bundle. 
Consider the variety $\mathcal W \ra \bP^1$ of relative lines for $\pi$. Take the Stein factorization $\mathcal W \ra C \ra \bP^1$. Then $C$ is a double cover of $\bP^1$. When $X$ is general, $C$ is smooth and irreducible. Moreover, $C$ is ramified at six points. Hence $C$ is a curve of genus two. By Faltings' theorem, 
there are only finitely many rational points on $C$, thus there are only finitely many split fibers of $\pi$.

We can expect that for a fixed number field, after removing finitely many split fibers, 
the growth rate of heights of rational points is as predicted by 
Manin's Conjecture (with Peyre's constant).  
However, Manin's Conjecture predicts the behavior over all extensions as well.  
Since any fixed Zariski open $X^{\circ}\subset X$ will intersect split fibers over some extension, 
we should expect to find ``extra'' contributions to the constant $c(X^{\circ},-\mathcal{K}_X)$ arising from the fibers.
\end{exam}

\begin{exam}
\label{exam:3}
Let $X\ra \bP^2 \times \bP^1$ be a double cover 
ramified along a smooth divisor of bidegree $(4,2)$. It is 
a primitive Fano threefold of $\rho(X)=2$ and $d(X)=6$. 
We assume that $X$ is general enough so that  the fibers of $\pi$ admit at most one $A_1$-singularity. 
The projection $\pi:X\ra\bP^1$ is a degree $2$ del Pezzo surface fibration, 
and we consider the variety $f :  C \ra \bP^1$ of relative lines for $\pi$.  
We claim that $C$ is a smooth irreducible curve.  
Interpreting $C$ as the relative Hilbert scheme of lines, a normal bundle calculation shows that $C$ is smooth. 
Let $U$ be the complement of the discriminant hypersurface in the space of plane quartics. 
The variety $X$ induces a conic $F$ in the space of plane quartics, 
and it follows from a Lefschetz theorem that for a general $F$ 
$$
\pi_1(F\cap U) \ra \pi_1(U)
$$
is surjective. The monodromy group of $\pi_1(U)$ on a plane quartic was computed in \cite{universalhypersurface}, 
it is the symplectic group of the first cohomology of the plane quartic. 
There is a one-to-one correspondence between smooth nonhyperelliptic curves of genus $3$ 
with Aronhold bases and del Pezzo surfaces $S$ of degree $2$ with marked Picard group (see, e.g.,  \cite[Section 7]{GH04}). 
The symplectic group acts on the set of Arnonhold bases transitively, 
thus the monodromy acts on the $56$ lines on the degree $2$ del Pezzo surface transitively. 
We conclude that $C$ is irreducible.
The projection $f  :C \ra \mathbb P^1$ is a finite map of degree $56$. 
Since the degree of the discriminant hypersurface in the space of plane quartics is $27$,
$f$ is ramified at $54$ points. 
Any del Pezzo surface with one $A_1$-singularity contains $44$ lines, thus it follows from the Hurwitz formula that 
$$
2g(C) - 2 = 56\cdot (-2) + 54(56 - 44).
$$
This implies that $g(C) = 269$, so by Faltings' theorem there are only finitely many rational points on $C$. We further consider the variety $\mathcal W \ra \bP^1$ of relative conics for $\pi$. As the case of the variety of lines, one can show that $\mathcal W$ is smooth and irreducible. Consider the Stein factorization 
$$
\mathcal W \ra D \stackrel{h}{\longrightarrow}\bP^1.
$$
The projection $h$ has degree $126$, since
every smooth del Pezzo surface of degree two contains $126$ families of conics. 
A surface with one $A_1$-singularity contains $93$ families of conics. By Hurwitz formula, we have $g(D) = 766$. Thus for all but finitely many fibers, a fiber contains no lines and no conics defined over the ground field. 
A fiber with no lines and no conics has Picard rank $1$. 
Again, for any fixed number field, we can expect the growth of heights to behave as in 
Manin's conjecture, after removing finitely many fibers.
\end{exam}

\nocite{*}
\bibliographystyle{alpha}
\bibliography{balanced-fano4}

\newcommand{\etalchar}[1]{$^{#1}$}
\begin{thebibliography}{BCHM10}

\bibitem[Art66]{Artin66}
Michael Artin.
\newblock On isolated rational singularities of surfaces.
\newblock {\em Amer. J. Math.}, 88:129--136, 1966.

\bibitem[AS95]{AS95}
Urban Angehrn and Yum~Tong Siu.
\newblock Effective freeness and point separation for adjoint bundles.
\newblock {\em Invent. Math.}, 122(2):291--308, 1995.

\bibitem[BCHM10]{BCHM}
C.~Birkar, P.~Cascini, Chr.~D. Hacon, and James McKernan.
\newblock Existence of minimal models for varieties of log general type.
\newblock {\em J. Amer. Math. Soc.}, 23(2):405--468, 2010.

\bibitem[BDPP13]{BDPP}
S{\'e}bastien Boucksom, Jean-Pierre Demailly, Mihai Paun, and Thomas Peternell.
\newblock The pseudo-effective cone of a compact {K}\"ahler manifold and
  varieties of negative {K}odaira dimension.
\newblock {\em J. Algebraic Geom.}, 22(2):201--248, 2013.

\bibitem[Bea86]{universalhypersurface}
Arnaud Beauville.
\newblock Le groupe de monodromie des familles universelles d'hypersurfaces et
  d'intersections compl\`etes.
\newblock In {\em Complex analysis and algebraic geometry ({G}\"ottingen,
  1985)}, volume 1194 of {\em Lecture Notes in Math.}, pages 8--18. Springer,
  Berlin, 1986.

\bibitem[BL13]{BL13}
Tim~D. Browning and Daniel Loughran.
\newblock Varieties with too many rational points, 2013.
\newblock {\tt arXiv:1311.5755 [math.NT]}.

\bibitem[BM90]{BM}
Victor~V. Batyrev and Yuri~I. Manin.
\newblock Sur le nombre des points rationnels de hauteur born\'e des
  vari\'et\'es alg\'ebriques.
\newblock {\em Math. Ann.}, 286(1-3):27--43, 1990.

\bibitem[BT96]{BT-cubic}
Victor~V. Batyrev and Yuri Tschinkel.
\newblock Rational points on some {F}ano cubic bundles.
\newblock {\em C. R. Acad. Sci. Paris S\'er. I Math.}, 323(1):41--46, 1996.

\bibitem[BT98]{BT}
Victor~V. Batyrev and Yuri Tschinkel.
\newblock Tamagawa numbers of polarized algebraic varieties.
\newblock {\em Ast\'erisque}, 251:299--340, 1998.
\newblock Nombre et r{\'e}partition de points de hauteur born{\'e}e (Paris,
  1996).

\bibitem[CFSL14]{monodromy}
G.~Codogni, A.~Fanelli, R.~Svaldi, and T.~Luca.
\newblock {Fano varieties in Mori fibre spaces}.
\newblock {\tt arXiv:1406.7634}, 2014.

\bibitem[CLT01]{CL-T-torsors}
Antoine Chambert-Loir and Yuri Tschinkel.
\newblock Torseurs arithm\'etiques et espaces fibr\'es.
\newblock In {\em Rational points on algebraic varieties}, volume 199 of {\em
  Progr. Math.}, pages 37--70. Birkh\"auser, Basel, 2001.

\bibitem[CLT10]{CLT-igusa}
Antoine Chambert-Loir and Yuri Tschinkel.
\newblock Igusa integrals and volume asymptotics in analytic and adelic
  geometry.
\newblock {\em Confluentes Math.}, 2(3):351--429, 2010.

\bibitem[dFH11]{dFH09}
Tommaso de~Fernex and Christopher~D. Hacon.
\newblock Deformations of canonical pairs and {F}ano varieties.
\newblock {\em J. Reine Angew. Math.}, 651:97--126, 2011.

\bibitem[ELM{\etalchar{+}}06]{elmnp06}
Lawrence Ein, Robert Lazarsfeld, Mircea Musta{\c{t}}{\u{a}}, Michael Nakamaye,
  and Mihnea Popa.
\newblock Asymptotic invariants of base loci.
\newblock {\em Ann. Inst. Fourier (Grenoble)}, 56(6):1701--1734, 2006.

\bibitem[Els11]{quadratic11}
Andreas-Stephan Elsenhans.
\newblock Rational points on some {F}ano quadratic bundles.
\newblock {\em Exp. Math.}, 20(4):373--379, 2011.

\bibitem[FMT89]{FMT}
Jens Franke, Yuri~I. Manin, and Yuri Tschinkel.
\newblock Rational points of bounded height on {F}ano varieties.
\newblock {\em Invent. Math.}, 95(2):421--435, 1989.

\bibitem[GH04]{GH04}
Benedict~H. Gross and Joe Harris.
\newblock On some geometric constructions related to theta characteristics.
\newblock In {\em Contributions to automorphic forms, geometry, and number
  theory}, pages 279--311. Johns Hopkins Univ. Press, Baltimore, MD, 2004.

\bibitem[HM07]{HM07}
Christopher~D. Hacon and James Mc{K}ernan.
\newblock On {S}hokurov's rational connectedness conjecture.
\newblock {\em Duke Math. J.}, 138(1):119--136, 2007.

\bibitem[HTT14]{balanced}
Brendan Hassett, Sho Tanimoto, and Yuri Tschinkel.
\newblock Balanced line bundles and equivariant compactifications of
  homogeneous spaces, 2014.
\newblock to appear in Intern. Math. Research Notices.

\bibitem[IN04]{IN04}
Yuji Ishii and Noboru Nakayama.
\newblock Classification of normal quartic surfaces with irrational
  singularities.
\newblock {\em J. Math. Soc. Japan}, 56(3):941--965, 2004.

\bibitem[Isk77]{FanoI}
V.~A. Iskovskih.
\newblock Fano threefolds. {I}.
\newblock {\em Izv. Akad. Nauk SSSR Ser. Mat.}, 41(3):516--562, 717, 1977.

\bibitem[Isk78]{FanoII}
V.~A. Iskovskih.
\newblock Fano threefolds. {II}.
\newblock {\em Izv. Akad. Nauk SSSR Ser. Mat.}, 42(3):506--549, 1978.

\bibitem[Isk79]{iskov79}
V.~A. Iskovskih.
\newblock Anticanonical models of three-dimensional algebraic varieties.
\newblock In {\em Current problems in mathematics, {V}ol. 12 ({R}ussian)},
  pages 59--157, 239 (loose errata). VINITI, Moscow, 1979.

\bibitem[Kaw92]{Kawamata92}
Yujiro Kawamata.
\newblock Boundedness of {$\bold Q$}-{F}ano threefolds.
\newblock In {\em Proceedings of the {I}nternational {C}onference on {A}lgebra,
  {P}art 3 ({N}ovosibirsk, 1989)}, volume 131 of {\em Contemp. Math.}, pages
  439--445. Amer. Math. Soc., Providence, RI, 1992.

\bibitem[KM98]{KM98}
J{\'a}nos Koll{\'a}r and Shigefumi Mori.
\newblock {\em Birational geometry of algebraic varieties}, volume 134 of {\em
  Cambridge Tracts in Mathematics}.
\newblock Cambridge University Press, Cambridge, 1998.
\newblock With the collaboration of C. H. Clemens and A. Corti, Translated from
  the 1998 Japanese original.

\bibitem[KMM92]{KMM}
J{\'a}nos Koll{\'a}r, Yoichi Miyaoka, and Shigefumi Mori.
\newblock Rationally connected varieties.
\newblock {\em J. Algebraic Geom.}, 1(3):429--448, 1992.

\bibitem[Kol96]{kollarbook}
J{\'a}nos Koll{\'a}r.
\newblock {\em Rational curves on algebraic varieties}, volume~32 of {\em
  Ergebnisse der Mathematik und ihrer Grenzgebiete. 3. Folge. A Series of
  Modern Surveys in Mathematics [Results in Mathematics and Related Areas. 3rd
  Series. A Series of Modern Surveys in Mathematics]}.
\newblock Springer-Verlag, Berlin, 1996.

\bibitem[Kol97]{Ko97}
J{\'a}nos Koll{\'a}r.
\newblock Singularities of pairs.
\newblock In {\em Algebraic geometry---{S}anta {C}ruz 1995}, volume~62 of {\em
  Proc. Sympos. Pure Math.}, pages 221--287. Amer. Math. Soc., Providence, RI,
  1997.

\bibitem[Laz04]{LazI}
Robert Lazarsfeld.
\newblock {\em Positivity in algebraic geometry. {I}}, volume~48 of {\em
  Ergebnisse der Mathematik und ihrer Grenzgebiete. 3. Folge. A Series of
  Modern Surveys in Mathematics}.
\newblock Springer-Verlag, Berlin, 2004.
\newblock Classical setting: line bundles and linear series.

\bibitem[Leh12]{L11}
Brian Lehmann.
\newblock A cone theorem for nef curves.
\newblock {\em J. Algebraic Geom.}, 21(3):473--493, 2012.

\bibitem[Man93]{manin}
Yuri~I. Manin.
\newblock Notes on the arithmetic of {F}ano threefolds.
\newblock {\em Compositio Math.}, 85(1):37--55, 1993.

\bibitem[MM83]{mori-mukai2}
Shigefumi Mori and Shigeru Mukai.
\newblock On {F}ano {$3$}-folds with {$B_{2}\geq 2$}.
\newblock In {\em Algebraic varieties and analytic varieties ({T}okyo, 1981)},
  volume~1 of {\em Adv. Stud. Pure Math.}, pages 101--129. North-Holland,
  Amsterdam, 1983.

\bibitem[MM04]{mori-mukai-corr}
Shigefumi Mori and Shigeru Mukai.
\newblock Extremal rays and {F}ano 3-folds.
\newblock In {\em The {F}ano {C}onference}, pages 37--50. Univ. Torino, Turin,
  2004.

\bibitem[MM82]{mori-mukai}
Shigefumi Mori and Shigeru Mukai.
\newblock Classification of {F}ano {$3$}-folds with {$B_{2}\geq 2$}.
\newblock {\em Manuscripta Math.}, 36(2):147--162, 1981/82.

\bibitem[Mor75]{mori75}
Shigefumi Mori.
\newblock On a generalization of complete intersections.
\newblock {\em J. Math. Kyoto Univ.}, 15(3):619--646, 1975.

\bibitem[Mor82]{NefM82}
Shigefumi Mori.
\newblock Threefolds whose canonical bundles are not numerically effective.
\newblock {\em Ann. of Math. (2)}, 116(1):133--176, 1982.

\bibitem[MP12]{poonen}
Davesh Maulik and Bjorn Poonen.
\newblock N\'eron-{S}everi groups under specialization.
\newblock {\em Duke Math. J.}, 161(11):2167--2206, 2012.

\bibitem[Ott05]{ott}
Andreas Ott.
\newblock On {F}ano threefolds with $b_2 \geq 2$, 2005.
\newblock Ph. D. thesis.

\bibitem[Pet12]{Peternell12}
Thomas Peternell.
\newblock Varieties with generically nef tangent bundles.
\newblock {\em J. Eur. Math. Soc. (JEMS)}, 14(2):571--603, 2012.

\bibitem[Pey95]{Peyre}
Emmanuel Peyre.
\newblock Hauteurs et mesures de {T}amagawa sur les vari\'et\'es de {F}ano.
\newblock {\em Duke Math. J.}, 79(1):101--218, 1995.

\bibitem[Rei88]{Reider}
Igor Reider.
\newblock Vector bundles of rank {$2$} and linear systems on algebraic
  surfaces.
\newblock {\em Ann. of Math. (2)}, 127(2):309--316, 1988.

\bibitem[Rei97]{Reid97}
Miles Reid.
\newblock Chapters on algebraic surfaces.
\newblock In {\em Complex algebraic geometry ({P}ark {C}ity, {UT}, 1993)},
  volume~3 of {\em IAS/Park City Math. Ser.}, pages 3--159. Amer. Math. Soc.,
  Providence, RI, 1997.

\bibitem[Sha99]{iskov}
I.~R. Shafarevich, editor.
\newblock {\em Algebraic geometry. {V}}, volume~47 of {\em Encyclopaedia of
  Mathematical Sciences}.
\newblock Springer-Verlag, Berlin, 1999.
\newblock Fano varieties, A translation of {{\i}t Algebraic geometry. 5}
  (Russian), Ross. Akad. Nauk, Vseross. Inst. Nauchn. i Tekhn. Inform., Moscow,
  Translation edited by A. N. Parshin and I. R. Shafarevich.

\bibitem[Tsc09]{T}
Yuri Tschinkel.
\newblock Algebraic varieties with many rational points.
\newblock In {\em Arithmetic geometry}, volume~8 of {\em Clay Math. Proc.},
  pages 243--334. Amer. Math. Soc., Providence, RI, 2009.

\end{thebibliography}

\end{document}